\title{Higher spin representations of maximal compact subalgebras of simply-laced  Kac-Moody-algebras. }
\thanks{This work was supported by DFG grant KO 4323/13 and the Studienstiftung des Deutschen Volkes.}
\author{Robin Lautenbacher}
\address{Robin Lautenbacher\\
	Christian-Albrechts-Universit\"at zu Kiel,
	Mathematisches Seminar\\
	Heinrich-Hecht-Platz 6, 24118 Kiel, Germany}
\email{lautenbacher@math.uni-kiel.de}
\author{Ralf K\"ohl}
\address{Ralf K\"ohl\\
	Christian-Albrechts-Universit\"at zu Kiel,
	Mathematisches Seminar\\
	Heinrich-Hecht-Platz 6, 24118 Kiel, Germany}
\email{koehl@math.uni-kiel.de}
\newcommand{\mf}[1]{\mathfrak{#1}}
\theoremstyle{plain}
\newtheorem{theorem}[subsection]{Theorem}
\newtheorem{lemma}[subsection]{Lemma}
\newtheorem{corollary}[subsection]{Corollary}
\newtheorem{proposition}[subsection]{Proposition}
\theoremstyle{definition}
\newtheorem{defn}[subsection]{Definition}
\newtheorem{remark}[subsection]{Remark}
\begin{document}
	\pagestyle{plain}
	\begin{abstract}
		Given the maximal compact subalgebra $\mf{k}(A)$ of a split-real Kac-Moody algebra $\mf{g}(A)$ of type $A$, we study certain finite-dimensional representations of $\mf{k}(A)$, that do not lift to the maximal compact subgroup $K(A)$ of the minimal Kac-Moody group $G(A)$ associated to $\mf{g}(A)$ but only to its spin cover $Spin(A)$ described in \cite{Spin covers}.
		Currently, four \emph{elementary} of these so-called spin representations are known. 
		We study their (ir-)reducibility, semi-simplicity, and lift to the group level. 
		The interaction of these representations with the spin-extended Weyl-group is used to derive a partial parametrization result of the representation matrices by the real roots of $\mf{g}(A)$.
	\end{abstract}

\maketitle 

\section{Introduction}
The involutory subalgebra $\mf{k}(A)$ w.r.t. the Chevalley involution of a split-real Kac-Moody algebra $\mf{g}(A)$ (cp. \cite{Kac:}) is typically referred to as its \emph{maximal compact subalgebra}. 
If $A$ is a generalized Cartan matrix of finite type, $\mf{g}(A)$ is a semi-simple Lie algebra and  $\mf{k}(A)$ indeed is its maximal compact subalgebra.
If $A$ is not of finite type, then both $\mf{g}(A)$ and $\mf{k}(A)$ are infinite-dimensional and $\mf{k}(A)$ admits an invariant, negative definite bilinear form,  but it is not compact in a topological sense, i.e., it is not the Lie algebra of  a compact Lie group (cp. \cite{Gloeckner08}). 
There are at least three reasons to study the representations of $\mf{k}(A)$:
First, its representations and among these in particular the finite-dimensional ones reveal parts of the structure theory of $\mf{k}(A)$. 
Second, some indefinite $\mf{k}(A)$ are conjectured to arise as symmetries in theories of quantum gravity and therefore a well-developed representation theory of $\mf{k}(A)$ is required there.
Third, the representation theory of $\mf{k}(A)$ is expected to be important to the theory of Kac-Moody symmetric spaces, similar to the finite-dimensional case.

An early result concerning the structure theory of $\mf{k}(A)$ is a presentation by generators and relations given in \cite{Berman}. 
The major challenge in the study of $\mf{k}(A)$ is that it is in general not graded by a root system with finite-dimensional root spaces as $\mf{g}(A)$ is, but only admits a filtered structure w.r.t. the roots of $\mf{g}(A)$. 
In particular, $\mf{k}(A)$ is not a simple Kac-Moody algebra or sum thereof if $A$ is not of finite type (cp. \cite{Berman,Slodowy GIM}), and as a consequence the standard tools of representation theory such as highest weight representations and character formulas are not applicable.
It was observed in \cite{KleinschmidtLautenbacher2022} that the $\mf{k}(E_n)$-series can be characterized as the quotient of a generalized intersection matrix algebra (cp.\ \cite{Slodowy GIM}) but the representation theory of these is also rather poorly understood.
It is not obvious that $\mf{k}(A)$  even possesses finite-dimensional representations if $A$ is not of finite type, but of course, these provide interesting  ideals of $\mf{k}(A)$. 
At some point it may be possible to characterize $\mf{k}(A)$ as the co-limit of ideals of finite-dimensional representations. 
For the affine case, this has been shown in \cite{KleinschmidtLautenbacher2022}. 

Concerning the case that $A$ is an indefinite generalized Cartan matrix, there are currently four \emph{elementary} representations known.
The basic one has been first described in the physics literature (\cite{deBuyl05, Extended E8-invariance, Hidden symmetries}) under the name of the $K(E_{10})$-Dirac spinor. 
It has been studied in a mathematical setting and generalized to arbitrary symmetrizable types in \cite{Hainke}, where they were referred to as generalized spin representations.
Both names, Dirac spinor as well as generalized spin representation, stem from the fact that the first and most important example is the representation of $\mf{k}(E_{10})$ which extends the standard spinor representation of its naturally contained $\mf{so}(10)$-subalgebra.

\pagestyle{fancy}
\fancyhead[R]{\textbf{R. Lautenbacher \& R. K\"ohl}}
\fancyhead[L]{\textbf{Higher spin representations }}
\fancyfoot[C]{\thepage}

The so-called higher spin representations $\mathcal{S}_{\frac{3}{2}}$, $\mathcal{S}_{\frac{5}{2}}$, and $\mathcal{S}_{\frac{7}{2}}$ of $\mf{k}(A)$ with the exception of $\mathcal{S}_{\frac{3}{2}}$ were  introduced first in \cite{On higher spin}, again in a physics setting.
In \cite{Ext gen spin reps}, the authors of this paper derived a coordinate-free formulation of  $\mathcal{S}_{\frac{3}{2}}$ and $\mathcal{S}_{\frac{5}{2}}$  in the setting of simply-laced $A$ but a similar formulation for $\mathcal{S}_{\frac{7}{2}}$ remained elusive back then.
However, it became apparent that the Weyl group plays a central role in this construction, which builds these representations  on top of the generalized spin representation $\mathcal{S}_{\frac{1}{2}}$ from \cite{Hidden symmetries} and \cite{Extended E8-invariance}.
We provide a detailed construction and description of all these representations in section \ref{sec: higher spin representations} (cp. \ref{thm:Properties of spin rep's image}, \ref{thm:S32 and S52} and \ref{thm:7/2 spin rep}). 
The novelties in this section are that we spell out the importance of the Weyl group in this construction as clearly as possible, that we provide everything in a mathematical and coordinate-free formulation including $\mathcal{S}_{\frac{7}{2}}$ and that we provide an abstract foundation for the generalized $\Gamma$-matrices used in \cite{On higher spin} that does not rely on the use of Clifford algebras (subsection \ref{sec: Gamma amtrices}).
The subsection on generalized $\Gamma$-matrices won't be needed until the proof of prop. \ref{prop: no kernel inclusion} and section \ref{sec: Spin(A)}.

We analyze these representations in section \ref{sec:Irreducibility}, where we put the Weyl group-based formulation to work.
We show that $\mathcal{S}_{\frac{3}{2}}$ is irreducible if $A$ is indecomposable, regular and simply-laced and that the image of $\mf{k}(A)$ under this representation is a semi-simple Lie-algebra (prop. \ref{prop:S32 irrreducible} and cor. \ref{cor: image is semi-simple}).
Furthermore, we show that $\mathcal{S}_{\frac{5}{2}}$ is completely reducible, always contains an invariant submodule isomorphic to $\mathcal{S}_{\frac{1}{2}}$ and that its other invariant factors are controlled by the representation theory of $W(A)$, namely how the symmetric product $\text{Sym}^2(\mf{h}^\ast)$ of the dual Cartan subalgebra decomposes as a $W(A)$-module (prop. \ref{prop:trace-free S52} and lem. \ref{lem: Sym2V completely reducible}).
As for $\mathcal{S}_{\frac{3}{2}}$, we show that the image of $\mf{k}(A)$ under this representation is semi-simple (cor. \ref{cor: image of S52 is semi-simple}).
At the end of sec. \ref{sec:Irreducibility} we show that the kernels of some of these representations are not contained in each other (prop. \ref{prop: no kernel inclusion}) and that their tensor product has a smaller kernel than the individual representations (prop. \ref{prop:product kernels}).

In section \ref{sec: Spin(A)}, we study the spin representations' lift to the group level.
We confirm the common belief (cp. \cite{Hidden symmetries, Extended E8-invariance, Higher spin realizations, On higher spin, Ext gen spin reps}) that these representations are spinorial in the sense that they do not lift to the involutory subgroup $K(A)=G(A)^{\theta}$, where $G(A)$ is the minimal split-real Kac-Moody group of type $A$ and $\theta$ is its Chevalley involution, but instead lift only to its spin cover $Spin(A)$ introduced in \cite{Spin covers} . 
This belief is plausible if one compares the one-parameter subgroups induced by $\exp (\phi \sigma(X_i))$ and  $\exp (\phi \text{ad}(X_i))$, where $\sigma$ is a spin representation and $X_i$ is a so-called Berman generator of $\mf{k}(A)$. 
We show in prop. \ref{prop:Lift criterion} that it indeed suffices to look at these one-parameter subgroups.
Afterwards, we demonstrate that the spin representations' lift realizes an action of the spin-extended Weyl group from \cite{Spin covers} on the modules $\mathcal{S}_{\frac{n}{2}}$. 
We use that the action of $Spin(A)$ on $\mf{k}(A)$ factors through the adjoint action of $K(A)$ on $\mf{k}(A)$, to derive the representation matrices up to sign of all elements in the $W^{ext}(A)$-orbit of the Berman generators, where $W^{ext}(A)$ is the extended Weyl group. 
This amounts to providing the representation matrices up to sign of all $x\in\mf{k}_\alpha=\mf{k}(A)\cap \mf{g}_\alpha\oplus\mf{g}_{-\alpha}$ for $\alpha$ a positive real root (props. \ref{prop:Conjugation lemma for S12} and \ref{prop:Conjjugation lemma for Sn2}).

This article contains results that were obtained in the first author's PhD-thesis \cite{Lautenbacher22} but did not appear in any journal so far. Most of these results' proofs are as in \cite{Lautenbacher22} but occasionally we decided to provide a more streamlined version that skips lengthy but elementary computations. 
We also correct a sign error in \cite[lem. 5.6]{Lautenbacher22} and note that as a consequence \cite[lem. 6.5]{Lautenbacher22} and the proof of \cite[prop. 6.7]{Lautenbacher22} are incorrect and it is therefore unknown if \cite[prop. 6.7]{Lautenbacher22} is true. 
We provide the appropriate references to \cite{Lautenbacher22} for propositions and theorems but not for lemmas.

\section{Preliminaries}
\subsection{Kac-Moody-algebras}
We provide a constructive definition of symmetrizable Kac-Moody-algebras that is equivalent to the construction in \cite[chp. 1]{Kac:} due to the Gabber-Kac-theorem (cp. \cite[thm. 9.11]{Kac:}, originally \cite{Gabber-Kac-theorem}).

\begin{defn}
	\label{def:(Generalized-Cartan-matrix)}
	A matrix $A=\left(a_{ij}\right)_{i,j=1}^{n}\in\mathbb{Z}^{n\times n}$
	is called a \emph{generalized Cartan matrix} (GCM) if for all $i\neq j\in\left\{ 1,\dots,n\right\} $
	\begin{align*}
		a_{ii}  =  2\ ,\quad  a_{ij}  \leq  0\ ,\quad	a_{ij}  =  0\ \Leftrightarrow\ a_{ji}=0.
	\end{align*}
	$A$ is called \emph{symmetrizable} if there exists a
	regular, diagonal matrix $D$ and a symmetric matrix $B$ such that
	$A=DB$. The pair of matrices $D$ and $B$ is called a \emph{symmetrization}
	of $A$. The GCM $A$ is called \emph{simply-laced} if $a_{ij}\in\left\{ 0,-1\right\} $
	for all $i\neq j$. 
	We denote the set of unordered pairs $\{i,j\}$ s.t. $a_{ij}\neq 0$ by $\mathcal{E}(A)$, the edges of the generalized Dynkin diagram associated to $A$.
\end{defn}

\begin{defn}
	\label{def:Realization}
	Let $A\in\mathbb{Z}^{n\times n}$ be a GCM of rank $l\leq n$ and let $\mathfrak{h}$
	be a $\mathbb{K}$-vector space of dimension $2n-l$. 
	A $\mathbb{K}$-\emph{realization} $\left(\mathfrak{h},\Pi,\Pi^{\vee}\right)$ of $A$ consists of linearly independent subsets
	$\Pi=\left\{ \alpha_{1},\dots,\alpha_{n}\right\} \subset\mathfrak{h}^{\ast}$
	and $\Pi^{\vee}=\left\{ \alpha_{1}^{\vee},\dots,\alpha_{n}^{\vee}\right\}\subset\mathfrak{h} $ such that 
	\begin{align*}
	\alpha_{j}\left(\alpha_{i}^{\vee}\right)=a_{ij}\ \forall\,i,j\in\left\{ 1,\dots,n\right\} .
	\end{align*}
	One calls $\Pi$ the \emph{simple roots}, $\Pi^{\vee}$ the \emph{simple coroots}, $Q(A)\coloneqq\text{span}_{\mathbb{Z}}\Pi$	the \emph{root lattice} and $Q^{\vee}(A)\coloneqq\text{span}_{\mathbb{Z}}\Pi^{\vee}$ the \emph{coroot lattice}. 
\end{defn}

\begin{defn}
	Let $A\in\mathbb{Z}^{n\times n}$ be a symmetrizable GCM with $\mathbb{K}$-realization
	$\left(\mathfrak{h},\Pi,\Pi^{\vee}\right)$. 
	The	\emph{split Kac-Moody algebra over $\mathbb{K}$ of type $A$} is defined as the Lie algebra on generators $\mathfrak{h}\cup\left\{ e_{1},\dots,e_{n},f_{1},\dots,f_{n}\right\} $
	subject to the relations 
	\begin{align*}
		\left[h,h'\right]&=0,\ &\left[e_{i},f_{j}\right]&=\delta_{ij}\alpha_{i}^{\vee},\\
		\left[h,e_{i}\right]&=\alpha_{i}\left(h\right)e_{i},\ &\left[h,f_{i}\right]&=-\alpha_{i}\left(h\right)f_{i},\\
		0&=\text{ad}\left(e_{i}\right)^{1-a_{ij}}\left(e_{j}\right),\  &0&=\text{ad}\left(f_{i}\right)^{1-a_{ij}}\left(f_{j}\right)\:.		
	\end{align*}
	for all $h,h'\in\mathfrak{h}$ and for all $i,j\in\left\{ 1,\dots,n\right\} $.
\end{defn}

\begin{defn}
	Let $\mathfrak{g}=\mathfrak{g}\left(A\right)\left(\mathbb{K}\right)$ be a split Kac-Moody-algebra and set $\mathfrak{g}_{\alpha}\coloneqq\left\{x\in\mathfrak{g}\,\vert\,\left[h,x\right]=\alpha\left(h\right)x\ \forall\,h\in\mathfrak{h}\right\} $ for $\alpha\in\mathfrak{h}$.
	One calls $\alpha\neq0$ such that $\mathfrak{g}_{\alpha}\neq\left\{ 0\right\} $  a \emph{root} and $\mathfrak{g}_{\alpha}$ a \emph{root space}. 
	
	Denote the set of roots	and its decomposition into positive and negative roots by  $\Delta=\Delta_{-}\cup\Delta_{+}\subset Q$,
	where $\Delta_{+}\coloneqq\left\{ \alpha\in\Delta\,\vert\,\alpha>0\right\} $ and $\Delta_{-}=-\Delta_{+}$, then $\mf{g}$ admits the following \emph{root space decomposition} (as a vector space):
	\[
	\mathfrak{g}= \bigoplus_{\alpha\in\Delta_-}\mathfrak{g}_{\alpha} \oplus \mf{h}\oplus \bigoplus_{\alpha\in\Delta_+}\mathfrak{g}_{\alpha}\:.
	\]
\end{defn}

\begin{proposition}[This is thm. 1.2 and sec. 1.3 of \cite{Kac:} ] \label{prop:Chevalley-involution} 
	On $\mathfrak{g}\left(A\right)\left(\mathbb{K}\right)$ there exists an involutive automorphism $\omega$ called the \emph{Chevalley involution} that is determined by 
	\[
	\omega\left(e_{i}\right)=-f_{i},\ \omega\left(f_{i}\right)=-e_{i},\ \omega\left(h\right)=-h\ \forall\,h\in\mathfrak{h}.
	\]
	It  satisfies $\omega\left(\mathfrak{g}_{\alpha}\right)=\mf{g}_{-\alpha}$
	for all $\alpha\in\Delta$.
\end{proposition}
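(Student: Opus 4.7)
The strategy is to exploit the universal description of $\mf{g}(A)$ by generators and relations: prescribe $\omega$ on the generating set $\mf{h}\cup\{e_1,\dots,e_n,f_1,\dots,f_n\}$ by the given formulas, verify that each defining relation is preserved so that $\omega$ extends uniquely to a Lie algebra endomorphism of $\mf{g}(A)$, and then show that $\omega^2=\mathrm{id}$ so that $\omega$ is in fact an involutive automorphism.

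The first and main step is to check each family of relations separately. The Cartan relation $[\mf{h},\mf{h}]=0$ is immediate from $\omega|_{\mf{h}} = -\mathrm{id}$. The mixed Chevalley relations $[e_i,f_j]=\delta_{ij}\alpha_i^\vee$ and the weight relations $[h,e_i]=\alpha_i(h)e_i$, $[h,f_i]=-\alpha_i(h)f_i$ map to one another under $\omega$ with the correct signs, and the pair of Serre relations are interchanged up to the overall factor $(-1)^{1-a_{ij}}$. Each check reduces to sign bookkeeping using $\omega(e_i)=-f_i$ and $\omega(h)=-h$; no computation beyond the relations already imposed in the presentation is required.

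Involutivity is then checked directly on generators: $\omega^2$ fixes each $e_i$, each $f_i$, and every $h\in\mf{h}$, and since the endomorphism $\omega^2$ agrees with $\mathrm{id}$ on a generating set it is the identity. The root space claim finally follows by intertwining $\omega$ with the $\mf{h}$-action: for $x\in\mf{g}_\alpha$ and $h\in\mf{h}$,
\[
[h,\omega(x)] = -[\omega(h),\omega(x)] = -\omega([h,x]) = -\alpha(h)\,\omega(x),
\]
so $\omega(\mf{g}_\alpha)\subseteq\mf{g}_{-\alpha}$, and applying $\omega$ again together with $\omega^2=\mathrm{id}$ yields the reverse inclusion.

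The only step deserving real attention is the verification of the Serre relations, but the symmetry of the presentation between the $e$- and $f$-halves makes this automatic: the Serre relation in the $e_i$'s is carried by $\omega$ to a scalar multiple of the Serre relation in the $f_i$'s, both of which are among the defining relations. The proposition is thus entirely a consequence of the manifest symmetry of the presentation of $\mf{g}(A)$ under the prescribed involution, and no deeper structural input is needed.
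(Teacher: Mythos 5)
Your argument is correct, and it is the standard one. The paper offers no proof of this proposition at all --- it simply cites Kac --- so there is nothing to deviate from; given the paper's constructive definition of $\mathfrak{g}(A)$ by the Serre presentation (legitimized by the Gabber--Kac theorem), checking that the prescribed assignment on generators preserves each defining relation, that $\omega^2$ is the identity on generators, and then intertwining with the $\mathfrak{h}$-action to get $\omega(\mathfrak{g}_\alpha)=\mathfrak{g}_{-\alpha}$ is exactly the right route, and all of your sign computations go through (e.g.\ $\omega$ sends $\operatorname{ad}(e_i)^{1-a_{ij}}(e_j)$ to $(-1)^{2-a_{ij}}\operatorname{ad}(f_i)^{1-a_{ij}}(f_j)$, which is again a defining relation). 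The only point worth flagging is that your proof is tied to the Serre-presentation definition: in Kac's own construction, $\mathfrak{g}(A)$ is the quotient of the auxiliary algebra $\tilde{\mathfrak{g}}(A)$ by the maximal ideal meeting $\mathfrak{h}$ trivially, and there one must additionally observe that this ideal is $\omega$-stable (which follows from its maximality) before $\omega$ descends; within the framework this paper actually adopts, your argument is complete as written.
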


\begin{proposition}[Cp. \cite{Kac:}, thm. 2.2]
	\label{prop:Invariant-bilinear-form}
	Let $A\in\mathbb{Z}^{n\times n}$ be a symmetrizable GCM with symmetrization $A=DB$, where $D=\text{diag}\left(\varepsilon_{1},\dots,\varepsilon_{n}\right)$ is chosen s.t. $\varepsilon_{i}>0$ for all $i=1,\dots,n$ and let $\left(\mathfrak{h},\Pi,\Pi^{\vee}\right)$ be a $\mathbb{K}$-realization of $A$.	
	Set $\mathfrak{h}'\coloneqq\text{span}_{\mathbb{K}}\left\{ \alpha_{1}^{\vee},\dots,\alpha_{n}^{\vee}\right\} $ and denote by $\mathfrak{h}''$  a fixed complementary subspace of $\mathfrak{h}'\subset\mathfrak{h}$.	
	On $\mathfrak{g}=\mathfrak{g}\left(A\right)\left(\mathbb{K}\right)$, there exists a $\mathbb{K}$-bilinear form $\left(\cdot\vert\cdot\right)$  s.t.
	\begin{align*}
		\left(h\vert\alpha_{i}^{\vee}\right)&=\alpha_{i}\left(h\right)\varepsilon_{i}\ \forall\,h\in\mathfrak{h},\quad\left(h_{1}\vert h_{2}\right)=0\ \forall\,h_{1},h_{2}\in\mathfrak{h}'',\\
		\left(\left[x,y\right]\vert\,z\right) & =  \left(x\,\vert\left[y,z\right]\right)\ \forall\,x,y,z\in\mathfrak{g},\\
		\left(\mathfrak{g}_{\alpha}\vert\mathfrak{g}_{\beta}\right) & =  0\ \forall\,\alpha,\beta\in\Delta\ s.t.\ \alpha\neq-\beta,\\
		\left(\cdot\vert\cdot\right)_{\vert_{\mathfrak{g}_{\alpha}\oplus\mathfrak{g}_{-\alpha}}} &   \text{ is non-degenerate }\forall\,\alpha\in\Delta\\
		\left[x,y\right] & =  \left(x\vert y\right)\nu^{-1}\left(\alpha\right)\ \forall\,x\in\mathfrak{g}_{\alpha},y\in\mathfrak{g}_{-\alpha},\ \alpha\in\Delta,
	\end{align*}
	where $\nu:\mf{h}\rightarrow\mf{h}^\ast$ is the isomorphism induced by the bilinear form, i.e., one has  $\nu(h_1)(h_2)=(h_1\vert h_2)$ for all $h_1,h_2\in\mf{h}$.
	This form is referred to as the \emph{standard invariant bilinear form}.
	If $A$ is indecomposable, $(\cdot\vert\cdot)$ is unique up to scalar multiples.
\end{proposition}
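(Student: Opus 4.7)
The plan is to follow Kac's construction from \cite{Kac:}, adapting the normalizations to the parameters $\varepsilon_i$ chosen here. The first step is to construct the form on $\mathfrak{h}$: since $A=DB$ with $D=\mathrm{diag}(\varepsilon_1,\dots,\varepsilon_n)$ and $B$ symmetric, I define a symmetric form on $\mathfrak{h}'=\mathrm{span}_{\mathbb{K}}\Pi^\vee$ by setting $(\alpha_i^\vee\vert\alpha_j^\vee):=\varepsilon_i\varepsilon_j b_{ij}$; this equals $\varepsilon_j a_{ji}=\alpha_i(\alpha_j^\vee)\varepsilon_j$, showing it is compatible with the prescription $(h\vert\alpha_i^\vee)=\alpha_i(h)\varepsilon_i$. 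I then extend to $\mathfrak{h}=\mathfrak{h}'\oplus\mathfrak{h}''$ by declaring $(h\vert\alpha_i^\vee):=\alpha_i(h)\varepsilon_i$ for $h\in\mathfrak{h}''$ and $(h_1\vert h_2):=0$ for $h_1,h_2\in\mathfrak{h}''$. Symmetry on $\mathfrak{h}'$ holds by symmetry of $B$, and the choice of the complement $\mathfrak{h}''$ makes the extension consistent. Non-degeneracy on $\mathfrak{h}$ is then a short check: the linear independence of $\Pi$ pairs $\mathfrak{h}''$ faithfully with $\mathfrak{h}'$, and the Gram matrix of $\Pi^\vee$ is $DBD$, which is regular because $A$ has rank $l$ only by the degeneracy accounted for by $\mathfrak{h}''$.

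Next I extend the form to all of $\mathfrak{g}$ in a manner compatible with the principal (height) grading $\mathfrak{g}=\bigoplus_{k\in\mathbb{Z}}\mathfrak{g}_{(k)}$. I set $(e_i\vert f_j):=\delta_{ij}\varepsilon_i$ (which is forced once one wants the invariance identity together with $[e_i,f_j]=\delta_{ij}\alpha_i^\vee$ to produce $(\alpha_i^\vee\vert h)=\alpha_i(h)\varepsilon_i$), declare $(\mathfrak{g}_{(k)}\vert\mathfrak{g}_{(l)})=0$ whenever $k+l\neq 0$, and then extend inductively on height: for $x\in\mathfrak{g}_\alpha$ of height $>1$, write $x=\sum_j[u_j,v_j]$ with $u_j,v_j$ of smaller height and define $(x\vert y):=\sum_j(u_j\vert[v_j,y])$. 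The key technical step is to prove this is well-defined, i.e., independent of the chosen decomposition, and that the resulting form on $\mathfrak{g}_{(k)}\times\mathfrak{g}_{(-k)}$ is symmetric and bilinear. Well-definedness and invariance are proved simultaneously by induction on total height, reducing consistency at step $k$ to invariance at step $k-1$ together with the Jacobi identity; this is the only substantial calculation and is the main obstacle of the proof. The Serre relations enter only to confirm that the form vanishes on the ideal generated by them, so that it descends from the free Lie algebra on the Chevalley generators to $\mathfrak{g}(A)$ itself.

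Once the form is constructed, the orthogonality property $(\mathfrak{g}_\alpha\vert\mathfrak{g}_\beta)=0$ for $\alpha\neq-\beta$ is by construction, and the bracket formula $[x,y]=(x\vert y)\nu^{-1}(\alpha)$ for $x\in\mathfrak{g}_\alpha$, $y\in\mathfrak{g}_{-\alpha}$ is obtained by testing against $h\in\mathfrak{h}$: invariance and $[h,y]=-\alpha(h)y$ give $([x,y]\vert h)=\alpha(h)(x\vert y)=(\nu^{-1}(\alpha)\vert h)(x\vert y)$, whence the identity by non-degeneracy of $(\cdot\vert\cdot)_{\vert\mathfrak{h}}$. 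Non-degeneracy on $\mathfrak{g}_\alpha+\mathfrak{g}_{-\alpha}$ then follows: any $x\in\mathfrak{g}_\alpha$ orthogonal to all of $\mathfrak{g}_{-\alpha}$ would, by the bracket formula, satisfy $[x,\mathfrak{g}_{-\alpha}]=0$, and a standard $\mathfrak{sl}_2$-style argument (repeated bracketing with the $f_i$'s) contradicts the irreducibility of the relevant $\mathfrak{g}(A)$-submodule generated by $x$ under the lower triangular part.

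Finally, for uniqueness when $A$ is indecomposable: if $(\cdot\vert\cdot)'$ is another form with the same properties, the ratios $c_i:=(e_i\vert f_i)'/(e_i\vert f_i)$ are well-defined nonzero scalars, and invariance applied to $[e_i,e_j]$ (when $a_{ij}\neq 0$) forces $c_i=c_j$. Connectivity of the generalized Dynkin diagram of $A$ then propagates a single common value $c$ over all indices, so that $(\cdot\vert\cdot)'=c\,(\cdot\vert\cdot)$ on the Chevalley generators and hence, by invariance and the generation of $\mathfrak{g}$ by these, on all of $\mathfrak{g}$.
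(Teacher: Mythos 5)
The paper does not actually prove this proposition — it is quoted from Kac's Theorem 2.2 — and your proposal is a faithful reconstruction of Kac's standard argument: the form on $\mathfrak{h}$ from the symmetrization, the inductive extension along the principal gradation with well-definedness and invariance proved simultaneously, the bracket identity $[x,y]=(x\vert y)\nu^{-1}(\alpha)$ obtained by testing against $\mathfrak{h}$, and uniqueness by propagating the normalization constants along the connected Dynkin diagram. Two small slips worth fixing: the compatibility check should read $\varepsilon_i\varepsilon_j b_{ij}=\varepsilon_j a_{ij}=\alpha_j(\alpha_i^{\vee})\varepsilon_j$ (not $\varepsilon_j a_{ji}$), and the Gram matrix $DBD$ of $\Pi^{\vee}$ is \emph{not} regular when $\mathrm{rank}(A)=l<n$ — non-degeneracy of the form on all of $\mathfrak{h}$ comes precisely from the pairing of $\mathfrak{h}''$ against $\mathfrak{h}'$, which compensates for the rank deficiency, not from regularity of $DBD$.
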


The definition of the Weyl group $W(A)$ for a GCM $A$ in the Kac-Moody context is the straightforward generalization of the definition in the classical setting of crystallographic root systems:
\begin{defn}
\label{def: Weyl group}
Given a split Kac-Moody algebra $\mathfrak{g}\left(A\right)\left(\mathbb{K}\right)$ with GCM $A\in\mathbb{Z}^{n\times n}$, define the \emph{fundamental reflections} $s_{i}\in GL\left(\mathfrak{h}^{\ast}\right)$ for $i=1,\dots,n$ via 
\begin{align*}
s_{i}\left(\lambda\right)\coloneqq\lambda-\lambda\left(\alpha_{i}^{\vee}\right)\alpha_{i}\ \forall\,\lambda\in\mathfrak{h^{\ast}}
\end{align*}
and the \emph{Weyl group} $W(A)$ of $\mathfrak{g}\left(A\right)\left(\mathbb{K}\right)$
as $W\left(A\right)\coloneqq\left\langle s_{1},\dots,s_{n}\right\rangle \subset GL\left(\mathfrak{h}^{\ast}\right)$.
\end{defn}
It is a standard fact (cp. \cite[prop. 3.13]{Kac:}) that $W(A)$ admits a presentation as a Coxeter group. 
Furthermore, the action of $W(A)$ preserves the roots. One calls a root $\alpha\in\Delta$ \textit{real} if there exists $w\in W(A)$ s.t. $\alpha=w(\alpha_i)$ for some $\alpha_i\in\Pi$ and \textit{imaginary}, if this is not the case. 
The sets of real and imaginary roots are denoted by $\Delta^{re}$ and $\Delta^{im}=\Delta\setminus\Delta^{re}$.

\subsection{The maximal compact subalgebra $\mf{k}(A)$}

\begin{defn}
	\label{def:Berman subalgebra}
Given a split-real, symmetrizable Kac-Moody-algebra $\mf{g}\coloneqq\mathfrak{g}\left(A\right)\left(\mathbb{R}\right)$, its \emph{maximal compact subalgebra} $\mf{k}(A)$ is defined to be  the Chevalley-involution's fixed-point subalgebra, i.e., one has $\mf{k}(A)\coloneqq\left\{ x\in\mathfrak{g}\,\vert\,\omega(x)=x\right\} $. 
\end{defn}
If $\mf{g}(A)$ is a finite-dimensional split-real Lie algebra, then $\mf{k}(A)$ is maximally compact in the sense that the Killing form restricted to $\mf{k}(A)$ is negative definite and that $\mf{k}(A)$ is maximal with regard to this property. 
Correspondingly, if $G$ is a real Lie group with Lie algebra $\mf{g}(A)$, then $\mf{k}(A)$ is the Lie algebra of the maximal compact subgroup $K$ of $G$.
Note that the notion \emph{maximal compact subalgebra} must not be confused with the maximal compact form, which is defined for complex Lie algebras and involves an additional twist of the Chevalley involution by complex conjugation.
For $\mf{g}=\mf{sl}(n,\mathbb{R})$ one has $\mf{k}=\mf{so}(n,\mathbb{R})$ but the maximal compact form of $\mf{sl}(n,\mathbb{C})$ is $\mf{su}(n,\mathbb{R})$.
One also has to note that for GCMs of non-finite type, the term \emph{maximal compact subalgebra} is slightly misleading, as it is in fact not the Lie algebra of a compact Lie group; its name derives from the analogy to the finite-dimensional case and the fact that the invariant bilinear form is negative-definite on $\mf{k}(A)$ (cp. \cite[thm. 11.7]{Kac:}).
\begin{theorem}[Cp. thm. 1.8 of \cite{Hainke}, originally due to \cite{Berman}]
	\label{thm:presentation of k}
	
	Given the split-real symmetrizable Kac-Moody algebra $\mathfrak{g}\left(A\right)\left(\mathbb{R}\right)$, the maximal compact subalgebra $\mathfrak{k}\left(A\right)\left(\mathbb{R}\right)$ admits a presentation by generators $X_{1},\dots,X_{n}$ and relations
	\[
	P_{-a_{ij}}\left(\text{ad}\,X_{i}\right)\left(X_{j}\right)=0\ \forall\,i\neq j\in\left\{ 1,\dots,n\right\} ,
	\]
	where 
	\[
	P_{m}\left(t\right)\coloneqq\begin{cases}
		\prod_{k=0}^{\frac{m-1}{2}}\left(t^{2}+\left(m-2k\right)^{2}\right) & \text{if }m\text{ is odd,}\\
		t\cdot\prod_{k=0}^{\frac{m}{2}-1}\left(t^{2}+\left(m-2k\right)^{2}\right) & \text{if }m\text{ is even,}
	\end{cases}
	\]
	and $m=0$ spells out as $\left[X_i,X_j\right]=0$ whenever $a_{ij}=0$.
Explicitly, one has $X_{i}=e_{i}-f_{i}$ for $i=1,\dots,n$ in terms of the Chevalley generators $e_{1},\dots,e_{n},f_{1},\dots,f_{n}$ of $\mf{g}(A)(\mathbb{R})$.
The $X_i$ are called the \emph{Berman generators} of $\mathfrak{k}\left(A\right)\left(\mathbb{R}\right)$.
\end{theorem}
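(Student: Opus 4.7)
The plan is to establish three things: (i) that $X_i = e_i - f_i$ lies in $\mf{k}(A)$ and generates it as a Lie algebra, (ii) that the claimed relations $P_{-a_{ij}}(\text{ad}\,X_i)(X_j) = 0$ hold, and (iii) that these are all the relations. I expect (iii) to be the main obstacle.

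For (i), $\omega(e_i - f_i) = e_i - f_i$ immediately places $X_i$ in $\mf{k}(A)$. The symmetrizing projection $p\colon \mf{g}(A) \to \mf{k}(A)$, $p(x) = \tfrac{1}{2}(x + \omega(x))$, is surjective, and $p(e_i) = -p(f_i) = \tfrac{1}{2} X_i$; an induction on bracket length among the Chevalley generators then shows that every element of $\mf{k}(A)$ can be written as a Lie polynomial in the $X_i$. For (ii), fix $i \neq j$ with $a_{ij} = -m$ and consider $\mf{s}_i := \langle e_i, f_i, \alpha_i^\vee \rangle \cong \mf{sl}_2$. The Serre relation $\text{ad}(e_i)^{m+1}(e_j) = 0$ together with $[\alpha_i^\vee, e_j] = -m e_j$ and $[f_i, e_j] = 0$ identifies the $\mf{s}_i$-submodule of $\mf{g}(A)$ generated by $e_j$ as the $(m+1)$-dimensional irreducible, with $e_j$ as lowest weight vector. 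After complexification, the compact generator $X_i$ acts on this module with eigenvalues $ik$ for $k \in \{-m, -m+2, \ldots, m\}$, so its minimal polynomial is $\prod_k (t - ik)$; pairing conjugate eigenvalues as $(t - ik)(t + ik) = t^2 + k^2$ and isolating the $k = 0$ factor in the even-$m$ case reproduces $P_m(t)$ exactly. Because $X_j$ lies in the sum of this submodule with its $\omega$-image (the analogous submodule generated by $f_j$, isomorphic as an $\mf{s}_i$-module), $P_m(\text{ad}\,X_i)(X_j) = 0$ follows.

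The main task is (iii). Write $\tilde{\mf{k}}$ for the abstract Lie algebra on generators $\tilde{X}_i$ subject only to the stated relations and $\pi \colon \tilde{\mf{k}} \twoheadrightarrow \mf{k}(A)$ for the canonical surjection. Injectivity of $\pi$ is the hard part. Following Berman, I would construct from $\tilde{\mf{k}}$ an auxiliary Lie algebra $\tilde{\mf{g}}$ by formally adjoining a copy of $\mf{h}$ and abstract positive/negative generators $\tilde{e}_i, \tilde{f}_i$ related to the $\tilde{X}_i$ by an involution $\tilde{\omega}$ whose fixed-point subalgebra contains $\tilde{\mf{k}}$; verify that $\tilde{\mf{g}}$ satisfies the defining relations of $\mf{g}(A)$ so that the Gabber--Kac theorem produces an involution-equivariant surjection $\tilde{\mf{g}} \twoheadrightarrow \mf{g}(A)$; and conclude by checking that the kernel intersects the fixed-point subalgebra trivially. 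The delicate point is to show that the relatively mild polynomial relations $P_m$ suffice to encode all consequences of $\omega$-invariance, rather than requiring stronger Serre-like constraints on the $\tilde{X}_i$ directly. This is what Berman proves in \cite{Berman}, reworked in \cite{Hainke}; I would rely on those arguments rather than re-derive them.
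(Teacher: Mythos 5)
This theorem is quoted from \cite{Berman} and \cite{Hainke}; the paper supplies no proof of its own, so there is nothing to compare beyond the citation. Your verifications of the easy direction are correct --- $X_i=e_i-f_i$ is $\omega$-fixed, the projection $x\mapsto\tfrac12(x+\omega(x))$ argument gives generation, and the $\mathfrak{sl}_2$-eigenvalue computation (with $e_j$ a lowest weight vector of the $(m+1)$-dimensional irreducible and $X_i$ conjugate over $\mathbb{C}$ to $i\alpha_i^\vee$) correctly reproduces $P_m$ --- while the completeness of the relations is, exactly as in the paper, delegated to Berman's argument.
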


For $A$ simply-laced the above relations take the following simple form:
\begin{corollary}[Cp. \cite{Berman}, thm. 1.31 or \cite{Hainke}, thm. 1.8]
	\label{cor:Berman presentation for simply-laced A} 
	Let $A$ be simply-laced and denote by $\mathcal{E}\left(A\right)$
	the edges of the generalized Dynkin diagram, i.e., the unordered pairs $\{i,j\}$ s.t. $a_{ij}=-1$. 
	Then $\mathfrak{k}\left(A\right)\left(\mathbb{R}\right)$
	admits a presentation by generators $X_{1}=e_1-f_1,\dots,X_{n}=e_n-f_n$ and relations
	\[
	\left[X_{i},\left[X_{i},X_{j}\right]\right]=-X_{j}\ \forall\,\{i,j\}\in\mathcal{E}\left(A\right),\qquad \left[X_{i},X_{j}\right]=0\ \forall\,\{i,j\}\notin\mathcal{E}\left(A\right).
	\]
\end{corollary}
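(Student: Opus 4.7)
The plan is simply to specialize the presentation in Theorem \ref{thm:presentation of k} to the simply-laced case. The key observation is that for a simply-laced GCM $A$ the off-diagonal entries satisfy $a_{ij}\in\{0,-1\}$, so the exponents $-a_{ij}$ appearing in the Serre-type polynomials $P_{-a_{ij}}(t)$ only take the values $0$ and $1$. Therefore only two polynomials need to be evaluated.

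First, I would handle the case $a_{ij}=0$: as already spelled out in Theorem \ref{thm:presentation of k}, the relation $P_{0}(\mathrm{ad}\,X_i)(X_j)=0$ reduces to $[X_i,X_j]=0$, which matches the claim for $\{i,j\}\notin\mathcal{E}(A)$. Second, for $a_{ij}=-1$, i.e.\ $\{i,j\}\in\mathcal{E}(A)$, I would compute
\begin{align*}
P_{1}(t) \;=\; \prod_{k=0}^{0}\bigl(t^{2}+(1-2k)^{2}\bigr) \;=\; t^{2}+1.
\end{align*}
Evaluated on $X_j$, this gives $(\mathrm{ad}\,X_i)^{2}(X_j)+X_j=0$, which is precisely $[X_i,[X_i,X_j]]=-X_j$.

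Since Theorem \ref{thm:presentation of k} asserts that these relations together with the generators $X_i=e_i-f_i$ yield a presentation of $\mathfrak{k}(A)(\mathbb{R})$, and since in the simply-laced case every relation in that presentation is of one of the two forms just derived, the corollary follows. There is no genuine obstacle: the statement is a direct substitution, and the only thing to verify carefully is the elementary computation of $P_{0}$ and $P_{1}$.
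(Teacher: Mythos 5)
Your proof is correct and is exactly the intended argument: the paper presents this as an immediate specialization of Theorem \ref{thm:presentation of k}, and your evaluation of $P_{0}$ and $P_{1}(t)=t^{2}+1$ is the whole content. Nothing further is needed.
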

For later reference, we collect the following elementary result:
\begin{lemma}
	\label{lem:Filtered structure on k} The maximal compact subalgebra
	$\mathfrak{k}\left(A\right)\left(\mathbb{R}\right)$ as well as its
	complexification $\mathfrak{k}\left(A\right)\left(\mathbb{C}\right)\coloneqq\mathfrak{k}\left(A\right)\left(\mathbb{R}\right)\otimes_{\mathbb{R}}\mathbb{C}$
	is filtered by $\Delta_{+}$, i.e., one has
	\begin{equation}
		\mathfrak{k}\left(A\right)=\bigoplus_{\alpha\in\Delta_{+}}\mathfrak{k}_{\alpha}\label{eq:filtered decomposition}
	\end{equation}
	as vector spaces, where $\mathfrak{k}_{\alpha}\coloneqq\left(\mathfrak{g}_{\alpha}\oplus\mathfrak{g}_{-\alpha}\right)\cap\mathfrak{k}$.
	For $x_{\alpha}\in\mathfrak{k}_{\alpha}$, $x_{\beta}\in\mathfrak{k}_{\beta}$
	s.t. $\alpha-\beta\in Q_+$ one has 
	\begin{equation}
		\left[x_{\alpha},x_{\beta}\right]\in\mathfrak{k}_{\alpha+\beta}\oplus\mathfrak{k}_{\alpha-\beta}\:.\label{eq:filtered commutator relation}
	\end{equation}
\end{lemma}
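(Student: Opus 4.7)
The plan is to prove the two parts of the lemma independently, using only the root space decomposition of $\mathfrak{g}(A)$ and the behaviour of the Chevalley involution $\omega$ recorded in Proposition \ref{prop:Chevalley-involution}.

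For the decomposition \eqref{eq:filtered decomposition}, I would start from the root space decomposition of $\mathfrak{g}(A)$ and write an arbitrary $x\in\mathfrak{k}(A)$ as $x=h+\sum_{\alpha\in\Delta}x_\alpha$ with $h\in\mathfrak{h}$ and $x_\alpha\in\mathfrak{g}_\alpha$. Applying the relation $\omega(x)=x$ and using both $\omega(h)=-h$ for $h\in\mathfrak{h}$ and $\omega(\mathfrak{g}_\alpha)=\mathfrak{g}_{-\alpha}$, the uniqueness of the root space decomposition forces $h=0$ and $x_{-\alpha}=\omega(x_\alpha)$ for every $\alpha\in\Delta$. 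Regrouping gives $x=\sum_{\alpha\in\Delta_+}\bigl(x_\alpha+\omega(x_\alpha)\bigr)$ with each summand visibly in $\mathfrak{k}_\alpha$. Conversely, any element of $\mathfrak{g}_\alpha\oplus\mathfrak{g}_{-\alpha}$ fixed by $\omega$ is of this shape, so the sum $\sum_{\alpha\in\Delta_+}\mathfrak{k}_\alpha$ is direct (its summands lie in pairwise disjoint root-space sums) and exhausts $\mathfrak{k}(A)$. The same argument applied over $\mathbb{C}$ gives the analogous statement for $\mathfrak{k}(A)(\mathbb{C})$.

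For \eqref{eq:filtered commutator relation}, I would write $x_\alpha=a+\omega(a)$ with $a\in\mathfrak{g}_\alpha$ and $x_\beta=b+\omega(b)$ with $b\in\mathfrak{g}_\beta$, and then expand
\[
[x_\alpha,x_\beta]=\bigl([a,b]+\omega([a,b])\bigr)+\bigl([a,\omega(b)]+[\omega(a),b]\bigr).
\]
The first parenthesis lies in $\mathfrak{g}_{\alpha+\beta}\oplus\mathfrak{g}_{-(\alpha+\beta)}$ and is manifestly $\omega$-invariant, hence sits in $\mathfrak{k}_{\alpha+\beta}$ (with the convention that $\mathfrak{k}_\gamma=0$ if $\gamma\notin\Delta_+$). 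For the second parenthesis I would apply $\omega$ to $[a,\omega(b)]$ and use $\omega^2=\mathrm{id}$ to see that $\omega([a,\omega(b)])=[\omega(a),b]$, so again the sum is $\omega$-fixed; it lies in $\mathfrak{g}_{\alpha-\beta}\oplus\mathfrak{g}_{-(\alpha-\beta)}$, which places it in $\mathfrak{k}_{\alpha-\beta}$.

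The only subtlety is what happens when $\alpha-\beta$ fails to be a positive root: either $\alpha=\beta$, in which case $[a,\omega(b)]+[\omega(a),b]\in\mathfrak{h}\cap\mathfrak{k}(A)=0$ by part one, or $\alpha-\beta\in Q_+\setminus(\Delta_+\cup\{0\})$, in which case the corresponding root spaces vanish so the bracket is automatically zero. Both cases are consistent with the stated containment under the convention $\mathfrak{k}_0=0$. This case analysis is the only non-mechanical part of the argument; apart from it, the lemma is a direct consequence of $\omega$ being an involutive automorphism permuting the root spaces.
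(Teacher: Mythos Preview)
Your proof is correct and is precisely the standard argument the paper alludes to: the paper's own proof is a single sentence stating that the filtered structure of $\mathfrak{k}(A)$ is a direct consequence of the graded structure of $\mathfrak{g}(A)$, with a reference to \cite{Berman} and \cite{Hainke} for details. You have simply written out those details---using the root space decomposition together with $\omega(\mathfrak{h})=-\mathfrak{h}$ and $\omega(\mathfrak{g}_\alpha)=\mathfrak{g}_{-\alpha}$---so the approaches are identical in substance, yours being self-contained where the paper defers to the literature.
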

\begin{proof}
	The filtered structure of $\mathfrak{k}(A)$ is a direct consequence of the graded structure of $\mathfrak{g}\left(A\right)\left(\mathbb{K}\right)$ (cp. \cite{Berman}  and \cite{Hainke} for a more detailed exposition).
\end{proof}

\section{Higher spin representations} \label{sec: higher spin representations}
Historically, the name \emph{spin representation} is reserved for a representation of $\mf{so}(p,q)$ that doesn't lift to the real Lie  group $SO(p,q)$ but only to its double cover, the so-called spin group $Spin(p,q)$.
The most elementary spin representations, corresponding to the missing fundamental weights not observed in representations that lift to the group level, are typically constructed by means of Clifford algebras.
In signature $(n,0)$, the representation matrices of suitably normalized generators such as the Berman generators possess only the eigenvalues $\pm \frac{i}{2}$ which is why they square to $-\frac{1}{4}Id$. 
As it turns out, one can generalize the Clifford algebra construction to any maximal compact subalgebra, not just $\mf{so}(n+1)$ which is $\mf{k}(A_n)$, but in the classical cases this provides nothing new. 
For GCMs of affine and indefinite type however, these representations are quite fascinating, because they show among other things that $\mf{k}(A)$ is not a Lie algebra of Kac-Moody type, since these do not admit finite dimensional representations if they are themselves infinite-dimensional (and simple).

\begin{defn}
\label{Def:gen spin rep for simply laced case}(Cp. \cite[def. 3.6]{Hainke})
Let $A$ be a simply-laced GCM and let $\rho\,:\ \mathfrak{k}(A)\rightarrow\text{End}\left(S\right)$ be a representation of $\mf{k}(A)$ for $S$ a finite-dimensional real vector space.
One calls $\rho$ a \emph{generalized spin representation} if 
\[
\rho\left(X_{i}\right)^{2}=-\frac{1}{4}Id\ \forall\,i=1,\dots,n\,,
\]
where $X_{1},\dots,X_{n}$ denote the Berman generators of $\mathfrak{k}(A)$. 
\end{defn}

\begin{proposition}[Cp. \cite{Hainke}, 3.7]
	\label{prop:properties of gen spin reps in simply laced case} 
	Let $A$ be a simply-laced GCM and let $\rho\,:\ \mathfrak{k}(A)\rightarrow\text{End }\left(S\right)$
	be a generalized spin representation and denote by $\left\{ A,B\right\} :=AB+BA$ the anti-commutator of matrices.
	Then one has for all $1\leq i\neq j\leq n$:
	\begin{eqnarray*}
		\left[\rho\left(X_{i}\right)\,,\,\rho\left(X_{j}\right)\right]=0 & \text{if }a_{ij}=0\:,\\
		\left\{ \rho\left(X_{i}\right)\,,\,\rho\left(X_{j}\right)\right\} =0 & \text{if }a_{ij}=-1 & .
	\end{eqnarray*}
	Conversely, the extension of the map $X_{i}\mapsto M_{i}$ defines a generalized
	spin representation, if the linear maps $M_{1},\dots,M_{n}\in\text{End}\,(S)$ satisfy
	\begin{eqnarray*}
		M_{i}^{2} & = & -\frac{1}{4}id_{s}\:,\\
		\left[M_{i},M_{j}\right] & = & 0\ \text{if }a_{ij}=0\:,\\
		\left\{ M_{i},M_{j}\right\}  & = & 0\ \text{if }a_{ij}=-1\ .
	\end{eqnarray*}
\end{proposition}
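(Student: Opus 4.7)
The plan is to reduce everything to the Berman presentation given in Corollary \ref{cor:Berman presentation for simply-laced A}, which for simply-laced $A$ provides the two types of relations $[X_i,X_j]=0$ when $a_{ij}=0$ and $[X_i,[X_i,X_j]]=-X_j$ when $a_{ij}=-1$. The proof then splits into a forward direction (a representation $\rho$ satisfying $\rho(X_i)^2=-\tfrac14 Id$ induces the (anti-)commutator relations) and a converse direction (maps $M_i$ satisfying the (anti-)commutator relations together with $M_i^2=-\tfrac14 Id_S$ realize the Berman relations and hence extend to a representation).

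For the forward direction, the case $a_{ij}=0$ is immediate from $\rho([X_i,X_j])=0$. For $a_{ij}=-1$, write $A\coloneqq \rho(X_i)$, $B\coloneqq \rho(X_j)$ and expand
\begin{equation*}
-B = \rho([X_i,[X_i,X_j]]) = [A,[A,B]] = A^2B - 2ABA + BA^2 = -\tfrac12 B - 2ABA,
\end{equation*}
using $A^2=-\tfrac14 Id$. This yields $ABA = \tfrac14 B$. Multiplying this identity on the left by $A$ and using $A^2=-\tfrac14 Id$ once more gives $-\tfrac14 BA = \tfrac14 AB$, i.e. $\{A,B\}=AB+BA=0$, as claimed.

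For the converse direction, we must check that the assignment $X_i\mapsto M_i$ respects the defining relations of Corollary \ref{cor:Berman presentation for simply-laced A}. When $a_{ij}=0$ this is exactly the commutativity assumption on $M_i,M_j$. When $a_{ij}=-1$, the anticommutator condition $M_iM_j=-M_jM_i$ yields $[M_i,M_j]=2M_iM_j$, hence
\begin{equation*}
[M_i,[M_i,M_j]] = 2\bigl(M_i^2 M_j - M_i M_j M_i\bigr) = 4M_i^2 M_j = -M_j,
\end{equation*}
which is the desired Berman relation. By the universal property of the presentation, $M_i\mapsto X_i$ thus extends to a representation, which is a generalized spin representation by the hypothesis $M_i^2=-\tfrac14 Id_S$.

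There is no real obstacle here beyond elementary bookkeeping; the only subtlety worth flagging is that the passage $ABA=\tfrac14 B \Rightarrow \{A,B\}=0$ in the forward direction uses that $A$ is invertible, which is guaranteed precisely by $A^2=-\tfrac14 Id$. The simply-laced hypothesis enters only through the specific form of the Berman relations; for non-simply-laced $A$ an analogous statement would require expanding the higher-degree polynomials $P_{-a_{ij}}(\operatorname{ad} X_i)$, which no longer yields a clean (anti-)commutator dichotomy.
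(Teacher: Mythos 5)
Your proof is correct; the paper itself gives no proof of this proposition (it simply cites \cite{Hainke}, prop.\ 3.7), and your argument — reducing both directions to the simply-laced Berman relations of Corollary \ref{cor:Berman presentation for simply-laced A} and expanding the double commutator using $\rho(X_i)^2=-\tfrac14 Id$ — is exactly the standard computation that reference contains. One small inaccuracy: the passage from $ABA=\tfrac14 B$ to $\{A,B\}=0$ does not actually need invertibility of $A$; left-multiplying by $A$ and substituting $A^2=-\tfrac14 Id$ is valid for any endomorphism, so no subtlety is being used there.
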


\begin{theorem}[This is the specialization of \cite{Hainke}, thms. 3.9 and 3.14 to $A$ simply-laced.]
	\label{thm:Properties of spin rep's image} 
	Let $A$ be a simply-laced GCM, then a generalized spin representation exists. 
	Its image is compact, hence reductive. 
	If the Dynkin diagram of $A$ has no isolated nodes, then the image is furthermore semi-simple.
\end{theorem}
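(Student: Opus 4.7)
To construct a generalized spin representation on a finite-dimensional real vector space $S$, I invoke the criterion of Proposition \ref{prop:properties of gen spin reps in simply laced case}: it suffices to produce linear maps $M_1,\dots,M_n\in\text{End}(S)$ with $M_i^2=-\tfrac14\text{Id}$, with $\{M_i,M_j\}=0$ whenever $\{i,j\}\in\mathcal{E}(A)$, and with $[M_i,M_j]=0$ otherwise. My plan is to realize such operators via a signed Clifford-type construction. Setting $N_i:=2M_i$, consider the abstract group $G_0$ on generators $N_1,\dots,N_n$ modulo $N_i^2=-1$, $N_iN_j=-N_jN_i$ for $\{i,j\}\in\mathcal{E}(A)$, and $N_iN_j=N_jN_i$ otherwise. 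A short normal-form argument shows every element of $G_0$ reduces to $\pm N_{i_1}\cdots N_{i_k}$ with $i_1<\dots<i_k$, so $|G_0|\le 2^{n+1}$ is finite. Any real representation of $G_0$ on which the central involution $-1$ acts as $-\text{Id}$ then yields the required $M_i=\tfrac12 N_i$; such a representation is produced explicitly by the regular representation of a non-trivial quotient of $G_0$, or equivalently by a suitable Clifford-module construction over $\mathbb{R}$.

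\textbf{Compactness and reductivity.} Since the $2M_i$ generate the finite subgroup $G_0\subset\text{GL}(S)$, averaging any inner product on $S$ over $G_0$ produces a $G_0$-invariant positive-definite form. Each $2M_i$ is then orthogonal with $(2M_i)^2=-\text{Id}$, which forces $(2M_i)^\top=-(2M_i)$, so $\rho(X_i)\in\mathfrak{so}(S)$ with respect to this form. Because the $X_i$ generate $\mathfrak{k}(A)$ by Corollary \ref{cor:Berman presentation for simply-laced A}, I conclude $\rho(\mathfrak{k}(A))\subseteq\mathfrak{so}(S)$; the image is therefore a subalgebra of a compact Lie algebra, hence compact, and compact Lie algebras are automatically reductive.

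\textbf{Semi-simplicity under the no-isolated-nodes hypothesis.} For a compact Lie algebra, semi-simplicity is equivalent to coinciding with its own derived subalgebra. Suppose every node $i$ has a neighbor $j$ with $\{i,j\}\in\mathcal{E}(A)$. Then a brief computation exploiting $M_j^2=-\tfrac14\text{Id}$ together with $\{M_i,M_j\}=0$ yields the key identity $[[M_i,M_j],M_j]=-M_i$. Hence every Berman generator $\rho(X_i)$ lies in $[\rho(\mathfrak{k}(A)),\rho(\mathfrak{k}(A))]$, and since the $\rho(X_i)$ generate the image, the image coincides with its derived ideal, which together with the compactness established above forces semi-simplicity. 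The one genuinely delicate step is the existence argument: one must ensure the chosen representation of $G_0$ does not collapse the central element $-1$ to $+\text{Id}$, since that would give $M_i^2=+\tfrac14\text{Id}$ and destroy the spinorial defining relation. This is handled by picking any representation faithful on the central subgroup $\langle -1\rangle\le G_0$.
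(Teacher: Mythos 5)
The paper itself offers no proof of this theorem; it simply cites \cite{Hainke}, thms.\ 3.9 and 3.14. Your argument is a legitimate self-contained substitute and its second and third parts are essentially the arguments used there: averaging an inner product over the finite group generated by the $2M_i$ makes each $2M_i$ orthogonal, and orthogonal plus $(2M_i)^2=-\mathrm{Id}$ indeed forces skew-symmetry, so the image lands in $\mathfrak{so}(S)$ and is compact, hence reductive. Your key identity $[[M_i,M_j],M_j]=-M_i$ checks out ($[M_i,M_j]=2M_iM_j$ by anticommutativity, and then $[2M_iM_j,M_j]=4M_iM_j^2=-M_i$), so every generator of the image is a commutator when no node is isolated, the image is perfect, and a perfect compact Lie algebra is semi-simple. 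This is the same mechanism the paper later reuses in cor.~\ref{cor: image is semi-simple}.

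The one incomplete step is existence. Your normal-form argument only bounds $|G_0|$ from above; it does not rule out that the presented group collapses, i.e.\ that the central element $-1$ equals $1$, in which case \emph{no} representation with $M_i^2=-\tfrac14\mathrm{Id}$ exists. Your proposed fix --- ``pick any representation faithful on $\langle -1\rangle$'' --- presupposes exactly what has to be shown, so as written the existence part is circular. The gap is easy to close with material already in the paper: take the standard normalized $2$-cocycle $\varepsilon$ of lemma~\ref{lem:def and existence of standard 2-cocycle} and form the twisted group algebra $S:=\bigoplus_{\gamma\in Q/2Q}\mathbb{R}\,u_{\gamma}$ with product $u_{\alpha}u_{\beta}=\varepsilon(\alpha,\beta)u_{\alpha+\beta}$; the cocycle identity (\ref{eq:cocycle associativity property-1}) makes this associative and unital, left multiplication is faithful, and $u_{\alpha_i}^2=\varepsilon(\alpha_i,\alpha_i)u_0=-1$ while $u_{\alpha_i}u_{\alpha_j}=(-1)^{(\alpha_i\vert\alpha_j)}u_{\alpha_j}u_{\alpha_i}$. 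Thus $M_i:=\tfrac12 u_{\alpha_i}$ acting by left multiplication satisfies the hypotheses of prop.~\ref{prop:properties of gen spin reps in simply laced case}, which both proves existence and shows $-1\neq 1$ in your $G_0$. (Alternatively one can quote the explicit inductive tensor-product construction of \cite{Hainke}, thm.\ 3.9, which is the route the cited source takes.) With that insertion your proof is complete.
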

Later on, we will refer to the representations described in the above theorem as $\frac{1}{2}$-spin representations, because the representation matrices of the Berman generators only possess the eigenvalues $\pm\frac{i}{2}$.
For a given $\mf{k}(A)$ we fix a representation denoted by $\rho$ and whose carrier space we will denote by $S$ in general and $\mathcal{S}_{\frac{1}{2}}$ if it is an irreducible $\mf{k}(A)$-module.

\subsection{Generalized spin representations and generalized $\Gamma$-matrices}\label{sec: Gamma amtrices}
Most explicit constructions of generalized spin representations use Clifford algebras. 
In \cite{On higher spin} for instance, the authors associate an element of a Clifford algebra to any root of the root system such that the simple roots map to (multiples of) the $\rho(X_i)$ and thus define a representation. 
They call these elements $\Gamma$-matrices in generalization of the Dirac-matrices which are typically denoted by $\gamma_\mu$.
We will construct and study these generalized $\Gamma$-matrices abstractly without reference to an underlying Clifford algebra but note that this realization appears to be achievable at least in the simply laced situation by using explicit basis expressions for the roots as in \cite{On higher spin}.
Later, this framework will be used to derive the representation matrices for any $x\in\mf{k}_{\alpha}$ with $\alpha\in\Delta^{re}$ up to sign. 

In order to properly introduce generalized $\Gamma$-matrices we need to associate normalized $2$-cocycles to a root lattice:
\begin{defn}
	\label{Def: associated two cocycle}
	Given the root lattice $Q$ of a Kac-Moody-algebra $\mf{g}(A)(\mathbb{K})$ with symmetrizable GCM $A$, we call $\varepsilon\,:\,Q\times Q\rightarrow C_{2}$
	 an \emph{associated}, \emph{normalized 2-cocycle} if
	 \begin{subequations} \label{eq:cocycle props}
	 	\begin{align}
	 		\varepsilon(\alpha,\beta)\varepsilon(\beta,\alpha) &=(-1)^{\left(\alpha\vert\beta\right)}\ ,\ \varepsilon(\alpha,0)=\varepsilon(0,\alpha)=1\label{eq:cocycle prop 1-1} \\
	 		\varepsilon(\alpha,\beta)\varepsilon(\alpha+\beta,\gamma) &=\varepsilon(\alpha,\beta+\gamma)\varepsilon(\beta,\gamma)\label{eq:cocycle associativity property-1}
	 	\end{align}
	 \end{subequations}
	for all $\alpha,\beta,\gamma\in Q$, where $(\cdot\vert\cdot)$ denotes the invariant bilinear form.
\end{defn}

The root lattice to any symmetrizable GCM possesses an associated, normalized $2$-cocycle (cp. for instance \cite[cor. 5.5]{Kac98}). 
As this can be checked in a few lines, we provide an explicit construction below.  
\begin{lemma}
	\label{lem:def and existence of standard 2-cocycle}
	Let $A$ be a symmetrizable GCM and fix its symmetrization $A=DB$ s.t. $\left(\alpha_{i}\vert\alpha_{i}\right)=b_{ii}\in2\mathbb{Z}$
	for all $i=1,\dots,n$. Define the bilinear form $\underline{\varepsilon}:Q\times Q\rightarrow\mathbb{Z}$ via bilinear extension of
	\[
	\underline{\varepsilon}\left(\alpha_{i},\alpha_{j}\right)\coloneqq\begin{cases}
		b_{ij} & \text{if }i<j\\
		\frac{1}{2}b_{ii} & \text{if }i=j\\
		0 & \text{if }i>j.
	\end{cases}
	\]
	Then
	\begin{align}
	\varepsilon:\,Q\times Q\rightarrow C_{2}\ ,\quad\varepsilon\left(\alpha,\beta\right)\coloneqq\left(-1\right)^{\underline{\varepsilon}\left(\alpha,\beta\right)}
	\end{align}
	is an associated, normalized $2$-cocycle which we refer to as  the \emph{standard	$2$-cocycle} to $Q$.
\end{lemma}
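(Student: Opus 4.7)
The plan is to reduce the multiplicative cocycle identities for $\varepsilon$ to additive identities for $\underline{\varepsilon}$ modulo $2$, and then exploit the bilinearity of $\underline{\varepsilon}$ to reduce everything to a finite check on pairs of simple roots. Note that the hypothesis $b_{ii}\in 2\mathbb{Z}$ ensures $\underline{\varepsilon}$ is indeed $\mathbb{Z}$-valued on the generators, hence on all of $Q\times Q$ by bilinear extension, so $\varepsilon=(-1)^{\underline{\varepsilon}}$ is well-defined.

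Normalization $\varepsilon(\alpha,0)=\varepsilon(0,\alpha)=1$ is immediate from bilinearity of $\underline{\varepsilon}$, which forces $\underline{\varepsilon}(\alpha,0)=\underline{\varepsilon}(0,\alpha)=0$. For the associativity property \eqref{eq:cocycle associativity property-1}, I would simply expand both sides additively using bilinearity in each slot:
\begin{align*}
\underline{\varepsilon}(\alpha,\beta)+\underline{\varepsilon}(\alpha+\beta,\gamma)
&=\underline{\varepsilon}(\alpha,\beta)+\underline{\varepsilon}(\alpha,\gamma)+\underline{\varepsilon}(\beta,\gamma),\\
\underline{\varepsilon}(\alpha,\beta+\gamma)+\underline{\varepsilon}(\beta,\gamma)
&=\underline{\varepsilon}(\alpha,\beta)+\underline{\varepsilon}(\alpha,\gamma)+\underline{\varepsilon}(\beta,\gamma).
\end{align*}
The two right-hand sides agree as integers, so the corresponding signs agree and \eqref{eq:cocycle associativity property-1} follows at once.

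The content is therefore concentrated in \eqref{eq:cocycle prop 1-1}, which asks for $\underline{\varepsilon}(\alpha,\beta)+\underline{\varepsilon}(\beta,\alpha)\equiv(\alpha\vert\beta)\pmod 2$. By bilinearity of both sides in $\alpha$ and $\beta$, it suffices to verify this on pairs of simple roots $(\alpha_i,\alpha_j)$. For $i<j$ the defining piecewise formula gives $\underline{\varepsilon}(\alpha_i,\alpha_j)+\underline{\varepsilon}(\alpha_j,\alpha_i)=b_{ij}+0=b_{ij}=(\alpha_i\vert\alpha_j)$, and by symmetry the same holds for $i>j$. For $i=j$ one obtains $2\cdot\tfrac{1}{2}b_{ii}=b_{ii}=(\alpha_i\vert\alpha_i)$, where it is precisely the evenness assumption on $b_{ii}$ that makes the two halves legitimately cancel into an integer. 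Thus on simple roots the identity already holds as an integer equation, and bilinear extension yields $\underline{\varepsilon}(\alpha,\beta)+\underline{\varepsilon}(\beta,\alpha)=(\alpha\vert\beta)$ in $\mathbb{Z}$ for arbitrary $\alpha,\beta\in Q$, which implies \eqref{eq:cocycle prop 1-1} after passing to $(-1)^{(\cdot)}$.

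The entire argument is essentially bookkeeping; the only mild obstacle is organising the case distinction $i<j$, $i=j$, $i>j$ when checking \eqref{eq:cocycle prop 1-1} on simple roots and ensuring that the normalisation $\tfrac{1}{2}b_{ii}$ on the diagonal, together with $b_{ii}\in 2\mathbb{Z}$, gives exactly the right contribution $(\alpha_i\vert\alpha_i)$ rather than half of it. Once that pointwise check is in place, bilinearity does all remaining work for free.
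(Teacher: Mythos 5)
Your proof is correct and follows essentially the same route as the paper's: reduce both cocycle identities to additive statements about $\underline{\varepsilon}$, verify the symmetry identity $\underline{\varepsilon}(\alpha,\beta)+\underline{\varepsilon}(\beta,\alpha)=(\alpha\vert\beta)$ on simple roots and extend by bilinearity, and obtain the associativity identity by expanding both sides bilinearly into the same three terms. The only cosmetic difference is the order in which the two identities are treated.
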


\begin{proof}
	One has for all $i,j=1,\dots,n$ that $\underline{\varepsilon}\left(\alpha_{i},\alpha_{j}\right)+\underline{\varepsilon}\left(\alpha_{j},\alpha_{i}\right)=b_{ij}$. 
	Spelling out $\beta=\sum_{i=1}^{n}b_{i}\alpha_{i}$	and $\gamma=\sum_{i=1}^{n}c_{i}\alpha_{i}$ this yields 
	\begin{align*}
		\underline{\varepsilon}\left(\beta,\gamma\right)+\underline{\varepsilon}\left(\gamma,\beta\right) &=  \sum_{i,j}b_{i}c_{j}\left(\underline{\varepsilon}\left(\alpha_{i},\alpha_{j}\right)+\underline{\varepsilon}\left(\alpha_{j},\alpha_{i}\right)\right)=\sum_{i,j}b_{i}c_{j}b_{ij} = \left(\beta\vert\gamma\right)\ .
	\end{align*}
	Thus, for all $\beta,\gamma\in Q(A)$ one has that 
	\[
	\varepsilon\left(\beta,\gamma\right)\varepsilon\left(\gamma,\beta\right)=\left(-1\right)^{\underline{\varepsilon}\left(\beta,\gamma\right)+\underline{\varepsilon}\left(\gamma,\beta\right)}=\left(-1\right)^{\left(\beta\vert\gamma\right)}
	\]
	and (\ref{eq:cocycle prop 1-1}) follows by bilinearity	of $\underline{\varepsilon}$. 
	Towards (\ref{eq:cocycle associativity property-1})	one uses 
	\begin{align*}
		\underline{\varepsilon}\left(\alpha,\beta\right)+\underline{\varepsilon}\left(\alpha+\beta,\gamma\right)  &=  \underline{\varepsilon}\left(\alpha,\beta\right)+\underline{\varepsilon}\left(\alpha,\gamma\right)+\underline{\varepsilon}\left(\beta,\gamma\right)
		=  \underline{\varepsilon}\left(\alpha,\beta+\gamma\right)+\underline{\varepsilon}\left(\beta,\gamma\right)
	\end{align*}
	to compute 
	\begin{eqnarray*}
		\varepsilon\left(\alpha,\beta\right)\varepsilon\left(\alpha+\beta,\gamma\right) & = & \left(-1\right)^{\underline{\varepsilon}\left(\alpha,\beta\right)+\underline{\varepsilon}\left(\alpha+\beta,\gamma\right)}=\left(-1\right)^{\underline{\varepsilon}\left(\alpha,\beta+\gamma\right)+\underline{\varepsilon}\left(\beta,\gamma\right)}\\
		& = & \varepsilon\left(\alpha,\beta+\gamma\right)\varepsilon\left(\beta,\gamma\right)\ \forall\,\alpha,\beta,\gamma\in Q(A).
	\end{eqnarray*}
\end{proof}

\begin{defn}
	\label{def:gen Gamma matrix}(Cp. \cite[eq. 4.6]{On higher spin})
	A map $\Gamma\,:\,Q\rightarrow\text{End}\,(S)$ for a finite-dimensional vector space $S$ is called
	a \emph{generalized $\Gamma$-matrix} if it satisfies
	\begin{subequations}\label{eq:Gamma matrix props}
		\begin{align}
			\Gamma(\alpha)\Gamma(\beta) &=(-1)^{\left(\alpha\vert\beta\right)}\Gamma(\beta)\Gamma(\alpha)\label{eq:Gamma matrices (anti-) commutator-1} \\
			\Gamma(0) &=Id\ ,\ \Gamma(\alpha)^{2} =\left(-1\right)^{\frac{1}{2}\left(\alpha\vert\alpha\right)}\ ,\ \Gamma(\alpha) =\Gamma(-\alpha)\label{eq:Gamma matrices normalization and squares-1} \\
			\Gamma(\alpha)\Gamma(\beta) &=\varepsilon(\alpha,\beta)\Gamma(\alpha+\beta)\ ,\label{eq:Gamma matrices homomorphism property-1}
		\end{align}
	\end{subequations}
	for all $\alpha,\beta\in Q$ and an associated, normalized	$2$-cocycle $\varepsilon$.
\end{defn}

The following properties of $2$-cocycles follow rather directly from (\ref{eq:cocycle props}) and will be used in the following computations:
\begin{subequations}
	\begin{align}
	\varepsilon(\alpha,\alpha) &=(-1)^{\frac{1}{2}\left(\alpha\vert\alpha\right)}\label{eq:cocycle of same root twice-1} \\
	\varepsilon(\alpha,\beta) &=\begin{cases}
		\ \ \varepsilon(\beta,\alpha) & \ if\ \left(\alpha\vert\beta\right)=0\,\mod\,2\\
		-\varepsilon(\beta,\alpha) & \ if\ \left(\alpha\vert\beta\right)=1\,\mod\:2.
	\end{cases}\label{eq:cocycle (anti-) symmetry-1}
	\end{align}
\end{subequations} 

\begin{proposition}[This is \cite{Lautenbacher22}, prop. 3.10]
	For a simply-laced GCM $A$, a generalized Gamma matrix $\Gamma\,:\,Q\rightarrow\text{End}\,(S)$
	defines a generalized spin representation $\rho\,:\ \mathfrak{k}\rightarrow\text{End}\,(S)$ via $\rho(X_i)\coloneqq\frac{1}{2}\Gamma(\alpha_{i})$.
\end{proposition}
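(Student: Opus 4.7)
The plan is to verify the three defining conditions from Proposition \ref{prop:properties of gen spin reps in simply laced case}, which characterize maps $M_1,\dots,M_n$ that extend to a generalized spin representation. Setting $M_i\coloneqq \frac{1}{2}\Gamma(\alpha_i)$, I must show that (i) $M_i^2=-\frac{1}{4}\mathrm{Id}$, (ii) $[M_i,M_j]=0$ whenever $a_{ij}=0$, and (iii) $\{M_i,M_j\}=0$ whenever $a_{ij}=-1$.

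First I would fix the normalization of the bilinear form. In the simply-laced case one may choose the symmetrization $A=DB$ with $D=\mathrm{Id}$ and $B=A$, so that $(\alpha_i\vert\alpha_j)=a_{ij}$ for all $i,j$; in particular $(\alpha_i\vert\alpha_i)=2$, $(\alpha_i\vert\alpha_j)=0$ when $a_{ij}=0$, and $(\alpha_i\vert\alpha_j)=-1$ when $a_{ij}=-1$. This is the normalization compatible with the hypothesis $(\alpha_i\vert\alpha_i)\in 2\mathbb{Z}$ used in defining the standard cocycle, and it is the form implicit in Definition \ref{def:gen Gamma matrix}.

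Then the three conditions follow directly from the $\Gamma$-matrix axioms (\ref{eq:Gamma matrix props}). For (i), property (\ref{eq:Gamma matrices normalization and squares-1}) gives $\Gamma(\alpha_i)^2=(-1)^{\frac{1}{2}(\alpha_i\vert\alpha_i)}\mathrm{Id}=-\mathrm{Id}$, hence $M_i^2=-\frac{1}{4}\mathrm{Id}$. For (ii) and (iii) I apply the commutation relation (\ref{eq:Gamma matrices (anti-) commutator-1}): in case $a_{ij}=0$ the exponent $(\alpha_i\vert\alpha_j)$ is even, so $\Gamma(\alpha_i)$ and $\Gamma(\alpha_j)$ commute; in case $a_{ij}=-1$ the exponent is odd, so they anti-commute. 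Multiplying by $\frac{1}{4}$ transports these identities to $M_i,M_j$.

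Having verified the three bullet points, Proposition \ref{prop:properties of gen spin reps in simply laced case} guarantees that the assignment $X_i\mapsto M_i$ extends (uniquely) to a Lie algebra homomorphism $\rho\colon \mathfrak{k}(A)\to \mathrm{End}(S)$, and by construction this extension satisfies $\rho(X_i)^2=-\frac{1}{4}\mathrm{Id}$, so it is a generalized spin representation in the sense of Definition \ref{Def:gen spin rep for simply laced case}. There is no serious obstacle in the proof; the only subtlety worth spelling out is the normalization of the invariant form $(\cdot\vert\cdot)$, since the $\Gamma$-matrix axioms as stated in (\ref{eq:Gamma matrix props}) implicitly refer to the same form used in the cocycle identities (\ref{eq:cocycle props}), and simply-lacedness together with the convention $(\alpha_i\vert\alpha_i)=2$ ensures that the integer exponents appearing in $\Gamma(\alpha_i)^2$ and in the commutation rule produce exactly the signs demanded by Proposition \ref{prop:properties of gen spin reps in simply laced case}.
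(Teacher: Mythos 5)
Your proposal is correct and follows essentially the same route as the paper: both verify the three conditions of Proposition \ref{prop:properties of gen spin reps in simply laced case} directly from the $\Gamma$-matrix axioms (\ref{eq:Gamma matrices (anti-) commutator-1}) and (\ref{eq:Gamma matrices normalization and squares-1}). Your extra remark on fixing the normalization $(\alpha_i\vert\alpha_i)=2$ is a harmless and reasonable clarification that the paper leaves implicit.
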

\begin{proof}
	For simple roots corresponding to adjacent nodes one has 
	\[
	\Gamma(\alpha_{i})\Gamma(\alpha_{j})=-\Gamma(\alpha_{j})\Gamma(\alpha_{i})
	\]
	by eq. (\ref{eq:Gamma matrices (anti-) commutator-1}), whereas for  non-adjacent simple roots one has
	\[
	\Gamma(\alpha_{i})\Gamma(\alpha_{j})=\Gamma(\alpha_{j})\Gamma(\alpha_{i})\:.
	\]
	This shows that the $\rho(X_i)$ commute and anti-commute as required by prop.	\ref{prop:properties of gen spin reps in simply laced case}.	
	The correct normalization is checked via (\ref{eq:Gamma matrices normalization and squares-1}).
\end{proof}
The converse statement is also true.
\begin{proposition}[This is \cite{Lautenbacher22}, prop. 3.11]
	\label{prop:Existence of gen Gamma matrices}
	Let $A$ be a simply-laced GCM	and  $\rho\,:\ \mathfrak{k}\rightarrow\text{End}\,(S)$ a generalized spin representation.
	Then
	\begin{subequations} 
		\begin{align}
			\Gamma\left(\pm\alpha_{i}\right) &\coloneqq2\rho\left(X_{i}\right),\\ \Gamma\left(\alpha_{i_{1}}+\dots+\alpha_{i_{n}}\right) &\coloneqq\left(\prod_{k=1}^{n-1}\varepsilon\left(\alpha_{i_{k}},\alpha_{i_{k+1}}+\dots+\alpha_{i_{n}}\right)\right)\Gamma\left(\alpha_{i_{1}}\right)\cdots\Gamma\left(\alpha_{i_{n}}\right),\label{eq: decomposition of Gamma matrices}
		\end{align}
	\end{subequations}
	defines a generalized $\Gamma$-matrix $\Gamma:Q\rightarrow\text{End}\,(S)$.
\end{proposition}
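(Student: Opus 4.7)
The plan is to verify in turn that (i) the formula \eqref{eq: decomposition of Gamma matrices} defines a map $\Gamma: Q \to \text{End}(S)$ independently of the chosen decomposition of $\alpha$ into signed simple roots, and (ii) this map satisfies the three defining properties \eqref{eq:Gamma matrices (anti-) commutator-1}, \eqref{eq:Gamma matrices normalization and squares-1}, \eqref{eq:Gamma matrices homomorphism property-1} of a generalized $\Gamma$-matrix.

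For well-definedness, any two decompositions of a fixed $\alpha\in Q$ as a sum of signed simple roots are related by a finite sequence of elementary moves, namely (a) adjacent transpositions and (b) insertion or removal of a cancelling pair $(\alpha_j,-\alpha_j)$. Under an adjacent transposition, the $\Gamma$-product picks up the factor $(-1)^{(\alpha_{i_k}\vert\alpha_{i_{k+1}})}$ by Proposition~\ref{prop:properties of gen spin reps in simply laced case} (using $\Gamma(\pm\alpha_i)=2\rho(X_i)$), while applying \eqref{eq:cocycle associativity property-1} to both orderings of the triple $(\alpha_{i_k},\alpha_{i_{k+1}},T_{k+1})$ and combining with \eqref{eq:cocycle prop 1-1} shows that the cocycle prefactor picks up precisely the same sign. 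Under an insertion of $(\alpha_j,-\alpha_j)$, the $\Gamma$-product gains $\Gamma(\alpha_j)\Gamma(-\alpha_j)=4\rho(X_j)^2=-Id$ and the prefactor gains $\varepsilon(\alpha_j,-\alpha_j+T)\varepsilon(-\alpha_j,T)$, where $T$ is the tail of the remaining summands; invoking \eqref{eq:cocycle associativity property-1} on the triple $(\alpha_j,-\alpha_j,T)$ together with $\varepsilon(\alpha_j,-\alpha_j)=-1$ (a consequence of the bilinearity of $\underline\varepsilon$ and $\underline\varepsilon(\alpha_j,\alpha_j)=1$) reduces the latter to $-1$ as well, so that the two signs cancel.

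The homomorphism identity \eqref{eq:Gamma matrices homomorphism property-1} is then the main calculation: taking the concatenated decomposition of $\alpha+\beta$ gives $\Gamma(\alpha)\Gamma(\beta) = (C_\alpha C_\beta / C_{\alpha+\beta})\Gamma(\alpha+\beta)$, and rewriting each ratio $\varepsilon(\alpha_{i_k},T_k+\beta)/\varepsilon(\alpha_{i_k},T_k)$ via \eqref{eq:cocycle associativity property-1} as $\varepsilon(\alpha^{(k)},\beta)/\varepsilon(\alpha^{(k+1)},\beta)$, where $\alpha^{(k)}:=\alpha_{i_k}+\cdots+\alpha_{i_n}$, turns the product into a telescope that collapses to $\varepsilon(\alpha,\beta)$. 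The anticommutation law \eqref{eq:Gamma matrices (anti-) commutator-1} follows at once from \eqref{eq:Gamma matrices homomorphism property-1} and \eqref{eq:cocycle prop 1-1} by swapping $\alpha$ and $\beta$. For \eqref{eq:Gamma matrices normalization and squares-1}: $\Gamma(0)=Id$ is the empty-sum convention; $\Gamma(-\alpha)=\Gamma(\alpha)$ holds since negating every summand of a decomposition leaves both the $\Gamma$-product and, by bilinearity of $\underline\varepsilon$, the cocycle prefactor unchanged; and $\Gamma(\alpha)^2=\Gamma(\alpha)\Gamma(-\alpha)=\varepsilon(\alpha,-\alpha)\Gamma(0)=(-1)^{\frac{1}{2}(\alpha\vert\alpha)}Id$ by \eqref{eq:Gamma matrices homomorphism property-1} and \eqref{eq:cocycle of same root twice-1}. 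The main obstacle is precisely the well-definedness step, where one must carefully track how the cocycle prefactor and the (anti-)commutation relations compensate each other; once this has been handled, the remaining properties follow by essentially formal manipulations.
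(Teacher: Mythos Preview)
Your proof is correct and takes a genuinely different route from the paper's. The paper proceeds by induction on the height of $\alpha$: assuming the properties hold for all $\beta,\gamma$ with $\mathrm{ht}(\beta)+\mathrm{ht}(\gamma)\leq n-1$, it shows that $\varepsilon(\beta,\gamma)\Gamma(\beta)\Gamma(\gamma)$ is independent of the splitting $\alpha=\beta+\gamma$ by multiplying from the left with $\Gamma(\alpha_{i_1})$ and reducing to height $n-1$, thereby establishing well-definedness and the homomorphism property \eqref{eq:Gamma matrices homomorphism property-1} simultaneously in one inductive sweep (and only afterwards deducing \eqref{eq:Gamma matrices (anti-) commutator-1} and \eqref{eq:Gamma matrices normalization and squares-1}). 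You instead separate the two issues cleanly: well-definedness is handled combinatorially via the generating elementary moves on ordered signed decompositions (adjacent transpositions and cancelling pairs), and the homomorphism property is then obtained by a direct telescoping of the cocycle prefactors. Your approach is arguably more transparent and modular, and it treats signed decompositions uniformly from the outset rather than first reducing to $Q_+$; the paper's approach has the virtue of packaging well-definedness and the multiplicative law into a single induction, avoiding the need to invoke that transpositions and cancellations generate all relations between decompositions. Both arguments ultimately hinge on the same cocycle identity \eqref{eq:cocycle associativity property-1}, just organized differently.
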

\begin{proof}
	We begin by showing (\ref{eq:Gamma matrix props}) for $\alpha,\beta\in Q$ of height $1$ and $2$.
	We can always assume $\alpha=\sum_{i=1}^{n}k_{i}\alpha_{i}$ with $k_{i}\geq0$ since $\Gamma\left(-\alpha_{i}\right)=\Gamma\left(\alpha_{i}\right)$ and the standard $2$-cocycle only counts modulo $2$.
	For height $1$ there is nothing to show and for  height $2$ one first checks if the expression of $\Gamma\left(\alpha\right)$ for $\alpha=\alpha_{i}+\alpha_{j}$ is well-defined. 
	Towards this, compute
	\begin{align*}
		\Gamma\left(\alpha_{i}+\alpha_{j}\right) &= \varepsilon\left(\alpha_{i},\alpha_{j}\right) \Gamma\left(\alpha_{i}\right) \Gamma\left(\alpha_{j}\right)\:, \\
		\Gamma\left(\alpha_{j}+\alpha_{i}\right) &= \varepsilon\left(\alpha_{j},\alpha_{i}\right) \Gamma\left(\alpha_{j}\right) \Gamma\left(\alpha_{i}\right)\:.
	\end{align*}
	By the properties of generalized spin representations, the two matrices commute if   $\left(\alpha_{i}\vert\alpha_{j}\right)=0$ and anti-commute if $\left(\alpha_{i}\vert\alpha_{j}\right)=-1$.
	Also, in the first case one has $\varepsilon\left(\alpha_{i},\alpha_{j}\right)=1=\varepsilon\left(\alpha_{j},\alpha_{i}\right)$, whereas in the second case one has $\varepsilon\left(\alpha_{i},\alpha_{j}\right)\varepsilon\left(\alpha_{j},\alpha_{i}\right)=-1$.
	Thus, $\Gamma\left(\alpha_{i}+\alpha_{j}\right)=\Gamma\left(\alpha_{j}+\alpha_{i}\right)$. 
	The only part of (\ref{eq:Gamma matrix props}) left to show  is $\Gamma(\alpha)^{2} =\left(-1\right)^{\frac{1}{2}\left(\alpha\vert\alpha\right)}$ for $\alpha=\alpha_i+\alpha_j$. 
	One has 
	\begin{align*}
		\Gamma\left(\alpha_{i}+\alpha_{j}\right)^2 &= \varepsilon\left(\alpha_{i},\alpha_{j}\right)\varepsilon\left(\alpha_{j},\alpha_{i}\right) \Gamma\left(\alpha_{i}\right)^2 \Gamma\left(\alpha_{j}\right)^2
		 = (-1)^{\left(\alpha_i \vert \alpha_j\right)} (-1)^{\frac{1}{2}\left(\alpha_i \vert \alpha_i\right)} (-1)^{\frac{1}{2}\left(\alpha_j \vert \alpha_j\right)} \\
		 &= (-1)^{\frac{1}{2}\left(\alpha_i+\alpha_j \vert \alpha_i+\alpha_j\right)}\:.
	\end{align*}
	
	From here on we continue by induction on the height $n$, so assume that (\ref{eq:Gamma matrix props}) holds for all $\alpha,\beta\in Q_{+}$ with $\text{ht}\left(\alpha\right)+\text{ht}\left(\beta\right)\leq n-1$.
	In particular, $\Gamma\left(\gamma\right)$ is well-defined for $\text{ht}\left(\gamma\right)\leq n-1$ and one has for all $\alpha,\beta\in Q_{+}$ with $\text{ht}\left(\alpha\right)+\text{ht}\left(\beta\right)\leq n-1$ that 
	\[
	\varepsilon\left(\alpha,\beta\right)\Gamma\left(\alpha\right)\Gamma\left(\beta\right)=\Gamma\left(\alpha+\beta\right)\:.
	\]
	Given $\alpha=\alpha_{i_{1}}+\dots+\alpha_{i_{n}}$ we need to show 
	$\Gamma(\alpha)=\varepsilon\left(\beta,\gamma\right)\Gamma\left(\beta\right)\Gamma\left(\gamma\right)$ for any $\beta,\gamma\in Q(A)_{+}$ s.t. $\alpha=\beta+\gamma$ and $\text{ht}\left(\alpha\right)+\text{ht}\left(\beta\right)= n$.
	We assume w.l.o.g. that $\beta$ contains $\alpha_{i_{1}}$ and multiply  from the left with $\Gamma\left(\alpha_{i_{1}}\right)$ and exploit that $\Gamma\left(\alpha_{i_{1}}\right)=\Gamma\left(-\alpha_{i_{1}}\right)$:
	\begin{align*}
	\varepsilon\left(\beta,\gamma\right) \Gamma\left(\alpha_{i_{1}}\right) \Gamma\left(\beta\right) \Gamma\left(\gamma\right) &= \varepsilon\left(\beta,\gamma\right)\varepsilon\left(\alpha_{i_{1}},\beta\right)\Gamma\left(\beta-\alpha_{i_{1}}\right)\Gamma\left(\gamma\right),\\
	\Gamma\left(\beta-\alpha_{i_{1}}\right) &= \Gamma\left(-\alpha_{i_{1}}+\beta\right) = \varepsilon\left(-\alpha_{i_{1}},\beta\right) \Gamma\left(-\alpha_{i_{1}}\right) \Gamma\left(\beta\right) \\
	&= \varepsilon\left(\alpha_{i_{1}},\beta\right) \Gamma\left(\alpha_{i_{1}}\right)\Gamma\left(\beta\right).
	\end{align*}
	Since $\text{ht}\left(\beta-\alpha_{i_{1}}\right)+\text{ht}\left(\gamma\right)=n-1$ one has $\Gamma\left(\beta-\alpha_{i_{1}}\right)\Gamma\left(\gamma\right)=\varepsilon\left(\beta-\alpha_{i_{1}},\gamma\right)\Gamma\left(\beta-\alpha_{i_{1}}+\gamma\right)$ and in combination with $\varepsilon\left(\beta-\alpha_{i_{1}},\gamma\right)=\varepsilon\left(\beta+\alpha_{i_{1}},\gamma\right)$ and $\beta-\alpha_{i_{1}}+\gamma=\alpha_{i_{2}}+\dots+\alpha_{i_{n}}$ this yields
	\begin{align*}
		\varepsilon\left(\beta,\gamma\right) \Gamma\left(\alpha_{i_{1}}\right) \Gamma\left(\beta\right) \Gamma\left(\gamma\right) &= \varepsilon\left(\beta,\gamma\right) \varepsilon\left(\alpha_{i_{1}},\beta\right) \varepsilon\left(\beta+\alpha_{i_{1}},\gamma\right) \Gamma\left(\alpha_{i_{2}}+\dots+\alpha_{i_{n}}\right)\:.
	\end{align*}
	By (\ref{eq:cocycle associativity property-1}) one has $	\varepsilon\left(\alpha_{i_{1}}+\beta,\gamma\right) = \varepsilon\left(\alpha_{i_{1}},\beta\right) \varepsilon\left(\alpha_{i_{1}},\beta+\gamma\right) \varepsilon\left(\beta,\gamma\right)$ which implies $\varepsilon\left(\beta,\gamma\right)\varepsilon\left(\alpha_{i_{1}},\beta\right)\varepsilon\left(\beta+\alpha_{i_{1}},\gamma\right)=\varepsilon\left(\alpha_{i_{1}},\beta+\gamma\right)$ so that the above equation simplifies further to 
	\begin{align*}
		&\varepsilon\left(\beta,\gamma\right)\Gamma\left(\alpha_{i_{1}}\right)\Gamma\left(\beta\right)\Gamma\left(\gamma\right) \\ 
		&=  \varepsilon\left(\alpha_{i_{1}},\beta+\gamma\right)\Gamma\left(\alpha_{i_{2}}+\dots+\alpha_{i_{n}}\right)\\
		&=  \varepsilon\left(\alpha_{i_{1}},\alpha_{i_{1}}+\dots+\alpha_{i_{n}}\right)\left(\prod_{k=2}^{n-1}\varepsilon\left(\alpha_{i_{k}},\alpha_{i_{k+1}}+\dots+\alpha_{i_{n}}\right)\right)\Gamma\left(\alpha_{i_{2}}\right)\cdots\Gamma\left(\alpha_{i_{n}}\right)
	\end{align*}
	Together with 
	\begin{align*}
		\varepsilon\left(\alpha_{i_{1}},\alpha_{i_{1}}+\dots+\alpha_{i_{n}}\right) &=  \varepsilon\left(\alpha_{i_{1}},\alpha_{i_{1}}\right) \varepsilon\left(2\alpha_{i_{1}},\alpha_{i_{2}}+\dots+\alpha_{i_{n}}\right) \varepsilon\left(\alpha_{i_{1}},\alpha_{i_{2}}+\dots+\alpha_{i_{n}}\right)\\
		&=  -\varepsilon\left(\alpha_{i_{1}},\alpha_{i_{2}}+\dots+\alpha_{i_{n}}\right)
	\end{align*}
	one arrives at 
	\[
	\varepsilon\left(\beta,\gamma\right)\Gamma\left(\alpha_{i_{1}}\right)\Gamma\left(\beta\right)\Gamma\left(\gamma\right)=-\left(\prod_{k=1}^{n-1}\varepsilon\left(\alpha_{i_{k}},\alpha_{i_{k+1}}+\dots+\alpha_{i_{n}}\right)\right)\Gamma\left(\alpha_{i_{2}}\right)\cdots\Gamma\left(\alpha_{i_{n}}\right).
	\]
	One the other hand one has
	\begin{align*}
		\Gamma\left(\alpha_{i_{1}}\right)\Gamma\left(\alpha_{i_{1}}+\dots+\alpha_{i_{n}}\right) &=  \left(\prod_{k=1}^{n-1} \varepsilon\left(\alpha_{i_{k}},\alpha_{i_{k+1}}+\dots+\alpha_{i_{n}}\right)\right) \Gamma\left(\alpha_{i_{1}}\right)^{2} \Gamma\left(\alpha_{i_{2}}\right)\cdots \Gamma\left(\alpha_{i_{n}}\right)\\
		&=-\left(\prod_{k=1}^{n-1}\varepsilon\left(\alpha_{i_{k}},\alpha_{i_{k+1}}+\dots+\alpha_{i_{n}}\right)\right)\Gamma\left(\alpha_{i_{2}}\right)\cdots\Gamma\left(\alpha_{i_{n}}\right),
	\end{align*}
	and as multiplication with $\Gamma\left(\alpha_{i_{1}}\right)$ can be inverted this yields 
	\[
	\varepsilon\left(\beta,\gamma\right) \Gamma\left(\beta\right) \Gamma\left(\gamma\right) = \Gamma\left(\alpha_{i_{1}}+\dots+\alpha_{i_{n}}\right)
	\]
	for all $\beta,\gamma\in Q_{+}$ of height $<n$ s.t. $\beta+\gamma=\alpha_{i_{1}}+\dots+\alpha_{i_{n}}$ and thus shows well-definedness and (\ref{eq:Gamma matrices homomorphism property-1}) by induction.
	We can now use this to prove (\ref{eq:Gamma matrices (anti-) commutator-1}) by using $\Gamma\left(\alpha+\beta\right)=\Gamma\left(\beta+\alpha\right)$ and (\ref{eq:cocycle prop 1-1}):
	\[
	\varepsilon\left(\alpha,\beta\right)\Gamma\left(\alpha\right)\Gamma\left(\beta\right)=\varepsilon\left(\beta,\alpha\right)\Gamma\left(\beta\right)\Gamma\left(\alpha\right)\ \Leftrightarrow\ \Gamma\left(\alpha\right)\Gamma\left(\beta\right)=\left(-1\right)^{\left(\alpha\vert\beta\right)}\Gamma\left(\beta\right)\Gamma\left(\alpha\right)
	\]
	In order to show the missing parts of (\ref{eq:Gamma matrices normalization and squares-1}) one computes
	\begin{align*}
		\Gamma\left(\alpha+\beta\right)^{2} &= \varepsilon\left(\alpha,\beta\right) \Gamma\left(\alpha\right) \Gamma\left(\beta\right) \varepsilon\left(\beta,\alpha\right) \Gamma\left(\beta\right)\Gamma\left(\alpha\right)\\
		&=  \varepsilon\left(\alpha,\beta\right) \varepsilon\left(\beta,\alpha\right) \left(-1\right)^{\frac{1}{2}\left(\beta\vert\beta\right)} \left(-1\right)^{\frac{1}{2}\left(\alpha\vert\alpha\right)}\\
		&= \left(-1\right)^{\left(\alpha\vert\beta\right) +\frac{1}{2}\left(\beta\vert\beta\right) +\frac{1}{2}\left(\alpha\vert\alpha\right)} =\left(-1\right)^{\frac{1}{2}\left(\alpha+\beta\vert\alpha+\beta\right)}.
	\end{align*}
\end{proof}
There are some identities of $\Gamma$-matrices that we would like to state explicitly.
\begin{lemma}
	Let $A$ be a symmetrizable GCM with a symmetrization as in	lemma \ref{lem:def and existence of standard 2-cocycle} and denote the standard $2$-cocycle by $\varepsilon$.
 	Then any generalized $\Gamma$-matrix satisfies 
	\begin{align}
		\Gamma\left(\alpha+\beta\right) &=\Gamma\left(\alpha-\beta\right),\ \Gamma\left(\alpha+2\beta\right)=\Gamma\left(\alpha\right)\ \forall\,\alpha,\beta\in Q.\label{eq:Gamma matrices of certain sums of roots}
	\end{align}
\end{lemma}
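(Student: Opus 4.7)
The plan is to prove both identities directly from the homomorphism property (\ref{eq:Gamma matrices homomorphism property-1}), the symmetry $\Gamma(-\beta) = \Gamma(\beta)$ built into (\ref{eq:Gamma matrices normalization and squares-1}), and the concrete form $\varepsilon(\alpha,\beta) = (-1)^{\underline{\varepsilon}(\alpha,\beta)}$ of the standard 2-cocycle from lemma \ref{lem:def and existence of standard 2-cocycle}. The central observation is that $\underline{\varepsilon}$ is $\mathbb{Z}$-bilinear in both arguments, and only its parity enters $\varepsilon$. Consequently $\underline{\varepsilon}(\alpha,-\beta) = -\underline{\varepsilon}(\alpha,\beta)$, and reducing modulo $2$ immediately yields the parity identity $\varepsilon(\alpha,-\beta) = \varepsilon(\alpha,\beta)$ for all $\alpha,\beta \in Q$.

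With this observation in hand, the first identity becomes a one-line computation. I would expand both sides via (\ref{eq:Gamma matrices homomorphism property-1}), writing $\Gamma(\alpha+\beta) = \varepsilon(\alpha,\beta)\Gamma(\alpha)\Gamma(\beta)$ and $\Gamma(\alpha-\beta) = \varepsilon(\alpha,-\beta)\Gamma(\alpha)\Gamma(-\beta)$, and then use $\Gamma(-\beta) = \Gamma(\beta)$ from (\ref{eq:Gamma matrices normalization and squares-1}) together with the parity identity above to collapse both onto the same expression. The second identity then drops out as a corollary: replacing $\alpha$ by $\alpha+\beta$ in the first gives
\[
\Gamma(\alpha + 2\beta) \;=\; \Gamma\bigl((\alpha+\beta)+\beta\bigr) \;=\; \Gamma\bigl((\alpha+\beta)-\beta\bigr) \;=\; \Gamma(\alpha).
\]

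There is no genuine obstacle here; the argument is purely formal once the parity identity for $\varepsilon$ is recognized. The only point worth flagging in the write-up is that the proof uses the concrete bilinear structure of the standard 2-cocycle essentially, not just the abstract axioms (\ref{eq:cocycle props}): without $\mathbb{Z}$-bilinearity of $\underline{\varepsilon}$, the key equality $\varepsilon(\alpha,-\beta) = \varepsilon(\alpha,\beta)$ need not follow from the cocycle relations alone, so the hypothesis that $\varepsilon$ is the standard 2-cocycle cannot be dropped.
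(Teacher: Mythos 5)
Your proof is correct, and for the first identity it is essentially the paper's argument verbatim: expand both sides via (\ref{eq:Gamma matrices homomorphism property-1}), use $\Gamma(-\beta)=\Gamma(\beta)$, and observe that $\varepsilon(\alpha,-\beta)=(-1)^{-\underline{\varepsilon}(\alpha,\beta)}=\varepsilon(\alpha,\beta)$ by $\mathbb{Z}$-bilinearity of $\underline{\varepsilon}$. For the second identity you take a small shortcut: substituting $\alpha\mapsto\alpha+\beta$ into the first identity gives $\Gamma(\alpha+2\beta)=\Gamma(\alpha)$ immediately, whereas the paper instead computes $\varepsilon(\alpha,2\beta)=1$ and $\Gamma(2\beta)=(-1)^{(\beta\vert\beta)}$ separately and then invokes $(\beta\vert\beta)\in 2\mathbb{Z}$; your route is valid and slightly cleaner, and your closing remark that the argument genuinely needs the standard cocycle's bilinear structure (not just the abstract axioms (\ref{eq:cocycle props})) is an accurate reading of the hypothesis.
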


\begin{proof}
	With the help of (\ref{eq:Gamma matrices normalization and squares-1}) one computes 
	\[
	\Gamma\left(\alpha+\beta\right)=\varepsilon\left(\alpha,\beta\right)\Gamma\left(\alpha\right)\Gamma\left(\beta\right),\ \Gamma\left(\alpha-\beta\right)=\varepsilon\left(\alpha,-\beta\right)\Gamma\left(\alpha\right)\Gamma\left(-\beta\right)
	\]
	and because	$\varepsilon$ is a standard $2$-cocycle one has further that  
	\[
	\varepsilon\left(\alpha,-\beta\right)=\left(-1\right)^{\underline{\varepsilon}\left(\alpha,-\beta\right)}=\left(-1\right)^{-\underline{\varepsilon}\left(\alpha,\beta\right)}=\left(-1\right)^{\underline{\varepsilon}\left(\alpha,\beta\right)}=\varepsilon\left(\alpha,\beta\right)\:,
	\]
	which shows the first part of the claim. 
	Regarding the second claim, one has that $\Gamma\left(\alpha+2\beta\right)=\Gamma\left(\alpha+\beta+\beta\right)=\Gamma\left(\alpha+\beta-\beta\right)=\Gamma(\alpha)$ by the first part.
\end{proof}
\begin{remark}
	The above lemma shows that $\Gamma$-matrices are unable to capture distinctions beyond modulo $2Q$. As a consequence, they can not distinguish different roots in symmetrizable root systems such as $\alpha_i+2\alpha_j$. 
	This should come as no surprise as $\Gamma$-matrices are tailored towards a more global description of spin representations for simply-laced $A$. 
	
	There exist generalized spin representations in the symmetrizable, non-simply-laced cases (cp. \cite{Kleinschmidt2020}) $\mf{k}(AE_3)$, $\mf{k}(G_2^{++})$ and $\mf{k}(BE_{10})$ which all arise from a generalized spin representation of simply-laced type of suitable quotients which are $\mf{k}(A_2)\oplus \mf{k}(A_1)$, $\mf{k}(A_4)$ and $\mf{k}(E_9)$ respectively.	
	
	In \cite{Hainke}, the generalized spin representations for the general symmetrizable case are constructed via embedding $\mf{k}(A)$ in a simply-laced cover $\mf{k}(\tilde{A})$ first.
	This cover would admit generalized $\Gamma$-matrices w.r.t. the root system of type $\tilde{A}$ but it is an open question how the ones needed for the representation of $\mf{k}(A)$ spell out in terms of the root system of type $A$.
	For this, it might be necessary to replace the $2$-cocycles by ones of higher order.

\end{remark}

\begin{proposition}[This is \cite{Lautenbacher22}, prop. 3.14] \label{prop:rep matrix is always prop to Gamma matrix}
	Let $A$ be	a simply-laced GCM as in lemma \ref{lem:def and existence of standard 2-cocycle}	and denote the standard $2$-cocycle by $\varepsilon$ and let $\left(\rho,S\right)$ be a generalized spin representation with associated $\Gamma$-matrix $\Gamma:Q\rightarrow End\left(S\right)$ according to prop. \ref{prop:Existence of gen Gamma matrices}.
	Then for all $x\in\mathfrak{k}_{\alpha}$ there exists $c(x)\in\mathbb{R}$	such that
	\begin{equation}
		\rho\left(x\right)=c(x)\cdot\Gamma\left(\alpha\right)\:.\label{eq:rep matrix is always prop to Gamma matrix}
	\end{equation}
\end{proposition}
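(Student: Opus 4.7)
My plan is to prove the statement by induction on $\mathrm{ht}(\alpha)$. For $\mathrm{ht}(\alpha)=1$ one has $\alpha=\alpha_i$, so $\mf{k}_{\alpha_i}=\mathbb{R}X_i$, and $\rho(X_i)=\tfrac{1}{2}\Gamma(\alpha_i)$ by prop.~\ref{prop:Existence of gen Gamma matrices}, which settles the base case with $c(X_i)=\tfrac{1}{2}$.

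For the inductive step, fix $\alpha\in\Delta_+$ of height $n\geq 2$ and $x\in\mf{k}_\alpha$. My first task is to realize $x$ as a sum of $\mf{k}_\alpha$-projections of brackets of Berman generators with elements living in lower-height $\mf{k}$-spaces. Using $\mf{k}_\alpha=(1+\omega)\mf{g}_\alpha$, I write $x=x_\alpha+\omega(x_\alpha)$ with $x_\alpha\in\mf{g}_\alpha$, and using that $\mf{n}_+$ is generated as a Lie algebra by $e_1,\dots,e_n$, I decompose $x_\alpha=\sum_i[e_i,y_i]$ with $y_i\in\mf{g}_{\alpha-\alpha_i}$. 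Setting $z_i:=y_i+\omega(y_i)\in\mf{k}_{\alpha-\alpha_i}$ and expanding $[X_i,z_i]=[e_i-f_i,y_i+\omega(y_i)]$, I identify its $(\mf{g}_\alpha\oplus\mf{g}_{-\alpha})$-component as $[e_i,y_i]-[f_i,\omega(y_i)]$; summing over $i$ recovers exactly $x_\alpha+\omega(x_\alpha)=x$. Denoting by $\pi_\alpha$ the projection onto $\mf{k}_\alpha$ coming from the decomposition~\eqref{eq:filtered decomposition}, I thereby obtain $x=\sum_i\pi_\alpha[X_i,z_i]$.

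Next, the induction hypothesis provides $\rho(z_i)=c(z_i)\Gamma(\alpha-\alpha_i)$, and the homomorphism property~\eqref{eq:Gamma matrices homomorphism property-1} yields
\[
\rho([X_i,z_i])=\tfrac{c(z_i)}{2}\bigl(\varepsilon(\alpha_i,\alpha-\alpha_i)-\varepsilon(\alpha-\alpha_i,\alpha_i)\bigr)\Gamma(\alpha)\ \in\ \mathbb{R}\Gamma(\alpha).
\]
On the other hand, lemma~\ref{lem:Filtered structure on k} gives $[X_i,z_i]\in\mf{k}_\alpha\oplus\mf{k}_{\alpha-2\alpha_i}$ whenever $\alpha-2\alpha_i\in\Delta_+$ (the second summand being absent otherwise, since then $\mf{g}_{\pm(\alpha-2\alpha_i)}=0$). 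Since $\mathrm{ht}(\alpha-2\alpha_i)=n-2<n$, the induction hypothesis applied to the $\mf{k}_{\alpha-2\alpha_i}$-component yields $\rho(\pi_{\alpha-2\alpha_i}[X_i,z_i])\in\mathbb{R}\Gamma(\alpha-2\alpha_i)$, and the identity $\Gamma(\alpha-2\alpha_i)=\Gamma(\alpha)$ from~\eqref{eq:Gamma matrices of certain sums of roots} shows that this one-dimensional line coincides with $\mathbb{R}\Gamma(\alpha)$. Subtracting then yields $\rho(\pi_\alpha[X_i,z_i])\in\mathbb{R}\Gamma(\alpha)$ for each $i$, and summing produces $\rho(x)\in\mathbb{R}\Gamma(\alpha)$, completing the induction.

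The main obstacle I anticipate is precisely the appearance of the extra summand $\pi_{\alpha-2\alpha_i}[X_i,z_i]$, which is unavoidable because $\mf{k}(A)$ is only filtered, not graded, by the root system of $\mf{g}(A)$. The induction survives only thanks to the collapse $\Gamma(\alpha-2\alpha_i)=\Gamma(\alpha)$ of~\eqref{eq:Gamma matrices of certain sums of roots}, which places the inductively controlled error term into the same one-dimensional subspace $\mathbb{R}\Gamma(\alpha)$ as the principal bracket term; without this identity the two candidate lines would a priori differ in $\mathrm{End}(S)$.
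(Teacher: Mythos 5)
Your proof is correct, and it reaches the conclusion by a genuinely cleaner route than the paper at the one point where the paper is vague. The paper's proof first computes $\rho$ on iterated Berman brackets $X_{\beta_1+\dots+\beta_n}=[X_{\beta_1},[X_{\beta_2},[\dots,X_{\beta_n}]\dots]]$, showing each image is $0$ or $\pm\Gamma(\alpha)$ via (\ref{eq:Gamma matrices (anti-) commutator-1}) and (\ref{eq: decomposition of Gamma matrices}), and then \emph{asserts} that any $x\in\mf{k}_\alpha$ equals a linear combination of such brackets minus a remainder $r\in\mf{k}_{<\alpha}$ whose components lie only in those $\mf{k}_\gamma$ with $\alpha-\gamma\in 2Q$; the $2Q$-collapse $\Gamma(\gamma)=\Gamma(\alpha)$ of (\ref{eq:Gamma matrices of certain sums of roots}) then finishes the induction. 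You replace that unproved spanning-plus-remainder claim by the standard fact $\mf{g}_\alpha=\sum_i[e_i,\mf{g}_{\alpha-\alpha_i}]$ together with the explicit identification $\pi_\alpha[X_i,z_i]=[e_i,y_i]+\omega([e_i,y_i])$, which makes the error term visible as exactly the $\mf{k}_{\alpha-2\alpha_i}$-component of $[X_i,z_i]$ and shows concretely why it satisfies the $2Q$-condition ($\alpha-(\alpha-2\alpha_i)=2\alpha_i$). The two key pillars are identical in both arguments --- the homomorphism property (\ref{eq:Gamma matrices homomorphism property-1}) turning brackets into multiples of $\Gamma(\alpha)$, and the mod-$2Q$ collapse absorbing the filtration defect --- but your single-step strong induction is self-contained, whereas the paper's version buys, as a by-product, the explicit value ($0$ or $\pm 1$ times the normalization) of $c$ on the iterated Berman brackets, which is implicitly reused later. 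One small presentational point: you should state explicitly that you are using strong induction (you invoke the hypothesis at heights $n-1$ and $n-2$), and note that for $n=2$ the term $\alpha-2\alpha_i$ is never a root, so the base of the two-step descent causes no trouble.
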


\begin{proof}
	Given $\alpha\in\Delta_{+}(A)$ there exist an ordered series of simple roots $\beta_{1},\dots,\beta_{N}\in\Pi$ such that $\alpha=\beta_{1}+\dots+\beta_{N}$. 
	To such a fixed	decomposition of $\alpha$ we set $X_{\beta_{1}+\dots+\beta_{N}}\coloneqq\left[X_{\beta_{1}},\left[X_{\beta_{2}},\left[\dots,X_{\beta_{N}}\right]\dots\right]\right]$. 
	As $\mf{k}(A)$ is generated by the $X_i$ there exists at least one such ordered series such that $X_{\beta_{1}+\dots+\beta_{N}}$ is nonzero but for now we do not need to assume this.
	By prop. \ref{prop:Existence of gen Gamma matrices} one has
	\[
	\rho\left(X_{\beta_{1}+\dots+\beta_{N}}\right)=\frac{1}{2^{N}}\left[\Gamma\left(\beta_{1}\right),\left[\Gamma\left(\beta_{2}\right),\left[\dots,\Gamma\left(\beta_{N}\right)\right]\dots\right]\right]\:.
	\]
	From (\ref{eq:Gamma matrices (anti-) commutator-1}) one concludes that
	\[
	\left[\Gamma\left(\alpha\right),\Gamma\left(\beta\right)\right]=\Gamma\left(\alpha\right)\Gamma\left(\beta\right)-\Gamma\left(\beta\right)\Gamma\left(\alpha\right)=\begin{cases}
		0 & \text{if }\left(\alpha\vert\beta\right)\in2\mathbb{Z}\\
		2\Gamma\left(\alpha\right)\Gamma\left(\beta\right) & \text{if }\left(\alpha\vert\beta\right)\in\mathbb{Z}\setminus2\mathbb{Z},
	\end{cases}
	\]
	so that, exploiting $\Gamma(\alpha)\Gamma(\beta)=\varepsilon(\alpha,\beta)\Gamma(\alpha+\beta)$, there are two possibilities for $\rho\left(X_{\beta_{1}+\dots+\beta_{N}}\right)$:
	\begin{align*}
	\rho\left(X_{\beta_{1}+\dots+\beta_{N}}\right) &= \frac{1}{2^{N}} \left[\Gamma\left(\beta_{1}\right), \left[\Gamma\left(\beta_{2}\right), \left[\dots,\Gamma\left(\beta{}_{n}\right)\right] \dots \right] \right] \\
	&= \begin{cases}
		0\\
		\frac{1}{2}\varepsilon(\beta_{1},\beta_2+\dots+\beta_{N})\cdots\varepsilon(\beta_{N-1},\beta_N)\Gamma\left(\beta_{1}+\beta_{2}+\dots+\beta_{N}\right)\:.
	\end{cases}
	\end{align*}
	The first case occurs if there exists $i$ such that $\left(\beta_{i}\vert\beta_{i+1}+\dots+\beta_{N}\right)\in2\mathbb{Z}$ and the second case if no such $i$ exists.
	By (\ref{eq: decomposition of Gamma matrices}) one has that 
	\[
	\Gamma(\alpha)=\left(\prod_{k=1}^{N-1}\varepsilon\left(\beta_{k},\beta_{{k+1}}+\dots+\beta_{N}\right)\right)\Gamma\left(\beta_{{1}}\right)\cdots\Gamma\left(\beta_{N}\right)
	\] 
	so that one in fact obtains
	\begin{align}
		\rho\left(X_{\beta_{1}+\dots+\beta_{N}}\right) 		&= \begin{cases}
			0\\
			\frac{1}{2}\Gamma\left(\beta_{{1}}\right)\cdots\Gamma\left(\beta_{N}\right)\:,
		\end{cases}
	\end{align}
	 because any cocycle squared is $1$. The claim follows for all $X_{\beta_{1}+\dots+\beta_{N}}\in\mf{k}(A)$.
	 Note that the order of the nested commutator here is crucial and is reflected in the order of the $\Gamma(\beta_i)$ in the product.
	Recall that $\mf{k}$ is not graded by $\Delta$ but instead filtered by $\Delta_+$ and set  $\mathfrak{k}_{<\alpha}\coloneqq\bigoplus_{\beta<\alpha}\mathfrak{k}_{\beta}$.
	To $x\in\mathfrak{k}_{\alpha}$ there exist ordered decompositions	$\beta_{1}^{(j)}+\dots+\beta_{N}^{(j)}=\alpha$ for $j=1,\dots,k=\dim \mf{k}_{\alpha}$,
	$c_{j}\in\mathbb{K}$ and a remainder $r\in\mathfrak{k}_{<\alpha}$ such that
	\[
	\sum_{j=1}^{k}c_{j}X_{\beta_{1}^{(j)}+\dots+\beta_{N}^{(j)}}=x+r.
	\]
	For $y_{1}\in\mathfrak{k}_{\beta},y_{2}\in\mathfrak{k}_{\gamma}$ one has $\left[y_{1},y_{2}\right]\in\mathfrak{k}_{\beta+\gamma}\oplus\mathfrak{k}_{\pm(\beta-\gamma)}$ due to the filtered structure of $\mf{k}$.
	Now $r$ also has a decomposition $r=\bigoplus_{\gamma}y_{\gamma}$ which is furthermore	such that $y_{\gamma}\in\mathfrak{k}_{\gamma}$ is nonzero only if	$\gamma<\alpha$ and $\alpha-\gamma\in2Q(A)$.
	Thus, by induction on $\text{ht}\left(\alpha\right)$ one concludes
	\[
	\rho(x)=\sum_{j=1}^{k}c_{j}\rho\left(X_{\beta_{1}^{(j)}+\dots+\beta_{N}^{(j)}}\right)-\rho(r)=c(x)\cdot\Gamma(\alpha),
	\]
	 for all $x\in\mathfrak{k}_{\alpha}$ and $\alpha\in\Delta_{+}(A)$ because $\Gamma\left(\alpha\right)=\Gamma\left(\gamma\right)$
	if $\alpha-\gamma\in2Q(A)$ by (\ref{eq:Gamma matrices of certain sums of roots}).
\end{proof}
At the present point we are not able to say when $\rho(x)$ for $x\in\mf{k}_{\alpha}$ is nonzero or not or what the constant $c(x)$ is. 
We will pick up this thread in section \ref{sec: Spin(A)} once we introduced the spin cover of the maximal compact subgroup $K(A)$ and how it interacts with generalized spin representations $\rho$.
\subsection{The higher spin representations $\mathcal{S}_{\frac{3}{2}}$ and $\mathcal{S}_{\frac{5}{2}}$ }
In this subsection we provide a description of the higher spin representations from \cite{On higher spin} in the phrasing of \cite{Ext gen spin reps}, where we put more emphasis on how these representations feature representations of $W(A)$ as a crucial piece. 
The name stems from the fact that the weight system of these representations also includes larger eigenvalues than $\pm \frac{i}{2}$.
Throughout, we will assume that $A$ is simply-laced.

\begin{proposition}
	\label{prop:criterion for homomorphism} 
	Let $A$ be a simply-laced GCM, let $\rho$ be a generalized spin representation of $\mf{k}$ as in def. \ref{Def:gen spin rep for simply laced case}, where we denote the module by $S$, and let $V$ be a finite-dimensional vector space. 
	Let $X_{1},\dots,X_{n}$ denote the Berman generators of $\mathfrak{k}$ and set $\Delta^{re}\supset\widetilde{\Delta}=\left\{ \alpha_{1},\dots,\alpha_{n}\right\} \cup\left\{ \alpha_{i}+\alpha_{j}\ \vert\ (i,j)\in\mathcal{E}\left(A\right)\right\} $.
	Consider a map  $\tau\,:\widetilde{\Delta}\rightarrow\mathrm{End}\left(V\right)$ satisfying
	\begin{eqnarray}
		\left[\tau(\alpha),\tau(\beta)\right] & = & 0\quad\qquad\quad\quad\,\text{if }\left(\alpha\vert\beta\right)=0\label{eq:commutator for tau as endo}\\
		\left\{ \tau(\alpha),\tau(\beta)\right\}  & = & \tau(\alpha\pm\beta)\quad\text{if \ensuremath{\left(\alpha\vert\beta\right)=\mp1}and \ensuremath{\alpha\pm\beta\in\widetilde{\Delta}}}\label{eq:anticommutator for tau as endo}
	\end{eqnarray}
	for all $\alpha,\beta\in\widetilde{\Delta}$.
	Then the assignment $\sigma\left(X_{i}\right):=\tau\left(\alpha_{i}\right)\otimes2\rho\left(X_{i}\right)\in\mathrm{End}\left(V\otimes S\right)$ extends to a finite-dimensional	representation $\sigma$ of $\mathfrak{k}(A)$.
\end{proposition}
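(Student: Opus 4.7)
\medskip

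\noindent\textbf{Proof plan.} By Corollary \ref{cor:Berman presentation for simply-laced A}, the Lie algebra $\mathfrak{k}(A)$ admits the presentation with generators $X_1,\dots,X_n$ and defining relations $[X_i,X_j]=0$ whenever $\{i,j\}\notin\mathcal{E}(A)$, and $[X_i,[X_i,X_j]]=-X_j$ whenever $\{i,j\}\in\mathcal{E}(A)$. By the universal property it therefore suffices to verify these two families of relations for the images $\sigma(X_i)=T_i\otimes M_i$, where I abbreviate $T_i:=\tau(\alpha_i)$ and $M_i:=2\rho(X_i)$. Proposition \ref{prop:properties of gen spin reps in simply laced case} gives me $M_i^2=-\mathrm{id}_S$, together with $[M_i,M_j]=0$ for non-edges and $\{M_i,M_j\}=0$ for edges, while the hypotheses on $\tau$ provide the analogous behaviour for the $T_i$.

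\medskip

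For a non-edge $\{i,j\}\notin\mathcal{E}(A)$ both $T_i,T_j$ and $M_i,M_j$ commute, so $(T_i\otimes M_i)(T_j\otimes M_j)=T_iT_j\otimes M_iM_j=T_jT_i\otimes M_jM_i=(T_j\otimes M_j)(T_i\otimes M_i)$, yielding $[\sigma(X_i),\sigma(X_j)]=0$. This disposes of the first family of relations.

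\medskip

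For an edge $\{i,j\}\in\mathcal{E}(A)$ I use that $M_i,M_j$ anti-commute to pull the tensor apart:
\begin{align*}
[\sigma(X_i),\sigma(X_j)] &= T_iT_j\otimes M_iM_j - T_jT_i\otimes M_jM_i \\
&= (T_iT_j+T_jT_i)\otimes M_iM_j \;=\; \tau(\alpha_i+\alpha_j)\otimes M_iM_j,
\end{align*}
where the last equality uses (\ref{eq:anticommutator for tau as endo}) applied to $(\alpha,\beta)=(\alpha_i,\alpha_j)$ with $(\alpha_i\vert\alpha_j)=-1$ and $\alpha_i+\alpha_j\in\widetilde{\Delta}$. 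Writing $U:=\tau(\alpha_i+\alpha_j)$ and exploiting $M_i^2=-\mathrm{id}_S$ together with $M_jM_i=-M_iM_j$, a short calculation gives
\[
[\sigma(X_i),U\otimes M_iM_j] \;=\; -T_iU\otimes M_j - UT_i\otimes M_j \;=\; -\{T_i,U\}\otimes M_j\:,
\]
so the nested bracket equals $-\{\tau(\alpha_i),\tau(\alpha_i+\alpha_j)\}\otimes M_j$. The remaining task is to identify the anticommutator with $T_j$, and for this I apply (\ref{eq:anticommutator for tau as endo}) in the opposite order, namely to $\alpha:=\alpha_i+\alpha_j$ and $\beta:=\alpha_i$: one computes $(\alpha\vert\beta)=(\alpha_i\vert\alpha_i)+(\alpha_j\vert\alpha_i)=2-1=+1$, and the candidate $\alpha-\beta=\alpha_j$ lies in $\widetilde{\Delta}$, so the rule yields $\{\tau(\alpha_i+\alpha_j),\tau(\alpha_i)\}=\tau(\alpha_j)=T_j$. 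Thus $[\sigma(X_i),[\sigma(X_i),\sigma(X_j)]]=-T_j\otimes M_j=-\sigma(X_j)$, as required.

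\medskip

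The genuinely delicate point is the very last step: the relation (\ref{eq:anticommutator for tau as endo}) has to be invoked twice along the computation, once with inner product $-1$ to produce $\tau(\alpha_i+\alpha_j)$ from the outer commutator and once with inner product $+1$ to collapse $\{T_i,\tau(\alpha_i+\alpha_j)\}$ back to $T_j$. Choosing the ordering $(\alpha_i+\alpha_j,\alpha_i)$ rather than $(\alpha_i,\alpha_i+\alpha_j)$ is what keeps all arguments inside $\widetilde{\Delta}$ and avoids any appeal to values of $\tau$ on negative roots; once this is arranged the verification of the Berman relations is essentially a bookkeeping exercise in the Kronecker product, and finite-dimensionality of the resulting representation is automatic from $\dim(V\otimes S)<\infty$.
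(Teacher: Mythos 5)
Your proof is correct: you verify exactly the two families of Berman relations from Corollary \ref{cor:Berman presentation for simply-laced A}, and both applications of (\ref{eq:anticommutator for tau as endo}) — once with $(\alpha_i\vert\alpha_j)=-1$ to produce $\tau(\alpha_i+\alpha_j)$ and once with $(\alpha_i+\alpha_j\vert\alpha_i)=+1$ to collapse back to $\tau(\alpha_j)$ — are legitimate since all arguments stay in $\widetilde{\Delta}$. The paper itself does not spell this computation out but delegates it to \cite{On higher spin} and \cite{Ext gen spin reps}; your argument is precisely the standard verification those references perform, so it fills in the omitted details rather than taking a different route.
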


\begin{proof}
	This is originally \cite[eq. (5.1)]{On higher spin} but the phrasing is as
	in \cite{Ext gen spin reps}.
\end{proof}

If the map $\tau$ admits an extension to all real roots one has the following identity: 
\begin{lemma}
	\label{lem:How sigma relates to tau and rho on commutators}
	Let $\tau\,:\Delta^{re}\rightarrow\mathrm{End}\left(V\right)$ 
	satisfy equations (\ref{eq:commutator for tau as endo})  and (\ref{eq:anticommutator for tau as endo}) with $ \widetilde{\Delta}$ replaced by $\Delta^{re}$.
	Let $\{i,j\},\,\{ j,k\}\in\mathcal{E}(A)$ but $\{i,k\}\notin\mathcal{E}(A)$,
	then 
	\begin{eqnarray}
		\sigma\left(\left[X_{i},X_{j}\right]\right) & = & \tau\left(\alpha_{i}+\alpha_{j}\right)\otimes2\rho\left(\left[X_{i},X_{j}\right]\right)\label{eq:sigma on single commutator}\\
		\sigma\left(\left[X_{i},\left[X_{j},X_{k}\right]\right]\right) & = & \tau\left(\alpha_{i}+\alpha_{j}+\alpha_{k}\right)\otimes2\rho\left(\left[X_{i},\left[X_{j},X_{k}\right]\right]\right)\ .\label{eq:sigma on triple commutator}
	\end{eqnarray}
\end{lemma}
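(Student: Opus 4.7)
The plan is to exploit that \(\sigma\) is a Lie algebra homomorphism together with the standard identity
\[
[A\otimes B,\,C\otimes D] \;=\; \tfrac{1}{2}\{A,C\}\otimes[B,D] \;+\; \tfrac{1}{2}[A,C]\otimes\{B,D\}
\]
valid for any endomorphisms acting on a tensor product. Combined with the (anti-)commutation relations for \(\tau\) in \eqref{eq:commutator for tau as endo}--\eqref{eq:anticommutator for tau as endo} and for \(\rho\) from prop.~\ref{prop:properties of gen spin reps in simply laced case}, each bracket of \(\sigma\)-images will collapse to a single tensor term, and the scalar bookkeeping will match \(\sigma(X_i)=\tau(\alpha_i)\otimes 2\rho(X_i)\).

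For \eqref{eq:sigma on single commutator} I would first note that \(\{i,j\}\in\mathcal{E}(A)\) gives \((\alpha_i\vert\alpha_j)=-1\), whence \(\{\tau(\alpha_i),\tau(\alpha_j)\}=\tau(\alpha_i+\alpha_j)\) by hypothesis and \(\{\rho(X_i),\rho(X_j)\}=0\) by prop.~\ref{prop:properties of gen spin reps in simply laced case}. Plugging into the tensor-commutator identity, the \(\{B,D\}\)-summand vanishes on the \(\rho\)-side, so only the first summand survives, producing \(\tau(\alpha_i+\alpha_j)\otimes 2\rho([X_i,X_j])\) after tracking the factors of \(2\) and using that \(\rho\) is a Lie algebra homomorphism.

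For \eqref{eq:sigma on triple commutator} I would iterate the argument: applying \eqref{eq:sigma on single commutator} first yields \(\sigma([X_j,X_k])=\tau(\alpha_j+\alpha_k)\otimes 2\rho([X_j,X_k])\), and then bracketing with \(\sigma(X_i)\) invokes the tensor-commutator identity a second time. The hypotheses \(\{i,j\}\in\mathcal{E}(A)\), \(\{i,k\}\notin\mathcal{E}(A)\) give \((\alpha_i\vert\alpha_j+\alpha_k)=-1\), so the assumed extension of \(\tau\) to \(\Delta^{re}\) supplies \(\{\tau(\alpha_i),\tau(\alpha_j+\alpha_k)\}=\tau(\alpha_i+\alpha_j+\alpha_k)\). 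The crucial step is then to verify \(\{\rho(X_i),\rho([X_j,X_k])\}=0\) in order to kill the complementary tensor term. This comes purely from the spin-rep relations: write \(\rho([X_j,X_k])=2\rho(X_j)\rho(X_k)\) using \(\{\rho(X_j),\rho(X_k)\}=0\), then commute \(\rho(X_i)\) past \(\rho(X_j)\) with a sign flip (adjacency of \(i,j\)) and past \(\rho(X_k)\) without one (non-adjacency of \(i,k\)), picking up exactly one minus sign.

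The only genuine obstacle is keeping track of the scalar factors coming from the \(2\rho(X_i)\)-normalization and from \(\{2A,2B\}=4\{A,B\}\); once those are recorded carefully, both identities drop out mechanically from the single application of the tensor-commutator identity in each case.
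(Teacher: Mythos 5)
Your proposal is correct and fills in exactly the ``elementary computations exploiting the commutation and anti-commutation relations'' that the paper's proof alludes to without spelling out: the tensor-commutator identity $[A\otimes B,C\otimes D]=\tfrac12\{A,C\}\otimes[B,D]+\tfrac12[A,C]\otimes\{B,D\}$, the vanishing of $\{\rho(X_i),\rho(X_j)\}$ and $\{\rho(X_i),\rho([X_j,X_k])\}$ under the stated adjacency pattern, and the anticommutator relation for $\tau$ applied to $(\alpha_i\vert\alpha_j+\alpha_k)=-1$ all check out, including the factors of $2$. This is essentially the paper's intended argument.
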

\begin{proof}
	The proof consists of elementary computations exploiting the commutation and anti-commutation relations among the $\rho(X_i)$ as well as (\ref{eq:commutator for tau as endo})  and (\ref{eq:anticommutator for tau as endo}).
\end{proof}

Given a simply-laced GCM $A$ and real roots $\alpha,\beta$ with $\left(\alpha,\beta\right)=-1$, the reflections  $s_{\alpha}$, $s_\beta$   w.r.t. $\alpha$ and $\beta$ generate a subgroup of $\mathfrak{S}_{\alpha,\beta}:=\left\langle s_{\alpha},s_{\beta}\right\rangle <W(A)$ which is isomorphic to $\mathfrak{S}_{3}$, the symmetric group on three letters.
The three distinct irreducible representations of $\mathfrak{S}_{3}$ are the trivial representation $U$ (dimension $1$), the sign representation $U'$ (dimension $1$) and the standard representation  $E$ of dimension $2$. 

\begin{proposition}[This is \cite{Ext gen spin reps}, rem. 4.2 (iv), based on an observation by Paul Levy]
	\label{prop: master eq. holds away from sign reps} 
	Let $A$ be simply-laced and $\eta:W(A)\rightarrow\text{End}(V)$ be a finite-dimensional representation	of  $W(A)$. 
	Then 
	\begin{equation}
		\tau:\Delta^{re}(A)\rightarrow\text{End}\left(V\right),\ \alpha\mapsto\eta\left(s_{\alpha}\right)-\frac{1}{2}Id\label{eq:weyl group ansatz}
	\end{equation}
	satisfies eqs. (\ref{eq:commutator for tau as endo}) and (\ref{eq:anticommutator for tau as endo})
	if the restriction of $\eta$ to any $\mathfrak{S}_{\alpha_{i},\alpha_{j}}$
	such that $\alpha_{i}$, $\alpha_{j}$ are adjacent simple roots does
	not contain the sign representation of $\mathfrak{S}_{3}$ as an irreducible
	factor.
\end{proposition}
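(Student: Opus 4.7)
The plan is to translate the two equations into identities that $\eta$ must satisfy inside the group algebra of the rank-two reflection subgroup $\mathfrak{S}_{\alpha,\beta}=\langle s_\alpha, s_\beta\rangle\leq W(A)$, and then to characterise when those identities hold via the representation theory of $\mathfrak{S}_3$. The commutator relation (\ref{eq:commutator for tau as endo}) is immediate: for $(\alpha\vert\beta)=0$ in the simply-laced case one has $s_\alpha(\beta)=\beta$, so $s_\alpha$ and $s_\beta$ already commute in $W(A)$; applying $\eta$ and noting that the $-\tfrac{1}{2}Id$-shift is central, $[\tau(\alpha),\tau(\beta)]=[\eta(s_\alpha),\eta(s_\beta)]=0$.

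For the anti-commutator relation (\ref{eq:anticommutator for tau as endo}) I first use $s_{-\beta}=s_\beta$, hence $\tau(-\beta)=\tau(\beta)$, to reduce $(\alpha\vert\beta)=+1$ to $(\alpha\vert\beta)=-1$. In the latter case $(s_\alpha s_\beta)^3=e$, so $\mathfrak{S}_{\alpha,\beta}\cong\mathfrak{S}_3$ with three reflections $s_\alpha,s_\beta,s_{\alpha+\beta}=s_\alpha s_\beta s_\alpha$ and two $3$-cycles $s_\alpha s_\beta,s_\beta s_\alpha$. Substituting $\tau(\gamma)=\eta(s_\gamma)-\tfrac{1}{2}Id$ and expanding $\{\tau(\alpha),\tau(\beta)\}-\tau(\alpha+\beta)$, the identity-shifts collect to give the equivalent condition $\eta(X)=0$, where
\[
X \;=\; s_\alpha s_\beta + s_\beta s_\alpha - s_\alpha - s_\beta - s_{\alpha+\beta} + e \;\in\; \mathbb{R}[\mathfrak{S}_{\alpha,\beta}].
\]
The clean observation driving the proof is that $X=6\,p_{U'}$, where $p_{U'}=\tfrac{1}{6}\sum_{g\in\mathfrak{S}_3}\chi_{U'}(g)\,g$ is the central idempotent of the sign representation $U'$ of $\mathfrak{S}_3$, as one reads off from the character values $\chi_{U'}(e)=1$, $\chi_{U'}(s_\gamma)=-1$, $\chi_{U'}(s_\gamma s_\delta)=+1$. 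Consequently $\eta(X)=0$ if and only if $U'$ does not occur in the decomposition of $\eta\vert_{\mathfrak{S}_{\alpha,\beta}}$.

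The main remaining step, which I expect to be the principal obstacle, is to transfer the sign-rep-free condition from the subgroups $\mathfrak{S}_{\alpha_i,\alpha_j}$ with $\{i,j\}\in\mathcal{E}(A)$ covered by the hypothesis to arbitrary $\mathfrak{S}_{\alpha,\beta}$ with $\alpha,\beta\in\Delta^{re}$ and $(\alpha\vert\beta)=-1$. I will argue that every such $\mathfrak{S}_{\alpha,\beta}$ is $W(A)$-conjugate to some $\mathfrak{S}_{\alpha_i,\alpha_j}$ with $\{i,j\}\in\mathcal{E}(A)$: one first uses that $W(A)$ acts transitively on real roots within each connected component of the Dynkin diagram (adjacent simple roots are themselves $W(A)$-conjugate via $s_j s_i(\alpha_j)=\alpha_i$, and every real root is a $W(A)$-translate of a simple root) to move $\alpha$ to some $\alpha_i$, and then a short reduction inside the stabiliser of $\alpha_i$ moves the image of $\beta$ to a simple neighbour $\alpha_j$. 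Because $\eta$ is a representation of $W(A)$, the conjugation $w\mathfrak{S}_{\alpha,\beta}w^{-1}=\mathfrak{S}_{w\alpha,w\beta}$ induces an isomorphism of $\mathfrak{S}_3$-modules $\eta\vert_{\mathfrak{S}_{\alpha,\beta}}\cong\eta\vert_{\mathfrak{S}_{w\alpha,w\beta}}$, so the sign-rep-free condition transports and the proof is complete.
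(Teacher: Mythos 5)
Your reduction of the anticommutator relation to the vanishing of $\eta(X)$ with
\[
X \;=\; e - s_\alpha - s_\beta - s_{\alpha\pm\beta} + s_\alpha s_\beta + s_\beta s_\alpha \;=\; 6\,p_{U'} \;\in\;\mathbb{R}[\mathfrak{S}_{\alpha,\beta}]
\]
is exactly the paper's equation (\ref{eq: W(A)-rep. criterion}); recognizing $X$ as six times the central idempotent of the sign representation is a genuinely cleaner way to finish than the paper's case-by-case check on the three irreducible $\mathfrak{S}_3$-modules, since it gives ``$\eta(X)=0$ iff $U'$ does not occur'' in one stroke. The commutator case is also handled exactly as in the paper. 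So the core of your argument is correct and, if anything, tidier.

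The problem is your third step. First, it is unnecessary: equations (\ref{eq:commutator for tau as endo}) and (\ref{eq:anticommutator for tau as endo}) are, by their formulation in prop.~\ref{prop:criterion for homomorphism}, only required for $\alpha,\beta\in\widetilde{\Delta}$, and every instance of the anticommutator relation there (namely $(\alpha_i,\alpha_j)$ with $\{i,j\}\in\mathcal{E}(A)$, and $(\alpha_i+\alpha_j,\alpha_i)$, etc.) already generates the standard subgroup $\mathfrak{S}_{\alpha_i,\alpha_j}$ covered by the hypothesis, while every instance of the commutator relation involves orthogonal real roots whose reflections commute in $W(A)$ itself; the paper's proof makes this restriction to $\widetilde{\Delta}$ explicit. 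Second, as an argument for the stronger statement on all of $\Delta^{re}$, it has a real gap: the assertion that ``a short reduction inside the stabiliser of $\alpha_i$ moves the image of $\beta$ to a simple neighbour $\alpha_j$'' is unproven, and for indefinite $A$ it is not to be expected — the stabiliser of a real root in a Kac--Moody Weyl group need not act transitively on the real roots pairing to $-1$ with it, and finite dihedral reflection subgroups of an infinite Coxeter group are in general not conjugate to standard parabolics. If you want the relations on all of $\Delta^{re}$ (as is needed, e.g., in lemma~\ref{lem:How sigma relates to tau and rho on commutators}), the honest route is not conjugation but a direct verification that no $\mathfrak{S}_{\alpha,\beta}$ with $\alpha,\beta\in\Delta^{re}$, $(\alpha\vert\beta)=-1$, affords the sign representation on the specific modules at hand — which is what the proofs of lemma~\ref{prop:No sign reps in V and Sym^2V} in fact deliver, since they use only the geometry of the pair $\alpha,\beta$ and not their simplicity.
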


\begin{proof}
	If $\left(\alpha\vert\beta\right)=0$ then $s_{\alpha},s_{\beta}$
	commute and so do $\tau(\alpha)$, $\tau(\beta)$ as required. 
	For $\left(\alpha\vert\beta\right)=-1$ one has  with $s_{\alpha+\beta}=s_{\beta}s_{\alpha}s_{\beta}$ that
	that $\tau\left(\alpha+\beta\right)=\left\{ \tau(\alpha),\tau(\beta)\right\} $
	is equivalent to 
	\begin{equation}\label{eq: W(A)-rep. criterion}
	0\overset{!}{=}-\eta\left(s_{\beta}s_{\alpha}s_{\beta}\right)+\eta\left(s_{\alpha}\right)\eta\left(s_{\beta}\right)+\eta\left(s_{\beta}\right)\eta\left(s_{\alpha}\right)-\eta\left(s_{\alpha}\right)-\eta\left(s_{\beta}\right)+Id.
	\end{equation}
	For the trivial representation this is easily seen to be true, whereas
	it is false for the sign representation as then 
	\[
	-Id=\eta\left(s_{\beta}s_{\alpha}s_{\beta}\right)=\eta\left(s_{\alpha}\right)=\eta\left(s_{\beta}\right).
	\]
	One can set up the standard representation as the subspace $\text{span}_{\mathbb{R}}\left\{ \alpha,\beta\right\} \subset\mathfrak{h}^{\ast}$.
	In this basis one checks explicitly that (\ref{eq: W(A)-rep. criterion}) holds. 
	Since any finite-dimensional representation of $\mathfrak{S}_{3}$
	is completely reducible one concludes that (\ref{eq:weyl group ansatz})
	provides a representation if $\eta$ restricted to $\mathfrak{S}_{\alpha_{i},\alpha_{j}}$ contains no copies of the sign representation.
	Note that it suffices to consider simple roots, because the relations (\ref{eq:anticommutator for tau as endo}) and (\ref{eq:commutator for tau as endo}) need only be satisfied for $\alpha,\beta\in\widetilde{\Delta}$, where $\widetilde{\Delta}$ consists of simple roots and all roots $\alpha_i+\alpha_j$ where $i$ and $j$ are adjacent. 
	Those are the only root spaces involved in the pairwise Berman relations $\left[X_i, \left[X_i,X_j\right]\right]=-X_j$.
\end{proof}
\begin{lemma} 
\label{prop:No sign reps in V and Sym^2V}
Let $\left(\mf{h},\Pi,\Pi^\vee\right)$ be a realization of $A$ and set $V_{1}:=\mathfrak{h}^{\ast}$, $V_{2}:=\text{Sym}^{2}\left(\mathfrak{h}^{\ast}\right)$, the symmetric product of $\mf{h}^\ast$ with itself.
Then the standard representation of $W\left(A\right)$ on $V_1$ and the induced representation on $V_{2}$ contain no copies of the sign representation upon restriction to any $\mathfrak{S}_{\alpha_{i},\alpha_{j}}$ for $\{i,j\}\in\mathcal{E}(A)$.
\end{lemma}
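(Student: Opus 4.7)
The plan is to decompose $V_1 = \mathfrak{h}^*$ explicitly as a $\mathfrak{S}_{\alpha_i,\alpha_j}$-module and then use the multiplicative behavior of characters under symmetric squares to handle $V_2$. Because $A$ is simply-laced and $\{i,j\}\in\mathcal{E}(A)$, one has $\alpha_j(\alpha_i^\vee)=a_{ij}=-1=a_{ji}=\alpha_i(\alpha_j^\vee)$, so the subspace $U:=\mathrm{span}_{\mathbb{R}}\{\alpha_i,\alpha_j\}\subset\mathfrak{h}^*$ is stable under $s_{\alpha_i}$ and $s_{\alpha_j}$, and a direct inspection of the matrices of $s_{\alpha_i}|_U,s_{\alpha_j}|_U$ in the basis $\{\alpha_i,\alpha_j\}$ identifies $U$ with the standard $2$-dimensional representation $E$ of $\mathfrak{S}_3$. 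For a complement, I would take $F:=\{\lambda\in\mathfrak{h}^*\mid\lambda(\alpha_i^\vee)=\lambda(\alpha_j^\vee)=0\}$; this subspace is fixed pointwise by both generators, and from the definition of the reflections one immediately checks $\mathfrak{h}^*=U\oplus F$ (the intersection is trivial since the $2\times 2$ matrix $\bigl(\alpha_k(\alpha_\ell^\vee)\bigr)_{k,\ell\in\{i,j\}}$ is invertible, and the dimensions add up). Hence $V_1|_{\mathfrak{S}_{\alpha_i,\alpha_j}}\cong E\oplus U^{\oplus (\dim\mathfrak{h}^*-2)}$ where $U$ denotes the trivial representation, and in particular the sign representation $U'$ does not occur.

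For $V_2=\mathrm{Sym}^2(\mathfrak{h}^*)$ I would use the decomposition
\[
\mathrm{Sym}^2(E\oplus F)\;\cong\;\mathrm{Sym}^2(E)\ \oplus\ (E\otimes F)\ \oplus\ \mathrm{Sym}^2(F).
\]
Since $\mathfrak{S}_{\alpha_i,\alpha_j}$ acts trivially on $F$, the third summand is a sum of trivial representations and the middle summand is isomorphic to $E^{\oplus\dim F}$, so neither contributes a sign component. It therefore suffices to decompose $\mathrm{Sym}^2(E)$. A short character computation using $\chi_{\mathrm{Sym}^2 E}(g)=\tfrac{1}{2}(\chi_E(g)^2+\chi_E(g^2))$ together with the character table of $\mathfrak{S}_3$ gives $\chi_{\mathrm{Sym}^2 E}=(3,1,0)$ on the conjugacy classes (identity, transpositions, $3$-cycles), which coincides with the sum of the trivial and standard characters; hence $\mathrm{Sym}^2(E)\cong U\oplus E$ contains no sign representation either.

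I do not expect any real obstacle here: the crucial observation is that $\mathfrak{S}_{\alpha_i,\alpha_j}$ acts on the concrete two-dimensional space spanned by the two simple roots as the standard representation, and the rest of $\mathfrak{h}^*$ is fixed pointwise; once this is in place the statement for $\mathrm{Sym}^2$ reduces to the elementary character computation above. The only care needed is to verify the direct sum decomposition $\mathfrak{h}^*=U\oplus F$, which uses that $\alpha_i^\vee,\alpha_j^\vee$ are linearly independent (so $F$ has codimension $2$) and that the $2\times 2$ Cartan block is invertible (so $U\cap F=0$).
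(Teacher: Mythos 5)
Your proposal is correct and follows essentially the same route as the paper: split $\mathfrak{h}^{\ast}$ into the standard representation on $\mathrm{span}\{\alpha_i,\alpha_j\}$ plus a pointwise-fixed complement, then analyze $\mathrm{Sym}^2$ summand by summand. The only (immaterial) difference is that you identify $\mathrm{Sym}^2(E)\cong U\oplus E$ by a character computation, whereas the paper exhibits the explicit basis $\{\alpha_i\alpha_i,\alpha_j\alpha_j,(\alpha_i+\alpha_j)(\alpha_i+\alpha_j)\}$.
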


\begin{proof}
After restriction to $\mathfrak{S}_{\alpha_{i},\alpha_{j}}$ consider a basis $\left\{ \alpha_{i},\alpha_{j}\right\} \cup\left\{ b_{1},\dots,b_{m-2}\right\} $ of $\mf{h}^{\ast}$, where $m=2n-\text{rnk}(A)$, such that the $b_{i}$ are orthogonal to both $\alpha_{i}$ and $\alpha_{j}$. 
Then $\text{span}\left\{ \alpha_{i},\alpha_{j}\right\} $
forms a copy of the standard representation of $\mathfrak{S}_{\alpha_{i},\alpha_{j}}$
while $\text{span}\left\{ b_{1},\dots,b_{m-2}\right\} $ decomposes
into $m-2$ copies of the trivial representation. 
For $\text{Sym}^{2}\left(\mathfrak{h}^{\ast}\right)$
one can use the basis
\begin{align*}
\left\{ \alpha_{i}\alpha_{i},\,\alpha_{j}\alpha_{j},\,\left(\alpha_{i}+\alpha_{j}\right)\left(\alpha_{i}+\alpha_{j}\right)\right\} \cup\left\{ \alpha_{i}b_{k},\alpha_{j}b_{k}\ \vert\,k=1,\dots,m-2\right\} \\\cup\left\{ b_{k}b_{l}\,\vert\,1\leq k\leq l\leq m-2\right\} .
\end{align*}
to see that  only the trivial representation and the standard
representation occur in the decomposition of $V_2$ restricted to $\mf{S}_{\alpha_i,\alpha_j}$. 
\end{proof}

We obtain the following result as an immediate consequence of the previous lemma and prop. \ref{prop:criterion for homomorphism}. 
Note that prop. \ref{prop:criterion for homomorphism} can be applied even if $A$ is not regular.
\begin{theorem}(Cp. \cite{Higher spin realizations} and \cite{Ext gen spin reps})
	\label{thm:S32 and S52} Let $\left(\eta_{1},V_{1}\right)$ and $\left(\eta_{2},V_{2}\right)$ with $V_{1}:=\mathfrak{h}^{\ast}$, $V_{2}:=\text{Sym}^{2}\left(\mathfrak{h}^{\ast}\right)$  be the standard and induced representations of $W(A)$, respectively.
	Then  $\tau_{s}:\widetilde{\Delta}\rightarrow\text{End}\left(V_{s}\right)$,	$\tau_{s}(\alpha)=\eta_{s}\left(s_{\alpha}\right)-\frac{1}{2}Id$ for $s\in\left\{ 1,2\right\} $  satisfies eqs. (\ref{eq:commutator for tau as endo}) and (\ref{eq:anticommutator for tau as endo}).
	Let $X_{1},\dots,X_{n}$ denote the Berman generators of $\mathfrak{k}(A)$ and assign 
	\[
	\sigma_{\frac{2s+1}{2}}:\ X_{i}\mapsto\tau_{s}\left(\alpha_{i}\right)\otimes2\rho\left(X_{i}\right),
	\]
	where $\rho$ is a $\frac{1}{2}$-spin representation as in thm.	\ref{thm:Properties of spin rep's image}.
	Then $\sigma_{\frac{2s+1}{2}}$ extends to a representation of $\mathfrak{k}(A)$.
\end{theorem}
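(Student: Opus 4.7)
The plan is to chain together three results already established in the excerpt. First, observe that both $\eta_1$ (the reflection action of $W(A)$ on $\mathfrak{h}^{\ast}$) and $\eta_2$ (its symmetric square on $\mathrm{Sym}^2(\mathfrak{h}^{\ast})$) are finite-dimensional $W(A)$-representations, so Proposition \ref{prop: master eq. holds away from sign reps} applies to each of them and reduces the verification of (\ref{eq:commutator for tau as endo}) and (\ref{eq:anticommutator for tau as endo}) for $\tau_k(\alpha) = \eta_k(s_\alpha) - \tfrac{1}{2}\mathrm{Id}$ to a purely representation-theoretic condition: the restriction of $\eta_k$ to each $\mathfrak{S}_{\alpha_i,\alpha_j} = \langle s_{\alpha_i}, s_{\alpha_j}\rangle \cong \mathfrak{S}_3$, taken over all $\{i,j\} \in \mathcal{E}(A)$, must not contain the sign representation as an irreducible constituent.

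Second, I would invoke Lemma \ref{prop:No sign reps in V and Sym^2V}, which is precisely tailored to this condition: its proof exhibits explicit bases of $V_1$ and $V_2$ adapted to $\mathfrak{S}_{\alpha_i,\alpha_j}$ in which the only $\mathfrak{S}_3$-isotypic components appearing are the trivial and the standard one. Hence the hypothesis of Proposition \ref{prop: master eq. holds away from sign reps} is satisfied for $k \in \{1,2\}$, and therefore $\tau_k$ fulfills (\ref{eq:commutator for tau as endo}) and (\ref{eq:anticommutator for tau as endo}) on all of $\Delta^{re}(A)$, in particular on the finite subset $\widetilde{\Delta}$.

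Third, with this verification in hand, the finite-dimensional generalized spin representation $\rho$ on $S$ guaranteed by Theorem \ref{thm:Properties of spin rep's image}, together with $\tau_k$, satisfies the hypotheses of Proposition \ref{prop:criterion for homomorphism}. Applying that proposition directly yields that the assignment $\sigma_{\frac{2k+1}{2}}(X_i) := \tau_k(\alpha_i) \otimes 2\rho(X_i) \in \mathrm{End}(V_k \otimes S)$ extends to a bona fide finite-dimensional representation of $\mathfrak{k}(A)$, which is the claim.

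Since every ingredient has already been done, there is no genuine obstacle; the only conceptual point worth flagging is that Proposition \ref{prop:criterion for homomorphism} requires (\ref{eq:commutator for tau as endo}) and (\ref{eq:anticommutator for tau as endo}) to hold only on $\widetilde{\Delta}$, which consists of the simple roots and the height-two roots $\alpha_i + \alpha_j$ for $\{i,j\} \in \mathcal{E}(A)$, so that the Berman relations of Corollary \ref{cor:Berman presentation for simply-laced A} are the only obstructions to check. Lemma \ref{prop:No sign reps in V and Sym^2V} verifies exactly the pairs $\mathfrak{S}_{\alpha_i,\alpha_j}$ arising from these edges, so the reduction is perfectly aligned with the input required by Proposition \ref{prop:criterion for homomorphism}.
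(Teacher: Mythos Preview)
Your proposal is correct and matches the paper's approach exactly: the paper states the theorem as ``an immediate consequence of the previous lemma and prop.~\ref{prop:criterion for homomorphism}'', where the previous lemma is Lemma~\ref{prop:No sign reps in V and Sym^2V} and Proposition~\ref{prop: master eq. holds away from sign reps} serves as the bridge between the two. Your write-up is in fact more explicit than the paper's one-line justification, and the extra remark about $\widetilde{\Delta}$ aligning with the edges checked in Lemma~\ref{prop:No sign reps in V and Sym^2V} is a helpful clarification.
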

The above representation is called the \emph{$\frac{2s+1}{2}$-spin	 representation} in \cite{Higher spin realizations} and \cite{Ext gen spin reps}, motivated by the fact that the eigenvalues of $\sigma_{\frac{2s+1}{2}}( X_{j})$ are half-integral up to a factor of $i$.
However, it is somewhat misleading that the largest magnitude of eigenvalues occurring is $\frac{3}{2}$ in both cases and not $\frac{2s+1}{2}$.
The above ansatz fails for $V_{3}:=\text{Sym}^{3}\left(\mathfrak{h}^{\ast}\right)$, because  $\alpha\beta(\alpha+\beta)$ spans a sign representation of $\mathfrak{S}_{\alpha,\beta}$ for adjacent simple roots $\alpha,\beta$. 
However, this sign representation is the only one that appears in the decomposition of $V_{3}$, regardless of $A$.

\begin{lemma}
	\label{lem:One sign rep on Sym^3V} 
	Let $A$ be simply-laced and let  $\alpha,\beta\in\Pi$ be adjacent simple roots. 
	Then the induced representation of  $W(A)$ on $\text{Sym}^{3}\left(\mathfrak{h}^{\ast}\right)$  restricted	to $\mathfrak{S}_{\alpha,\beta}$	contains exactly one copy of the sign representation, spanned by $\alpha\beta\left(\alpha+\beta\right)\in\text{Sym}^{3}\left(\mathfrak{h}^{\ast}\right)$.
\end{lemma}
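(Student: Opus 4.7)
My plan is to decompose $\text{Sym}^3(\mathfrak{h}^\ast)$ as an $\mathfrak{S}_{\alpha,\beta}$-module using the same basis technique as in the previous lemma, isolate the summand that can possibly carry a sign representation, and identify the explicit vector spanning the unique copy found there. First, I choose the basis $\{\alpha,\beta\}\cup\{b_1,\ldots,b_{m-2}\}$ with each $b_k$ orthogonal to both $\alpha$ and $\beta$, giving $\mathfrak{h}^\ast\cong E\oplus T$ where $E=\mathrm{span}\{\alpha,\beta\}$ is the standard $\mathfrak{S}_{\alpha,\beta}$-representation and $T=\mathrm{span}\{b_k\}$ is a sum of $m-2$ trivial representations. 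This yields the $\mathfrak{S}_{\alpha,\beta}$-equivariant decomposition
\begin{equation*}
\text{Sym}^3(\mathfrak{h}^\ast)\;=\;\bigoplus_{k=0}^{3}\text{Sym}^{k}(E)\otimes\text{Sym}^{3-k}(T).
\end{equation*}
Because each $\text{Sym}^{3-k}(T)$ is a sum of trivial representations, tensoring with it merely multiplies multiplicities, so every copy of $U'$ in $\text{Sym}^3(\mathfrak{h}^\ast)$ must arise from a copy of $U'$ inside some $\text{Sym}^k(E)$.

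Next, I would determine the sign content of each $\text{Sym}^k(E)$. The cases $k\in\{0,1,2\}$ are immediate: $\text{Sym}^0 E\cong U$, $\text{Sym}^1 E = E$, and the analysis in lemma \ref{prop:No sign reps in V and Sym^2V} already gives $\text{Sym}^2 E\cong U\oplus E$, none of which contain $U'$. For $k=3$ I would use character theory: the character of $E$ on the conjugacy classes $\{e\},\{\text{transpositions}\},\{\text{$3$-cycles}\}$ is $(2,0,-1)$, and the standard power-sum identity
\begin{equation*}
\chi_{\text{Sym}^3 E}(g)=\tfrac{1}{6}\bigl(\chi_E(g)^3+3\chi_E(g)\chi_E(g^2)+2\chi_E(g^3)\bigr)
\end{equation*}
gives $\chi_{\text{Sym}^3 E}=(4,0,1)$. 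Decomposing against the irreducible characters of $\mathfrak{S}_3$ one finds $\text{Sym}^3 E\cong U\oplus U'\oplus E$, so there is exactly one copy of the sign representation in $\text{Sym}^3(\mathfrak{h}^\ast)$.

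Finally, I identify this copy explicitly as the line spanned by $v\coloneqq\alpha\beta(\alpha+\beta)\in\text{Sym}^3 E$. Since $s_\alpha$ sends the positive $A_2$-roots $\{\alpha,\beta,\alpha+\beta\}$ to $\{-\alpha,\alpha+\beta,\beta\}$, it permutes the three factors of $v$ and produces exactly one sign flip, so $s_\alpha(v)=-v$; by symmetry $s_\beta(v)=-v$. Hence $\mathbb{R}v$ is a submodule isomorphic to $U'$, and by the count above it is the only one.

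The sole step that involves any genuine computation is the identification $\text{Sym}^3 E\cong U\oplus U'\oplus E$. This could equivalently be deduced from invariant theory for the reflection group $\mathfrak{S}_3$ acting on the two-dimensional standard representation, where the antiinvariants form a free module over the invariant polynomial ring generated in degree $3$ by the product $\Delta=\alpha\beta(\alpha+\beta)$ of the positive $A_2$-roots; this simultaneously pinpoints the sign line without any further calculation.
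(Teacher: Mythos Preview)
Your proof is correct and follows essentially the same route as the paper: the same orthogonal basis $\{\alpha,\beta\}\cup\{b_1,\dots,b_{m-2}\}$, the same reduction to the four-dimensional piece $\text{Sym}^3 E$ via the previous lemma, a character computation to see $\text{Sym}^3 E\cong U\oplus U'\oplus E$, and the direct verification that $\alpha\beta(\alpha+\beta)$ spans the sign line. Your version is merely more explicit (writing the $\text{Sym}^k(E)\otimes\text{Sym}^{3-k}(T)$ decomposition and the power-sum formula out in full) and adds the pleasant invariant-theoretic remark about the discriminant $\Delta$, but there is no substantive difference in strategy.
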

\begin{proof}
	Consider a basis $\mathcal{B}_1=\{ \alpha,\beta,b_{1},\dots,b_{m-2}\}$ for $\mathfrak{h}^{\ast}$ such that $b_{1},\dots,b_{m-2}$ are orthogonal  to $\alpha$ and $\beta$. 
	This provides a basis $\mathcal{B}_3=\{e_1e_2e_3\ \vert\ e_1,e_2,e_3\in\mathcal{B}_1,\ e_1\leq e_2 \leq e_3\}$ for $V_3$ w.r.t. some ordering of $\mathcal{B}_1$.
	Then elements $x\in\mathcal{B}_3$ which contain $b_{k}$ for some $k$ behave like copies of	$V$ or $\text{Sym}^{2}(V)$ and therefore span trivial or standard	representations according to prop. \ref{prop:No sign reps in V and Sym^2V}.
	The remaining subspace of products containing only $\alpha$ and/or $\beta$ is $4$-dimensional and computation of its character reveals that it contains one of each irreducible representations of $\mf{S}_{\alpha,\beta}$, i.e. the trivial, the sign and the standard representation.
	One also checks by direct computation that the sign	representation is spanned by $\alpha\beta\left(\alpha+\beta\right)$. 	
\end{proof}

\subsection{\label{subsec:7/2-spin rep}The higher spin representation $\mathcal{S}_{\frac{7}{2}}$}

In this section's thm. \ref{thm:7/2 spin rep} we provide a coordinate-free version of the so-called $\frac{7}{2}$-spin representation described in \cite{On higher spin}.
The main idea is a suitable extension of the previous Weyl group type ansatz on the module $V_3\otimes S$, where $V_3\coloneqq\text{Sym}^{3}V$ with $V=\mf{h}^\ast$ and $S$ is the carrier space of the $\frac{1}{2}$-spin representation from thm. \ref{thm:Properties of spin rep's image}.
For better readability, we will denote the standard invariant form on $\mathfrak{h}^{\ast}$
by $b\left(\cdot,\cdot\right)$ in this section. 
According to prop. \ref{prop:criterion for homomorphism}, we need to find maps $\tau: \Delta^{re}\rightarrow End(V_3)$
satisfying 
\begin{subequations}\label{eq: full master equation}
	\begin{eqnarray}
		\left[\tau(\alpha),\tau(\beta)\right] & = & 0\ \text{if }b\left(\alpha,\beta\right)=0\label{eq:trivial part of master equation}\\
		\left\{ \tau(\alpha),\tau(\beta)\right\}  & = & \tau\left(\alpha\pm\beta\right)\ \text{if }b\left(\alpha,\beta\right)=\mp1\label{eq:master equation}\:.
	\end{eqnarray}
\end{subequations}
We will need some standard results on the structure of symmetric products of vector spaces. 
Fix the normalization on $\text{Sym}^{3}V$ w.r.t. $V^{\otimes3}$ via 
\begin{equation}
	\text{Sym}^{3}V\ni v_{1}\cdot v_{2}\cdot v_{3}:=\frac{1}{3!}\sum_{\sigma\in\mathfrak{S}_{3}}v_{\sigma(1)}\otimes v_{\sigma(2)}\otimes v_{\sigma(3)}\ .\label{eq:multiplication in Sym(3)}
\end{equation}
The induced $W(A)$-invariant, non-degenerate bilinear form on $\text{Sym}^{3}V$ is given by
\begin{eqnarray*}
	b\left(v_{1}\cdot v_{2}\cdot v_{3}\,,\,u_{1}\cdot u_{2}\cdot u_{3}\right) & = &  \frac{1}{3!}\sum_{\sigma\in\mathfrak{S}_{3}}b\left(v_{\sigma(1)},u_{1}\right)\dots b\left(v_{\sigma(3)},u_{3}\right)
\end{eqnarray*}
Let $e_{1},\dots,e_{m}$ be a basis of $V$ and set 
\begin{equation}
	\omega_{ij}:=b\left(e_{i},e_{j}\right),\ \left(\omega^{ij}\right):=\left(\omega_{ij}\right)^{-1}\ \Leftrightarrow\ \sum_{l}\omega^{kl}\omega_{ln}=\delta_{\,n}^{k}.\label{eq:definition of omega and its inverse}
\end{equation}
Define the \emph{symmetric insertion} $\psi\,:\,V\rightarrow\text{Sym}^{3}V$ via 
\begin{eqnarray*}
	\psi\left(v\right) & = & \frac{1}{3!}\cdot\sum_{k,l=1}^{m}\omega^{kl}\left(v\otimes e_{k}\otimes e_{l}+e_{k}\otimes v\otimes e_{l}+e_{k}\otimes e_{l}\otimes v\right)\ .
\end{eqnarray*}
Symmetric insertions play an important role in invariant theory and
the above definition of $\psi$ does not depend on the chosen basis (cp.
\cite[secs. 17.3 \& 19.5]{Fulton Harris}).
In the language of theoretical physics used in \cite{On higher spin}, the coordinates of the element $\psi(\alpha)$ w.r.t. some basis would be given as $\alpha_{(a}G_{bc)}$, where $a,b,c=1,\dots, \dim(V)$, $G_{bc}$ denote the elements of the bilinear form in the chosen basis (corresponding directly to the $\omega_{ij}$ above) and the parentheses denote complete symmetrization.

We consider the ansatz 
\begin{subequations}\label{eq:ansatz 7/2}
	\begin{align}
		\tau(\alpha)&=s_{\alpha}-\frac{1}{2}Id+f(\alpha)\quad\forall\,\alpha\in\Delta^{re}\left(A\right),\\
		f\left(\alpha\right)&\coloneqq v\left(\alpha\right)\cdot b\left(v(\alpha)\,\vert\,\cdot\right) \in End(\text{Sym}^3V),\\
		v\left(\alpha\right)&=p\cdot\alpha\alpha\alpha+q\cdot\psi\left(\alpha\right)\ \in \text{Sym}^3V,\label{eq:ansatz for v(alpha) in 7/2 spin}
	\end{align}
\end{subequations}
where $s_{\alpha}$, by slight abuse of notation, denotes the action of the Weyl reflection $s_\alpha$ on $\text{Sym}^{3}V$ that is induced by its action on $V=\mf{h}^\ast$,  and $p,q\in \mathbb{R}$ are unknowns to be determined. 
It may seem natural to assume $f(\alpha)$ to be of rank $1$, given that there is only a single copy of the sign representation.
The above assumption on the shape of $f(\alpha)$ is even more restrictive and can only be justified afterwards: the above ansatz will suffice to solve (\ref{eq: full master equation}) and replacing one of the $v(\alpha)$ above by an independent vector $w(\alpha)$ would lead to the requirement $v(\alpha)=w(\alpha)$ after a tedious and lengthy computation.
There are only two vectors in $\text{Sym}^3(V)$ which are related to $\alpha$ in a natural way: $\alpha\alpha\alpha$
and $\psi\left(\alpha\right)$, which motivates (\ref{eq:ansatz for v(alpha) in 7/2 spin}).

\begin{lemma}
	\label{lem:scalar products of natural elements in Sym^3V}
	Let $A$ be simply-laced, $m\coloneqq\dim V$ and  $\alpha,\beta\in\Delta^{re}(A)$. Then
	\[
	s_{\alpha}\left(\psi(\beta)\right)=\psi\left(s_{\alpha}\beta\right),
	\]
	\[
	b\left(\psi\left(\alpha\right),\psi\left(\beta\right)\right)=\frac{m+2}{12}b\left(\alpha,\beta\right)\ ,\ b\left(\alpha\alpha\alpha,\psi\left(\beta\right)\right)=b\left(\alpha,\beta\right)\ .
	\]
\end{lemma}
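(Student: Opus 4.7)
The statement splits into three independent identities, each of which I would treat separately. For convenience I would phrase the ambient calculations in $V^{\otimes 3}$, extending $b$ diagonally and using the fact that the induced form on $\text{Sym}^3 V$ coincides with the restriction of $b^{\otimes 3}$ via the symmetrization idempotent; together with the symmetry $\omega^{kl} = \omega^{lk}$, this lets me work with the three summands $T_i^\alpha$ (with $\alpha$ placed in the $i$-th tensor slot) of $\psi(\alpha) = \tfrac{1}{6}(T_1^\alpha + T_2^\alpha + T_3^\alpha)$ without worrying about sign factors or symmetrization.

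For the first identity $s_\alpha(\psi(\beta)) = \psi(s_\alpha \beta)$, the key point is that the definition of $\psi$ does not depend on the chosen basis of $V$, as stated after its definition. Applying $s_\alpha$ diagonally to $\psi(\beta)$ rewrites it using the basis $\{s_\alpha(e_k)\}_{k=1,\dots,m}$, and since $s_\alpha$ is $b$-orthogonal the Gram matrix $\omega_{ij}$ (and therefore its inverse $\omega^{ij}$) is unchanged. Collecting everything gives $\psi(s_\alpha \beta)$, establishing equivariance.

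For the second identity, I would compute the mixed inner products $b_{V^{\otimes 3}}(T_i^\alpha, T_j^\beta)$ in closed form. When $i=j$ the $\alpha$ and $\beta$ factors pair directly, and the two remaining $e_k$-contractions collapse via the identity $\sum_{k,l} \omega^{kl}\omega_{kl} = m$, yielding $m \cdot b(\alpha,\beta)$. When $i \ne j$ one of the $\omega^{pq}$ factors pairs an $e$ with $\alpha$ and the other with $\beta$, the $\omega$-$\omega^{-1}$ contractions absorb into $\delta$'s, and one is left with $b(\alpha,\beta)$. Summing over the $3$ diagonal pairs and $6$ off-diagonal pairs, dividing by the $\tfrac{1}{6^2}$ from the two copies of $\psi$, gives the coefficient $\tfrac{3m + 6}{36} = \tfrac{m+2}{12}$.

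For the third identity, the analogous computation of $b_{V^{\otimes 3}}(\alpha \otimes \alpha \otimes \alpha, T_i^\beta)$ yields $b(\alpha,\beta)\, b(\alpha,\alpha)$ for each of $i = 1,2,3$, hence $b(\alpha\alpha\alpha, \psi(\beta)) = \tfrac{1}{6} \cdot 3 \cdot b(\alpha,\alpha) b(\alpha,\beta) = \tfrac{1}{2} b(\alpha,\alpha) b(\alpha,\beta)$. At this point I would invoke the simply-laced hypothesis and the fact that $\alpha \in \Delta^{re}(A)$, which forces $b(\alpha,\alpha) = 2$, cancelling the $\tfrac{1}{2}$ and giving $b(\alpha,\beta)$. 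The only non-routine point is keeping the index gymnastics honest; there are no structural obstacles, and the main subtlety is the (harmless) bookkeeping of which of the three tensor positions $\alpha$ or $\beta$ occupies.
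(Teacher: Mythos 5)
Your proposal is correct and follows essentially the same route as the paper: equivariance of $\psi$ via the $s_\alpha$-invariance (equivalently, basis-independence) of the contracted element $\sum_{k,l}\omega^{kl}e_k\otimes e_l$, and the two pairings by direct contraction in $V^{\otimes 3}$ using $\sum_{k,l,p,q}\omega^{kl}\omega^{pq}\omega_{kp}\omega_{lq}=m$ and $b(\beta,\beta)=2$. Your slot-by-slot bookkeeping of the $3$ diagonal and $6$ off-diagonal pairings is just a more explicit version of the computation the paper calls ``straightforward but a bit lengthy.''
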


\begin{proof}
	The first statement reduces to checking that $\left(\sum_{k,l=1}^{m}\omega^{kl}e_{k}\otimes e_{l}\right)\in Sym^2(V)$ is invariant under action of $s_{\alpha}$, which is a result of $b$ being $W(A)$-invariant.
	Therefore $s_{\alpha}$	intertwines with $\psi$, i.e. $s_{\alpha}\circ\psi=\psi\circ s_{\alpha}$.
	One computes directly: 
	\begin{align*}
		b\left(\psi(\alpha),v_{1}v_{2}v_{3}\right) &= \frac{1}{6}\left[b\left(\alpha,v_{1}\right)b\left(v_{2},v_{3}\right)+b\left(\alpha,v_{2}\right)b\left(v_{1},v_{3}\right)+b\left(\alpha,v_{3}\right)b\left(v_{1},v_{2}\right)\right]\,.
	\end{align*}
	For $v_{1}v_{2}v_{3}=\beta\beta\beta$ this specializes to 
	\[
	b\left(\psi(\alpha),\beta\beta\beta\right)=\frac{1}{2}b\left(\alpha,\beta\right)b\left(\beta,\beta\right)=b\left(\alpha,\beta\right),
	\]
	since $b\left(\beta,\beta\right)=2$ as $\beta\in\Delta^{re}(A)$ and $A$ is simply-laced. 
	The computation of 
	\begin{align*}
		b\left(\psi\left(\alpha\right),\psi\left(\beta\right)\right) &=  \frac{m+2}{12}b\left(\alpha,\beta\right),
	\end{align*}
	is straightforward but a bit lengthy and one uses the identity 
	\[
	\sum_{k,l,i,j}\omega^{kl}\omega^{ij}\omega_{ki}\omega_{lj}=\sum_{k,i,j}\delta_{\,j}^{k}\omega^{ij}\omega_{ki}=\sum_{k,i}\omega^{ik}\omega_{ki}=\sum_{i}\delta_{\,i}^{i}=m.
	\]
\end{proof}

\begin{theorem}[This is \cite{Lautenbacher22}, thm. 3.23 and lem. 3.24]
	\label{thm:7/2 spin rep}
	Let $A$ be a simply-laced GCM,  let $\rho$	be a $\frac{1}{2}$-spin representation of $\mathfrak{k}(A)$
	as in theorem \ref{thm:Properties of spin rep's image} and denote by $X_1,\dots,X_n$ the Berman generators of $\mathfrak{k}(A)$. 
	The ansatz (\ref{eq:ansatz 7/2}) satisfies (\ref{eq:trivial part of master equation})
	and (\ref{eq:master equation}) if one fixes $p$ and $q$ to be ($\varepsilon=\pm1$,
	$m\coloneqq\dim\mathfrak{h}$ and the signs can be chosen independently)
	\begin{equation}
		q_{\pm,\varepsilon}=-\varepsilon\frac{12\mp2\sqrt{6(m+8)}}{(m+2)\sqrt{3}},\ p_{\varepsilon}=\varepsilon\frac{1}{\sqrt{3}}\label{eq:q in 7/2}\:.
	\end{equation}
	One has further that $f(\alpha)$ in (\ref{eq:ansatz 7/2}) satisfies
	\begin{equation}
		f\left(\alpha\right)^{2}=4f\left(\alpha\right)\ \forall\,\alpha\in\Delta^{re}.\label{eq:eigenvalue of f(alpha)}
	\end{equation}
	With $\tau$ as in (\ref{eq:ansatz 7/2}),
	\begin{align*}
	\sigma\left(X_{i}\right)&=\tau\left(\alpha_{i}\right)\otimes2\rho\left(X_{i}\right)\ \forall\,i=1,\dots,n
	\end{align*}
	extends to a representation of $\mathfrak{k}(A)$.	
\end{theorem}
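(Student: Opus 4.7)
The plan is to verify the master equations (\ref{eq:trivial part of master equation}) and (\ref{eq:master equation}) for the ansatz (\ref{eq:ansatz 7/2}) on $\widetilde{\Delta}$ and then invoke prop. \ref{prop:criterion for homomorphism} to conclude that $\sigma$ extends to a representation of $\mathfrak{k}(A)$. The rank-one identity (\ref{eq:eigenvalue of f(alpha)}) is a separate, short calculation once $p$ and $q$ are pinned down.

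The orthogonal case $b(\alpha,\beta)=0$ is straightforward. Since $s_\alpha\beta=\beta$, lemma \ref{lem:scalar products of natural elements in Sym^3V} gives $s_\alpha(\psi(\beta))=\psi(s_\alpha\beta)=\psi(\beta)$, while $s_\alpha(\beta\beta\beta)=\beta\beta\beta$ is immediate, so $s_\alpha$ fixes $v(\beta)$ and by symmetry $s_\beta$ fixes $v(\alpha)$. The same lemma forces $b(v(\alpha),v(\beta))=0$, because each of the pairings $b(\alpha\alpha\alpha,\beta\beta\beta)$, $b(\alpha\alpha\alpha,\psi(\beta))$ and $b(\psi(\alpha),\psi(\beta))$ is a scalar multiple of $b(\alpha,\beta)$. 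Every mixed term in $[\tau(\alpha),\tau(\beta)]$ therefore vanishes and (\ref{eq:trivial part of master equation}) holds for any $p,q$.

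The anticommutator case is the heart. After replacing $\beta$ by $-\beta$ it suffices to treat $b(\alpha,\beta)=-1$, so that $s_\alpha\beta=\alpha+\beta$ and, by lemma \ref{lem:scalar products of natural elements in Sym^3V}, $s_\alpha(v(\beta))=v(\alpha+\beta)$. Expanding the ansatz yields
\begin{align*}
\{\tau(\alpha),\tau(\beta)\}-\tau(\alpha+\beta)
&= \bigl(\{s_\alpha,s_\beta\}-s_\alpha-s_\beta+Id-s_{\alpha+\beta}\bigr) \\
&\quad + \bigl(\{s_\alpha,f(\beta)\}+\{s_\beta,f(\alpha)\}-f(\alpha)-f(\beta)\bigr) \\
&\quad + \{f(\alpha),f(\beta)\}-f(\alpha+\beta).
\end{align*}
By prop. \ref{prop: master eq. holds away from sign reps} combined with lemma \ref{lem:One sign rep on Sym^3V}, the first line vanishes on the trivial and standard isotypic components of $V_3$ under $\mathfrak{S}_{\alpha,\beta}$ and restricts to a nonzero (in fact scalar $6$) multiple of the projector onto the unique sign-representation line spanned by $\alpha\beta(\alpha+\beta)$. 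Writing $u_\gamma:=v(\gamma)$ and using $W(A)$-invariance of $b$, the second line rewrites as a sum of rank-one operators built from the three vectors $u_\alpha,u_\beta,u_{\alpha+\beta}$; the third line is also a rank-one operator in these vectors, and lemma \ref{lem:scalar products of natural elements in Sym^3V} expresses every scalar $b(u_\gamma,u_\delta)$ as a polynomial in $p,q,m$ and $b(\gamma,\delta)\in\{-1,2\}$. The whole identity thus becomes a rank-one operator equation whose coefficients are polynomials in $p,q$; matching coefficients yields a small polynomial system whose real solutions are precisely the four pairs $(p_\varepsilon,q_{\pm,\varepsilon})$ of (\ref{eq:q in 7/2}).

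Once $p$ and $q$ are fixed, (\ref{eq:eigenvalue of f(alpha)}) reduces to the scalar check $b(v(\alpha),v(\alpha))=4$: since $f(\alpha)$ has rank one, $f(\alpha)^2=b(v(\alpha),v(\alpha))\,f(\alpha)$, and the scalar equals $8p^2+4pq+\tfrac{m+2}{6}q^2$ by lemma \ref{lem:scalar products of natural elements in Sym^3V}, which evaluates to $4$ upon substitution of (\ref{eq:q in 7/2}). The final extension statement is then an immediate application of prop. \ref{prop:criterion for homomorphism}. The main obstacle is the bookkeeping in the anticommutator step: one must separate the contribution supported on the one-dimensional sign-rep line, which must be cancelled by $\{f(\alpha),f(\beta)\}-f(\alpha+\beta)$, from the contributions on the trivial and standard components, which must be cancelled by the cross terms $\{s_\alpha,f(\beta)\}+\{s_\beta,f(\alpha)\}-f(\alpha)-f(\beta)$, and then check that both demands are simultaneously met by real scalars. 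The two-parameter ansatz (\ref{eq:ansatz for v(alpha) in 7/2 spin}) is tailored precisely to give enough freedom for both demands to be compatible, and this is exactly what singles out the values (\ref{eq:q in 7/2}).
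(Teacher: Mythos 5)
Your proposal is correct and follows essentially the same route as the paper's proof: handle the orthogonal case via $W(A)$-invariance of $b$ and Lemma \ref{lem:scalar products of natural elements in Sym^3V}, reduce the anticommutator case to the demand that the aggregate of all $f$-dependent terms be a rank-one operator with support and image on $\mathbb{R}\,\alpha\beta(\alpha+\beta)$ cancelling the scalar $6$ there, extract the polynomial system fixing $(p,q)$, and verify $b(v(\alpha),v(\alpha))=4$ for (\ref{eq:eigenvalue of f(alpha)}). One caveat: your closing gloss that the sign-line contribution is cancelled by $\{f(\alpha),f(\beta)\}-f(\alpha\pm\beta)$ alone while the cross terms $\{s_\alpha,f(\beta)\}+\{s_\beta,f(\alpha)\}-f(\alpha)-f(\beta)$ handle the trivial and standard components is not literally true (since $b(v(\beta),\alpha\beta(\alpha+\beta))=-2p\neq0$, the cross terms also act on the sign line), so the bookkeeping must be done, as you in fact first propose, by treating all $f$-terms together as a single operator and matching coefficients.
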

\begin{remark}
	The above theorem in fact describes two representations as there are two signs to chose but only the relative sign matters, because $v(\alpha)$ appears twice in $f(\alpha)$. In the remainder we pick both signs as $+$ when referring to \emph{the} higher spin representation $\mathcal{S}_{\frac{7}{2}}$. 
	The preferred $\mf{k}(A)$-module is denoted by $\mathcal{S}_{\frac{7}{2}}$ in the remainder of the text.
	\end{remark}
\begin{proof}
Plugging the ansatz (\ref{eq:ansatz 7/2}) into the l.h.s. of (\ref{eq:trivial part of master equation})
yields
\begin{align*}
	\left[\tau(\alpha),\tau(\beta)\right] & =   \left[s_{\alpha},s_{\beta}\right]+\left[s_{\alpha},f(\beta)\right]+\left[f(\alpha),s_{\beta}\right]
	   +\left[f(\alpha),f(\beta)\right]
\end{align*}
and is required to vanish for $b\left(\alpha,\beta\right)=0$. 
For $\alpha,\beta\in\Delta^{re}$ s.t. $b\left(\alpha,\beta\right)=0$ one has $s_{\beta}\left(v(\alpha)\right)=v(\alpha)$ and $b\left(v\left(\alpha\right),v\left(\beta\right)\right)=0$ by lemma \ref{lem:scalar products of natural elements in Sym^3V}.
Thus, $\left[s_{\alpha},f(\beta)\right]=0$ by invariance of $b$ under $W(A)$ and $\left[f(\alpha),f(\beta)\right]$ because of $b\left(v\left(\alpha\right),v\left(\beta\right)\right)=0$.
Since $s_{\alpha}$ and $s_{\beta}$ commute, (\ref{eq:trivial part of master equation}) is satisfied.

For $b\left(\alpha,\beta\right)=\mp1$, plugging the ansatz (\ref{eq:ansatz 7/2}) into  (\ref{eq:master equation})
yields 
\begin{align*}	
	 \left\{ s_{\alpha}-\frac{1}{2}Id,s_{\beta}-\frac{1}{2}Id\right\} +\left\{ s_{\alpha},f(\beta)\right\} & +\left\{ f(\alpha),s_{\beta}\right\} +\left\{ f(\alpha),f(\beta)\right\} \\
	    -f(\alpha)-f(\beta)
	&\overset{!}{=} s_{\alpha\pm\beta}-\frac{1}{2}Id+f\left(\alpha\pm\beta\right)\ .
\end{align*}
This equation is satisfied if and only if
\begin{subequations}
	\begin{align}
		\left\{ s_{\alpha}-\frac{1}{2}Id,s_{\beta}-\frac{1}{2}Id\right\} +T\left(\alpha,\beta\right)=s_{\alpha\pm\beta}-\frac{1}{2}Id,\ \text{with}\label{eq:AC in general case}
	\end{align}
	\begin{align}
		T\left(\alpha,\beta\right)&=\left\{ s_{\alpha},f(\beta)\right\} +\left\{ f(\alpha),s_{\beta}\right\} +\left\{ f(\alpha),f(\beta)\right\} \label{eq:the relevant part of AC} \\
		&\ -f(\alpha)-f(\beta)-f\left(\alpha\pm\beta\right)\ . \nonumber
	\end{align}
\end{subequations}
One has by prop. \ref{prop: master eq. holds away from sign reps} that
$\left\{ s_{\alpha}-\frac{1}{2}Id,s_{\beta}-\frac{1}{2}Id\right\} =s_{\alpha\pm\beta}-\frac{1}{2}Id$
holds on all $\mathfrak{S}_{3}=\langle s_{\alpha},s_{\beta}\rangle$-representations whose decomposition does not  contain the sign representation.
Thus, the support of $T\left(\alpha,\beta\right)$ needs to be the span of $\alpha\beta(\alpha+\beta)$, as this is the only occurring sign representation of $\mathfrak{S}_{3}=\langle s_{\alpha},s_{\beta}\rangle$ by lemma \ref{lem:One sign rep on Sym^3V}, because (\ref{eq:AC in general case}) yields $T(\alpha,\beta)u=0$ for all $u$ in a $\mathfrak{S}_{3}$-representation that is not the sign-representation.
Furthermore, the image of $T(\alpha,\beta)$ must also be the span of $\alpha\beta(\alpha+\beta)$, because the rest of (\ref{eq:AC in general case}) acts diagonally on $\mathbb{R}\alpha\beta(\alpha+\beta)$.
In conclusion, support and image of $T\left(\alpha,\beta\right)$ need to be the span of $\alpha\beta(\alpha+\beta)$.

One computes for $b\left(\alpha,\beta\right)=\mp 1$:
\begin{align*}
s_{\alpha}v(\beta)&=-b\left(\alpha,\beta\right)\cdot v\left(\alpha\pm\beta\right),
\end{align*}
\begin{align*}
	v(\beta)\cdot b\left(v(\beta),s_{\alpha}\left(u_{1}u_{2}u_{3}\right)\right) &= -b\left(\alpha,\beta\right)\cdot v(\beta)\cdot b\left(v\left(\alpha\pm\beta\right),u_{1}u_{2}u_{3}\right)
\end{align*}
for all $u_{1}u_{2}u_{3}\in\text{Sym}^{3}V$.
Thus,
\begin{align*}
	\left\{ s_{\alpha},f(\beta)\right\} +\left\{ f(\alpha),s_{\beta}\right\}  & =  v\left(\alpha\right) b\left(v\left(\alpha\pm\beta\right),\cdot\right)+v\left(\alpha\pm\beta\right) b\left(v\left(\alpha\right),\cdot\right)\\
	   &-b\left(\alpha,\beta\right) v(\beta) b\left(v\left(\alpha\pm\beta\right),\cdot\right) -b\left(\alpha,\beta\right) v\left(\alpha\pm\beta\right) b\left(v\left(\beta\right),\cdot\right)\ .
\end{align*}
Setting $X(\alpha,\beta):=b\left(v\left(\alpha\right),v\left(\beta\right)\right)$ one has furthermore
\begin{align*}
\left\{ f(\alpha),f(\beta)\right\} =X\left(\alpha,\beta\right)\left[v\left(\alpha\right) b\left(v\left(\beta\right),\cdot\right)+v\left(\beta\right) b\left(v\left(\alpha\right),\cdot\right)\right]\ .
\end{align*}
With this, one computes
\begin{eqnarray*}
	T\left(\alpha,\beta\right) 
	& = & v\left(\alpha\right) b\left(v\left(\alpha\pm\beta\right)+X\left(\alpha,\beta\right)v\left(\beta\right)-v\left(\alpha\right),\,\cdot\right)\\
	&  & +v(\beta) b\left(X\left(\alpha,\beta\right)v\left(\alpha\right)-b\left(\alpha,\beta\right)v\left(\alpha\pm\beta\right)-v\left(\beta\right),\,\cdot\right)\\
	&  & +v\left(\alpha\pm\beta\right) b\left(v\left(\alpha\right)-b\left(\alpha,\beta\right)\cdot v\left(\beta\right)-v\left(\alpha\pm\beta\right),\,\cdot\right).
\end{eqnarray*}
Now the demand that $T\left(\alpha,\beta\right)$ may only be supported
on the span of $V_{\alpha,\beta}:=\alpha\cdot\beta\cdot\left(\alpha\pm\beta\right)\in\text{Sym}^{3}V$ leads, after rearrangements of order and signs,  to three equations ($k_1,k_2,k_3\in\mathbb{R}$):
\begin{eqnarray*}
	v\left(\alpha\pm\beta\right)-v\left(\alpha\right)+X\left(\alpha,\beta\right)v\left(\beta\right) & = & k_{1}\cdot V_{\alpha,\beta}\:,\\
	-b\left(\alpha,\beta\right)\left[v\left(\alpha\pm\beta\right)-b\left(\alpha,\beta\right)^{-1}X\left(\alpha,\beta\right)v\left(\alpha\right)+b\left(\alpha,\beta\right)^{-1}v\left(\beta\right)\right] & = & k_{2}\cdot V_{\alpha,\beta}\:,\\
	-\left[v\left(\alpha\pm\beta\right)-v\left(\alpha\right)+b\left(\alpha,\beta\right)\cdot v\left(\beta\right)\right] & = & k_{3}\cdot V_{\alpha,\beta}\:.
\end{eqnarray*}
By the definition of $v(\alpha)$ and linearity of $\psi$ one has
\begin{eqnarray*}
	v\left(\alpha\pm\beta\right)-v\left(\alpha\right)+\underset{=\mp1}{\underbrace{b\left(\alpha,\beta\right)}}\cdot v\left(\beta\right) & = -3p\,b\left(\alpha,\ensuremath{\beta}\right) V_{\alpha,\beta}.
\end{eqnarray*}
as $v(\beta)$ is linearly independent from $V_{\alpha,\beta}$, all three equations concerning the support are satisfied if $X\left(\alpha,\beta\right)=b\left(\alpha,\beta\right)$.
Miraculously this satisfies the constraint concerning the image as well, since $X\left(\alpha,\beta\right)=b\left(\alpha,\beta\right)$ implies
\begin{align*}
	T\left(\alpha,\beta\right) & =  -9p^{2}\, V_{\alpha,\beta} b\left(V_{\alpha,\beta},\,\cdot\right)\:.
\end{align*}
By use of lemma \ref{lem:scalar products of natural elements in Sym^3V} one computes
\begin{align}
	b\left(v\left(\alpha\right),v\left(\beta\right)\right) &= p^{2}\,b\left(\alpha,\beta\right)^{3}+2pq\,b\left(\alpha,\beta\right)+q^{2}\frac{m+2}{12}\,b\left(\alpha,\beta\right), \label{eq:product of v(a) with v(b)}
\end{align}
so that $X\left(\alpha,\beta\right)=b\left(v\left(\alpha\right),v\left(\beta\right)\right)=b\left(\alpha,\beta\right)$ is equivalent to 
\begin{equation}
	p^{2}+2pq+\frac{m+2}{12}q^{2}=1\ .\label{eq:q and p in 7/2 spin part 1}
\end{equation}
This determines $p$ and $q$ in the ansatz for $v\left(\alpha\right)$ such that $T\left(\alpha,\beta\right)$ has correct support and image. 
Evaluation of the l.h.s. of (\ref{eq:AC in general case}) on $V_{\alpha,\beta}$ fixes the scale of $v(\alpha)$ (note that $s_\alpha,s_\beta,s_{\alpha\pm \beta}$ act on $V_{\alpha,\beta}$ as $-Id$):
\begin{align}
	\left[\left\{ s_{\alpha}-\frac{1}{2}Id,s_{\beta}-\frac{1}{2}Id\right\} +T\left(\alpha,\beta\right)\right]V_{\alpha,\beta} & =  \left[s_{\alpha\pm\beta}-\frac{1}{2}Id\right]V_{\alpha,\beta}\nonumber \\
	\Leftrightarrow\ p^{2} & =  \frac{1}{3}\nonumber 
\end{align}
Together with (\ref{eq:q and p in 7/2 spin part 1}) one has with $\varepsilon=\pm 1$:
\begin{subequations} \label{eq: p and q in 7/2}
	\begin{align}
		p_{\varepsilon} & =  \varepsilon\frac{1}{\sqrt{3}}\:,\label{eq:p in 7/2}\\
		q_{\pm,\varepsilon} &=-\varepsilon\frac{12\mp2\sqrt{6(m+8)}}{(m+2)\sqrt{3}}.\label{eq:q in 7/2 in proof}
	\end{align}
\end{subequations}
Thus, by props. \ref{prop:criterion for homomorphism} and \ref{prop: master eq. holds away from sign reps}, one concludes that 
\begin{align*}
	\sigma\left(X_{i}\right)&=\tau\left(\alpha_{i}\right)\otimes2\rho\left(X_{i}\right)\ \forall\,i=1,\dots,n
\end{align*}
with $\tau$ as in (\ref{eq:ansatz 7/2}) and $p,q$ as in (\ref{eq: p and q in 7/2}) indeed extends to a representation of $\mf{k}(A)$.

Concerning $f(\alpha)^2$ one has  with (\ref{eq:product of v(a) with v(b)}) that 
\begin{align*}
	b\left(v\left(\alpha\right),v\left(\alpha\right)\right) &= 8p^{2}+4pq+q^{2}\frac{m+2}{6} 
	= \frac{8}{3}+\frac{24+4m+32-48}{3(m+2)}=4\:
\end{align*}
and therefore 
\begin{align*}
	f\left(\alpha\right)^{2}&=v\left(\alpha\right)\cdot b\left(v\left(\alpha\right)\vert v\left(\alpha\right)\right)\cdot b\left(v\left(\alpha\right)\vert\cdot\right)=4\cdot f\left(\alpha\right)
\end{align*}
\end{proof}

\begin{remark}
	As of now, all attempts at extending the ansatz (\ref{eq:ansatz 7/2})
	to higher powers $\text{Sym}^{n}\left(\mathfrak{h}^{\ast}\right)$
	or other Schur modules $\mathcal{S}_{\lambda}\left(\mathfrak{h}^{\ast}\right)$ were unsuccessful. 
	Typically, the number of sign representations exceeds the number of free parameters that is determined by the elements naturally associated to a root $\alpha$, which we tested for all Schur modules associated to partitions of $n$ up to $n=5$. 
	For almost all Schur modules over $\mf{h}^\ast$ the number of sign representations depends on the dimension of $\mathfrak{h}^{\ast}$.
	In that regard, $\mathcal{S}_{\frac{7}{2}}$ is somewhat special because the occurrence of exactly one sign representation is universal in that it does not depend on $\mathfrak{h}^{\ast}$. 
	
\end{remark}

\section{Reducibility and Irreducibility}\label{sec:Irreducibility}
In this section we derive some results concerning the irreducibility of  higher spin representations and properties of their tensor products.
These results have in full generality only appeared in \cite{Lautenbacher22} and are genuinely new  except for choice examples such as $\mathcal{S}_{\frac{3}{2}}$ of $\mf{k}(E_{10})$ where the image is known to be isomorphic to $\mf{so}(32,288)$ by \cite{Higher spin realizations}.

On the technical level the most important tool will be that the action factorizes according to the tensor product structure:
\begin{lemma} \textnormal{(This is the corrected\footnote{In \cite[5.6]{Lautenbacher22}, the r.h.s. of (\ref{eq:quartic identity relation}) misses an overall minus sign. } version of \cite[5.6]{Lautenbacher22})}\\
	\label{lem:Split of the k(A)-action}
	Let $\left(\sigma,V\otimes S\right)$ denote the representation $\mathcal{S}_{\frac{3}{2}}$ or $\mathcal{S}_{\frac{5}{2}}$ of 	$\mf{k}\coloneqq\mathfrak{k}\left(A\right)\left(\mathbb{K}\right)$ for $A$ simply-laced and let $X_1,\dots,X_n$ denote the Berman generators of $\mf{k}$.
	Then one has for all $i=1,\dots,n$ that
	\begin{subequations}\label{eq:split action}
		\begin{align}
			Id\otimes\rho\left(X_{i}\right) &=\frac{2}{3}\sigma\left(X_{i}\right)^{3}+\frac{7}{6}\sigma\left(X_{i}\right)\:, \label{eq:split action a}\\ \eta\left(s_{i}\right)\otimes Id&=-\frac{20}{9}\sigma\left(X_{i}\right)^{4}-\frac{41}{9}\sigma\left(X_{i}\right)^{2}\:,\label{eq:split action b} \\
			Id\otimes Id &= -\frac{16}{9}\sigma \left(X_{i}\right)^{4} - \frac{40}{9} \sigma\left(X_{i}\right)^{2}\:, \label{eq:quartic identity relation}
		\end{align}
	\end{subequations} 
	where $\eta$ denotes the representation of the Weyl group $W\left(A\right)$	on $V$. 
	Hence, for all $w\in W(A)$ there exists $y_{1}\in\mathcal{U}\left(\mathfrak{k}\right)$	s.t. $\sigma\left(y_{1}\right)=\eta(w)\otimes Id$ and for all $x\in\mathfrak{k}$	there exists $y_{2}\in\mathcal{U}\left(\mathfrak{k}\right)$ s.t.	$\sigma\left(y_{2}\right)=Id\otimes\rho\left(x\right)$.
\end{lemma}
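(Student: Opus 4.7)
The plan is to reduce the three identities \eqref{eq:split action a}--\eqref{eq:quartic identity relation} to a purely algebraic calculation on the two-dimensional polynomial algebras generated by the tensor factors of $\sigma(X_i)$. Write $T := \tau(\alpha_i) = \eta(s_i) - \tfrac{1}{2}Id$ and $R := 2\rho(X_i)$, so that $\sigma(X_i) = T \otimes R$. Since $s_i$ is an involution in $W(A)$ one has $\eta(s_i)^2 = Id$; expanding $\eta(s_i) = T + \tfrac{1}{2}Id$ forces $T^2 + T - \tfrac{3}{4}Id = 0$. The defining property $\rho(X_i)^2 = -\tfrac{1}{4}Id$ from def.\ \ref{Def:gen spin rep for simply laced case} gives $R^2 = -Id$. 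Consequently, inside $\text{End}(V \otimes S)$ the commutative subalgebra generated by $T \otimes Id$ and $Id \otimes R$ is spanned by the four operators $\{Id \otimes Id,\ T \otimes Id,\ Id \otimes R,\ T \otimes R = \sigma(X_i)\}$.

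The key step is to expand $\sigma(X_i)^k = T^k \otimes R^k$ for $k = 2,3,4$ and substitute the two quadratic relations repeatedly. This expresses each power of $\sigma(X_i)$ as a linear combination of the four basis elements above; inverting the resulting $4\times 4$ system produces $T \otimes Id$, $Id \otimes R = 2(Id \otimes \rho(X_i))$, and $Id \otimes Id$ as polynomials in $\sigma(X_i)$, which after rearrangement are precisely \eqref{eq:split action a}--\eqref{eq:quartic identity relation}. Each of the three identities can then be confirmed by a single short verification. The sign correction relative to \cite[lem.\ 5.6]{Lautenbacher22} already enters in the formula $\sigma(X_i)^2 = T \otimes Id - \tfrac{3}{4} Id \otimes Id$ and propagates through the higher powers, so this is the one point at which the bookkeeping must be executed with care.

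For the two consequences, the identity \eqref{eq:split action b} exhibits $\eta(s_i) \otimes Id$ as $\sigma$ of a polynomial in $X_i$, hence as $\sigma(z_i)$ for some $z_i \in \mathcal{U}(\mf{k})$. Since $W(A) = \langle s_1,\dots,s_n \rangle$ and $\sigma$ extends to an algebra homomorphism $\mathcal{U}(\mf{k}) \to \text{End}(V \otimes S)$, the product $y_1 := z_{i_1} \cdots z_{i_\ell}$ along a reduced expression $w = s_{i_1} \cdots s_{i_\ell}$ satisfies $\sigma(y_1) = \eta(w) \otimes Id$. Similarly \eqref{eq:split action a} provides a preimage $u_i \in \mathcal{U}(\mf{k})$ of $Id \otimes \rho(X_i)$ for every Berman generator; since $\mf{k}$ is generated by $X_1,\dots,X_n$ by thm.\ \ref{thm:presentation of k} and $Id \otimes \rho$ is itself a Lie algebra representation, an induction on the depth of iterated brackets, using $[\sigma(u_i),\sigma(u_j)] = \sigma([u_i,u_j])$ inside $\mathcal{U}(\mf{k})$, lifts this to arbitrary $x \in \mf{k}$.

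The main obstacle is thus not structural but computational: the signs in $\sigma(X_i)^2$, $\sigma(X_i)^3$, $\sigma(X_i)^4$ must be tracked without error, because the subsequent generation arguments and the downstream uses of \eqref{eq:quartic identity relation} in section \ref{sec: Spin(A)} all rely on the three displayed identities being simultaneously correct.
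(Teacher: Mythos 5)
Your proposal is correct and follows essentially the same route as the paper: both reduce everything to the relations $T^2+T-\tfrac{3}{4}Id=0$ (from $\eta(s_i)^2=Id$) and $R^2=-Id$ (from $\rho(X_i)^2=-\tfrac14 Id$), compute $\sigma(X_i)^k=T^k\otimes R^k$ for $k\le 4$, solve for $Id\otimes R$, $\eta(s_i)\otimes Id$ and $Id\otimes Id$, and then invoke generation of $W(A)$ by the $s_i$ and of $\mf{k}$ by the $X_i$ for the final claim. The only cosmetic difference is that the paper computes the powers directly in terms of $\eta(s_i)$ rather than phrasing it as inverting a $4\times4$ system, and your closing verification by direct substitution covers the (immaterial) possibility that the four spanning operators are not independent.
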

\begin{remark}
	The above formulas also hold for $\alpha\in\Delta_{+}^{re}$ and $X_{\alpha}\in\mathfrak{k}_{\alpha}$ s.t. $\left(X_{\alpha}\vert X_{\alpha}\right)=\left(X_{i}\vert X_{i}\right)$ for some $i=1,\dots,n$, due to the conjugation result \ref{prop:Conjjugation lemma for Sn2} that we are going to prove later.
\end{remark}
\begin{proof}
	One has that 
	\begin{align}
	\sigma\left(X_{i}\right) &=\left(\eta\left(s_{i}\right) -\frac{1}{2}\right)\otimes\left(2\rho\left(X_{i}\right)\right),
	\end{align}
	and from this one computes
	\begin{align*}
		\sigma\left(X_{i}\right)^{2} &= \left(\eta\left(s_{i}\right)-\frac{5}{4}\right)\otimes Id\:, \\
		\sigma\left(X_{i}\right)^{3} &= \left( -\frac{7}{4}\eta\left(s_{i}\right)+\frac{13}{8} \right)\otimes 2\rho(X_i) = -\frac{7}{4}\sigma\left(X_{i}\right) +\frac{3}{4}Id\otimes2\rho\left(X_{i}\right)\:, \\
		\sigma\left(X_{i}\right)^{4} &= -\frac{5}{2}\left(\eta\left(s_{i}\right) -\frac{41}{40}\right)\otimes Id\:,
	\end{align*}
	
	from which (\ref{eq:split action}) follows. 
	As $W(A)$ is generated by the $s_{i}$ and $\mathfrak{k}$ is generated by the $X_{i}$
	it is always possible to find $y_{1},y_{2}\in\mathcal{U}\left(\mathfrak{k}\right)$
	s.t. $\sigma\left(y_{1}\right)=\eta(w)\otimes Id$ and $\sigma\left(y_{2}\right)=Id\otimes\rho\left(x\right)$. \newline
	Concerning the remark: By prop. \ref{prop:Conjjugation lemma for Sn2} one has that 
	\begin{align*}
		\sigma\left(X_{\alpha}\right) &= \pm\left(\eta\left(s_{\alpha}\right) -\frac{1}{2}\right)\otimes\left(2\rho\left(X_{\alpha}\right)\right)
	\end{align*}
	for $\alpha\in\Delta^{re}$ and the sign $\pm$ does not change the computation.
\end{proof}
We need to collect a few basic results concerning the action of $W(A)$ on $\mf{h}^\ast$:
\begin{lemma}
	\label{lem:h* is irreducible W(A)-module for regular A}
	Let $A\in\mathbb{Z}^{n\times n}$ be an indecomposable, regular, not necessarily simply-laced GCM and let $(\mathfrak{h}, \Pi, \Pi^\vee)$ a realization of $A$. Then $\mathfrak{h}^{\ast}$
	is an irreducible $W(A)$-module.
\end{lemma}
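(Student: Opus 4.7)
The plan is to exploit the two hypotheses individually: regularity of $A$ forces $\{\alpha_1,\dots,\alpha_n\}$ to be a basis of $\mathfrak{h}^\ast$, and indecomposability propagates membership in an invariant subspace from one simple root to all of them.

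First I would observe that by definition $\dim\mathfrak{h}^\ast=2n-l$ where $l=\operatorname{rk}A$, so regularity gives $l=n$ and hence $\dim\mathfrak{h}^\ast=n$. Since $\Pi$ is linearly independent of cardinality $n$, the simple roots form a basis of $\mathfrak{h}^\ast$, and symmetrically the simple coroots form a basis of $\mathfrak{h}$. Let $V\subset\mathfrak{h}^\ast$ be a nonzero $W(A)$-invariant subspace and pick $v\in V\setminus\{0\}$. From the formula in def.~\ref{def: Weyl group} one has
\begin{equation*}
s_i(v)-v\,=\,-v(\alpha_i^\vee)\,\alpha_i\;\in\;V\qquad\forall\,i=1,\dots,n.
\end{equation*}
If $v(\alpha_i^\vee)=0$ for every $i$, then $v$ annihilates a basis of $\mathfrak{h}$, so $v=0$, contradicting the choice of $v$. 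Hence there exists $i_0$ with $\alpha_{i_0}\in V$.

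Next I would use indecomposability to show that $\alpha_j\in V$ for every $j$. For any pair $\{i,j\}\in\mathcal{E}(A)$ one has
\begin{equation*}
s_j(\alpha_i)-\alpha_i=-\alpha_i(\alpha_j^\vee)\,\alpha_j=-a_{ji}\,\alpha_j\in V,
\end{equation*}
and because $A$ is a GCM one has $a_{ji}\neq 0\Leftrightarrow a_{ij}\neq 0\Leftrightarrow\{i,j\}\in\mathcal{E}(A)$. Hence $\alpha_i\in V$ forces $\alpha_j\in V$ for every neighbour $j$ of $i$ in the generalized Dynkin diagram. Since that diagram is connected by indecomposability, induction on the graph distance from $i_0$ yields $\alpha_j\in V$ for all $j$, so $V\supset\operatorname{span}\Pi=\mathfrak{h}^\ast$ and thus $V=\mathfrak{h}^\ast$.

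There is no real obstacle here; the only subtlety is that regularity is used \emph{twice} — first to ensure that $\Pi$ actually spans $\mathfrak{h}^\ast$ (without it one could only reach the sublattice $\operatorname{span}\Pi$, leaving a $W(A)$-fixed complement), and second to ensure that $\Pi^\vee$ spans $\mathfrak{h}$ so that the alternative ``$v(\alpha_i^\vee)=0$ for all $i$'' is ruled out. Indecomposability provides the connectivity needed for the second step, and the GCM axiom $a_{ij}=0\Leftrightarrow a_{ji}=0$ makes the simply-laced hypothesis unnecessary.
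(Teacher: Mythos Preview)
Your proof is correct and follows essentially the same approach as the paper: use regularity so that $\Pi$ spans $\mathfrak{h}^\ast$ (and $\Pi^\vee$ spans $\mathfrak{h}$), extract a simple root from any nonzero invariant subspace via $s_i(v)-v$, and then propagate along the connected Dynkin diagram using $s_j(\alpha_i)-\alpha_i=-a_{ji}\alpha_j$. Your write-up is in fact slightly more careful than the paper's, which omits the explicit justification that $v(\alpha_i^\vee)$ cannot vanish for all $i$ and has a harmless sign slip in the propagation step.
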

\begin{proof}
	Since $A$ is regular, $\mathfrak{h}^{\ast}$ is spanned by $\alpha_1,\dots,\alpha_n$ which are by definition linearly independent and therefore to any $0\neq \lambda\in U$ in an invariant submodule $U\subset\mf{h}^\ast$ there exists $i\in\left\{ 1,\dots,n\right\} $ such that $\lambda\left(\alpha_{i}^\vee\right)\neq0$.	
	But then $s_{i}\lambda=\lambda-\lambda\left(\alpha_{i}^\vee\right)\alpha_{i}\in U$ implies $\alpha_{i}\in U$.
	Now $s_{j}(\alpha_i)-\alpha_i = a_{ji}\alpha_j \in U$ shows by successive application that all simple roots are contained in $U$ because $A$ is indecomposable.
	Thus, $\mathfrak{h}^{\ast}$	is an irreducible $W(A)$-module.
\end{proof}	
We remark that the assumption of regularity is necessary. 
If $A$ is an affine GCM, then $\mathbb{K}\cdot\delta$, where $\delta$ denotes the null root, is an invariant subspace of $\mathfrak{h}^{\ast}$ w.r.t. the action of $W(A)$.
Also, indecomposability is necessary because otherwise the span of simple roots corresponding to each sub-block of $A$ are an invariant subspace of $\mf{h}^\ast$.

\begin{proposition}[This is \cite{Lautenbacher22}, prop. 5.8]	\label{prop:S32 irrreducible} 
	Let $A\in\mathbb{Z}^{n\times n}$ be	an indecomposable, simply-laced, regular  GCM  and $n\geq2$. 
	Then there exists an irreducible generalized spin representation $\mathcal{S}_{\frac{1}{2}}$ of $\mathfrak{k}\left(A\right)$.
	Furthermore, the  representation $\mathcal{S}_{\frac{3}{2}}$ built on this  $\mathcal{S}_{\frac{1}{2}}$ is irreducible as well.
\end{proposition}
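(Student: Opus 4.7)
The plan is to prove the two assertions in turn.

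First, to obtain an irreducible $\mathcal{S}_{\frac{1}{2}}$: since $A$ is indecomposable and $n\geq2$, its Dynkin diagram contains no isolated nodes, so thm.~\ref{thm:Properties of spin rep's image} provides a generalized spin representation $(\rho,S)$ whose image is a semi-simple compact Lie algebra. Any finite-dimensional module over such a Lie algebra is completely reducible, so $S$ decomposes into a direct sum of irreducible $\rho(\mathfrak{k}(A))$-submodules. The condition $\rho(X_{i})^{2}=-\tfrac{1}{4}\mathrm{Id}$ restricts from $S$ to every direct summand, so each irreducible summand is itself a generalized spin representation; we set $\mathcal{S}_{\frac{1}{2}}$ to be any such summand.

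For the irreducibility of $\mathcal{S}_{\frac{3}{2}}=V_{1}\otimes\mathcal{S}_{\frac{1}{2}}$ with $V_{1}=\mathfrak{h}^{\ast}$, we consider a nonzero $\sigma(\mathfrak{k}(A))$-invariant subspace $U\subset V_{1}\otimes\mathcal{S}_{\frac{1}{2}}$. Lemma~\ref{lem:Split of the k(A)-action} shows that $\sigma(\mathcal{U}(\mathfrak{k}(A)))$ contains both $\eta(w)\otimes\mathrm{Id}$ for all $w\in W(A)$ and $\mathrm{Id}\otimes\rho(x)$ for all $x\in\mathfrak{k}(A)$, hence it contains the unital subalgebra $B_{1}\otimes_{\mathbb{R}}B_{2}\subset\mathrm{End}(V_{1}\otimes\mathcal{S}_{\frac{1}{2}})$, where $B_{1}$ denotes the subalgebra of $\mathrm{End}(V_{1})$ generated by $\eta(W(A))$ and $B_{2}\subset\mathrm{End}(\mathcal{S}_{\frac{1}{2}})$ is generated by $\rho(\mathfrak{k}(A))$. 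Consequently $U$ is $B_{1}\otimes B_{2}$-invariant, and it suffices to show that $V_{1}\otimes\mathcal{S}_{\frac{1}{2}}$ is already irreducible as a $B_{1}\otimes B_{2}$-module.

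For this final step we invoke a classical double-commutant argument. Lemma~\ref{lem:h* is irreducible W(A)-module for regular A} gives irreducibility of $V_{1}$ as a $W(A)$-module, and its proof is valid verbatim after base change to any field extension, so $V_{1}$ is absolutely irreducible and $\mathrm{End}_{B_{1}}(V_{1})=\mathbb{R}$. Jacobson's density theorem therefore yields $B_{1}=\mathrm{End}_{\mathbb{R}}(V_{1})$. By Schur's lemma, $D\coloneqq\mathrm{End}_{B_{2}}(\mathcal{S}_{\frac{1}{2}})$ is a real division algebra, and Jacobson density again gives $B_{2}=\mathrm{End}_{D}(\mathcal{S}_{\frac{1}{2}})$. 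Via the canonical isomorphism $\mathrm{End}_{\mathbb{R}}(V_{1})\otimes_{\mathbb{R}}\mathrm{End}_{D}(\mathcal{S}_{\frac{1}{2}})\cong\mathrm{End}_{D}(V_{1}\otimes_{\mathbb{R}}\mathcal{S}_{\frac{1}{2}})$, $B_{1}\otimes B_{2}$ then acts on $V_{1}\otimes\mathcal{S}_{\frac{1}{2}}$ as the full endomorphism algebra of a simple module, from which $U\in\{0,V_{1}\otimes\mathcal{S}_{\frac{1}{2}}\}$ follows.

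The main obstacle is packaged inside lemma~\ref{lem:Split of the k(A)-action}, which supplies the crucial observation that the image algebra already contains the whole of $B_{1}\otimes B_{2}$; once this is granted, the remainder becomes a formal exercise in the structure theory of finite-dimensional algebras over $\mathbb{R}$. A minor subtlety is verifying absolute irreducibility of the reflection representation, which follows by rerunning the proof of lemma~\ref{lem:h* is irreducible W(A)-module for regular A} after complexification.
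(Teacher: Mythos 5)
Your proof is correct. The first assertion is handled the same way as in the paper (the paper simply restricts to a minimal invariant submodule of $S$, whereas you invoke complete reducibility first; both are fine, and your added remark that the condition $\rho(X_i)^2=-\tfrac14\mathrm{Id}$ restricts to summands is a point the paper leaves implicit). For the second assertion you use the paper's two key ingredients, lemma \ref{lem:Split of the k(A)-action} and lemma \ref{lem:h* is irreducible W(A)-module for regular A}, but finish by a different argument. The paper's proof is more elementary: it applies the rank-one projections $p_i\otimes\mathrm{Id}$ with $p_i=\tfrac12(\mathrm{Id}-\eta(s_i))$, whose images are $\mathbb{R}\alpha_i\otimes\mathcal{S}_{\frac{1}{2}}$, to exhibit a nonzero elementary tensor $\alpha_i\otimes u$ inside any nonzero invariant subspace, and then sweeps out the whole module using the factorized action together with irreducibility of both factors. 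You instead run Burnside/Jacobson density on each tensor factor and identify the image algebra with $\mathrm{End}_{\mathbb{R}}(V_1)\otimes_{\mathbb{R}}\mathrm{End}_D(\mathcal{S}_{\frac{1}{2}})\cong\mathrm{End}_D(V_1\otimes\mathcal{S}_{\frac{1}{2}})$. Your route correctly flags and resolves the one genuine subtlety over $\mathbb{R}$ — a tensor product of irreducibles need not be irreducible unless one factor is absolutely irreducible — and your verification that the proof of lemma \ref{lem:h* is irreducible W(A)-module for regular A} goes through after base change is valid. What your approach buys is the stronger structural statement that $\sigma(\mathcal{U}(\mathfrak{k}(A)))$ is the full commutant of $D$; what the paper's approach buys is that no machinery beyond Schur-type reasoning on elementary tensors is needed, and its elementary-tensor trick is reused almost verbatim in the proof of prop.~\ref{prop:trace-free S52}, where the analogue of your density argument would be harder to set up since $\mathrm{Sym}^2(\mathfrak{h}^{\ast})$ is not irreducible.
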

\begin{proof}
	Since the generalized spin representations from thm. \ref{thm:Properties of spin rep's image} are finite-dimensional, restriction to a smallest invariant sub-module is always possible and yields an irreducible generalized spin representation $\mathcal{S}_{\frac{1}{2}}$.
	
	By lemma \ref{lem:h* is irreducible W(A)-module for regular A} $\mathfrak{h}^{\ast}$ is an irreducible $W\left(A\right)$-module because $A$ is regular and by lemma \ref{lem:Split of the k(A)-action} one can act independently on the factors of $\mathcal{S}_{\frac{3}{2} }= \mf{h}^\ast \otimes \mathcal{S}_{\frac{1}{2} }$.
	Irreducibility of $\mathcal{S}_{\frac{3}{2} }$ follows, if one can show that any invariant submodule contains an elementary tensor $\lambda\otimes u \in \mathcal{S}_{\frac{3}{2} }$.
	In order to do so, introduce the projections $p_i\coloneqq \frac{1}{2}(Id-s_i)\in End(\mf{h}^\ast)$, where $i=1,\dots,n$, to the simple roots.
	Since $(\cdot,\cdot)$ is non-degenerate $(\alpha_i,\lambda)=0$ for all $i=1,\dots,n$ and $\lambda\in\mf{h}^\ast$ is equivalent to $\lambda=0$ and in consequence to any $\lambda\neq0$ there exists $i$ such that $p_i(\lambda)\neq0$.
	Thus, to $0\neq v\in \mf{h}^\ast \otimes \mathcal{S}_{\frac{1}{2}}$ there exists $i$ such that $(p_i\otimes Id)(v)\neq 0$.
	But $(p_i\otimes Id)(v)= \alpha_i \otimes u$ for some $0\neq u\in \mathcal{S}_{\frac{1}{2}}$.
	This shows that any invariant submodule of $\mathcal{S}_{\frac{3}{2}}$ contains an elementary tensor and  therefore  that $\mathcal{S}_{\frac{3}{2}}$ is irreducible.
\end{proof}

\begin{corollary}\label{cor: image is semi-simple}
		Let $A\in\mathbb{Z}^{n\times n}$ and $\mathcal{S}_{\frac{3}{2}}$ be as in prop. \ref{prop:S32 irrreducible} and denote the representation by $\sigma:\mf{k}(A)\rightarrow \text{End}(\mathcal{S}_{\frac{3}{2}})$.
		Then $\mathrm{im}\,(\sigma)$ is semi-simple.
\end{corollary}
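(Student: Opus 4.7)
The plan is to embed $\mathrm{im}(\sigma)$ inside $\mf{so}(\mathcal{S}_{\frac{3}{2}}, B)$ for a suitable non-degenerate symmetric bilinear form $B$ and then invoke the irreducibility of $\sigma$ from prop.~\ref{prop:S32 irrreducible} to exclude solvable ideals. The form $B$ is built as a tensor product of natural forms on the two factors, and the verification that $\sigma$ maps into $\mf{so}(B)$ is a one-line calculation from the tensor structure of the Berman generators.

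For the form, I would take $B := b_V \otimes b_S$, where $b_V$ is the form on $\mf{h}^\ast$ induced from the standard invariant bilinear form on $\mf{g}(A)$ via the isomorphism $\nu$ of prop.~\ref{prop:Invariant-bilinear-form} (it is $W(A)$-invariant, symmetric and non-degenerate), and $b_S$ is a positive-definite inner product on $\mathcal{S}_{\frac{1}{2}}$ with respect to which $\rho(\mf{k}(A))$ acts by skew-symmetric operators, available by Weyl-type averaging because $\mathrm{im}(\rho)$ is compact by thm.~\ref{thm:Properties of spin rep's image}. The reflection $\eta(s_i)$ is $b_V$-self-adjoint, hence so is $\tau(\alpha_i) = \eta(s_i) - \frac{1}{2}Id$; together with $2\rho(X_i)$ being $b_S$-skew, the tensor product $\sigma(X_i) = \tau(\alpha_i) \otimes 2\rho(X_i)$ is $B$-skew. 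Thus $\mathrm{im}(\sigma) \subseteq \mf{so}(\mathcal{S}_{\frac{3}{2}}, B)$ and, in particular, contains no non-zero scalar endomorphism.

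To conclude semi-simplicity, irreducibility of $\sigma$ makes the faithful $\mathrm{im}(\sigma)$-module $\mathcal{S}_{\frac{3}{2}}$ completely reducible, forcing $\mathrm{im}(\sigma)$ to be reductive; only the vanishing of the centre remains. By lemma~\ref{lem:Split of the k(A)-action}, the associative algebra generated by $\sigma(\mf{k}(A))$ contains both $\eta(s_i) \otimes Id$ and $Id \otimes \rho(X_i)$; combining this with the absolute irreducibility of $\mf{h}^\ast$ as a $W(A)$-module (the proof of lemma~\ref{lem:h* is irreducible W(A)-module for regular A} is field-independent, yielding irreducibility over $\mathbb{C}$ as well), a double-commutant argument identifies the Schur-centraliser of $\sigma(\mf{k}(A))$ in $\mathrm{End}(\mathcal{S}_{\frac{3}{2}})$ with $Id_{\mf{h}^\ast} \otimes \mathrm{End}_{\mf{k}(A)}(\mathcal{S}_{\frac{1}{2}})$. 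When $\mathcal{S}_{\frac{1}{2}}$ is of real type, this centraliser reduces to $\mathbb{R} \cdot Id$, whose intersection with $\mf{so}(B)$ is zero, and semi-simplicity of $\mathrm{im}(\sigma)$ follows immediately.

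The main obstacle is the case where $\mathcal{S}_{\frac{1}{2}}$ is of complex or quaternionic type as a $\mf{k}(A)$-module, for then $\mathrm{End}_{\mf{k}(A)}(\mathcal{S}_{\frac{1}{2}}) \cap \mf{so}(b_S)$ contains orthogonal complex structures that would a priori sit in the centre of $\mathrm{im}(\sigma)$. To exclude such central elements one has to show that no $x \in \mf{k}(A)$ satisfies $\sigma(x) = Id \otimes J$ for such a $J$: a plausible route is to decompose $x$ according to the filtered structure of $\mf{k}(A)$ (lemma~\ref{lem:Filtered structure on k}) and to argue, via the root-space formula $\sigma(x_\alpha) = \pm \tau(\alpha) \otimes 2\rho(x_\alpha)$ from the upcoming prop.~\ref{prop:Conjjugation lemma for Sn2}, that the $\mf{h}^\ast$-component of $\sigma(x)$ can equal $Id$ only if $x$ itself lies in $\ker(\sigma)$, whence $J = 0$. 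An equivalent route passes through the complexification and Dixmier's lemma, where the $\mf{so}(B_\mathbb{C})$-condition applied to both non-degenerate and isotropic $\mathrm{im}(\sigma_\mathbb{C})$-irreducible splittings of $(\mathcal{S}_{\frac{3}{2}})_\mathbb{C}$ forces the scalar action of the radical to vanish.
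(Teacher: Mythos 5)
Your first two steps (the invariant form $B=b_V\otimes b_S$ and the appeal to irreducibility of $\mathcal{S}_{\frac{3}{2}}$ to get reductivity of $\mathrm{im}\,(\sigma)$ via the Bourbaki criterion) are sound and match the paper's argument for the reductive part. The problem is the final passage from reductive to semi-simple. You correctly observe that $\mathrm{im}\,(\sigma)\subseteq\mf{so}(B)$ kills scalar central elements, but then you yourself identify that when $\mathcal{S}_{\frac{1}{2}}$ is of complex or quaternionic type the commutant of $\sigma(\mf{k}(A))$ may contain orthogonal complex structures $J$ with $Id\otimes J\in\mf{so}(B)$, and your proposed exclusions of such central elements remain at the level of ``a plausible route'' — neither the filtered-decomposition argument nor the Dixmier-lemma route is carried out, and the first one as sketched would in any case lean on prop.~\ref{prop:Conjjugation lemma for Sn2}, which is proved only later in the paper and itself uses the structure of the lift to $Spin(A)$. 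As written, the proof is therefore incomplete precisely at the step that distinguishes semi-simple from reductive.

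The paper closes this gap by a one-line structural observation that makes the whole commutant analysis unnecessary: since $A$ is indecomposable and simply-laced, each Berman generator satisfies $X_j=-[X_i,[X_i,X_j]]$ for $i$ adjacent to $j$, so $\mf{k}(A)$ is perfect; hence $\mathrm{im}\,(\sigma)=[\mathrm{im}\,(\sigma),\mathrm{im}\,(\sigma)]$ is perfect, and a perfect reductive Lie algebra has trivial centre, i.e.\ is semi-simple. You should replace your entire third and fourth paragraphs with this argument; it is independent of the real/complex/quaternionic type of $\mathcal{S}_{\frac{1}{2}}$ and of the later conjugation results.
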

\begin{proof}
	According to \cite[I.6.4 prop. 5]{Bourbaki75}, one of 7 equivalent characterizations for a Lie algebra $\mf{g}$ to be reductive, is to possess a faithful, finite-dimensional, completely reducible\footnote{The exact wording is \emph{semi-simple}, which is used synonymously for \emph{completely reducible} in \cite{Bourbaki75} when applied to $\mf{g}$-modules (see \cite[I.3.1, def. 2]{Bourbaki75}). Note also, that  the reference to I.6.4 prop. 5 is sensitive to publisher and edition.} representation.
	By definition, $\text{im}\,(\sigma)\subset \text{End}(\mathcal{S}_{\frac{3}{2}})$ possesses a faithful finite-dimensional representation, because $\mathcal{S}_{\frac{3}{2}}$ is a finite-dimensional vector space.
	By prop. \ref{prop:S32 irrreducible}, $\mathcal{S}_{\frac{3}{2}}$ is an irreducible $\mf{k}(A)$-module and therefore an irreducible, hence completely reducible, $\text{im}\,(\sigma)$-module.
	Thus, $\text{im}\,(\sigma)$ is reductive.
	Since $A$ is indecomposable and simply-laced, $\mf{k}(A)$ is perfect, i.e., $[\mf{k}(A), \mf{k}(A)]=\mf{k}(A)$ (This is essentially a consequence of the fact that each Berman generator $X_j$ can be written as the commutator $-[X_i, [X_i,X_j]]$ for $i$ adjacent to $j$).
	One concludes that $\text{im}\,(\sigma)$ is semi-simple.
\end{proof}

\begin{lemma} 
	\label{lem:skew-adjointness of rep. matrices}
	Let $A$ be a simply-laced GCM. Then the higher spin representations $\left(\sigma,\mathcal{S}_{\frac{2s+1}{2}}\right)$	of $\mathfrak{k}\left(A\right)\left(\mathbb{R}\right)$ for $s\in\left\{1,2,3\right\} $ admit a non-degenerate bilinear	form $\left\langle \cdot,\cdot\right\rangle $ with respect to which the representation matrices are skew-adjoint.
	This so-called contravariant bilinear form is given explicitly by 
	\begin{align}
		 \left\langle a\otimes s,b\otimes t\right\rangle =\left(a\vert b\right)_{V}\cdot\left(s\vert t\right)_{S}\ \forall\,a,b\in V,\ s,t\in S\:, \label{eq: contravariant form on Sn2} 
	\end{align}
	with $V\in \{\mf{h}^\ast , \text{Sym}^2(\mf{h}^\ast), \text{Sym}^3(\mf{h}^\ast)\}$ and $S$ a generalized spin representation.
	Here, $(\cdot\vert\cdot)_V$ denotes the symmetric bilinear form invariant under the action of $W(A)$ induced by the standard invariant form on $\mf{h}^\ast$ and $(\cdot\vert\cdot)_S$ denotes an inner product on $S$.
\end{lemma}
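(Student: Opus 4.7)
The plan is to reduce skew-adjointness on all of $\mf{k}(A)$ to a verification on the Berman generators $X_1,\dots,X_n$. Since they generate $\mf{k}(A)$ as a Lie algebra and since the operators that are skew-adjoint with respect to a fixed non-degenerate symmetric bilinear form on $\mathcal{S}_{\frac{n}{2}}$ form a Lie subalgebra of $\mathrm{End}(\mathcal{S}_{\frac{n}{2}})$ (the commutator of two skew-adjoint operators is itself skew-adjoint), it suffices to prove that each $\sigma(X_i)$ is skew-adjoint with respect to $\langle\cdot,\cdot\rangle$.

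Non-degeneracy of $\langle\cdot,\cdot\rangle$ on $V\otimes S$ is immediate from its product structure: the standard invariant form on $\mf{h}^{\ast}$ is non-degenerate for the symmetrizable GCM $A$, and this property passes to $\mathrm{Sym}^{k}(\mf{h}^{\ast})$ via the determinantal formula for the induced form, while $(\cdot\vert\cdot)_{S}$ is non-degenerate by hypothesis. The freedom still available lies in the choice of $(\cdot\vert\cdot)_S$, and the key idea is to fix it using the compactness statement in thm. \ref{thm:Properties of spin rep's image}: since $\rho(\mf{k}(A))$ is compact, it is the Lie algebra of a compact subgroup $K\subset GL(S)$, and by the usual averaging argument $K$ preserves some inner product on $S$, which we take as $(\cdot\vert\cdot)_{S}$. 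With this choice every element of $\rho(\mf{k}(A))$, in particular every $\rho(X_{i})$, is skew-adjoint.

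For the first tensor factor the target is to show that $\tau(\alpha_{i})$ is self-adjoint with respect to $(\cdot\vert\cdot)_{V}$. For $V\in\{\mf{h}^{\ast},\mathrm{Sym}^{2}(\mf{h}^{\ast})\}$ one has $\tau(\alpha_{i})=\eta(s_{\alpha_{i}})-\frac{1}{2}\mathrm{Id}$, and $\eta(s_{\alpha_{i}})$ is both an isometry of $(\cdot\vert\cdot)_{V}$ (because the form is $W(A)$-invariant) and an involution, hence self-adjoint. In the $\mathcal{S}_{\frac{7}{2}}$ case there is the additional summand $f(\alpha_{i})=v(\alpha_{i})\cdot b(v(\alpha_{i}),\cdot)$, a rank-one operator whose self-adjointness is immediate from the symmetry of $b$. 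Combining this with the skew-adjointness of $2\rho(X_{i})$ via the identity $\langle(T\otimes R)(a\otimes s),b\otimes t\rangle=(Ta\vert b)_{V}(Rs\vert t)_{S}=(a\vert T^{\ast}b)_{V}(s\vert R^{\ast}t)_{S}$, the tensor of a self-adjoint and a skew-adjoint factor is skew-adjoint, so each $\sigma(X_{i})$ is skew-adjoint as required.

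Together with the Lie-algebraic reduction from the first paragraph this gives the lemma. I do not expect a genuine obstacle: the only non-formal ingredient is the invariant inner product on $S$, and this is supplied essentially for free by the compactness half of thm. \ref{thm:Properties of spin rep's image}. A small sanity check worth doing at the end is to confirm that the conclusion is stable under the choice of complexification (the form is $\mathbb{R}$-valued, so no conjugation subtlety arises), and to note that the same argument gives skew-adjointness of $\sigma(x)$ for every $x\in\mf{k}_{\alpha}$ once one knows, via the forthcoming conjugation result \ref{prop:Conjjugation lemma for Sn2}, that $\sigma(x)$ still factors as a tensor of a self-adjoint and a skew-adjoint operator up to sign.
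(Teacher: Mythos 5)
Your proposal is correct and follows essentially the same route as the paper's proof: compactness of $\mathrm{im}\,\rho$ supplies an invariant inner product on $S$ making the $\rho(X_i)$ skew-adjoint, $W(A)$-invariance of $(\cdot\vert\cdot)_V$ makes $\eta(s_{\alpha_i})$ (and the rank-one projection $f(\alpha_i)$ in the $\mathcal{S}_{\frac{7}{2}}$ case) self-adjoint, and the tensor of a self-adjoint with a skew-adjoint operator is skew-adjoint, which then extends to all of $\mathrm{im}\,\sigma$ since it is generated as a Lie algebra by the $\sigma(X_i)$. No gaps.
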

\begin{proof}
	The generalized spin representation $S$	is compact according to prop. \ref{prop:properties of gen spin reps in simply laced case}.
	Hence, $S$ admits an inner product $\left(\cdot\vert\cdot\right)_{S}$ w.r.t. which the $\rho\left(X_{i}\right)$ are skew-adjoint. 
	Furthermore $V\in \{\mf{h}^\ast , \text{Sym}^2(\mf{h}^\ast), \text{Sym}^3(\mf{h}^\ast)\}$ carries the invariant bilinear form induced by $\left(\cdot\vert\cdot\right)$ on $\mf{h}^\ast$.
	The (induced) Weyl reflection $s_\alpha$ as well as the projection $f(\alpha)$ from \ref{eq:ansatz 7/2} for $\alpha\in\Pi$ are symmetric w.r.t. $\left(\cdot\vert\cdot\right)_{V}$.
	The bilinear form  (\ref{eq: contravariant form on Sn2}) is non-degenerate because $\left(\cdot\vert\cdot\right)_{V}$ and the inner product on $S$ are.
	Skew-adjointness of $\sigma(X_i)$ extends to the image of $\sigma$ as it is generated as a Lie algebra by the $\sigma(X_i)$.
\end{proof}

We would like to remark that for the existence of a contravariant bilinear form, compactness of the underlying generalized spin representation appears to be crucial. 
One can also construct spin representations for other involutory subalgebras where the involution has a sign twist for some of the Berman generators which typically results in a non-compact representation (cp. \cite{West03, Keurentjes04}). 
In this case, a contravariant bilinear form needs not exist (cp. \cite[sec. 2.3]{Damour06b}).

\begin{proposition}[This is  \cite{Lautenbacher22}, prop. 5.10]
	\label{prop:trace-free S52}
	Let $A$ be an indecomposable, simply-laced, regular	GCM and let $\mathcal{S}_{\frac{5}{2}}$ be the representation from	thm. \ref{thm:S32 and S52} built on an irreducible  generalized spin representation.  $\mathcal{S}_{\frac{5}{2}}$ decomposes into an orthogonal sum of invariant submodules  
	\begin{align}
	\mathcal{S}_{\frac{5}{2}}\cong\widetilde{\mathcal{S}}_{\frac{5}{2}}\oplus\mathcal{S}_{\frac{1}{2}} \label{eq: universal S52-decomp}
	\end{align}
	w.r.t. the contravariant form (\ref{eq: contravariant form on Sn2}).
	The module $\widetilde{\mathcal{S}}_{\frac{5}{2}}$, called the	\emph{trace-free} part of $\mathcal{S}_{\frac{5}{2}}$, is irreducible	if the $W(A)$-module $\mathrm{Sym}^{2}\left(\mathfrak{h}^{\ast}\right)$ decomposes into exactly two irreducible factors, where one of them is always the	trivial representation.
\end{proposition}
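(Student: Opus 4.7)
The plan is to produce the $\mathcal{S}_{\frac{1}{2}}$ summand from the canonical $W(A)$-invariant element of $\mathrm{Sym}^2(\mathfrak{h}^*)$ induced by the standard invariant form, and then analyze the complement $\widetilde{\mathcal{S}}_{\frac{5}{2}}$ via the factorized action provided by Lemma \ref{lem:Split of the k(A)-action}, which lets one act independently on the two tensor factors from within $\sigma(\mathcal{U}(\mathfrak{k}(A)))$.

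Because $A$ is indecomposable, the standard invariant form on $\mathfrak{h}^*$ is unique up to scalar, and in the notation of (\ref{eq:definition of omega and its inverse}) one obtains a canonical, basis-independent, $W(A)$-invariant element $\omega := \sum_{k,l} \omega^{kl}\, e_k \cdot e_l \in \mathrm{Sym}^2(\mathfrak{h}^*)$. A short contraction of indices yields $(\omega|\omega)_V = \dim \mathfrak{h} \neq 0$, so the induced $W(A)$-invariant form on $\mathrm{Sym}^2(\mathfrak{h}^*)$ is non-degenerate on $\mathbb{R}\omega$ and $U := \omega^\perp$ is a $W(A)$-invariant complement. In particular, $\mathbb{R}\omega$ always furnishes the trivial $W(A)$-subrepresentation claimed in the statement.

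To pass to the $\mathfrak{k}(A)$-module level, note that $\eta_2(s_\alpha)(\omega) = \omega$ gives $\tau_2(\alpha)(\omega) = \tfrac{1}{2}\omega$, hence $\sigma(X_i)(\omega \otimes s) = \omega \otimes \rho(X_i)(s)$. Thus $\mathbb{R}\omega \otimes S$ is $\mathfrak{k}(A)$-invariant and isomorphic to $\mathcal{S}_{\frac{1}{2}}$, while $\widetilde{\mathcal{S}}_{\frac{5}{2}} := U \otimes S$ is the $\mathfrak{k}(A)$-invariant orthogonal complement with respect to the contravariant form (\ref{eq: contravariant form on Sn2}), orthogonality being immediate from $\omega \perp U$. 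This establishes the decomposition unconditionally.

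For the irreducibility statement, assume that $\mathrm{Sym}^2(\mathfrak{h}^*)$ has exactly two irreducible $W(A)$-factors, so that $U$ itself is $W(A)$-irreducible. For any non-zero $\mathfrak{k}(A)$-invariant subspace $W \subseteq U \otimes S$, the argument proceeds in three steps: (i) extract an elementary tensor $u \otimes s \in W$ by applying an operator $\mathrm{Id} \otimes \rho(y) \in \sigma(\mathcal{U}(\mathfrak{k}(A)))$ which, via Jacobson density on the irreducible module $S$, annihilates all but one summand of a reduced expression $v = \sum_{i=1}^r u_i \otimes s_i$; (ii) spread the first factor across $U$ using $\eta_2(w) \otimes \mathrm{Id}$ together with $W(A)$-irreducibility of $U$; (iii) spread the second factor across $S$ using $\mathrm{Id} \otimes \rho(x)$ together with $\mathfrak{k}(A)$-irreducibility of $S$. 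The principal obstacle is step (i): over $\mathbb{R}$, Jacobson density delivers the required $y$ directly only when $\mathrm{End}_{\mathfrak{k}(A)}(S) = \mathbb{R}$. When this absolute irreducibility fails, one must either perform the analogous extraction on the first tensor factor (invoking absolute irreducibility of $U$ as a $W(A)$-module, which is easier to verify since the acting group is a reflection group on a real vector space) or track the commutant throughout; this is the most delicate part of the argument.
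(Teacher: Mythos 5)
Your decomposition argument is exactly the paper's: the invariant element you call $\omega$ is the paper's $\Psi$ from (\ref{eq:symmetric element}), the computation $(\Psi\vert\Psi)=\dim\mathfrak{h}^\ast\neq 0$ is the same anisotropy argument, and the identification of $\mathbb{R}\Psi\otimes S$ with $\mathcal{S}_{\frac{1}{2}}$ is correct. The overall strategy for irreducibility --- extract an elementary tensor from an arbitrary invariant submodule, then spread it across both tensor factors using Lemma \ref{lem:Split of the k(A)-action} together with irreducibility of each factor, as in Proposition \ref{prop:S32 irrreducible} --- also coincides with the paper's.

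The gap is the one you flag yourself in step (i), and it is genuine. Jacobson density on $S$ over $\mathbb{R}$ requires $\mathrm{End}_{\mathfrak{k}(A)}(S)=\mathbb{R}$, and nothing in the hypotheses guarantees this: $\mathrm{im}\,\rho$ is a compact semi-simple Lie algebra and its irreducible real module $S$ may be of complex or quaternionic type. Your fallback of doing the extraction on the first factor via ``absolute irreducibility of $U$ as a $W(A)$-module'' is not available either: $W(A)$ is in general an infinite Coxeter group, and the hypothesis of the proposition only gives irreducibility of $\Psi^\perp$ over $\mathbb{R}$, not triviality of its commutant. The paper closes this gap by a different device: it complexifies, invokes Theorem \ref{thm:Properties of spin rep's image} to view $S_{\mathbb{C}}$ as a highest weight module of the semi-simple complex Lie algebra $\mathrm{im}(\rho)_{\mathbb{C}}$, and applies raising operators $\mathring{e}_{i_1}\cdots\mathring{e}_{i_k}$ to the weight components of maximal depth in an arbitrary element of an invariant submodule. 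Since the highest weight space is one-dimensional, every surviving component lands in the line spanned by $s_\Lambda$, which produces an elementary tensor with no hypothesis on the commutant; one then descends to $\mathbb{R}$ by noting that a real invariant submodule $U$ would yield the complex invariant submodule $U+iU$. To complete your proof you must either establish absolute irreducibility of $S$ (false in general) or replace step (i) by an extraction argument of this complexified, highest-weight type.
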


\begin{proof}
	One again uses that the action	of $\mathfrak{k}\left(A\right)$ on $V\otimes S$ can be split into
	the action on $V=\text{Sym}^2(\mf{h}^\ast)$ and $S=\mathcal{S}_{\frac{1}{2}}$, where the action on $V$ is essentially that of $W(A)$ on $V$.
	The symmetric element 
	\begin{equation}
		\Psi:=\sum_{k,l}\omega^{kl}e_{k}\otimes e_{l}\label{eq:symmetric element}
	\end{equation}
	with $\omega$ from (\ref{eq:definition of omega and its inverse})	is $W(A)$-invariant as it is invariant under any $A\in End\left(V\right)$ that is induced by	a $g\in O\left(\mathfrak{h}^{\ast}\right)$	(cp. \cite[secs. 17.3 \& 19.5]{Fulton Harris}).
	Since $S$ admits an inner product and $\left(\Psi\vert\Psi\right)=\dim\mathfrak{h}^{\ast}$, one has that $\mathbb{K}\cdot\Psi\otimes S$ is anisotropic w.r.t. the contravariant form (\ref{eq: contravariant form on Sn2}).
	Thus, its  orthogonal complement is an invariant submodule which we call the trace-free part.
	
	In order to describe irreducibility of its complement in terms of the action of $W(A)$ we proceed as before and try to show that there exists an elementary tensor in $\widetilde{\mathcal{S}}_{\frac{5}{2}}$.
	It may be possible to derive this by the induced action of $W(A)$ on $\Psi^\perp$ similar to the proof of prop. \ref{prop:S32 irrreducible}, but this time it is in fact more economic to work with $S$ instead.
	One has from thm. \ref{thm:Properties of spin rep's image}	that $\text{im}\,(\rho)\subset End(S)$ is semi-simple. 
	The irreducible $\text{im}\,(\rho)$-module $S$ becomes a finite-dimensional highest weight module of some semi-simple complex Lie algebra $\mathring{\mf{g}}=\text{im}\,(\rho)_{\mathbb{C}}$ after complexification.
	This implies that $S$ decomposes into weight spaces w.r.t. to a Cartan subalgebra $\mathring{\mathfrak{h}}$ of $\mathring{\mathfrak{g}}$ whose triangular   decomposition we denote by $\mathring{\mathfrak{n}}_{-}\oplus\mathring{\mathfrak{h}}\oplus\mathring{\mathfrak{n}}_{+}$; we also denote the Chevalley generators of $\mathring{\mathfrak{n}}_{+}$ by $\mathring{e}_{1},\dots\mathring{e}_{m}$ and the weights of $S_\mathbb{C}$ by $P(S)$.
	Any invariant sub-module $U_\mathbb{C}$ of $\widetilde{\mathcal{S}}_{\frac{5}{2},\mathbb{C}}$ therefore admits a basis whose elements are linear combinations of elementary tensors $\alpha\beta\otimes s_{\lambda}^{j}$ with $\alpha,\beta\in\mf{h}^\ast$ and $s_\lambda^{j}$ for $j=1,\dots,\text{mult}(\lambda)$ weight vectors of weight $\lambda\in P(S)\subset\mathring{\mf{h}}^\ast$. 
	As $S$ is a highest weight module each $\lambda$ can be written uniquely as $\lambda=\Lambda-\sum_{i=1}^m k_i \mathring{\beta}_i$, where the $\mathring{\beta}_i$ denote the simple roots of $\mathring{\mf{h}}^\ast$ and the $k_i$ are non-negative.
	As the basis of $U_{\mathbb{C}}$ is finite, there exist weights $\lambda$ of maximal depth (defined as $\sum k_i$) $k$ such that $s_\lambda$ occurs in the basis of $U_{\mathbb{C}}$. 
	To each such $s_\lambda^{i}$ there exists an element $e_+(\lambda,i)=\mathring{e}_{i_1}\cdots \mathring{e}_{i_k}\in \mathcal{U}(\mathring{\mf{n}}_+)$ such that $e_+(\lambda,i)s_\lambda^i \in S_\Lambda$ and is nonzero.
	This is because to each nonzero $x\in S_\lambda$ there exists an $i=1,\dots,m$ such that $\mathring{e}_{i}x\neq0$ unless $\lambda=\Lambda$ by uniqueness of the highest weight vector.  
	The same applies to any nonzero linear combination of the $s_\lambda^{i}$. 
	Furthermore, the $e_+(\lambda,i)$ map any $s_{\mu}^j$ with $\mu\neq\lambda$ but of the same or smaller depth to $0$. 
	Thus, with $\{b_1,\dots,b_N\}$ a basis of $\text{Sym}^2(\mf{h}^\ast)$ and given
	\[
	u=\sum_{i=1}^{N}\sum_{\mu\in P(S)}\sum_{j=1}^{mult\left(\mu\right)}c_{ij}^{(\mu)}b_{i}\otimes s^{j}_{(\mu)}\in U_{\mathbb{C}},
	\]
	there exists $e_+(\lambda)$ such that
	\begin{align*}
	(Id\otimes e_+(\lambda))u &= \sum_{i=1}^{N}\sum_{j=1}^{mult\left(\lambda\right)}c_{ij}^{(\lambda)}b_{i}\otimes e_+(\lambda) s^{j}_{(\lambda)}
	= \sum_{i=1}^{N}\left( b_{i} \otimes \left[\sum_{j=1}^{mult\left(\lambda\right)}c_{ij}^{(\lambda)}e_+(\lambda) s^{j}_{(\lambda)}\right] \right)\\
	&= \sum_{i=1}^{N}b_{i} \otimes k_i s_\Lambda =  \left(\sum_{i=1}^{N}k_{i}b_{i}\right) \otimes s_\Lambda
	\end{align*}
	with at least one $k_i\neq 0$.
	Therefore, an invariant submodule $U_{\mathbb{C}}$ of $\widetilde{\mathcal{S}}_{\frac{5}{2},\mathbb{C}}$ always contains an elementary tensor. 
	As in the proof of prop. \ref{prop:S32 irrreducible}, one now deduces from lemma \ref{lem:Split of the k(A)-action} that $\widetilde{\mathcal{S}}_{\frac{5}{2},\mathbb{C}}$ is irreducible if the action of $W(A)$ on $\Psi^\perp \subset \text{Sym}^2(\mf{h}^\ast)$ is.
	This in turn implies irreducibility of the real module $\widetilde{\mathcal{S}}_{\frac{5}{2}}$ because if $U$ were an invariant submodule of $\widetilde{\mathcal{S}}_{\frac{5}{2}}$, then $U_{\mathbb{C}}=U+iU$ is an invariant submodule of $\widetilde{\mathcal{S}}_{\frac{5}{2},\mathbb{C}}$.

\end{proof}
\begin{remark}
	Note that $\Psi^\perp\subset\text{Sym}^{2}\left(\mathfrak{h}^{\ast}\right)$ can be irreducible or not; 
	an example for the latter case is $A=A_{n-1}$. In this case, $W\left(A_{n-1}\right)$ is isomorphic to the symmetric group $\mf{S}_n$ and $\mathfrak{h}^{\ast}$ is isomorphic to its standard representation.
	According to \cite[ex. 4.19]{Fulton Harris}, $\text{Sym}^{2}V\cong U\oplus V\oplus V_{(n-2,2)}$	where $U$ denotes the trivial representation, $V$ denotes the standard representation and $V_{(n-2,2)}$ is the irreducible representation associated to the partition $(n-2,2)$ of $n$. 
	The exceptional diagrams $E_{6}$, $E_{7}$ and $E_{8}$ provide an example for $\Psi^\perp$ being irreducible: 
	According to \cite[tbls. C.4-6]{Geck-Pfeiffer}, $W\left(E_{n}\right)$ for $n=6,7,8$ admits an irreducible character of degree $\begin{pmatrix}n+1\\
		2
	\end{pmatrix}-1$ that occurs\footnote{This can be seen from the value of $b_{\chi}$ in this table. For
		an irreducible character $\chi$ a value of $b_{\chi}=d$ means that	$\text{Sym}^{d}\left(V\right)$ is the smallest symmetric product of $V$ that affords $\chi$ as an irreducible component.} 
		in $\text{Sym}^{2}\left(V\right)$, where $V$ denotes the standard representation of $W\left(E_{n}\right)$ as before.
\end{remark}
We would like to show that the image of $\mf{k}(A)$ under the representation $(\sigma,\mathcal{S}_{\frac{5}{2}})$ is a semi-simple Lie algebra by showing that its image is reductive as we did in cor. \ref{cor: image is semi-simple} and then exploit perfectness of $\mf{k}(A)$.
Since even the module  $\widetilde{\mathcal{S}}_{\frac{5}{2}}$ is not always irreducible, we need to show more generally that the module is completely reducible. 
By the factorization of the actions this essentially requires to show that $\text{Sym}^2(V)$ is a completely reducible $W(A)$-module which is not obvious if $W(A)$ is not finite.
\begin{lemma}\label{lem: Sym2V completely reducible}
	Let $A$ be an indecomposable, simply-laced, regular	GCM and denote by $V=\text{Sym}^2(\mf{h}^\ast)$ the $W(A)$-module induced by the natural action of $W(A)$ on $\mf{h}^\ast$. 
	The module $V$ is completely reducible.
\end{lemma}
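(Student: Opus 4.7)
The plan is to decompose $V$ inductively by splitting off invariant orthogonal complements with respect to the $W(A)$-invariant symmetric bilinear form on $V$ inherited from $\mf{h}^\ast$. Because $A$ is regular, the standard form $(\cdot\vert\cdot)$ on $\mf{h}^\ast$ is non-degenerate and $W(A)$ acts on $\mf{h}^\ast$ by isometries, so the induced form $(\cdot\vert\cdot)_V$ on $V=\mathrm{Sym}^2(\mf{h}^\ast)$ is a non-degenerate $W(A)$-invariant symmetric bilinear form. Just as in the proof of prop.~\ref{prop:trace-free S52}, the trivial submodule $\mathbb{R}\Psi$ spanned by the element $\Psi$ from \eqref{eq:symmetric element} is anisotropic and splits off to yield $V=\mathbb{R}\Psi\oplus\Psi^\perp$ as $W(A)$-modules. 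For an arbitrary $W(A)$-invariant subspace $U\subseteq V$, the orthogonal complement $U^\perp$ is again $W(A)$-invariant, and one has $V=U\oplus U^\perp$ as soon as the restriction of $(\cdot\vert\cdot)_V$ to $U$ is non-degenerate; induction on $\dim V$ then delivers complete reducibility.

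The main technical step is therefore to show that $(\cdot\vert\cdot)_V\vert_U$ is non-degenerate for every invariant $U$, equivalently that the radical $R:=U\cap U^\perp$ vanishes. My plan is to identify $V$ with the space $\mathrm{Sym}(\mf{h}^\ast)$ of self-adjoint endomorphisms of $\mf{h}^\ast$ (with respect to $(\cdot\vert\cdot)$) via the non-degenerate form, under which identification $W(A)$ acts by conjugation and $(\cdot\vert\cdot)_V$ corresponds to (a positive scalar multiple of) the trace form $(T_1,T_2)\mapsto\mathrm{tr}(T_1T_2)$. Under this picture $R$ becomes a $W(A)$-conjugation-invariant subspace of $\mathrm{Sym}(\mf{h}^\ast)$ which is totally isotropic for the trace form. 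Now $\mf{h}^\ast$ is absolutely irreducible as a $W(A)$-module: irreducibility over $\mathbb{R}$ is lemma~\ref{lem:h* is irreducible W(A)-module for regular A}, while $\mathrm{End}_{W(A)}(\mf{h}^\ast)=\mathbb{R}$ follows from the observation that any $W(A)$-equivariant endomorphism $T$ sends each simple root $\alpha_i$ to a $(-1)$-eigenvector of $s_i$, i.e., to $c_i\alpha_i$ for some $c_i\in\mathbb{R}$, and since all simple roots lie in a single $W(A)$-orbit in the simply-laced indecomposable case, these constants must all coincide. By Burnside's theorem this yields $\mathbb{R}[W(A)]=\mathrm{End}(\mf{h}^\ast)$, which is then exploited to restrict the possible shapes of $R$ and rule out any non-zero totally isotropic candidate inside $\mathrm{Sym}(\mf{h}^\ast)$.

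The principal obstacle is exactly this last step: propagating the Burnside density of $\mathbb{R}[W(A)]$ inside $\mathrm{End}(\mf{h}^\ast)$ to a classification of $W(A)$-invariant subspaces of $V$. The difficulty is that the representation on $V=\mathrm{Sym}^2(\mf{h}^\ast)$ is a quadratic, not linear, functor of the representation on $\mf{h}^\ast$, so density on $\mf{h}^\ast$ does not transport verbatim to $V$. In the finite-type cases $W(A)$ is a finite group and Maschke's theorem supplies complete reducibility immediately; in the indefinite cases the form $(\cdot\vert\cdot)$ on $\mf{h}^\ast$ is genuinely indefinite, so self-adjoint operators may be nilpotent and a naive positivity argument based on $\mathrm{tr}(T^2)$ breaks down. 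The indefinite case thus presumably requires either a refined analysis of $W(A)$-orbits on $\mathrm{Sym}(\mf{h}^\ast)$ or a Zariski-density argument identifying $\overline{W(A)}^{\mathrm{Zar}}$ with a reductive subgroup of $O(\mf{h}^\ast,(\cdot\vert\cdot))$ whose action on $V$ is known to be completely reducible by general reductive-group representation theory.
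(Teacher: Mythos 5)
Your reduction is correct as far as it goes: the induced form $(\cdot\vert\cdot)_V$ is $W(A)$-invariant and non-degenerate, orthogonal complements of invariant subspaces are invariant, and complete reducibility follows once one shows that the radical $R=U\cap U^\perp$ vanishes for every invariant $U$. This is exactly the reduction the paper makes. But the proposal then stops precisely at the point where the actual work of the lemma lies: you yourself concede that Burnside density of $\mathbb{R}[W(A)]$ in $\mathrm{End}(\mathfrak{h}^\ast)$ does not transport to $\mathrm{Sym}^2(\mathfrak{h}^\ast)$, that the trace-form positivity argument fails in the indefinite case, and that one ``presumably requires'' either an orbit analysis or a Zariski-density argument --- neither of which you carry out. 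Establishing that the Zariski closure of $W(A)$ in $O(\mathfrak{h}^\ast,(\cdot\vert\cdot))$ is reductive (or is the full orthogonal group) is itself a nontrivial claim that you neither prove nor cite, so as written the proof has a genuine gap at its central step.

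For comparison, the paper closes this gap by a completely elementary, explicit computation in coordinates adapted to the fundamental weights. First it shows that the space of $W(A)$-fixed vectors in $V$ is exactly $\mathbb{R}\Psi$ (one dimension), and $\Psi$ is anisotropic, so the radical contains no nonzero invariant vector. Then, for $0\neq u\in R$ not fixed by some $s_i$, one computes $s_iu-u=\alpha_i\beta$ with $(\beta\vert\alpha_i)=0$; total isotropy of $R$ forces $(\beta\vert\beta)=0$, and regularity supplies a $j$ with $(\alpha_j\vert\beta)\neq 0$. Applying $s_j$ once more produces, in either case $(\alpha_i\vert\alpha_j)=0$ or $=-1$, an element of $R$ of explicitly nonzero norm ($\frac{1}{2}c^2(\alpha_i\vert\alpha_i)(\alpha_j\vert\alpha_j)$ resp.\ $6c^2$), a contradiction. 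If you want to salvage your approach, you would need to either supply the Zariski-density input with a reference or replace the final step by a direct computation of this kind; as it stands the proposal identifies the obstacle but does not overcome it.
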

\begin{proof}
	The module $V$ comes with a natural $W(A)$-invariant bilinear form $(\cdot\vert\cdot)$. 
	In order to show complete reducibility, we need to show that any invariant submodule $U$ has an invariant complement.
	The submodule $U^\perp$ orthogonal to $U$ w.r.t. $(\cdot\vert\cdot)$ is a natural candidate by $W(A)$-invariance.
	$U^\perp$ is an invariant complement if and only if $U^\perp \cap U=\{0\}$.
	If $A$ is of finite type there is nothing to prove since $(\cdot\vert\cdot)$ is positive definite in this case.
	For $A$ indefinite however, this requires some consideration. We will show first that $\mathbb{K}\Psi$ with $\Psi$ from (\ref{eq:symmetric element}) is the only $W(A)$-invariant subspace of dimension $1$. 
	We then show that $s_i.u\neq u$ for $u\in U^\perp \cap U$ leads to a contradiction, so that $U^\perp \cap U$ can only be trivial because $\Psi$ is not isotropic.
	
	First of all,  $U^\perp \cap U$ is a $W(A)$-invariant submodule, too. 
	As a consequence, $(u\vert u)=0$ for all $u\in U^\perp \cap U$. 
	Consider a basis $\omega_1,\dots,\omega_n$ of $\mf{h}^\ast$ defined by $(\alpha_i \vert \omega_j)=\delta_{ij}$ (since $A$ is simply-laced, these are  the fundamental weights) and spell out $u=\sum_{i,j} c_{ij}\omega_i \omega_j$ and $\alpha_i = \sum_{j} B^{(i)}_j \omega_j$ .
	Now $s_iu=u$ for all $i$ is equivalent to
	\begin{align*}
		0&=s_iu-u = c_{ii}(\omega_i-\alpha_i)(\omega_i-\alpha_i) - c_{ii}\omega_i \omega_i +(\omega_i-\alpha_i)\sum_{j\neq i} c_{ij}\omega_j - \omega_i \sum_{j\neq i} c_{ij}\omega_j \\
		&= c_{ii}\alpha_i\alpha_i-2c_{ii}\alpha_i\omega_i -\alpha_i \sum_{j\neq i} c_{ij}\omega_j
		= \alpha_i \left[ c_{ii}\sum_{j} B^{(i)}_j \omega_j -2c_{ii}\omega_i-\sum_{j\neq i} c_{ij}\omega_j\right] 
	\end{align*}
	which is only zero if the second factor is. This leads to the system of equations
	\begin{align*}
		c_{ii} B^{(i)}_i=2c_{ii}\quad,\quad c_{ii} B^{(i)}_j= c_{ij} \ \forall\,j\neq i\:.
	\end{align*}
	The first equation is satisfied for all $i$ because $s_i \alpha_i =-\alpha_i$ yields that $B^{(i)}_i=2$ so that the second equation fixes all $c_{ij}$ for $j\neq i$.
	Thus, if all $c_{ii}$ were $0$, then $u=0$ so that we can from now on assume otherwise.
	Let $i$ be such that $c_{ii}\neq0$ and pick $j$ such that $\{i,j\}\in\mathcal{E}(A)$ which is possible since $A$ is indecomposable.
	One has $0\neq (\alpha_i \vert \alpha_j)=B^{(i)}_j (\omega_j \vert \alpha_j)= B^{(i)}_j$ but also $0\neq (\alpha_i \vert \alpha_j)=B^{(j)}_i (\alpha_i \vert \omega_i)= B^{(j)}_i$ so that $B^{(j)}_i = B^{(i)}_j$ for all $i,j$. 
	Since $c_{ij}=c_{ji}$ because $u\in Sym^2(\mf{h^\ast})$ one has $c_{ii} B^{(i)}_j = c_{ij} = c_{ji} = c_{jj} B^{(j)}_i$ and therefore $c_{jj}=c_{ii}$ for all $j$.
	Thus, the space of all $W(A)$-invariant vectors is of dimension $1$ and therefore equal to $\mathbb{K}\Psi$.
	We had already seen that the norm of $\Psi$ is nonzero so that $\Psi^\perp \cap \mathbb{K}\Psi=\{0\}$.
	
	Now assume $u\in \Psi^\perp\cap U\cap U^\perp$ were nonzero. 
	Then there exists $i$ such that $s_iu\neq u$.
	Spelling out $u$ in the slightly altered basis $\{\alpha_i\}\cup\{ \omega_j\,\vert\,j\neq i\}$ as  $u=c_{ii} \alpha_i \alpha_i +\alpha_i\sum_{j\neq i} c_{ij}\omega_j + \sum_{j\neq i \neq k} c_{jk} \omega_j \omega_k$ yields
	\begin{align*}
		s_iu-u &= -2\alpha_i \sum_{j\neq i} c_{ij}\omega_j\eqqcolon \alpha_i\beta \in \Psi^\perp\cap U\cap U^\perp\:.
	\end{align*}
	One has that $(\beta\vert\alpha_i)=0$ and so $\alpha_i\beta \in \Psi^\perp\cap U\cap U^\perp$ implies 
	\begin{align*}
		0&= \left( \alpha_i\beta \vert \alpha_i \beta \right)= \frac{1}{2} \left(\alpha_i\vert\alpha_i\right)\left(\beta\vert\beta\right)+ \frac{1}{2} \left(\alpha_i\vert\beta\right)\left(\beta\vert\alpha_i\right) = \left(\beta\vert\beta\right)\:.
	\end{align*}
Since $A$ is regular and $\beta$ is nonzero, there exists $j\neq i$ such that $(\alpha_j\vert\beta)\eqqcolon c\neq0$.\\
Case 1: $(\alpha_i\vert\alpha_j)=0$.\\
One has $-s_j(\alpha_i\beta)+\alpha_i\beta = c\alpha_i\alpha_j\in \Psi^\perp\cap U\cap U^\perp$, 
but $ \left(c\alpha_i\alpha_j\, \vert\,c\alpha_i\alpha_j\right)=\frac{1}{2}c^2(\alpha_i\vert\alpha_i)(\alpha_j\vert\alpha_j)\neq0$ contradicts this.\\
Case 2: $(\alpha_i\vert\alpha_j)=-1$.
\begin{align*}
	s_j(\alpha_i\beta) -\alpha_i\beta &= c\alpha_i\alpha_j +\beta\alpha_j+c\alpha_j\alpha_j \eqqcolon w \:,\\
	\left(w\vert w\right) &= \frac{5}{2}c^2-2c\cdot\frac{1}{2}c -4c^2+\frac{1}{2}c^2+4c^2+4c^2\\
	&= 6c^2\neq0\:,
\end{align*}
which is again a contradiction to $w\in \Psi^\perp\cap U\cap U^\perp$.
Therefore $U\cap U^\perp =\{0\}$  and invariance of $(\cdot\vert\cdot)$ show that $V$ is a completely reducible $W(A)$-module.
\end{proof}

\begin{corollary}\label{cor: image of S52 is semi-simple}
	Let $A$ and $\mathcal{S}_{\frac{5}{2}}$ be as in prop. \ref{prop:trace-free S52}, then the image of $\mf{k}(A)$ under this representation is semi-simple.
\end{corollary}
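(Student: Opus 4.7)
The plan is to mirror the proof of Corollary \ref{cor: image is semi-simple}: invoke Bourbaki's characterization of reductive Lie algebras \cite[I.6.4 prop. 5]{Bourbaki75}, which says that a Lie algebra is reductive if and only if it admits a faithful, finite-dimensional, completely reducible representation. The inclusion $\mathrm{im}\,(\sigma) \hookrightarrow \mathrm{End}(\mathcal{S}_{\frac{5}{2}})$ is tautologically faithful and finite-dimensional, so the only non-trivial input is complete reducibility of $\mathcal{S}_{\frac{5}{2}}$ as an $\mathrm{im}\,(\sigma)$-module, equivalently as a $\mathfrak{k}(A)$-module. Once reductiveness is established, perfectness of $\mathfrak{k}(A)$ (each Berman generator satisfies $X_j = -[X_i,[X_i,X_j]]$ for $i$ adjacent to $j$, and $A$ is indecomposable and simply-laced) forces $\mathrm{im}\,(\sigma)$ to be perfect; a perfect reductive Lie algebra is semi-simple.

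The key step is to decompose $\mathcal{S}_{\frac{5}{2}}$ into irreducible $\mathfrak{k}(A)$-submodules. By Lemma \ref{lem: Sym2V completely reducible}, the $W(A)$-module $V = \mathrm{Sym}^{2}(\mathfrak{h}^{\ast})$ admits a decomposition $V = \bigoplus_{j} V_{j}$ into irreducible $W(A)$-submodules. This yields the vector space decomposition
\begin{equation*}
\mathcal{S}_{\frac{5}{2}} = V \otimes \mathcal{S}_{\frac{1}{2}} = \bigoplus_{j} V_{j} \otimes \mathcal{S}_{\frac{1}{2}}\:.
\end{equation*}
By Lemma \ref{lem:Split of the k(A)-action}, for every $w \in W(A)$ there is $y_{1} \in \mathcal{U}(\mathfrak{k}(A))$ with $\sigma(y_{1}) = \eta(w) \otimes \mathrm{Id}$, and for every $x \in \mathfrak{k}(A)$ there is $y_{2} \in \mathcal{U}(\mathfrak{k}(A))$ with $\sigma(y_{2}) = \mathrm{Id} \otimes \rho(x)$; hence each summand $V_{j} \otimes \mathcal{S}_{\frac{1}{2}}$ is a $\mathfrak{k}(A)$-submodule.

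To see that each $V_{j} \otimes \mathcal{S}_{\frac{1}{2}}$ is irreducible, one repeats the argument of Proposition \ref{prop:trace-free S52} verbatim: pass to complexifications, view $\mathcal{S}_{\frac{1}{2},\mathbb{C}}$ as a finite-dimensional highest weight module of the semi-simple complex Lie algebra $\mathrm{im}\,(\rho)_{\mathbb{C}}$, and use elements of $\mathcal{U}(\mathring{\mathfrak{n}}_{+})$ acting on the second tensor factor to produce a nonzero element of $V_{j,\mathbb{C}} \otimes \mathbb{C}s_{\Lambda}$ inside any nonzero invariant submodule, where $s_{\Lambda}$ is the highest weight vector. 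The $W(A)$-irreducibility of $V_{j}$, lifted to a $\mathfrak{k}(A)$-action via Lemma \ref{lem:Split of the k(A)-action}, then sweeps out the full summand. Passing back to real forms, complete reducibility of $\mathcal{S}_{\frac{5}{2}}$ follows.

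The only point requiring a little care is the claim that the argument of Proposition \ref{prop:trace-free S52} applies to each $V_{j}$ rather than only to $\Psi^{\perp}$, but this is immediate because that proof only uses $W(A)$-irreducibility of the first tensor factor together with the highest weight structure on $\mathcal{S}_{\frac{1}{2}}$. With complete reducibility of $\mathcal{S}_{\frac{5}{2}}$ as a $\mathfrak{k}(A)$-module in hand, Bourbaki's criterion yields that $\mathrm{im}\,(\sigma)$ is reductive, and perfectness upgrades this to semi-simplicity, as in Corollary \ref{cor: image is semi-simple}.
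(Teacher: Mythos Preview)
Your proof is correct and follows the paper's approach: invoke Lemma~\ref{lem: Sym2V completely reducible} to get complete reducibility of $\mathrm{Sym}^2(\mathfrak{h}^\ast)$ as a $W(A)$-module, deduce complete reducibility of $\mathcal{S}_{\frac{5}{2}}$ as a $\mathfrak{k}(A)$-module, and then repeat the Bourbaki-plus-perfectness argument from Corollary~\ref{cor: image is semi-simple}. The paper compresses all of this into two sentences; you spell out the intermediate step (irreducibility of each $V_j\otimes\mathcal{S}_{\frac{1}{2}}$) explicitly, which is fine.

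One small logical slip: the $\mathfrak{k}(A)$-invariance of each summand $V_j\otimes\mathcal{S}_{\frac{1}{2}}$ does not follow from Lemma~\ref{lem:Split of the k(A)-action}. That lemma tells you that certain factored operators lie in the image of $\mathcal{U}(\mathfrak{k}(A))$ under $\sigma$; it does not say that \emph{every} $\sigma(x)$ preserves the tensor decomposition. The correct (and immediate) justification is simply that $\sigma(X_i)=(\eta(s_i)-\tfrac{1}{2}\mathrm{Id})\otimes 2\rho(X_i)$ and each $V_j$ is $W(A)$-invariant, so $\eta(s_i)$ (hence $\tau(\alpha_i)$) preserves $V_j$. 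With that adjustment the argument goes through.
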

\begin{proof}
	If $\text{Sym}^{2}\left(\mathfrak{h}^{\ast}\right)$ is a completely reducible $W(A)$-module, then $\mathcal{S}_{\frac{5}{2}}$ is a completely reducible $\mf{k}(A)$-module. 
	This implies the claim by the same reasoning as in cor. \ref{cor: image is semi-simple}.
\end{proof}

\begin{lemma} 
	\label{lem:kernels of products either or}
	Let $\rho_{i}:\mathfrak{k}\left(A\right)\left(\mathbb{K}\right)\rightarrow\text{End}(V_{i})$
	for $i=1,2$ be f.d. representations and let $x\in\ker\left(\rho_{1}\otimes\rho_{2}\right)$.
	Then either $x\in\ker\left(\rho_{1}\right)\cap\ker\left(\rho_{2}\right)$
	or there exists $0\neq\lambda\in\mathbb{C}$ such that
	\begin{equation}
		\rho_{1}(x)=\lambda\cdot Id_{V_{1}}\ \text{and }\rho_{2}(x)=-\lambda\cdot Id_{V_{2}}.\label{eq:nontrivial condition}
	\end{equation}
\end{lemma}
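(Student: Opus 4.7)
The plan is to unwind the definition of the Lie-algebra tensor product. For any representations $\rho_1,\rho_2$ of a Lie algebra one has $(\rho_1\otimes\rho_2)(x)=\rho_1(x)\otimes Id_{V_2}+Id_{V_1}\otimes\rho_2(x)$, so setting $A\coloneqq\rho_1(x)$ and $B\coloneqq\rho_2(x)$, the hypothesis $x\in\ker(\rho_1\otimes\rho_2)$ reads
\[
A\otimes Id_{V_2}\;=\;-\,Id_{V_1}\otimes B\:.
\]
The whole content of the lemma will come from matching entries on the two sides of this identity.

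The core step is a block-matrix computation. I would fix a basis $\{e_1,\dots,e_n\}$ of $V_2$ and write $Be_j=\sum_k B_{kj}e_k$. Viewing $V_1\otimes V_2$ as the $n$-fold direct sum $\bigoplus_j V_1\otimes e_j$, the operator $A\otimes Id_{V_2}$ is block-diagonal with block $A$ in every slot, while $Id_{V_1}\otimes B$ has block $B_{kj}\cdot Id_{V_1}$ in position $(k,j)$. Comparing the two sides block by block yields $B_{kj}\cdot Id_{V_1}=0$ for $k\neq j$, hence $B_{kj}=0$ off the diagonal, together with $A=-B_{jj}\cdot Id_{V_1}$ for every $j$. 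These diagonal equations force all $B_{jj}$ to agree on a single scalar $\lambda$, so that $B=\lambda\cdot Id_{V_2}$ and $A=-\lambda\cdot Id_{V_1}$.

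From here the dichotomy of the lemma is immediate. Either $\lambda=0$, in which case $\rho_1(x)=0$ and $\rho_2(x)=0$, giving $x\in\ker(\rho_1)\cap\ker(\rho_2)$; or $\lambda\neq 0$, and after relabelling $\lambda\leftrightarrow -\lambda$ to match the sign convention of the statement one obtains $\rho_1(x)=\lambda\cdot Id_{V_1}$ and $\rho_2(x)=-\lambda\cdot Id_{V_2}$ as in (\ref{eq:nontrivial condition}). There is no real obstacle to this argument; the only point to watch is that one must employ the \emph{derivation} (i.e.\ Lie-algebraic) rule for the tensor product rather than the multiplicative group-theoretic one, which is automatic in the present context.
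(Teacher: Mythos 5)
Your proof is correct and follows essentially the same route as the paper's: both unwind the derivation rule $(\rho_1\otimes\rho_2)(x)=\rho_1(x)\otimes Id+Id\otimes\rho_2(x)$ and compare coefficients in a basis of the tensor product to conclude that both operators must be scalar with opposite scalars, yielding the stated dichotomy. Your block-matrix packaging of the coefficient comparison is just a cleaner presentation of the paper's componentwise computation.
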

\begin{proof}
	One has $\ker\rho_{1}\cap\ker\rho_{2}\subset\ker\left(\rho_{1}\otimes\rho_{2}\right)$ because of 
	\begin{align*}
		\left(\rho_{1}\otimes\rho_{2}\right)(x)(v\otimes w) &=  \left(\rho_{1}(x)v\right)\otimes w+v\otimes\left(\rho_{2}\left(x\right)w\right)\\
		&=  0\otimes w+v\otimes0=0\ \forall\,v\in V_{1},w\in V_{2}.
	\end{align*}	
	Now assume $x\in\ker\left(\rho_{1}\otimes\rho_{2}\right)$ but $x\notin\ker\left(\rho_{1}\right)$, then	
	$(\rho_{1}\otimes\rho_{2}) (x)$ needs to vanish everywhere and so in particular on every element of a basis of $V_1\otimes V_2$. 
	Take bases $\left\{ e_{1},\dots,e_{n}\right\} $	and $\left\{ f_{1},\dots,f_{m}\right\} $ of $V_{1}$, resp. $V_{2}$ and compute
	\begin{align*}
		\rho_{1}(x)e_{i}\otimes f_{j}+e_{i}\otimes\rho_{2}(x)f_{j}  &=  0\\
		\Leftrightarrow\ \sum_{k\neq i}^{n}\rho_{1}(x)_{ki}e_{k}\otimes f_{j}+\rho_{1}(x)_{ii}e_{i}\otimes f_{j} &+\sum_{l\neq j}^{m}\rho_{2}(x)_{lj}e_{i}\otimes f_{l}+\rho_{2}(x)_{jj}e_{i}\otimes f_{j} =0\\
	\end{align*}
	for all $1\leq i\leq n,\,1\leq j\leq m$.
	This holds if and only if $\rho_{1}(x)$ and $\rho_{2}(x)$ are diagonal and s.t.
	$\rho_{1}(x)_{ii}=-\rho_{2}(x)_{jj}\neq0$ for all $1\leq i\leq n,\,1\leq j\leq m$.
\end{proof}

\begin{lemma} 
	Let $\rho:\mathfrak{k}\left(A\right)\left(\mathbb{K}\right)\rightarrow\text{End}(V)$ be a finite-dimensional representation, then
	\begin{align}
	\ker\left(\rho\otimes\rho\right)=\ker\left(\rho\right),\ \ker\left(\text{Sym}^{n}(\rho)\right)=\ker\left(\rho\right),\ \ker\left(\wedge^{n}\rho\right)=\ker\rho,\ \label{eq: kernels of sym and antisym powers}
	\end{align}
	as long as $n<\dim(V)$.
\end{lemma}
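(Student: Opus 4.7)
The plan is to handle the three identities in turn, each reducing to a basis-level computation on $y\coloneqq\rho(x)$. The containment $\ker\rho\subseteq\ker(\rho\otimes\rho),\,\ker(\text{Sym}^{n}\rho),\,\ker(\wedge^{n}\rho)$ is immediate from the Leibniz rule governing each induced action, so only the reverse inclusions require argument.

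For $\rho\otimes\rho$ I would invoke the preceding lemma \ref{lem:kernels of products either or} with $\rho_1=\rho_2=\rho$: any $x\in\ker(\rho\otimes\rho)$ then either already lies in $\ker\rho\cap\ker\rho=\ker\rho$, or satisfies $\rho(x)=\lambda\cdot Id_V=-\lambda\cdot Id_V$ for some $\lambda\neq 0$, which is absurd. Hence $x\in\ker\rho$.

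For $\text{Sym}^n\rho$ I would evaluate $\text{Sym}^n(\rho)(x)$ on $v^{\cdot n}$; the Leibniz rule yields $n\cdot v^{\cdot(n-1)}\cdot yv=0$ in $\text{Sym}^n V$ for every $v\in V$. Fixing a basis $e_1,\dots,e_m$ of $V$, specialising $v=e_i$ and expanding $ye_i=\sum_j y_{ji} e_j$, the identity reads $\sum_j y_{ji}\, e_i^{\cdot(n-1)}\cdot e_j=0$. Since $\{e_i^{\cdot(n-1)}\cdot e_j\}_{j=1,\dots,m}$ are distinct basis vectors of $\text{Sym}^n V$, all $y_{ji}$ vanish, hence $y=0$.

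For $\wedge^n\rho$ I would proceed analogously: expanding $\wedge^n\rho(x)(e_{i_1}\wedge\cdots\wedge e_{i_n})$ on basis wedges with $i_1<\cdots<i_n$ gives $\sum_{k,j} y_{j,i_k}\, e_{i_1}\wedge\cdots\wedge e_j\wedge\cdots\wedge e_{i_n}$ with $e_j$ in slot $k$. Terms with $j\in\{i_1,\dots,i_n\}\setminus\{i_k\}$ vanish by repetition, leaving off-diagonal contributions (for $j\notin\{i_1,\dots,i_n\}$) that produce distinct basis wedges with coefficient $\pm y_{j,i_k}$, together with a single diagonal coefficient $\sum_k y_{i_k i_k}$ on $e_{i_1}\wedge\cdots\wedge e_{i_n}$. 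The hypothesis $n<\dim V$ enters twice: it lets me, for any $i\neq j$, choose an $n$-subset containing $i$ and missing $j$, thereby forcing $y_{ji}=0$; and it permits comparing the trace-type relations $\sum_{i\in S}y_{ii}=0$ across two $n$-subsets differing by a single index, so that all diagonal entries agree on a common value $\lambda$ with $n\lambda=0$, i.e.\ $\lambda=0$ in characteristic zero.

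The only real bookkeeping is in the wedge case, where one has to track signs from reordering the inserted index back to its standard position; the symmetric and tensor cases are essentially immediate once the respective Leibniz rule is applied.
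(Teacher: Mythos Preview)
Your proof is correct and follows essentially the same approach as the paper: invoking the preceding lemma for $\rho\otimes\rho$, evaluating on pure powers $e_i^{\cdot n}$ for $\text{Sym}^n$, and reading off diagonal and off-diagonal coefficients from basis wedges for $\wedge^n$. You supply more detail than the paper does, particularly in the wedge case where you spell out how the hypothesis $n<\dim V$ is used to vary the index set, whereas the paper simply states the resulting conditions and suggests looking at $\bigwedge^2 V$ first.
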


\begin{proof}
	For $\rho\otimes\rho$ one applies lemma \ref{lem:kernels of products either or}.
	For $\text{Sym}^{n}(\rho)$	consider a basis $\left\{ b_{1},\dots,b_{m}\right\} $ of $V$, then $x.b_{i}\cdot b_{i}\cdot\dots\cdot b_{i}=0$ is equivalent to $\rho\left(x\right)_{ji}=0$ for all $i,j=1,\dots,m$.
	Similarly, one computes on $\bigwedge^{n}V$ that $x.b_{i_{1}}\wedge\dots\wedge b_{i_{n}}=0$ is equivalent to
	$\sum\limits{k=1}^{n}\rho\left(x\right)_{i_{k}i_{k}}=0$ and $\rho\left(x\right)_{ij}=0$
	for all $j\neq i$ (it may be instructional to look at $\bigwedge^{2}V$ first).
\end{proof}

\begin{proposition}[This is \cite{Lautenbacher22}, prop. 6.3]
	\label{prop:product kernels}
	Let $A$ be a simply-laced and indecomposable GCM.	
	Then all the higher spin representations $\left(\rho_{\frac{2s+1}{2}},\mathcal{S}_{\frac{2s+1}{2}}\right)$	of $\mathfrak{k}\left(A\right)$ for $s=1,2,3$ (cp. thms. \ref{thm:Properties of spin rep's image}, \ref{thm:S32 and S52} and \ref{thm:7/2 spin rep}) satisfy $\ker\rho_{\frac{2s_{1}+1}{2}}\otimes\rho_{\frac{2s_{2}+1}{2}}\cong\ker\rho_{\frac{2s_{1}+1}{2}}\cap\ker\rho_{\frac{2s_{2}+1}{2}}$.
\end{proposition}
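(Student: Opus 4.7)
The plan is to invoke Lemma \ref{lem:kernels of products either or} so as to reduce the claim to the assertion that no spin representation sends an element of $\mf{k}(A)$ to a nonzero scalar multiple of the identity. More precisely, that lemma provides the dichotomy that every $x \in \ker(\rho_{\frac{n_1}{2}} \otimes \rho_{\frac{n_2}{2}})$ either already belongs to $\ker\rho_{\frac{n_1}{2}} \cap \ker\rho_{\frac{n_2}{2}}$, or else there exists $0 \neq \lambda \in \mathbb{C}$ with $\rho_{\frac{n_1}{2}}(x) = \lambda \cdot Id$ and $\rho_{\frac{n_2}{2}}(x) = -\lambda \cdot Id$. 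The reverse inclusion $\ker\rho_{\frac{n_1}{2}} \cap \ker\rho_{\frac{n_2}{2}} \subseteq \ker(\rho_{\frac{n_1}{2}} \otimes \rho_{\frac{n_2}{2}})$ is the easy direction recorded in the first lines of that same lemma.

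The key step is to exclude the second alternative. I will show that for each $n \in \{1,3,5,7\}$, whenever $\rho_{\frac{n}{2}}(x) = \lambda \cdot Id$ one must have $\lambda = 0$. For $n \in \{3,5,7\}$ this is an immediate consequence of Lemma \ref{lem:skew-adjointness of rep. matrices}, which equips $\mathcal{S}_{\frac{n}{2}}$ with a non-degenerate symmetric bilinear form $\langle \cdot,\cdot \rangle$ (symmetric because it is the tensor product of two symmetric forms) with respect to which every $\rho_{\frac{n}{2}}(x)$ is skew-adjoint. Substituting $\rho_{\frac{n}{2}}(x) = \lambda \cdot Id$ into the skew-adjointness identity gives $\lambda \langle u,v\rangle = -\lambda \langle u,v\rangle$ for all $u,v \in \mathcal{S}_{\frac{n}{2}}$, and non-degeneracy forces $\lambda = 0$. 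For $n=1$, Theorem \ref{thm:Properties of spin rep's image} guarantees that the image of the generalized spin representation is compact and hence preserves some positive-definite inner product on $\mathcal{S}_{\frac{1}{2}}$, so the same skew-adjointness argument applies verbatim.

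Combining these two observations, the second alternative of Lemma \ref{lem:kernels of products either or} would simultaneously demand $\lambda \neq 0$ and (from the skew-adjointness of $\rho_{\frac{n_1}{2}}$) $\lambda = 0$. This contradiction shows that only the first alternative can occur, whence $\ker(\rho_{\frac{n_1}{2}} \otimes \rho_{\frac{n_2}{2}}) \subseteq \ker\rho_{\frac{n_1}{2}} \cap \ker\rho_{\frac{n_2}{2}}$, and together with the reverse inclusion this yields the proposition. I do not expect a substantive obstacle here: the proof is a direct synthesis of Lemmas \ref{lem:kernels of products either or} and \ref{lem:skew-adjointness of rep. matrices} together with Theorem \ref{thm:Properties of spin rep's image}. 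The only minor point requiring care is confirming that the relevant bilinear forms are symmetric (so that the adjoint of a scalar operator is the same scalar operator, making the identity $\lambda = -\lambda$ a genuine constraint), which holds by construction in \eqref{eq: contravariant form on Sn2}.
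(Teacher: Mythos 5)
Your proposal is correct and follows essentially the same route as the paper: both reduce to excluding the second alternative of Lemma \ref{lem:kernels of products either or} via the skew-adjointness of the representation matrices from Lemma \ref{lem:skew-adjointness of rep. matrices} (resp. the compactness of the image for $n=1$). The paper phrases the final step as ``skew-adjoint hence traceless, so no nonzero scalar,'' while you substitute $\lambda\cdot Id$ directly into the skew-adjointness identity to get $\lambda=-\lambda$; these are the same observation, and in fact your version does not even need the form to be symmetric.
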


\begin{proof}
	There exists a non-degenerate bilinear form on each of the mentioned modules and the action of $\mathfrak{k}\left(A\right)$
	is skew (cp. lemma \ref{lem:skew-adjointness of rep. matrices}) w.r.t. this bilinear form.
	The representation matrices must therefore be traceless which excludes the second case of lemma \ref{lem:kernels of products either or}.
\end{proof}

\begin{proposition} \label{prop: no kernel inclusion}
	Let $A$ be a regular, indefinite, simply-laced GCM and denote by $\mathcal{I}_{\frac{1}{2}}$, $\mathcal{I}_{\frac{3}{2}}$ and $\mathcal{I}_{\frac{5}{2}}$ the kernels of the representations $(\rho,\mathcal{S}_{\frac{1}{2}})$,  $(\sigma,\mathcal{S}_{\frac{3}{2}})$, and $(\sigma,\widetilde{\mathcal{S}}_{\frac{5}{2}})$  respectively. 
	If the image of $\sigma$ does not contain a compact ideal, then $\mathcal{I}_{\frac{1}{2}}\cap \mathcal{I}_{\frac{m}{2}} \subsetneq \mathcal{I}_{\frac{k}{2}}$ for $m\in\{3,5\}$ and $k\in\{1,m\}$.
\end{proposition}
\begin{proof}
	We start by showing $\mathcal{I}_{\frac{1}{2}} \not\subseteq \mathcal{I}_{\frac{m}{2}}$.
	Let $x_{\alpha}\in\mathfrak{k}_{\alpha}\coloneqq\left(\mathfrak{g}_{\alpha}\oplus\mathfrak{g}_{-\alpha}\right)\cap\mathfrak{k}$,
	$x_{\beta}\in\mathfrak{k}_{\beta}$ with $\alpha,\beta\in\Delta_{+}^{re}$
	be such that (recall the def. of $\Gamma$-matrices \ref{def:gen Gamma matrix} and their relation to generalized spin representations \ref{prop:Existence of gen Gamma matrices})
	\begin{align*}
		\rho\left(x_{\alpha}\right)&=\frac{1}{2}\Gamma(\alpha)\ , &\ \rho\left(x_{\beta}\right)&=\frac{1}{2}\Gamma(\beta)\:,\\
		\sigma\left(x_{\alpha}\right)&=\left(s_{\alpha}-\frac{1}{2}Id\right)\otimes\Gamma(\alpha)\ , &\ \sigma\left(x_{\beta}\right)&=\left(s_{\beta}-\frac{1}{2}Id\right)\otimes\Gamma(\beta)\:.
	\end{align*}
Such elements exist due to lemma \ref{prop:Conjugation lemma for S12} and prop. \ref{prop:Conjjugation lemma for Sn2}.
We can furthermore choose $\alpha,\beta\in\Delta^{re}_{+}$ such that $\alpha-\beta\in2Q$ which implies $\Gamma(\alpha)=\Gamma(\beta)$ and therefore $x_{\alpha}-x_{\beta}\in\mathcal{I}_{\frac{1}{2}}$.
However, one has $\sigma(x_\alpha)-\sigma(x_\beta)=\left(s_{\alpha}-s_{\beta}\right)\otimes\Gamma(\alpha)\neq0$. 

In order to show  $\mathcal{I}_{\frac{m}{2}} \not\subseteq \mathcal{I}_{\frac{1}{2}}$ we are going to exploit that the image of the generalized spin representation $\rho$ is compact but that $\mathcal{S}_{\frac{3}{2}}$ and $\widetilde{\mathcal{S}}_{\frac{5}{2}}$ have an invariant bilinear form of mixed signature.
We had excluded the case $\mathcal{I}_{\frac{m}{2}} = \mathcal{I}_{\frac{1}{2}}$ and therefore it is the same to assume $\mathcal{I}_{\frac{m}{2}} \subsetneq \mathcal{I}_{\frac{1}{2}}$ and produce a contradiction.
We know by corollaries \ref{cor: image is semi-simple} and \ref{cor: image of S52 is semi-simple} that $\text{im}\,\sigma$ is semi-simple. 
If $\mathcal{I}_{\frac{m}{2}} \subsetneq \mathcal{I}_{\frac{1}{2}}$, then there exists a nontrivial homomorphism $\phi: \text{im}\,\sigma \rightarrow \text{im}\,\rho$ that factors through $\rho$ and $\sigma$, i.e., $\phi\circ\sigma=\rho$. 
Since  $\text{im}\,\rho$ is compact, there exists an invariant inner product on $\mathcal{S}_{\frac{1}{2}}$ so that $\phi$ provides a nontrivial finite-dimensional unitary representation of $\text{im}\,\sigma$.
As  non-compact simple Lie-algebras do not admit nontrivial finite-dimensional unitary representations, all non-compact simple factors of $\text{im}\,\sigma$ must act trivially on $\mathcal{S}_{\frac{1}{2}}$. 
Thus, if  $\mathcal{I}_{\frac{m}{2}} \subsetneq \mathcal{I}_{\frac{1}{2}}$, then $\text{im}\,\sigma$ has a nontrivial semi-simple compact factor $\mf{k}_0$ which we excluded by hypothesis.
\end{proof}
\begin{remark}
	The hypothesis on $A$ to be indefinite is crucial for the above proposition. 
	For $\mf{k}(E_9)$, it is shown in \cite{Gen holonomies and ke9} that the kernels of $\mathcal{S}_{\frac{2s+1}{2}}$ form an ascending chain $\mathcal{I}_{\frac{7}{2}}\subsetneq \mathcal{I}_{\frac{5}{2}}\subsetneq \mathcal{I}_{\frac{3}{2}}\subsetneq \mathcal{I}_{\frac{1}{2}}$.
	This chain of inclusions is due to affine Kac-Moody-algebras containing the loop algebra $\mathbb{K}[t^{-1},t]\otimes\mathring{\mf{g}}$ and the fact that $\mf{k}(A)$ is contained in the Loop algebra.
	This makes it possible to spell out $\mathcal{S}_{\frac{3}{2}}$ and $\mathcal{S}_{\frac{5}{2}}$ as generalized evaluation maps of the loop algebra involving also derivatives of order up to $2$.
	This leads to all but the first inclusions, which is more subtle.
	
	By the above proposition, the kernels of tensor products of representations can provide truly smaller ideals. In \cite{Lautenbacher22} it was shown for $\mf{k}(E_{10})$ by a computer-based analysis that the tensor products $\mathcal{S}_{\frac{3}{2}}\otimes \mathcal{S}_{\frac{1}{2}}$ and $\mathcal{S}_{\frac{3}{2}}\otimes \bigwedge^2 \mathcal{S}_{\frac{1}{2}}  $ are both irreducible. 
	We believe that this is generally true for $A$ regular, indefinite, simply-laced, where $\mathcal{S}_{\frac{3}{2}}$ can also be replaced by $\widetilde{\mathcal{S}}_{\frac{5}{2}}$, as long as each factor in the tensor product is irreducible.
	The proof given in \cite{Lautenbacher22} however (cp. \cite[lem. 6.5 and prop. 6.7]{Lautenbacher22}), contains an error that cannot be easily fixed:
	The applied strategy is similar to that from props. \ref{prop:S32 irrreducible} and \ref{prop:trace-free S52}, i.e., one tries to exploit a polynomial identity of the representation matrices that enables to act on each factor of the tensor product individually. 
	The issue is that the identity derived in \cite[lem. 6.5]{Lautenbacher22} does not hold if the sign error in \cite[lem. 5.6]{Lautenbacher22} is fixed as we did in lemma \ref{lem:Split of the k(A)-action}. 
	With the correct sign, the matrices $\sigma(X_i)\otimes Id$ and $Id\otimes \rho(X_i)$ are not contained in the span of $\mu(X_i)^n$ for $n\in\mathbb{N}$ with $\mu(X_i)=\sigma(X_i)\otimes Id+Id\otimes \rho(X_i)$.
	Instead one has that $\mu(X_i)$ satisfies the identity $\mu(X_i)^4 = -\frac{5}{2} \mu(X_i)^2 -\frac{9}{16}$.  
\end{remark}

\section{Lift to $Spin(A)$ and compatibility with action of $W^{spin}(A)$} \label{sec: Spin(A)}
In this section we show that the higher spin representations do not lift to the maximal compact subgroup $K(A)$ of the minimal split-real Kac-Moody group $G(A)$ but only to its spin cover $Spin(A)$.
We also analyze the interaction of the spin representations with the action of the spin-extended Weyl group introduced in \cite{Spin covers}, which we use to derive a parametrization result for the representation matrices.
Our exposition follows \cite[chp. 4]{Lautenbacher22} and we only briefly review spin covers, referring to \cite{Spin covers} for more details.

\subsection{Maximal compact subgroups, spin covers and a lift criterion}
There are different groups that can be associated to a Kac-Moody algebra (cp. \cite{Introduction to KM-groups over fields}).
The minimal simply-connected, split-real Kac-Moody group $G(A)$ (cp. \cite[chp. 7]{Introduction to KM-groups over fields}) possesses an involution $\theta$ that is the group analogue of the Chevalley involution $\omega$.
The fixed-point subgroup $K(A)\coloneqq G(A)^\theta$ is commonly referred to as the maximal compact subgroup and its Lie algebra is $\mf{k}(A)$.
We are mostly concerned with the spin cover $Spin(A)$ of $K(A)$ introduced in \cite{Spin covers} for simply-laced $A$ and are therefore able to avoid the machinery involving the constructive Tits functor.
Instead we follow \cite{Spin covers} and construct $K(A)$ as an $SO(2)$-amalgam (defined below) without constructing $G(A)$ first.
We merely note that the amalgamation approach to $K(A)$ as well as for $G(A)$ is equivalent to the constructive Tits functor for two-spherical diagrams, i.e., diagrams whose rank-2 sub-diagrams are all of finite type.

\begin{defn}
	(Cp.  \cite[3.1 \& 3.4]{Spin covers})
	For $I=\{1,\dots,n\}$ and $i\neq j\in I$ let $G_{i}$, $G_{ij}$  be groups with monomorphisms $\psi_{ij}^{i}:G_{i}\rightarrow G_{ij}$.
	One calls
	\[
	\mathcal{A}:=\left\{ G_{i},G_{ij},\psi_{ij}^{i}\vert i\neq j\in I\right\} 
	\]
	and the $\psi_{ij}^{i}$ an \emph{amalgam of groups} and \emph{connecting homomorphisms}, respectively. 
	If $G_{i}\cong U$ for all $i\in I$, one calls $\mathcal{A}$ an $U$-amalgam.
	It is called \emph{continuous} if all $G_{i}$, $G_{ij}$ are topological groups with continuous connecting homomorphisms $\psi_{ij}^{i}$.
\end{defn}

\begin{defn}(Cp. \cite[3.5-6]{Spin covers})
	Let $\mathcal{A}=\left\{ G_{i},G_{ij},\psi_{ij}^{i}\vert i\neq j\in I\right\} $
	be an amalgam of groups.
	A group $G$ together with homomorphisms $\tau:=\left\{ \tau_{ij}:G_{ij}\rightarrow G\right\} $
	such that $\tau_{ij}\circ\psi_{ij}^{i}=\tau_{ik}\circ\psi_{ik}^{i}$	for all $j\neq i\neq k\in I$ is called	an \emph{enveloping group} of $\mathcal{A}$ with \emph{enveloping	homomorphisms} $\tau_{ij}$. 
	One calls $(G,\tau)$ \emph{faithful}, if all $\tau_{ij}$ are injective, and \emph{universal} if to any  enveloping group $\left(H,\tilde{\tau}\right)$ of $\mathcal{A}$, there exists a unique epimorphism	$\pi:G\rightarrow H$ such that $\pi\circ\tau_{ij}=\tilde{\tau}_{ij}$.
\end{defn}
Given a fixed amalgam $\mathcal{A}$, two universal enveloping groups  are uniquely isomorphic by universality. 
The canonical universal enveloping group (CUEG) 
\begin{equation}
	G\left(\mathcal{A}\right):=\left\langle \bigcup_{i\neq j\in I}G_{ij}\,\vert\,\text{all relations in }G_{ij},\ \forall\,i\neq j\neq k,\,\forall\,x\in G_{j}:\psi_{ij}^{j}(x)=\psi_{kj}^{j}(x)\right\rangle .\label{eq:def of CUEG}
\end{equation}
is a universal enveloping group  (\cite[1.3.2]{Geom. spor. grps}), the above phrasing is as in \cite{Spin covers}.
For general GCMs $A$ and field $\mathbb{K}$, the split minimal Kac-Moody group $G(A)$ over $\mathbb{K}$ associated to $\mf{g}(A)(\mathbb{K})$ is defined via the constructive Tits functor (cp. \cite{pres of KM groups}).
For $A$ two-spherical however, one has from the main result of \cite{presentations of BN-pair amalgams}  that the split minimal Kac-Moody group $G(A)$ over $\mathbb{R}$  is the universal enveloping group of the amalgam $\mathcal{A}=\left\{ G_{i},G_{ij},\phi_{ij}^{i}\right\} $, where $G_{i}=SL\left(2,\mathbb{R}\right)$ and $G_{ij}$ is the split-real algebraic group of type $A_{\{i,j\}}$.
The $\phi_{ij}^{i}:G_{i}\hookrightarrow G_{ij}$ are the canonical inclusion maps induced from $G_{ij}$ being generated by its fundamental rank-$1$ subgroups $G_i$ and $G_j$.
The restriction $\theta\vert_{G_{ij}}$ of $\theta$ to any fundamental rank-$2$ subgroup yields the classical Cartan-Chevalley involution on the split-real Lie group $G_{ij}$ because $A$ is two-spherical.
\cite{Spin covers} shows that $K(A)$ is an amalgam of the   $G_{ij}^{\theta}$ which are the classical maximal-compact subgroups.
In order to state their result precisely we need to introduce a few more objects first and we restrict ourselves to the simply-laced situation.

For $A$ simply-laced one has $G_i^\theta\cong SO(2)$ and $G_{ij}^\theta$ isomorphic to $SO(3)$ or $SO(2)\times SO(2)$.
We set
\begin{equation}
	K_{ij}:=\begin{cases}
		SO(3) & \text{if }\{i,j\}\in\mathcal{E}\\
		SO(2)\times SO(2) & \text{if }\{i,j\}\notin\mathcal{E}
	\end{cases}\label{eq:G_ij for SO}
\end{equation}
For a group $H$ set 
\begin{equation}
	i_{1}:H\rightarrow H\times H,\ h\mapsto(h,e),\ i_{2}:H\rightarrow H\times H,\ h\mapsto(e,h).\label{eq:diag embedding maps}
\end{equation}
Furthermore, denote by $\varepsilon_{12}:SO(2)\hookrightarrow SO(3)$ the embedding
via the upper-left $SO(2)$-subgroup and by $\varepsilon_{23}:SO(2)\hookrightarrow SO(3)$
that via the lower-right $SO(2)$-subgroup. 

\begin{defn}	\label{def:so(2) amalgam}(Cp. \cite[def. 9.1]{Spin covers})
	Let	$A\in\mathbb{Z}^{n\times n}$ be a simply-laced GCM, $\mathcal{E}$ its Dynkin diagram's edges and $I=\left\{ 1,\dots,n\right\} $. 
	The \emph{standard $SO(2)$-amalgam} of type $A$ is defined as
	\[
	\mathcal{A}\left(A,SO(2)\right):=\left\{ K_{i}\cong SO(2),K_{ij},\phi_{ij}^{i}\vert i\neq j\in I\right\} 
	\]
	with $K_{ij}$ as in (\ref{eq:G_ij for SO}) and for all $i<j\in I$:
	\[
	\phi_{ij}^{i}=\begin{cases}
		\varepsilon_{12} & \text{if }\{i,j\}\in\mathcal{E}\\
		i_{1} & \text{if }\{i,j\}\notin\mathcal{E}
	\end{cases},\ \phi_{ij}^{j}=\begin{cases}
		\varepsilon_{23} & \text{if }\{i,j\}\in\mathcal{E}\\
		i_{2} & \text{if }\{i,j\}\notin\mathcal{E}.
	\end{cases}
	\]
\end{defn}
According to \cite[9.5]{Spin covers}, changing the Dynkin diagram's labeling $I$  does not affect the isomorphism type of $\mathcal{A}$ (we have not defined this term, cp. \cite[3.2]{Spin covers}).
It is possible to concatenate the $\phi_{ij}^{i}$ by an isomorphism of $SO(2)$, which  \cite{Spin covers} call simply an $SO(2)$-amalgam and
such an  $SO(2)$-amalgam is only guaranteed to be isomorphic to the standard  one if the connecting homomorphisms are continuous.
Split-real Kac-Moody groups of $2$-spherical type naturally carry the so-called Kac-Peterson topology that induces the Lie topology on their spherical subgroups (cp. \cite{Regular functions} and \cite{top. twin buildings}). 
Thus, it is natural in our setting to consider only continuous connecting homomorphisms.

The block-embeddings $\varepsilon_{12},\varepsilon_{23}$ have a canonical lift $\tilde{\varepsilon}_{12}$, $\tilde{\varepsilon}_{23}:Spin(2)\hookrightarrow Spin(3)$ (cp. \cite[lem. 6.10]{Spin covers}) which enables one to define the standard
$Spin(2)$-amalgam of type $A$ in the same manner as the standard $SO(2)$-amalgam:
\begin{defn}\label{def:spin amalgam}
	(Cp. \cite[def. 10.1]{Spin covers})
	Let	$A\in\mathbb{Z}^{n\times n}$  be a simply-laced GCM, $\mathcal{E}$ its Dynkin diagram's edges and $I=\left\{ 1,\dots,n\right\} $. 
	Set
	\begin{equation}
		K_{ij}:=\begin{cases}
			Spin(3) & \text{if }\{i,j\}\in\mathcal{E}\\
			Spin(2)\times Spin(2)\diagup\left\langle (-1,-1)\right\rangle  & \text{if }\{i,j\}\notin\mathcal{E}.
		\end{cases}\label{eq:G_ij for Spin}
	\end{equation}
	The \emph{standard $Spin(2)$-amalgam} of type $A$ is defined as
	\[
	\mathcal{A}\left(A,Spin(2)\right):=\left\{ K_{i}\cong Spin(2),K_{ij},\phi_{ij}^{i}\vert i\neq j\in I\right\} 
	\]
	with $K_{ij}$ as in (\ref{eq:G_ij for Spin}) and for all $i<j\in I$:
	\begin{align*}
	\phi_{ij}^{i}=\begin{cases}
		\tilde{\varepsilon}_{12} & \text{if }\{i,j\}\in\mathcal{E}\\
		i_{1} & \text{if }\{i,j\}\notin\mathcal{E}
	\end{cases},\ \phi_{ij}^{j}=\begin{cases}
		\tilde{\varepsilon}_{23} & \text{if }\{i,j\}\in\mathcal{E}\\
		i_{2} & \text{if }\{i,j\}\notin\mathcal{E}.
	\end{cases}
	\end{align*}
\end{defn}
As before the Dynkin diagram's labeling does not matter (see \cite[cor. 10.7]{Spin covers}) and  any continuous $Spin(2)$-amalgam
of type $A$ is isomorphic to $\mathcal{A}\left(A,Spin(2)\right)$ (cp. \cite[thm. 10.9]{Spin covers}).
\begin{defn}	(Cp. \cite[def. 11.5]{Spin covers})
	Let $A$ be a simply-laced GCM. Define $Spin(A)$ to be the CUEG of the standard $Spin(2)$-amalgam
	$\mathcal{A}\left(A,Spin(2)\right)$ of type $A$.
\end{defn}

\begin{theorem}
	(Cp. \cite[thm. 11.2]{Spin covers})
	Let $A$ be a simply-laced GCM and $G(A)$ the minimal, simply-connected split-real Kac-Moody group of type $A$. 
	Denote its  Cartan-Chevalley involution by $\theta$, then the maximal compact subgroup $K\left(A\right):=G(A)^{\theta}$ is a faithful	universal enveloping group of the standard $SO(2)$-amalgam $\mathcal{A}\left(A,SO(2)\right)$.	
\end{theorem}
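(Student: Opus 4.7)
The plan is to combine the Abramenko-Mühlherr presentation of $G(A)$ as a universal enveloping group of its rank-$2$ amalgam with a $\theta$-fixed-point argument. First I would invoke the main result of \cite{presentations of BN-pair amalgams}: since $A$ is simply-laced, every rank-$2$ sub-diagram is of type $A_1 \times A_1$ or $A_2$ and hence spherical, so $G(A)$ is canonically isomorphic to the universal enveloping group of its $SL(2,\mathbb{R})$-amalgam $\{G_i, G_{ij}, \phi^{i}_{ij}\}$, where each $G_{ij}$ is a classical split-real Lie group and the $\phi^{i}_{ij}$ are the canonical inclusions. The Cartan-Chevalley involution $\theta$ restricts to the classical Cartan involution on each $G_{ij}$, so the $\theta$-fixed sub-amalgam is exactly the $SO(2)$-amalgam $\mathcal{A}(A,SO(2))$ of definition \ref{def:so(2) amalgam}.

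Faithfulness is then immediate: the enveloping homomorphisms $\tau_{ij}\colon K_{ij}=G_{ij}^\theta \hookrightarrow G(A)^\theta = K(A)$ are restrictions of the injective maps $G_{ij} \hookrightarrow G(A)$, hence injective, and the compatibility $\tau_{ij}\circ\phi^{i}_{ij}=\tau_{ik}\circ\phi^{i}_{ik}$ is inherited from the corresponding property in $G(A)$.

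For universality, given any enveloping group $(H,\tilde{\tau})$ of $\mathcal{A}(A,SO(2))$, I would proceed in two steps. First, show that $K(A)$ is generated by the subgroups $\tau_{ij}(K_{ij})$; in the simply-laced situation this follows because by Theorem \ref{thm:presentation of k} the Lie algebra $\mathfrak{k}(A)$ is generated by the Berman generators $X_i$, whose one-parameter subgroups $\exp(\mathbb{R} X_i)$ exhaust $K_i \subset K_{ij}$, and because $G(A)$ is generated by the $G_{ij}$. Second, define $\pi\colon K(A)\to H$ on each generator by $\pi|_{\tau_{ij}(K_{ij})}=\tilde{\tau}_{ij}\circ\tau_{ij}^{-1}$ and verify that this is well-defined, i.e., that every relation holding in $K(A)$ among elements of the $\tau_{ij}(K_{ij})$ follows from the amalgam relations of $\mathcal{A}(A,SO(2))$.

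The main obstacle lies in this last verification: a priori $K(A)$ might inherit extra relations from the infinite-dimensional structure of $G(A)$ beyond those of the rank-$2$ sub-amalgam. The natural strategy is to lift any such relation to $G(A)$, apply Abramenko-Mühlherr universality to rewrite it as a consequence of the rank-$2$ relations in the $G_{ij}$, and then descend back to $\theta$-fixed points. Making this descent rigorous is the delicate point and is the essential content of \cite[Ch.~11]{Spin covers}; it ultimately rests on Phan-type amalgamation results for the twin building of $G(A)$, which ensure that the amalgamation behaviour of the compact form is fully controlled by its rank-$2$ data. Once this descent is established, $\pi$ is well-defined and unique by the generating property, completing the proof.
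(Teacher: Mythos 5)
This theorem is not proved in the paper at all: it is quoted verbatim from \cite[thm.\ 11.2]{Spin covers}, so the only fair comparison is with the proof in that reference, and there your sketch has a genuine gap at the decisive step. Your strategy for universality --- take a relation among elements of the $K_{ij}$ inside $K(A)$, view it in $G(A)$, use Abramenko--M\"uhlherr universality to rewrite it as a consequence of the rank-$2$ relations, and ``descend back to $\theta$-fixed points'' --- does not work, because taking $\theta$-fixed points is not compatible with presentations: the rewriting process inside $G(A)$ passes through elements of the $G_{ij}$ that are not $\theta$-fixed, so the derivation cannot be pushed down to the sub-amalgam $\mathcal{A}(A,SO(2))$. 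This is precisely why Phan-type theorems are nontrivial and are not corollaries of the Curtis--Tits/Abramenko--M\"uhlherr presentation of $G(A)$. The actual mechanism is different: one exhibits a suitable simply ($2$-)connected geometry on which $K(A)$ acts chamber-transitively (the ``flip-flop''/opposites geometry attached to $\theta$ and the twin building) and applies Tits' lemma to identify $K(A)$ with the universal completion of the amalgam of rank-$1$ and rank-$2$ stabilizers; the simple connectedness of that geometry is the hard input. Naming ``Phan-type amalgamation results'' is the right pointer, but as written your argument replaces the content of the theorem by the assertion that the descent works.

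Two smaller but real issues. First, your generation argument is also a fixed-point fallacy: that $\mathfrak{k}(A)$ is generated by the Berman generators and that $G(A)$ is generated by the $G_{ij}$ does not imply that $G(A)^{\theta}$ is generated by the $G_{ij}^{\theta}$; in the infinite-dimensional setting $K(A)$ is not recovered from its Lie algebra by exponentiation, and generation of $K(A)$ by the fundamental $SO(2)$'s requires a group-level argument (e.g.\ the Iwasawa decomposition of the minimal Kac--Moody group, or again the building action). Second, identifying the $\theta$-fixed sub-amalgam with the \emph{standard} $SO(2)$-amalgam of def.\ \ref{def:so(2) amalgam} (with the specific block embeddings $\varepsilon_{12},\varepsilon_{23}$) is not automatic; it uses the classification of continuous $SO(2)$-amalgams up to isomorphism and the continuity of the connecting maps in the Kac--Peterson topology, as the paper itself points out after def.\ \ref{def:so(2) amalgam}. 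The faithfulness part of your sketch is fine.
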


\begin{theorem}	(Cp. \cite[thm. 11.17]{Spin covers}) \label{thm: Spin(A) is central extension}
	Let $A$ be a simply-laced GCM , then  $Spin(A)$ is a $2^{s}$-fold central extension of $K(A)$ where $s$ is the number of connected
	components of the Dynkin diagram of type $A$.
\end{theorem}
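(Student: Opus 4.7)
The plan is to construct a natural surjection $\pi : Spin(A) \twoheadrightarrow K(A)$ by universality and then to compute $\ker \pi \cong (\mathbb{Z}/2\mathbb{Z})^n$. First, one exploits that $Spin(A)$ is by definition the canonical universal enveloping group (CUEG) of $\mathcal{A}(A, Spin(2))$. The double covers $p_k : Spin(k) \to SO(k)$ for $k \in \{2,3\}$ are compatible with the block embeddings, $p_3 \circ \tilde{\varepsilon}_{ab} = \varepsilon_{ab} \circ p_2$ for $ab \in \{12,23\}$ by \cite[lem.~6.10]{Spin covers}, and the projection $Spin(2) \times Spin(2)/\langle(-1,-1)\rangle \to SO(2) \times SO(2)$ is compatible with $i_1, i_2$. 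Assembling these yields a morphism of amalgams from $\mathcal{A}(A, Spin(2))$ to $\mathcal{A}(A, SO(2))$. Post-composing with the enveloping homomorphisms of $K(A)$, we obtain enveloping homomorphisms of $\mathcal{A}(A, Spin(2))$ into $K(A)$, and universality of $Spin(A)$ produces the sought morphism $\pi$.

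Second, surjectivity of $\pi$ is immediate because $\pi(K_{ij}^{spin})$ equals the image of $K_{ij}^{SO}$ in $K(A)$, and these images generate $K(A)$ by its CUEG presentation. Moreover, $\ker \pi$ is central: it is generated as a normal subgroup by the images of $\ker p_{ij}$, each of which is central in the respective $K_{ij}^{spin}$; being central in a generating family of subgroups, these elements are central in $Spin(A)$.

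The main work is the third step, computing $\ker \pi$. Write $z_i \in Spin(A)$ for the image of the non-trivial element $-1 \in K_i \cong Spin(2)$; this is well-defined because the CUEG identifies the different copies of $K_i$ across the pairs $\{i,j\}$. Each $z_i$ is central and satisfies $z_i^2 = 1$, and $\ker\pi$ is generated by $z_1, \dots, z_n$. For any edge $\{i,j\} \in \mathcal{E}$, both $\tilde{\varepsilon}_{12}(-1)$ and $\tilde{\varepsilon}_{23}(-1)$ coincide with the unique non-trivial central element of $Spin(3) = K_{ij}^{spin}$, giving $z_i = z_j$ in $Spin(A)$. Iterating along edge-paths, all $z_i$ with $i$ in a fixed connected component of the Dynkin diagram are equal. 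This provides at most $n$ distinct central $\mathbb{Z}/2$-classes, and $\ker\pi$ is a quotient of $(\mathbb{Z}/2)^n$.

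The expected main obstacle is the reverse direction: showing that the classes corresponding to different connected components are non-trivial and linearly independent, so that $\ker\pi$ is exactly $(\mathbb{Z}/2)^n$ rather than a proper quotient. The natural tool is to exhibit finite-dimensional representations of $Spin(A)$ that separate them. Concretely, the generalized spin representation $\rho$ of thm.~\ref{thm:Properties of spin rep's image}, restricted to a connected component $A_k$ of $A$, integrates to $Spin(A_k)$ with $\rho(z_{(k)}) = -Id$, so that the component's central class is non-trivial. For decomposable $A = A_1 + \cdots + A_n$, choosing generalized spin representations on each $A_k$ independently and taking their tensor product on the whole of $\mf{k}(A)$ produces a representation of $Spin(A)$ in which the $n$ central classes act by independent signs, proving independence. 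Reducing to the indecomposable case by decomposition of the amalgam and assembling the components then yields the full isomorphism $\ker \pi \cong (\mathbb{Z}/2\mathbb{Z})^n$.
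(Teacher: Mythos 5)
The paper does not prove this statement --- it is quoted from \cite[thm.\ 11.17]{Spin covers} --- so there is no internal proof to compare against; I am assessing your argument on its own. Your outline (a morphism of amalgams from the local double covers, universality to get $\pi$, then a computation of $\ker\pi$) is the right general strategy, and your first two steps are essentially sound. The genuine gap is in the kernel computation, and it sits exactly where the number of connected components is supposed to enter. You only record the identifications $z_i=z_j$ coming from edges $\{i,j\}\in\mathcal{E}$ via the centre of $Spin(3)$, but the non-edges impose identifications as well: for $\{i,j\}\notin\mathcal{E}$ the local group is $K_{ij}=\bigl(Spin(2)\times Spin(2)\bigr)\diagup\langle(-1,-1)\rangle$ with connecting maps $i_1,i_2$ from (\ref{eq:diag embedding maps}), and in this quotient $i_1(-1)=[(-1,1)]=[(1,-1)]=i_2(-1)$ because $(-1,1)\cdot(-1,-1)=(1,-1)$. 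Hence $z_i=z_j$ is forced for \emph{every} pair $i\neq j$, in particular for $i$ and $j$ lying in different connected components. With the amalgam exactly as in def.\ \ref{def:spin amalgam}, your own argument therefore collapses all the $z_i$ to a single central involution and yields a twofold cover, not a $2^{n}$-fold one; to prove the stated theorem you must either work with a different local group for non-adjacent pairs in distinct components or explain why the quotient by $\langle(-1,-1)\rangle$ does not impose this relation. (Your centrality argument also silently relies on these identifications: a priori $z_i$ is central only in those $K_{ij}$ having $i$ as an index, not in the generating subgroups $K_{jk}$ with $j,k\neq i$; one needs $z_i=z_j$ to conclude it is central in all of them.)

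The separating representations in your last step do not exist in the form you describe. Every generalized spin representation satisfies $\rho(X_i)^2=-\tfrac14 Id$ for \emph{every} $i$, so $\exp(2\pi\rho(X_i))=-Id$ for every $i$, and a tensor product of such representations over the components still sends each $z_i$ to $-Id$ on the total space: all central classes act by the \emph{same} sign, and nothing is separated. What would separate them is a representation that is spinorial on one component and integral on the others; but such a representation sends $(-1,-1)$ to $-Id$ in the non-adjacent rank-two piece joining two components and therefore does not factor through $\bigl(Spin(2)\times Spin(2)\bigr)\diagup\langle(-1,-1)\rangle$, i.e.\ it is not a representation of $Spin(A)$ as defined here. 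So, as written, neither your upper bound nor your lower bound on $\ker\pi$ produces $(\mathbb{Z}/2\mathbb{Z})^{n}$ with $n$ the number of connected components, and the discrepancy should be resolved against the precise definitions in \cite{Spin covers} before the argument can be completed.
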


We are now in the position to formulate a criterion that decides whether or not a given representation of $\mf{k}(A)$ lifts to $K(A)$ or to $Spin(A)$.

\begin{proposition}[This is \cite{Lautenbacher22}, prop. 4.8]	\label{prop:Lift criterion} 
	Let $A$ be a simply-laced, indecomposable GCM and let $\rho:\mathfrak{k}(A)\left(\mathbb{R}\right)\rightarrow\text{End}(V)$
	be a finite-dimensional representation. 
	As usual, denote the Berman-generators of $\mathfrak{k}(A)\left(\mathbb{R}\right)$ by $X_i$, then one of the following two cases applies:	
	\begin{equation}
		\exp\left(2\pi\rho\left(X_{i}\right)\right)=\begin{cases}
			-Id_{V} & \ \forall\,i\in I\\
			Id_{V} & \ \forall\,i\in I
		\end{cases}\:.\label{eq: case distinction lift}
	\end{equation}	
	In both cases, $\rho$ lifts to a representation $\Omega$ of	$Spin(A)$, but it lifts to $K\left(A\right)$ only	in the second case.
\end{proposition}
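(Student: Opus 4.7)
The plan is to first establish the dichotomy and then construct the group-level lift by exploiting that $Spin(A)$ is the canonical universal enveloping group of the standard $Spin(2)$-amalgam of type $A$, while $K(A)$ is that of the standard $SO(2)$-amalgam.

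\emph{Case distinction.} I would restrict $\rho$ to each rank-two subalgebra $\mathfrak{k}_{ij}:=\langle X_i, X_j\rangle$, which by cor.\ \ref{cor:Berman presentation for simply-laced A} is isomorphic to $\mathfrak{so}(3)$ if $\{i,j\}\in\mathcal{E}(A)$ and to $\mathfrak{so}(2)\oplus\mathfrak{so}(2)$ otherwise. Any finite-dimensional representation of $\mathfrak{so}(3)$ decomposes into integer- and half-integer-spin irreducibles, on each of which $\exp(2\pi\rho(X_i))$ acts as the scalar $(-1)^{2s}Id_V$; thus $\exp(2\pi\rho(X_i))$ is scalar on every summand of $\rho|_{\mathfrak{k}_{ij}}$ and in particular commutes with $\rho(X_j)$ for every $j$ adjacent to $i$. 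It also commutes with $\rho(X_k)$ for $k$ non-adjacent to $i$ since $[X_i,X_k]=0$. Hence $\exp(2\pi\rho(X_i))$ is central in $\rho(\mathfrak{k}(A))$ and its $\pm 1$-eigenspaces $V^{\pm}$ are $\mathfrak{k}(A)$-invariant subrepresentations. For adjacent $i,j$ both $\exp(2\pi\rho(X_i))$ and $\exp(2\pi\rho(X_j))$ equal the image of the central element $-1\in Spin(3)$ under the integrated $\mathfrak{so}(3)$-representation and therefore coincide. By indecomposability of $A$ the Dynkin diagram is connected, so the value $\exp(2\pi\rho(X_i))\in\{\pm Id_V\}$ is independent of $i$ on each of $V^{\pm}$; replacing $V$ by $V^+$ or $V^-$ if necessary, the dichotomy holds on all of $V$.

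\emph{Lift to $Spin(A)$.} For each pair $\{i,j\}$ I would integrate the restriction $\rho|_{\mathfrak{k}_{ij}}$ to a continuous group representation $\omega_{ij}\colon K_{ij}\to GL(V)$, with $K_{ij}$ as in def.\ \ref{def:spin amalgam}. For adjacent $i,j$, the simple connectedness of $Spin(3)$ furnishes a unique continuous lift. For non-adjacent $i,j$, I first integrate to $Spin(2)\times Spin(2)$ and then observe that the descent to the quotient $Spin(2)\times Spin(2)/\langle(-1,-1)\rangle$ amounts to $\exp(2\pi\rho(X_i))\cdot\exp(2\pi\rho(X_j))=Id_V$, which is ensured by the dichotomy since $(\pm Id_V)^2=Id_V$. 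Compatibility $\omega_{ij}\circ\phi_{ij}^j=\omega_{kj}\circ\phi_{kj}^j$ is automatic because both sides restrict to the one-parameter subgroup $\phi\mapsto\exp(\phi\rho(X_j))$ of $GL(V)$. Universality of $Spin(A)$ as the CUEG of $\mathcal{A}(A,Spin(2))$ then yields a unique group homomorphism $\Omega\colon Spin(A)\to GL(V)$ extending all $\omega_{ij}$.

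\emph{Lift to $K(A)$ iff Case 2.} If $\exp(2\pi\rho(X_i))=Id_V$ for all $i$, then each one-parameter subgroup $\phi\mapsto\exp(\phi\rho(X_i))$ has period $2\pi$ and hence factors through $SO(2)$ rather than $Spin(2)$; the $\omega_{ij}$ accordingly descend to continuous representations of the $SO(2)$-amalgam's $K_{ij}$, and universality of $K(A)$ with respect to $\mathcal{A}(A,SO(2))$ yields the lift. Conversely, in Case 1 the one-parameter subgroup has period $4\pi$ rather than $2\pi$ and does not factor through any $SO(2)$-subgroup; since any lift to $K(A)$ would restrict on the $SO(2)$-subgroup generated by $X_i$ to a continuous representation compatible with period $2\pi$, no such lift can exist. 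The main obstacle will be the case distinction step, in particular establishing that $\exp(2\pi\rho(X_i))$ is central in $\rho(\mathfrak{k}(A))$ and uniform across the Dynkin diagram; this requires combining $\mathfrak{so}(3)$-representation theory on each adjacent pair with the indecomposability of $A$ to propagate the local behaviour globally.
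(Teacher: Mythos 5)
Your proposal follows essentially the same route as the paper's proof: restrict to the rank-one and rank-two subalgebras, integrate there, and invoke the universality of $Spin(A)$ (resp.\ $K(A)$) as enveloping group of the standard $Spin(2)$- (resp.\ $SO(2)$-) amalgam, with the obstruction to descending to $K(A)$ detected on the $\mathfrak{so}(3)$-subalgebras. You are more explicit than the paper in two places: you verify the descent to $Spin(2)\times Spin(2)/\langle(-1,-1)\rangle$ for non-adjacent pairs, and you actually argue the dichotomy (\ref{eq: case distinction lift}) by identifying $\exp\left(2\pi\rho\left(X_{i}\right)\right)$ with the image of the central element $-1\in Spin(3)$ and propagating along the connected diagram, whereas the paper reads off the case from the $2\pi$-periodicity of $\exp\left(\phi\,\mathrm{ad}_{X_i}\right)$ and does not really justify that one of the two cases must hold globally. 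Be aware, though, that your own analysis shows the dichotomy as literally stated can fail: $\exp\left(2\pi\rho\left(X_{i}\right)\right)$ is a central involution independent of $i$, but for a reducible $\rho$ such as $\mathcal{S}_{\frac{1}{2}}\oplus\left(\mathcal{S}_{\frac{1}{2}}\otimes\mathcal{S}_{\frac{1}{2}}\right)$ it equals $-Id$ on one summand and $+Id$ on the other, so neither case of (\ref{eq: case distinction lift}) applies to all of $V$; your ``replace $V$ by $V^{+}$ or $V^{-}$'' step silently weakens the statement rather than proving it. This does not damage the substantive conclusions --- one lifts $V^{+}$ and $V^{-}$ to $Spin(A)$ separately and takes the direct sum, and the lift descends to $K(A)$ if and only if $V^{-}=0$ --- but to obtain the proposition verbatim you must either assume $\rho$ indecomposable (or that $\exp\left(2\pi\rho\left(X_{i}\right)\right)$ is scalar) or record the caveat explicitly; the paper's proof shares this gap.
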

\begin{proof}
We denote the canonical subalgebras generated by the subdiagram $J\subset I$ by $\mathfrak{k}_{J}:=\left\langle X_{j}:j\in J\right\rangle $.
Assume $J$ is such that $A_J$ is of finite type and let $\left(\phi,U\right)$ be a finite-dimensional irreducible
representation of $\mf{k}_J$.
The representation always lifts to the simply-connected Lie group $\widetilde{K}_{J}$ with Lie algebra $\mathfrak{k}_{J}$ and the first diagram in fig. \ref{fig:Com diagrams 4_8} commutes. 
For $J$ such that $A_J$ is of finite type, in particular for $\vert J \vert \leq 2$ one has that $\rho$ restricted to $\mathfrak{k}_{J}$ is completely reducible.
In the rank $1$ case, there is no distinction in the lift of $\rho$   since the involved groups are $SO(2)\cong U(1)\cong Spin(2)$. 
For  $J=\{i,j\}$ such that $\left\{ i,j\right\}\in\mathcal{E}$,  $\mf{k}_{\{i,j\}}\cong\mf{so}(3)$ and all finite-dimensional representations lift to the universal cover $Spin(3)$ but not all lift to $SO(3)$.
By comparison with the adjoint action of $\mathfrak{so}(3)$ on itself, one can determine if a given representation lifts to $SO(3)$.
Due to $\left[X_{i},\left[X_{i},X_{j}\right]\right]=-X_{j}$  one has
\begin{eqnarray*}
	\exp\left(\phi\cdot\text{ad}_{X_{i}}\right)\left(X_{j}\right) & = & \sum_{n=0}^{\infty}\frac{\left(-1\right)^{n}\phi^{2n}}{(2n)!}X_{j}+\sum_{n=0}^{\infty}\frac{\left(-1\right)^{n}\phi^{2n+1}}{(2n+1)!}\left[X_{i},X_{j}\right]\\
	& = & \cos\left(\phi\right)X_{j}+\sin\left(\phi\right)\left[X_{i},X_{j}\right]
\end{eqnarray*}
 for all $i,j$ such that $\{i,j\}\in\mathcal{E}$.
Since the exponential of the $\text{ad}_{X_{i}}$ is $2\pi$-periodic in the sense that $\exp\left(2\pi\cdot\text{ad}_{X_{i}}\right)=Id$, a representation that lifts to $SO(3)$ has to satisfy
the second case of (\ref{eq: case distinction lift}).
Also, the first case of (\ref{eq: case distinction lift}) is incompatible with a lift to $SO(3)$ and therefore such representations only lift to $Spin(3)$.

	\begin{figure}
		\begin{centering}
			\includegraphics[scale=1.5]{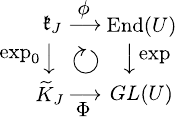}$\quad\qquad\qquad\qquad$\includegraphics[scale=1.5]{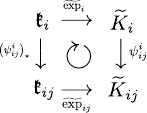}
			\par\end{centering}
		\caption{\label{fig:Com diagrams 4_8} Two important commutative diagrams for a spherical subdiagram $J$. $\exp_0$ denotes the abstract exponential map of a finite-dimensional Lie algebra to the identity component of a Lie group with that Lie algebra.}
		
	\end{figure}

We now need to extend this lifting behavior from the $\mf{k}_J$ to all of $\mf{k}$. 
We have already seen by the above that lifts exist locally for fixed spherical subalgebras and that the normalization of a single Berman generator tells us all we need to know for the others.
The extension to $Spin(A)$ and $K(A)$ respectively works similar to the proof of \cite[thm. 11.14]{Spin covers}.
For every $i,j$, the local lift of $\mf{k}_{\{i,j\}}$ to the group level induces an enveloping homomorphism $\tau_{\{i,j\}}:\widetilde{K}_{ij}\rightarrow GL(V)$ in case one and $ \tau_{ij}:K_{ij}\rightarrow GL(V)$ in case two of (\ref{eq: case distinction lift}).
In both cases we collect them in the set $\tau$.
One obtains these lifts as follows:
In the first case  one identifies  $\widetilde{K}_{\{i,j\}}$ with the exponential image of $\mathfrak{k}_{\{i,j\}}$ under $\widetilde{\exp}_{\{i,j\}}:\mathfrak{k}_{\{i,j\}}\rightarrow\widetilde{K}_{\{i,j\}}$ which is possible as the exponential map is surjective for compact Lie groups. 
This naturally induces connecting monomorphisms $\psi_{ij}^{i}:\widetilde{K}_{i}\rightarrow\widetilde{K}_{ij}$
that are compatible with exponentiation in the sense that the second diagram in figure \ref{fig:Com diagrams 4_8} commutes.
This enables one to define $\tau_{ij}:\widetilde{K}_{ij}\rightarrow GL(V)$ by $\tau_{ij}\left(\widetilde{\exp}_{ij}\left(x\right)\right)=\exp\left(\rho\left(x\right)\right)\ \forall\,x\in\mathfrak{k}_{\{i,j\}}$.
Since $\rho$ is globally defined on $\mathfrak{k}$ one has
\[
\tau_{ij}\circ\psi_{ij}^{i}\left(\widetilde{\exp}_{i}\left(x\right)\right)=\exp\left(\rho\left(x\right)\right)=\tau_{ik}\circ\psi_{ik}^{i}\left(\widetilde{\exp}_{i}\left(x\right)\right)\ \forall\,x\in\mathfrak{k}_{\{i\}}.
\]
This shows that the exponential of $\text{im}\,(\rho)$ is  an enveloping group of the standard $Spin(2)$-amalgam of type $A$ so that there exists a unique homomorphism $\Omega:Spin(A)\rightarrow GL(V)$.
If $\rho$ restricted to the rank $2$ subalgebras isomorphic to $\mf{so}(3)$ lifts to $SO(3)$, then the enveloping homomorphisms are $\tau_{\{i,j\}}:K_{ij}\rightarrow GL(V)$ and the above argument shows that the exponential of $\text{im}\,(\rho)$ is  an enveloping group of the standard $SO(2)$-amalgam of type $A$.
\end{proof}

\subsection{Lifting and interaction with spin-extended Weyl group}

Let $\sigma\left(X_{i}\right):=\tau\left(\alpha_{i}\right)\otimes\Gamma\left(\alpha_{i}\right)$ be a higher spin representation
as in (\ref{thm:S32 and S52}) and (\ref{thm:7/2 spin rep}) and denote the induced one-parameter subgroups by
\begin{equation}
	\Sigma_{i}\left(\phi\right):=\exp\left(\phi\cdot\sigma\left(X_{i}\right)\right)\:.\label{eq:def of Sigma_i}
\end{equation}
We would now like to compute an explicit formula for the above representation matrix. This has been done in \cite[sec. 5]{On higher spin} in a second-quantized form and without explicit reference to the Weyl group action, which is why we repeat the computation in our terminology.
\begin{proposition}
	\label{prop:Lift of 3/2 and 5/2 spin rep}
	Let $A$ be simply-laced and denote by $\left(\sigma,V\right)$ the representation $\mathcal{S}_{\frac{3}{2}}$ or $\mathcal{S}_{\frac{5}{2}}$ of $\mathfrak{k}\left(A\right)\left(\mathbb{R}\right)$	from thm. \ref{thm:S32 and S52}. Then 
	\begin{eqnarray}
		\Sigma_{i}\left(\phi\right) & = & \left[\cos\left(\phi\right)\cos\left(\frac{\phi}{2}\right)\cdot Id\otimes Id-\cos\left(\phi\right)\sin\left(\frac{\phi}{2}\right)\cdot Id\otimes\Gamma\left(\alpha_{i}\right)+\right.\nonumber \\
		&  & \left.\sin\left(\phi\right)\sin\left(\frac{\phi}{2}\right)\cdot\eta\left(s_{i}\right)\otimes Id+\sin\left(\phi\right)\cos\left(\frac{\phi}{2}\right)\cdot\eta\left(s_{i}\right)\otimes\Gamma\left(\alpha_{i}\right)\right],\label{eq:formula for spin rep exponential}
	\end{eqnarray}
	and $\left(\sigma,V\right)$ lifts to a representation $(\Sigma,V)$ of $Spin\left(A\right)$ but not
	to $K(A)$. The restriction of $(\Sigma,V)$ to the fundamental one-parameter subgroups are given by the $\Sigma_i$.
\end{proposition}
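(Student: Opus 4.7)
The plan is to exploit that $\sigma(X_i)=\bigl(\eta(s_i)-\tfrac12 Id\bigr)\otimes\Gamma(\alpha_i)$ factors as a tensor product of two operators with very simple spectra: $\eta(s_i)^2=Id$ (Coxeter relation) and $\Gamma(\alpha_i)^2=-Id$ (from eq.\ (\ref{eq:Gamma matrices normalization and squares-1}), since $(\alpha_i|\alpha_i)=2$). First I would introduce the spectral projectors $P_{\pm}:=\tfrac12(Id\pm\eta(s_i))\in\mathrm{End}(V_1)$ (where $V_1$ denotes $\mathfrak h^{\ast}$ or $\mathrm{Sym}^2(\mathfrak h^{\ast})$), so that $\eta(s_i)-\tfrac12 Id=\tfrac12 P_+-\tfrac32 P_-$. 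Since the $P_{\pm}$ are orthogonal idempotents commuting with $\Gamma(\alpha_i)$ in the tensor factorisation, one obtains
\begin{equation*}
\Sigma_i(\phi)=P_+\otimes\exp\bigl(\tfrac{\phi}{2}\Gamma(\alpha_i)\bigr)+P_-\otimes\exp\bigl(-\tfrac{3\phi}{2}\Gamma(\alpha_i)\bigr)\:.
\end{equation*}

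Next I would use $\Gamma(\alpha_i)^2=-Id$ to apply the standard Euler-type identity $\exp(\theta\Gamma(\alpha_i))=\cos(\theta)Id+\sin(\theta)\Gamma(\alpha_i)$, insert it above and substitute the definitions $P_{\pm}=\tfrac12(Id\pm\eta(s_i))$. Expanding the four resulting terms and applying the sum-to-product identities
\begin{align*}
\tfrac12[\cos(\tfrac{\phi}{2})+\cos(\tfrac{3\phi}{2})]&=\cos(\phi)\cos(\tfrac{\phi}{2}),&\tfrac12[\cos(\tfrac{\phi}{2})-\cos(\tfrac{3\phi}{2})]&=\sin(\phi)\sin(\tfrac{\phi}{2}),\\
\tfrac12[\sin(\tfrac{\phi}{2})-\sin(\tfrac{3\phi}{2})]&=-\cos(\phi)\sin(\tfrac{\phi}{2}),&\tfrac12[\sin(\tfrac{\phi}{2})+\sin(\tfrac{3\phi}{2})]&=\sin(\phi)\cos(\tfrac{\phi}{2})
\end{align*}
yields the claimed formula (\ref{eq:formula for spin rep exponential}) directly.

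For the lifting statement I would specialize $\phi=2\pi$: only the coefficient of $Id\otimes Id$ survives, and it equals $\cos(2\pi)\cos(\pi)=-1$, so $\Sigma_i(2\pi)=-Id_V$ for every Berman generator. By the lift criterion prop.\ \ref{prop:Lift criterion} this places the representation in the first case of (\ref{eq: case distinction lift}), so that $(\sigma,V)$ lifts to $Spin(A)$ but not to $K(A)$.

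I do not anticipate any serious obstacle: once the tensor factorisation of $\sigma(X_i)$ is combined with the two algebraic identities $\eta(s_i)^2=Id$ and $\Gamma(\alpha_i)^2=-Id$, everything reduces to elementary trigonometry. The only minor subtlety is observing that these two identities are enough to sidestep lemma \ref{lem:Split of the k(A)-action} entirely -- no polynomial identity expressing $\sigma(X_i)^k$ in the four basis operators $Id\otimes Id$, $Id\otimes\Gamma(\alpha_i)$, $\eta(s_i)\otimes Id$, $\eta(s_i)\otimes\Gamma(\alpha_i)$ is actually needed, because the spectral decomposition of $\eta(s_i)$ reduces the computation to two commuting one-parameter subgroups generated by $\Gamma(\alpha_i)$.
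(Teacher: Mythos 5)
Your proposal is correct and rests on the same two facts as the paper's proof, namely that $\tau(\alpha_i)=\eta(s_i)-\tfrac12 Id$ has spectrum $\{\tfrac12,-\tfrac32\}$ and that $\Gamma(\alpha_i)^2=-Id$ gives the Euler-type formula for $\exp(\theta\Gamma(\alpha_i))$; your four coefficients and the evaluation at $\phi=2\pi$ coincide exactly with the paper's $A_1,\dots,A_4$. The only difference is organizational: the paper computes $\tau(\alpha_i)^n=a(n)+b(n)\eta(s_i)$ by a binomial/Laurent-series extraction and resums the exponential series, whereas your spectral projectors $P_\pm$ diagonalize $\tau(\alpha_i)$ first and make that resummation unnecessary — a cleaner route to the same computation.
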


\begin{proof}
	One computes with $\Gamma\left(\alpha_{i}\right)^{2n}=(-1)^{n}$ and the shorthand $\tau\left(\alpha_{i}\right)^{n}=a(n)+b(n)\eta\left(s_{i}\right)$ for the representations $\mathcal{S}_{\frac{3}{2}}$ and $\mathcal{S}_{\frac{5}{2}}$ that 
	\begin{align*}
		\Sigma_{i}\left(\phi\right) &= A_{1}\left(\phi\right)\cdot Id\otimes Id+A_{2}\left(\phi\right)\cdot Id\otimes\Gamma\left(\alpha_{i}\right)+A_{3}\left(\phi\right)\cdot\eta\left(s_{i}\right)\otimes Id+A_{4}\left(\phi\right)\cdot\eta\left(s_{i}\right)\otimes\Gamma\left(\alpha_{i}\right),
	\end{align*}	
	with 
	\[
	A_{1}\left(\phi\right)=\sum_{n=0}^{\infty}\frac{\left(-1\right)^{n}\phi^{2n}}{(2n)!}a(2n),\ A_{2}\left(\phi\right)=\sum_{n=0}^{\infty}\frac{\left(-1\right)^{n}\phi^{2n+1}}{(2n+1)!}a(2n+1),
	\]
	\[
	A_{3}\left(\phi\right)=\sum_{n=0}^{\infty}\frac{\left(-1\right)^{n}\phi^{2n}}{(2n)!}b(2n),\ A_{4}\left(\phi\right)=\sum_{n=0}^{\infty}\frac{\left(-1\right)^{n}\phi^{2n+1}}{(2n+1)!}b(2n+1).
	\]	
	Because $\begin{pmatrix}n\\
		k
	\end{pmatrix}=0$ for $k>n$ and $s_{i}^{2}=e$ one has
	\begin{align*}
		\tau\left(\alpha_{i}\right)^{n} &= \eta\left(s_{i}\right)^{n}\cdot\left[\sum_{k=0}^{\infty}\begin{pmatrix}n\\
			2k
		\end{pmatrix}\left(-2\right)^{-2k}+\eta\left(s_{i}\right)\sum_{k=0}^{\infty}\begin{pmatrix}n\\
			2k+1
		\end{pmatrix}\left(-2\right)^{-2k-1}\right]\:.
	\end{align*}
	The coefficients of a holomorphic function's Laurent series $f\left(z\right)=\sum_{n=0}^{\infty}a_{n}z^{n}$ satisfy $\sum_{m=0}^{\infty}a_{2m}z^{2m}=\frac{1}{2}\left[f(z)+f(-z)\right]$ and  $\sum_{m=0}^{\infty}a_{2m+1}z^{2m+1}=\frac{1}{2}\left[f(z)-f(-z)\right]$.
	Specialized to $f\left(z\right):=\left(1+z\right)^{n}$ this yields
	\begin{align*}
		\sum_{k=0}^{\infty}\begin{pmatrix}n\\
			2k
		\end{pmatrix}\left(-2\right)^{-2k} &= \frac{1}{2}\left[f(z)+f(-z)\right]_{z=-\frac{1}{2}}=\frac{1}{2}\cdot\left(\frac{1}{2}\right)^{n}+\frac{1}{2}\cdot\left(\frac{3}{2}\right)^{n},\\
		\sum_{k=0}^{\infty}\begin{pmatrix}n\\
			2k+1
		\end{pmatrix}\left(-2\right)^{-2k-1} &= \frac{1}{2}\left[f(z)-f(-z)\right]_{z=-\frac{1}{2}}=\frac{1}{2}\cdot\left(\frac{1}{2}\right)^{n}-\frac{1}{2}\cdot\left(\frac{3}{2}\right)^{n}
	\end{align*}
	and therefore 
	\begin{align*}
		a\left(2n\right) &=\frac{1}{2}\cdot\left[\left(\frac{1}{2}\right)^{2n}+\left(\frac{3}{2}\right)^{2n}\right]&,\ a\left(2n+1\right)&=\frac{1}{2}\cdot\left[\left(\frac{1}{2}\right)^{2n+1}-\left(\frac{3}{2}\right)^{2n+1}\right],\\
		b\left(2n\right)&=\frac{1}{2}\cdot\left[\left(\frac{1}{2}\right)^{2n}-\left(\frac{3}{2}\right)^{2n}\right]&,\ b\left(2n+1\right)&=\frac{1}{2}\cdot\left[\left(\frac{1}{2}\right)^{2n+1}+\left(\frac{3}{2}\right)^{2n+1}\right].		
	\end{align*}
	From this, one has with some trigonometric identities that
	\begin{align*}
		A_{1}\left(\phi\right) & = \frac{1}{2}\cos\left(\frac{\phi}{2}\right)+\frac{1}{2}\cos\left(\frac{3\phi}{2}\right)=\cos\left(\phi\right)\cos\left(\frac{\phi}{2}\right), \\
		A_{2}\left(\phi\right) & = \frac{1}{2}\sin\left(\frac{\phi}{2}\right)-\frac{1}{2}\sin\left(\frac{3\phi}{2}\right) = -\cos\left(\phi\right)\sin\left(\frac{\phi}{2}\right), \\
		A_{3}\left(\phi\right) & = \frac{1}{2}\cos\left(\frac{\phi}{2}\right)-\frac{1}{2}\cos\left(\frac{3\phi}{2}\right) = \sin\left(\phi\right)\sin\left(\frac{\phi}{2}\right), \\
		A_{4}\left(\phi\right) & = \frac{1}{2}\sin\left(\frac{\phi}{2}\right)+\frac{1}{2}\sin\left(\frac{3\phi}{2}\right) = \sin\left(\phi\right)\cos\left(\frac{\phi}{2}\right)\:.
	\end{align*}
	Finally, one evaluates this for $\phi=2\pi$ and obtains $A_{1}=-1$ and $A_{2}=A_{3}=A_{4}=0$ so that 
	\[
	\exp\left(2\pi\cdot\sigma\left(X_{i}\right)\right)=-Id\otimes Id
	\]
	Now prop. \ref{prop:Lift criterion} implies that $\sigma$ lifts only to $Spin(A)$.
\end{proof}

\begin{proposition}	\label{prop:Lift of 7/2 spin rep}
	Let $A$ be simply-laced and denote by $\left(\sigma,V\right)$ the representation $\mathcal{S}_{\frac{7}{2}}$ of $\mathfrak{k}\left(A\right)\left(\mathbb{R}\right)$	from prop. \ref{thm:7/2 spin rep}. 
	Denote the image of the fundamental one-parameter subgroups by  $\widetilde{\Sigma}_{i}$, then
	\begin{align}
		\widetilde{\Sigma}_{i}\left(\phi\right) &= \Sigma_{i}\left(\phi\right) + \frac{1}{4}\left[\cos\left(\frac{5}{2}\phi\right)-\cos\left(\frac{3}{2}\phi\right)\right]f\left(\alpha_{i}\right)\otimes Id  \nonumber \\ 
		& +\frac{1}{4}\left[\sin\left(\frac{5}{2}\phi\right) +\sin\left(\frac{3}{2}\phi\right) \right] f\left(\alpha_{i}\right)\otimes\Gamma\left(\alpha_{i}\right) ,\label{eq:7/2 rep exponential}
	\end{align}
	with $\Sigma_{i}\left(\phi\right)$  as in (\ref{eq:formula for spin rep exponential})	and $\left(\sigma,\mathcal{S}_{\frac{7}{2}}\right)$	lifts to $Spin\left(A\right)$ but not to $K(A)$.
\end{proposition}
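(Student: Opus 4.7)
The plan is to diagonalise $\tau(\alpha_i) = (s_i-\tfrac12 Id) + f(\alpha_i)$ and then exploit $\Gamma(\alpha_i)^2 = -Id$ to exponentiate blockwise, much as in the proof of prop.~\ref{prop:Lift of 3/2 and 5/2 spin rep}. The key preparatory computations I would carry out first are (i) $s_i v(\alpha_i) = -v(\alpha_i)$, which is immediate from $v(\alpha_i) = p\,\alpha_i^{\,3} + q\,\psi(\alpha_i)$ together with $s_i\psi(\alpha_i) = \psi(s_i\alpha_i) = -\psi(\alpha_i)$ (lem.~\ref{lem:scalar products of natural elements in Sym^3V}), and (ii) the resulting identities $s_i f(\alpha_i) = f(\alpha_i)\, s_i = -f(\alpha_i)$. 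Combined with $f(\alpha_i)^2 = 4f(\alpha_i)$ from thm.~\ref{thm:7/2 spin rep}, this gives a complete description of the unital subalgebra of $\mathrm{End}(\text{Sym}^3(\mathfrak h^\ast))$ generated by $s_i$ and $f(\alpha_i)$.

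From this one reads off the eigenspace decomposition
\[
\text{Sym}^3(\mathfrak h^\ast) = V^{+} \,\oplus\, V^{-,0} \,\oplus\, V^{-,4},
\]
where $V^{+}=\ker(s_i-Id)$, $V^{-,4}=\mathbb{R}\,v(\alpha_i)$, and $V^{-,0}=\ker(s_i+Id)\cap \ker f(\alpha_i)$. On these three summands $\tau(\alpha_i)$ acts as the scalars $\tfrac12$, $\tfrac52$ and $-\tfrac32$ respectively, with associated projections
\[
P_{+} = \tfrac12(Id+s_i),\qquad P_{-,4} = \tfrac14 f(\alpha_i),\qquad P_{-,0} = \tfrac12(Id - s_i) - \tfrac14 f(\alpha_i).
\]
Since $\Gamma(\alpha_i)^2 = -Id$, on each summand $V^{?}\otimes S$ the exponential of $\phi\,\sigma(X_i)=\phi\,\tau(\alpha_i)\otimes \Gamma(\alpha_i)$ equals $\cos(\lambda\phi)\,Id\otimes Id + \sin(\lambda\phi)\,Id\otimes \Gamma(\alpha_i)$ with $\lambda \in \{\tfrac12,\tfrac52,-\tfrac32\}$. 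Summing the three contributions weighted by the projections above and rewriting $\frac12[\cos(\phi/2)\pm\cos(3\phi/2)]$ and $\frac12[\sin(\phi/2)\pm\sin(3\phi/2)]$ via the product-to-sum identities yields exactly $\Sigma_i(\phi)$ from (\ref{eq:formula for spin rep exponential}) for the $Id\otimes$- and $s_i\otimes$-terms, while the $f(\alpha_i)\otimes$-terms pick up the announced coefficients $\tfrac14[\cos(5\phi/2)-\cos(3\phi/2)]$ and $\tfrac14[\sin(5\phi/2)+\sin(3\phi/2)]$.

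For the second statement I will simply specialise to $\phi=2\pi$. All three eigenvalues of $\tau(\alpha_i)$ are odd half-integers, hence the eigenvalues of $\sigma(X_i)$ are odd-half-integer multiples of $i$; equivalently, $\cos(2\pi\lambda) = -1$ and $\sin(2\pi\lambda)=0$ for $\lambda\in\{\tfrac12,-\tfrac32,\tfrac52\}$. Therefore $\widetilde{\Sigma}_i(2\pi) = -\,Id\otimes Id$, and prop.~\ref{prop:Lift criterion} provides a lift to $Spin(A)$ but obstructs any lift to $K(A)$.

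The only delicate step is verifying that the chosen decomposition of $\text{Sym}^3(\mathfrak h^\ast)$ is exhaustive and that the image of $f(\alpha_i)$ is precisely $\mathbb{R}\,v(\alpha_i)$ inside the $(-1)$-eigenspace of $s_i$; this is essentially automatic from the rank-one form $f(\alpha)=v(\alpha)\,b(v(\alpha),\cdot)$ together with $b(v(\alpha_i),v(\alpha_i))=4$ established in the proof of thm.~\ref{thm:7/2 spin rep}, so no serious obstacle is expected.
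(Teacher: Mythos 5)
Your argument is correct and yields exactly the stated formula, but it is organized differently from the paper's proof. The paper never diagonalizes $\tau(\alpha_i)$: it sets $\tilde{\tau}(\alpha_i)=\eta(s_i)-\tfrac12 Id$, uses the relations $\tilde{\tau}(\alpha_i)f(\alpha_i)=f(\alpha_i)\tilde{\tau}(\alpha_i)=-\tfrac32 f(\alpha_i)$ and $f(\alpha_i)^2=4f(\alpha_i)$ to derive the closed form
\[
\tau(\alpha_i)^n=\tilde{\tau}(\alpha_i)^n+\tfrac14\left[\left(\tfrac52\right)^n-\left(-\tfrac32\right)^n\right]f(\alpha_i),
\]
and then sums the exponential series, splitting into even and odd parts and absorbing the $\tilde{\tau}$-contribution into the already-computed $\Sigma_i(\phi)$ of prop.~\ref{prop:Lift of 3/2 and 5/2 spin rep}. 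Your spectral route --- the decomposition $\mathrm{Sym}^3(\mathfrak h^\ast)=V^{+}\oplus V^{-,0}\oplus V^{-,4}$ with eigenvalues $\tfrac12,-\tfrac32,\tfrac52$ and projections $\tfrac12(Id+s_i)$, $\tfrac12(Id-s_i)-\tfrac14 f(\alpha_i)$, $\tfrac14 f(\alpha_i)$ --- is equivalent (the paper's identity for $\tau(\alpha_i)^n$ is precisely the spectral resolution read off power by power), and it has the advantage of producing all six coefficient functions in one step and making the $4\pi$-periodicity and the value $\widetilde{\Sigma}_i(2\pi)=-Id\otimes Id$ transparent from the odd-half-integer eigenvalues alone. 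The one point you rightly flag, that $f(\alpha_i)$ is $4$ times the projection onto $\mathbb{R}v(\alpha_i)$ inside $\ker(\eta(s_i)+Id)$ and vanishes on $\ker(\eta(s_i)-Id)$, does follow from $f(\alpha_i)s_i=s_if(\alpha_i)=-f(\alpha_i)$ and $b(v(\alpha_i),v(\alpha_i))=4$, so there is no gap. The paper's version buys a slightly shorter computation by reusing the series manipulations of the $\mathcal{S}_{3/2}/\mathcal{S}_{5/2}$ case verbatim; yours buys conceptual clarity about where the eigenvalue $\pm\tfrac52$ comes from.
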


\begin{proof}
	
	Recall from (\ref{eq:ansatz 7/2}) that 
	\begin{align*}
	\sigma\left(X_{i}\right) &= \tau\left(\alpha_{i}\right) \otimes \Gamma\left(\alpha_{i}\right),\quad \tau\left(\alpha\right)\coloneqq \eta\left(s_{\alpha}\right) -\frac{1}{2}Id+f\left(\alpha\right)\ \forall\,\alpha\in\Delta_{+}^{re}
	\end{align*}
	with $\eta:W\rightarrow GL(V)$ denoting the induced action of the Weyl group on $V=\text{Sym}^{3}\left(\mathfrak{h}^{\ast}\right)$ and $f\left(\alpha\right) = v(\alpha)\left( v(\alpha) \vert \cdot \right)$ with $v(\alpha)\in \text{Sym}^{3}\left(\mathfrak{h}^{\ast}\right)$ defined in thm. \ref{thm:7/2 spin rep} and eq. (\ref{eq:ansatz 7/2}) satisfying 
	\begin{align*}
		f\left(\alpha\right)^{2} &=4\cdot f\left(\alpha\right),\ \eta\left(s_{\alpha}\right)f\left(\alpha\right) =f\left(\alpha\right)\eta\left(s_{\alpha}\right) =-f\left(\alpha\right).
	\end{align*}
	Setting $\tilde{\tau}\left(\alpha\right)\coloneqq\eta\left(s_{\alpha}\right)-\frac{1}{2}Id$,
	one has
	\begin{align*}
		\tilde{\tau}\left(\alpha\right)f\left(\alpha\right) &= f\left(\alpha\right)\tilde{\tau}\left(\alpha\right)=-\frac{3}{2}f\left(\alpha\right),
	\end{align*}
	With this, one computes
	\begin{align*}
		\tau\left(\alpha\right)^{n} & = \left[\tilde{\tau}\left(\alpha\right)+f\left(\alpha\right)\right]^{n} =\sum_{k=0}^{n}\begin{pmatrix}n\\
			k
		\end{pmatrix} \tilde{\tau}\left(\alpha\right)^{n-k} f\left(\alpha\right)^{k} \\ &=\tilde{\tau}\left(\alpha\right)^{n}+\sum_{k=1}^{n}\begin{pmatrix}n\\
			k
		\end{pmatrix} \left(-\frac{3}{2}\right)^{n-k}4^{k-1} f\left(\alpha\right)
		= \tilde{\tau}\left(\alpha\right)^{n}+\frac{1}{4}\left[\left(\frac{5}{2}\right)^{n}-\left(-\frac{3}{2}\right)^{n}\right]f\left(\alpha\right)\:,
	\end{align*}
	\begin{align*}
		\widetilde{\Sigma}_{i}\left(\phi\right) &\coloneqq \exp\left(\phi\sigma\left(X_{i}\right)\right) =\sum_{n=0}^{\infty} \frac{\phi^{n}}{n!} \tau\left(\alpha_{i}\right)^{n} \otimes \Gamma\left(\alpha_{i}\right)^{n}\\
		&= \sum_{n=0}^{\infty} \frac{\left(-1\right)^{n}\phi^{2n}}{\left(2n\right)!} \tau\left(\alpha_{i}\right)^{2n}\otimes Id +\sum_{n=0}^{\infty} \frac{\left(-1\right)^{n}\phi^{2n+1}}{\left(2n+1\right)!} \tau\left(\alpha_{i}\right)^{2n+1}\otimes\Gamma\left(\alpha_{i}\right)\\
		&= \underset{\eqqcolon\Sigma_{i}\left(\phi\right)} {\underbrace{\sum_{n=0}^{\infty} \frac{\left(-1\right)^{n}\phi^{2n}}{\left(2n\right)!} \tilde{\tau}\left(\alpha_{i}\right)^{2n} \otimes Id + \sum_{n=0}^{\infty} \frac{\left(-1\right)^{n}\phi^{2n+1}}{\left(2n+1\right)!} \tilde{\tau}\left(\alpha_{i}\right)^{2n+1} \otimes \Gamma\left(\alpha_{i}\right)}}\\
		&\ +\frac{1}{4}\sum_{n=0}^{\infty} \frac{\left(-1\right)^{n}\phi^{2n}}{\left(2n\right)!} \left[\left(\frac{5}{2}\right)^{2n}-\left(-\frac{3}{2}\right)^{2n}\right] f\left(\alpha_{i}\right)\otimes Id\\
		&\ +\frac{1}{4}\sum_{n=0}^{\infty} \frac{\left(-1\right)^{n}\phi^{2n+1}}{\left(2n+1\right)!} \left[\left(\frac{5}{2}\right)^{2n+1}-\left(-\frac{3}{2}\right)^{2n+1}\right] f\left(\alpha_{i}\right)\otimes\Gamma\left(\alpha_{i}\right)
	\end{align*}
	\begin{align*}
		\widetilde{\Sigma}_{i}\left(\phi\right) & = \Sigma_{i}\left(\phi\right) +\frac{1}{4} \left[ \cos\left(\frac{5}{2}\phi\right) - \cos\left(\frac{3}{2}\phi\right) \right] f\left(\alpha_{i}\right)\otimes Id\\
		&   +\frac{1}{4}\left[ \sin\left(\frac{5}{2}\phi\right) + \sin\left(\frac{3}{2}\phi\right) \right]  f\left(\alpha_{i}\right) \otimes \Gamma\left(\alpha_{i}\right)\:.
	\end{align*}
	Above, $\Sigma_{i}\left(\phi\right)$ stands for the same expression	as in (\ref{eq:formula for spin rep exponential}) but now with $\eta:W(A)\rightarrow GL(V)$ for $V=\text{Sym}^3(\mf{h}^\ast)$.
	Concerning the periodicity of $\Sigma_{i}$, the proof of prop. \ref{prop:Lift of 3/2 and 5/2 spin rep} only relies on $\eta$ being a representation of $W(A)$ and the properties of $\Gamma$-matrices.
	Therefore, $\Sigma_{i}$ here is also $4\pi$-periodic and so is the remainder of $\widetilde{\Sigma}_{i}$.
	As before, prop. \ref{prop:Lift criterion} now implies that $\sigma$ lifts only to $Spin(A)$.
\end{proof}
	
\begin{remark}
	One observes that eigenvalues of largest absolute value of $\sigma(X_i)$ are $\pm\frac{5}{2}$ for $(\sigma,\mathcal{S}_{\frac{7}{2}})$ and $\pm\frac{3}{2}$ for $(\sigma,\mathcal{S}_{\frac{5}{2}})$   and $(\sigma,\mathcal{S}_{\frac{3}{2}})$, so that the notion of $\frac{n}{2}$-representation used in \cite{Higher spin realizations} and \cite{Ext gen spin reps} doesn't quite fit. 
	Essential to the occurrence of $\frac{5}{2}$ as an eigenvalue is that $f(\alpha)^2=4f(\alpha)$. 
	Consider abstractly $f(\alpha)^2=af(\alpha)$ and compute the adjoint action on the representation side, i.e., $\widetilde{\Sigma}_{i}\left(\frac{\pi}{2}\right)\sigma\left(X_{j}\right)\widetilde{\Sigma}_{i}\left(-\frac{\pi}{2}\right)$.
	This must be proportional to $\sigma\left(\left[X_{i},X_{j}\right]\right)$ which it only is if $a=0\,\text{mod }4$.
\end{remark}
In the above remark, we claimed without further details that $\widetilde{\Sigma}_{i}\left(\frac{\pi}{2}\right)\sigma\left(X_{j}\right)\widetilde{\Sigma}_{i}\left(-\frac{\pi}{2}\right)$ must be proportional to $\sigma\left(\left[X_{i},X_{j}\right]\right)$.
The reasoning behind this is how representations interact with the extended and spin-extended Weyl group and their action on $\mf{k}$, which we will now analyze in detail.

\begin{defn} (Cp. \cite[def. 18.4]{Spin covers})
	Let $A$ be a symmetrizable GCM and define $n(i,j)$ to be $0$ if $a_{ij}$ is even and $n(i,j)=1$ if $a_{ij}$ is odd.
	Furthermore, define the Coxeter coefficients $m_{ij}$ for $i\neq j$ via 
	
\begin{center}
		\begin{tabular}{c|c|c|c|c|c}
		$a_{ij}a_{ji}$	& $0$ & $1$ & $2$ & $3$ & $\geq4$		\tabularnewline
		\hline
		$m_{ij}$ & $2$ & $3$ & $4$ & $6$ & $\infty$ 
	\end{tabular}
\end{center}	
\vspace{5pt}	
	\noindent Define the \emph{extended Weyl group} $W^{ext}\left(A\right)$ as
	\begin{subequations}
	\begin{eqnarray}
		W^{ext}\left(A\right) & = & \left\langle t_{1},\dots,t_{n}\right|\ t_{i}^{4}=e\ \forall\,i\in I,\label{eq:W ext T1}\\
		\qquad\qquad &  & t_{j}^{-1}t_{i}^{2}t_{j}=t_{i}^{2}t_{j}^{2n(i,j)}\ \forall\,i\neq j\in I,\label{eq:W ext T2}\\
		 &  & \left.\underset{m_{ij}\text{ factors}}{\underbrace{t_{i}t_{j}t_{i}\cdots}}=\underset{m_{ij}\text{ factors}}{\underbrace{t_{j}t_{i}t_{j}\cdots}}\ \forall\,i\neq j\in I\right\rangle .\label{eq:W ext T3}
	\end{eqnarray}
	\end{subequations}
	and similarly define the \emph{spin-extended Weyl group} $W^{spin}\left(A\right)$ as	
	\begin{eqnarray}
		W^{spin}\left(A\right) & = & \left\langle r_{1},\dots,r_{n}\right|\ r_{i}^{8}=e\ \forall\,i\in I,\label{eq:W spin R1}\\
		\ \qquad\qquad &  & r_{j}^{-1}r_{i}^{2}r_{j}=r_{i}^{2}r_{j}^{2n(i,j)}\ \forall\,i\neq j\in I,\label{eq:W spin R2}\\
		 &  & \left.\underset{m_{ij}\text{ factors}}{\underbrace{r_{i}r_{j}r_{i}\cdots}}=\underset{m_{ij}\text{ factors}}{\underbrace{r_{j}r_{i}r_{j}\cdots}}\ \forall\,i\neq j\in I\right\rangle .\label{eq:W spin R3}
	\end{eqnarray}
\end{defn}

Note that $W\left(A\right)$ is not a subgroup of the minimal Kac-Moody group $G\left(A\right)$. 
However, to any integrable representation $\left(\pi,V\right)$ of $\mathfrak{g}$ one can introduce 
\[
t_{i}\coloneqq\exp\pi\left(f_{i}\right)\exp\left(-\pi\left(e_{i}\right)\right)\exp\pi\left(f_{i}\right)
\]
which have the following effect on the weight spaces $V_{\lambda}$ (cp. \cite[lem. 3.8]{Kac:}) 
\begin{equation}
	t_{i}\left(V_{\lambda}\right)=V_{s_{i}.\lambda},\label{eq:action of Wext on weight spaces}
\end{equation}
with the simple Weyl reflection $s_{i}\in W\left(A\right)$.
Denote by $G^{\pi}\leq GL\left(V\right)$ the group generated by $\exp\pi\left(k f_{i}\right),\exp\pi\left(k\alpha_{i}^{\vee}\right)$ and $\exp\pi\left(k e_{i}\right)$ for $k\in\mathbb{K}$.
Then the subgroup $W^\pi\left(A\right)<G^{\pi}$ generated by the $t_i$ contains an abelian normal subgroup $D^{\pi}=\left\langle t_{i}^{2}\,\vert\,i\in I\right\rangle $ that satisfies $W^\pi\left(A\right)\diagup D^{\pi}\cong W\left(A\right)$ if $\ker\pi\subset\mathfrak{h}$ (this is \cite[rem. 3.8]{Kac:}, originally due to \cite{Def relations of certain inf-dim groups}).
Furthermore, $W^{ext}\left(A\right)$ and $W^{spin}\left(A\right)$ are in fact subgroups of $K(A)$ and $Spin(A)$, respectively, which can be seen by the following construction:
\begin{defn} (Cp. \cite[def. 18.3]{Spin covers}) 
	Let $A$ be simply-laced	and let $\mathcal{A}\left(A,Spin(2)\right)$	be the associated standard spin-amalgam  whose connecting monomorphisms we denote by	$\tilde{\phi}_{ij}^{i}:\tilde{G}_{i}\rightarrow\tilde{G}_{ij}$, and similarly	let $\mathcal{A}\left(A,SO(2)\right)$ be the associated standard $SO(2)$-amalgam with connecting monomorphisms $\phi_{ij}^{i}:G_{i}\rightarrow G_{ij}$.
	We denote the enveloping homomorphisms by $\tilde{\psi}_{ij}:\tilde{G}_{ij}\rightarrow Spin\left(A\right)$
	and $\psi_{ij}:G_{ij}\rightarrow K(A)$, respectively. 
	The $2\pi$-periodic covering maps $S:\mathbb{R}\rightarrow Spin(2)$ and $D:\mathbb{R}\rightarrow SO(2)$ are given explicitly by 
	$S(\alpha)=\cos\alpha+\sin\alpha e_{1}e_{2}$, where $e_{1},e_{2}\in Cl\left(\mathbb{R}^{2}\right)$, the Clifford algebra over $\mathbb{R}^2$, and 
	$D(\alpha)=\begin{pmatrix}\cos\alpha & \sin\alpha\\
		-\sin\alpha & \cos\alpha
	\end{pmatrix}$
	 respectively (cp. \cite[8.1]{Spin covers}). We set for $i< j$:
	\[
	\hat{r}_{i}\coloneqq\tilde{\psi}_{ij}\circ\tilde{\phi}_{ij}^{i}\left(S\left(\frac{\pi}{4}\right)\right),\quad\widehat{W}\left(A\right)\coloneqq\left\langle \hat{r}_{i}\vert i\in I\right\rangle <Spin\left(A\right),
	\]
	\[
	\tilde{s}_{i}\coloneqq\psi_{ij}\circ\phi_{ij}^{i}\left(D\left(\frac{\pi}{2}\right)\right),\quad\widetilde{W}\left(A\right)\coloneqq\left\langle \tilde{s}_{i}\vert i\in I\right\rangle <K\left(A\right).
	\]
\end{defn}

Then $\widetilde{W}\left(A\right)\cong W^{ext}\left(A\right)$ by \cite[cor. 2.4]{Def relations of certain inf-dim groups} (and a few steps explained in \cite[rem. 18.5]{Spin covers} in more detail). 
Furthermore, the map $\hat{r}_{i}\mapsto r_{i}$ for $i=1,\dots,n$ defines an isomorphism from $\widehat{W}\left(A\right)$ to $W^{spin}\left(A\right)$ (cp. \cite[thm. 18.15]{Spin covers}).
Recall from \cite[thm. 11.17]{Spin covers}  (cited here as thm. \ref{thm: Spin(A) is central extension}) that $Spin(A)$ is a central extension of $K(A)$.
\begin{lemma}
	\label{lem: adjoint W^spin-action} The adjoint action of $Spin\left(A\right)$
	on $\mathfrak{k}$ factors through the natural projection $\varphi:Spin\left(A\right)\rightarrow K\left(A\right)$ and the induced action
	\begin{equation}
		Ad_{g}\left(x\right):=Ad_{\varphi(g)}\left(x\right)\ \forall\,g\in Spin\left(A\right),\,x\in\mathfrak{g}(A)\label{eq:Adjoint action of Spin(=00005CPi) on g}
	\end{equation}
	on $\mf{g}(A)$ satisfies for all $i\in I$:
	\begin{equation}
		Ad_{r_{i}}\left(\mathfrak{g}_{\alpha}\right)=\mathfrak{g}_{s_{i}.\alpha},\ \forall\,\alpha\in\Delta\label{eq:spin extended Wely group and root spaces}\:.
	\end{equation} 
	Furthermore, given $w\in W\left(A\right)$, there exists $\hat{w}\in W^{spin}\left(A\right)$ 
	such that	$Ad_{\hat{w}}\left(\mathfrak{g}_{\alpha}\right)=\mathfrak{g}_{w.\alpha}$ for all $\alpha\in\Delta$.
\end{lemma}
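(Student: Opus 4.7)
The plan is to prove the three assertions of the lemma in sequence, reducing the essential geometric input to Kac's classical fact that $t_i := \exp(f_i)\exp(-e_i)\exp(f_i) \in G(A)$ satisfies $Ad_{t_i}(\mathfrak{g}_\alpha) = \mathfrak{g}_{s_i.\alpha}$ (cp. \cite[lem.~3.8]{Kac:}). First, for the factorization: by thm.~\ref{thm: Spin(A) is central extension}, $\ker(\varphi)$ lies in the center of $Spin(A)$, and central elements act trivially under the adjoint action on the Lie algebra $\mathfrak{k}$. Hence $Ad: Spin(A) \to \mathrm{Aut}(\mathfrak{k})$ descends to $Spin(A)/\ker(\varphi) \cong K(A)$. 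Since $K(A) \leq G(A)$ restricts the ambient adjoint action of $G(A)$ to $\mathfrak{g}(A)$, formula (\ref{eq:Adjoint action of Spin(=00005CPi) on g}) gives a well-defined action of $Spin(A)$ on $\mathfrak{g}(A)$ factoring through $\varphi$.

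Second, I would track $\hat{r}_i \in Spin(A)$ through $\varphi$ to an explicit element of $K(A)$. The covering $Spin(2) \to SO(2)$ is the squaring map intertwining $S(\phi)$ with $D(2\phi)$. Since $Spin(A)$ is the CUEG of $\mathcal{A}(A,Spin(2))$ and $K(A)$ is a universal enveloping group of $\mathcal{A}(A,SO(2))$, the projection $\varphi$ is the unique homomorphism compatible with both amalgam structures, and this squaring behavior is inherited at the level of enveloping homomorphisms. Consequently, $\varphi(\hat{r}_i) = \psi_{ij} \circ \phi_{ij}^{i}(D(\pi/2)) = \tilde{s}_i$. Within the fundamental $SL(2,\mathbb{R})$-subgroup at node $i$, $\tilde{s}_i$ coincides with $\exp(\tfrac{\pi}{2} X_i)$, and a short $2 \times 2$ calculation identifies this element, up to a factor in the fundamental torus $\exp(\mathfrak{h})$, with Kac's $t_i$. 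Since torus elements preserve each root space, $Ad_{r_i}(\mathfrak{g}_\alpha) = Ad_{\tilde{s}_i}(\mathfrak{g}_\alpha) = \mathfrak{g}_{s_i.\alpha}$ by Kac's lemma, establishing (\ref{eq:spin extended Wely group and root spaces}).

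For the general statement, given $\omega \in W(A)$ I fix any expression $\omega = s_{i_1}\cdots s_{i_k}$ and set $\hat{\omega} := r_{i_1}\cdots r_{i_k} \in W^{spin}(A)$. Iterating the previous step,
\begin{equation*}
Ad_{\hat{\omega}}(\mathfrak{g}_\alpha) = Ad_{r_{i_1}} \cdots Ad_{r_{i_k}}(\mathfrak{g}_\alpha) = \mathfrak{g}_{s_{i_1}\cdots s_{i_k}.\alpha} = \mathfrak{g}_{\omega.\alpha}.
\end{equation*}
The main obstacle is the precise identification $\varphi(\hat{r}_i) = \tilde{s}_i$: one must argue that the universality statements for $Spin(A)$ and $K(A)$, together with the $2$-to-$1$ covering $S(\phi) \mapsto D(2\phi)$ of the local factors, force $\varphi$ to intertwine $\hat{r}_i$ with $\tilde{s}_i$ rather than with $\tilde{s}_i^{-1}$ or another element of the same fiber. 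This is essentially guaranteed by the constructive setup of $Spin(A)$ in \cite{Spin covers}, but writing it out requires chasing the connecting homomorphisms $\tilde{\phi}_{ij}^{i}$ and $\phi_{ij}^{i}$ carefully through the CUEG presentation; once this identification is in hand the rest of the argument is routine.
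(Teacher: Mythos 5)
Your argument is correct and follows the paper's proof in all essentials: both factor the adjoint action through the central kernel of $\varphi$ and reduce to Kac's statement that the elements $t_i$ of the extended Weyl group permute the root spaces according to $s_i$ (the paper simply quotes \cite{Spin covers} for $\ker\varphi\subset\widehat{W}(A)$ and the isomorphisms $\widehat{W}(A)\cong W^{spin}(A)$, $\widetilde{W}(A)\cong W^{ext}(A)$ in place of your hands-on identification of $\varphi(\hat r_i)$). The ``main obstacle'' you flag is not one for this lemma: any ambiguity of $\varphi(\hat r_i)$ within a fiber of $\varphi$ is central and hence invisible to $Ad$, and confusing $\tilde s_i$ with $\tilde s_i^{-1}$ is equally harmless since $Ad_{\tilde s_i^{\pm1}}(\mathfrak{g}_\alpha)=\mathfrak{g}_{s_i^{\pm1}.\alpha}=\mathfrak{g}_{s_i.\alpha}$, while the one fact you genuinely need --- that $\varphi$ restricts on each local $Spin(2)$ to the double cover $S(\phi)\mapsto D(2\phi)$ --- is built into the construction of $\varphi$ in \cite{Spin covers}. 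One further remark: for the final claim an arbitrary word for $\omega$ suffices, exactly as you use it; the paper's appeal to the Kac--Peterson reduced-word section of $W(A)\to W^{ext}(A)$ produces a canonical $\hat\omega$ but is not needed for mere existence.
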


\begin{proof}
	By \cite[18.11]{Spin covers} one has that $Z=\ker\varphi\subset W^{spin}\left(A\right)$ and so the
	adjoint action of $W^{spin}\left(A\right)$ on $\mathfrak{g}(A)$ factors through the projection of $W^{spin}(A)$ to $W^{ext}\left(A\right)$.
	But $W^{ext}\left(A\right)$ acts precisely like (\ref{eq:spin extended Wely group and root spaces}) as has been noted in (\ref{eq:action of Wext on weight spaces}).
	By \cite[cor. 2.3 b)]{Def relations of certain inf-dim groups} there exists a unique map from $W\left(A\right)$ to $W^{ext}\left(A\right)$ satisfying
	\[
	e  \mapsto  e\,,\quad s_{i}  \mapsto  t_{i}\,,\quad \omega\omega'  \mapsto  \widetilde{\omega}\widetilde{\omega}'\ \text{if }l\left(\omega\omega'\right)=l\left(\omega\right)+l\left(\omega'\right)\:.
	\]
	Thus, to any reduced word $w\in W(A)$ one finds a word $\widetilde{w}$ in
	$W^{ext}(A)$ and one just uses the analogous word in $W^{spin}\left(A\right)$:
	\[
	w=s_{i_{1}}\cdots s_{i_{k}}\mapsto\widehat{w}=r_{i_{1}}\cdots r_{i_{k}}\in W^{spin}\left(A\right).
	\]
	This works because the action factors through $\varphi$ and hence through the projection from $W^{spin}\left(A\right)$ to $W^{ext}\left(A\right)$.
\end{proof}
	
\begin{lemma} \label{lem:f.d. rep compatible with Ad}
	Let $A$ be	simply-laced and indecomposable and let $\rho:\mathfrak{k}(A)\rightarrow End(V)$ be a finite-dimensional representation whose lift to $K\left(A\right)$ and/or $Spin\left(A\right)$ we denote by $\Omega$.
	The lift satisfies
	\[
	\rho\left(Ad_{g}\left(x\right)\right)=\Omega\left(g\right)\rho\left(x\right)\Omega\left(g\right)^{-1}\ \forall\,g\in Spin\left(A\right),\ \forall\,x\in\mathfrak{k}(A).
	\]
\end{lemma}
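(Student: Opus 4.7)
The plan is to reduce the identity to a check on a generating set of $Spin(A)$ and then extend it by multiplicativity. By the amalgam definition used throughout section \ref{sec: Spin(A)}, $Spin(A)$ is generated by the images of the spherical rank-$2$ subgroups $\tilde{\psi}_{ij}(\tilde{G}_{ij})$, and since each $\tilde{G}_{ij}$ is a compact rank-$2$ Lie group it is covered by its one-parameter subgroups $\widetilde{\exp}_{ij}(\phi Y)$ for $Y\in\mathfrak{k}_{\{i,j\}}$. Thus it suffices to verify the identity for $g = \tilde{\psi}_{ij}(\widetilde{\exp}_{ij}(\phi Y))$. Once this case is handled, the identity extends to all of $Spin(A)$ because $g\mapsto\mathrm{Ad}_g$ and $g\mapsto \Omega(g)(\cdot)\Omega(g)^{-1}$ are both group homomorphisms into the automorphism groups of $\mathfrak{k}(A)$ and $\mathrm{End}(V)$, respectively, and $\rho$ itself is fixed on both sides.

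For such a one-parameter element $g$, the construction of $\Omega$ in the proof of prop. \ref{prop:Lift criterion} yields $\Omega(g) = \exp(\phi\rho(Y))\in GL(V)$. On the Lie-algebra side, lemma \ref{lem: adjoint W^spin-action} ensures that $\mathrm{Ad}_g$ factors through the projection $\varphi\colon Spin(A)\rightarrow K(A)$, and the adjoint action of this projected element on $\mathfrak{k}(A)$ agrees with the formal exponential $\mathrm{Ad}_g(x) = \sum_{n\geq 0}\tfrac{\phi^n}{n!}\mathrm{ad}_Y^n(x)$. Since $\rho$ is a Lie-algebra homomorphism and $\rho(\mathfrak{k}(A))\subset\mathrm{End}(V)$ is finite-dimensional, term-wise application of $\rho$ produces a series that converges in $\mathrm{End}(V)$ to
\[
\rho(\mathrm{Ad}_g(x)) = \sum_{n\geq 0}\frac{\phi^n}{n!}\mathrm{ad}_{\rho(Y)}^n(\rho(x)) = \exp(\phi\rho(Y))\rho(x)\exp(-\phi\rho(Y)) = \Omega(g)\rho(x)\Omega(g)^{-1}\:,
\]
which establishes the identity on generators of $Spin(A)$.

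The main technical subtlety is the justification that $\mathrm{Ad}_g(x)$ is well-defined as an element of $\mathfrak{k}(A)$ and that $\rho$ may be applied term-by-term to the defining series. This is transparent if one realizes both sides inside integrable highest weight modules of $\mathfrak{g}(A)$, on which $\mathrm{ad}_Y$ is locally finite on each weight space and the group-theoretic adjoint action coincides with the formal exponential. Alternatively, one may split $x$ according to the filtration $\mathfrak{k}(A) = \bigoplus_{\alpha\in\Delta_+}\mathfrak{k}_\alpha$ from lemma \ref{lem:Filtered structure on k} and truncate the series modulo a fixed filtration level; this reduces the computation to a finite sum of nested brackets to which $\rho$ can be applied without convergence concerns, and letting the truncation level tend to infinity matches the two sides in $\mathrm{End}(V)$.
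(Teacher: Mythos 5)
Your proposal is correct and follows essentially the same route as the paper: reduce to the spherical rank-$2$ subgroups where $\Omega(\widetilde{\exp}_{ij}(\phi Y))=\exp(\phi\rho(Y))$ by construction of the lift, apply the identity $\exp(\rho(a))\rho(x)\exp(-\rho(a))=\rho(\exp(\mathrm{ad}\,a)(x))$ justified by local finiteness of $\mathrm{ad}_Y$ for $Y$ in a spherical subalgebra, and extend to all of $Spin(A)$ by multiplicativity of both sides. The only difference is cosmetic: the paper cites \cite[(3.8.1) and sec.~3.8]{Kac:} for that identity, whereas you re-derive it by term-wise application of $\rho$ to the exponential series (your first justification via local finiteness is the right one; the filtration-truncation alternative is shakier, since $\mathrm{ad}_Y$ does not respect a fixed filtration level).
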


\begin{proof}
	Our proof is essentially based on the formula
	\begin{equation}
		\exp\left(\rho\left(a\right)\right)\rho\left(x\right)\exp\left(-\rho\left(a\right)\right)=\rho\left(\exp\left(\text{ad }a\right)(x)\right)\label{eq:exp, rho and ad}
	\end{equation}
	with $a,x\in\mathfrak{k}(A)$ such that $\rho(a)$ and $\text{ad}(a)$ are locally finite.
	The above formula is shown in \cite[(3.8.1)]{Kac:} for all $a,x\in\mathfrak{k}(A)$ such that $\rho(a)$, $\text{ad}(a)$	and $\rho(x)$ are locally nilpotent but later in \cite[sec. 3.8]{Kac:} it is also shown to be correct for $\rho(a)$ locally finite and such that the span of $\text{ad}(a)^{n}(x)$ for $n\in\mathbb{N}$ is
	finite-dimensional.
	Thus, (\ref{eq:exp, rho and ad}) is in particular applicable if $a$ is $\text{ad}$-locally finite and $(\rho,V)$ is a finite-dimensional $\mf{k}(A)$-module.
	Now the Berman-elements $x_{\alpha}\coloneqq e_{\alpha}-\omega\left(e_{\alpha}\right)$ for	$\alpha\in\Delta_{+}^{re}$ are $\text{ad}$-locally finite because the $e_{\alpha}$ are locally	nilpotent as long as $\alpha\in\Delta_{+}^{re}$ (because real root spaces are conjugate to simple root spaces by the extended Weyl group).
	Since subalgebras $\mf{k}_{J}\coloneqq\left\langle X_{j}\vert\,j\in J\subset I\right\rangle $ corresponding to $J\subset \{1,\dots,n\}$ such that $A_J$ is spherical consist only of real root spaces, this also means that all $x\in\mf{k}_{J}$ are $\text{ad}$-locally finite.
	Now $\mf{k}_{J}$ possesses an exponential map  $\exp_{J}:\mathfrak{k}_{J}\rightarrow K_{J}$. 
	Here, $K_J$ denotes both the maximal compact subgroup $K_{J}<G_{J}$ and its spin cover, since this detail will not matter.
	Although we won't need it in this strength, note that  $\exp_{J}$ is surjective because $K_J$ is compact.
	We denote the restrictions of  $\Omega$ to $K_{J}$ and $\rho$ to $\mf{k}_{J}$ by $\Omega_{J}$ and $\rho_{J}$ respectively.
	Then the diagram in fig. \ref{fig:Com_diagram 4_14} commutes, i.e., one has for $a\in\mathfrak{k}_{J}$ and $g=\exp_{J}(a)$ that 
	\[
	\Omega\left(g\right)=\Omega_{J}\left(\exp_{J}(a)\right)=\exp\left(\rho_{J}(a)\right)=\exp\left(\rho(a)\right)\ \forall\,g=\exp_{J}a\in K_{J}.
	\]
	\begin{figure}
		
			\includegraphics[scale=1.5]{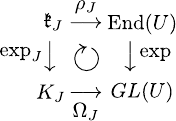}
			
		\caption{\label{fig:Com_diagram 4_14}A commutative diagram for $J$ spherical and $U$ a f.d. module.}
		
	\end{figure}
	Further, one has for all $a\in\mathfrak{k}_{J},x\in\mathfrak{k}(A)$ that
	\begin{eqnarray*}
		\Omega_{J}\left(\exp_{J}(a)\right)\rho(x)\Omega_{J}\left(\exp_{J}(a)\right)^{-1} & = & \exp\left(\rho(a)\right)\rho(x)\exp\left(-\rho(a)\right)\\
		& \overset{(\ref{eq:exp, rho and ad})}{=} & \rho\left(\exp\left(\text{ad }a\right)(x)\right)\\
		& = & \rho\left(Ad_{\exp_{J}a}\left(x\right)\right)=\rho\left(Ad_{g}(x)\right).
	\end{eqnarray*}
	The second-to-last equality uses that the involved Lie group and representation are finite-dimensional.
	Finally, one uses that $K(A)$ and $Spin(A)$ are generated by their fundamental rank-$1$ subgroups (or rank-$2$ if one prefers). 
\end{proof}
We are now in the position to determine how the one-parameter subgroups associated to simple roots act via conjugation on generalized $\Gamma$-matrices. As in prop. \ref{prop:Lift of 3/2 and 5/2 spin rep}, similar computations have been carried out in \cite{On higher spin} in a second-quantized form.
\begin{lemma}	\label{lem:conjugation lemma for S12}
	Let $A$ be simply-laced and let $\left(S,\rho\right)$ be a generalized spin representation of $\mathfrak{k}(A)$ as in def.	(\ref{Def:gen spin rep for simply laced case}) with associated generalized $\Gamma$-matrix as in prop.  \ref{prop:Existence of gen Gamma matrices}. With
	\begin{equation}
		\widehat{r}_{i}\left(\phi\right):=\exp\left(\phi\cdot\rho\left(X_{i}\right)\right),\label{eq:r_i hat in 1/2 spin rep}
	\end{equation}
	 one has for all $\alpha\in\Delta$ that
	\[
	\widehat{r}_{i}\left(\phi\right)\Gamma\left(\alpha\right)\widehat{r}_{i}\left(\phi\right)^{-1}=\begin{cases}
		\Gamma\left(\alpha\right) & \text{if }\left(\alpha\vert\alpha_{i}\right)\in2\mathbb{Z}\\
		\cos\phi\cdot\Gamma\left(\alpha\right)+\sin\phi\cdot\Gamma\left(\alpha_{i}\right)\Gamma\left(\alpha\right) & \text{if }\left(\alpha\vert\alpha_{i}\right)\in2\mathbb{Z}+1
	\end{cases}
	\]
	\[
	r_{i}\Gamma\left(\alpha\right)r_{i}^{-1}=\begin{cases}
		\Gamma\left(\alpha\right) & \text{if }\left(\alpha\vert\alpha_{i}\right)\in2\mathbb{Z}\\
		\varepsilon\left(\alpha_{i},\alpha\right)\Gamma\left(s_{i}.\alpha\right) & \text{if }\left(\alpha\vert\alpha_{i}\right)\in2\mathbb{Z}+1
	\end{cases}\:,
	\]
	where  $\varepsilon:Q\times Q\rightarrow\{\pm1\}$	denotes the standard normalized $2$-cocycle from lem. \ref{lem:def and existence of standard 2-cocycle}, and $r_{i}\coloneqq\widehat{r}_{i}\left(\frac{\pi}{2}\right)$ are
	the images of the generators of $W^{spin}(A)$.
\end{lemma}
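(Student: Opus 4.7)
The strategy is direct computation, exploiting that in the simply-laced setting $\Gamma(\alpha_i)$ squares to $-Id$. By prop. \ref{prop:Existence of gen Gamma matrices} one has $\rho(X_i)=\tfrac{1}{2}\Gamma(\alpha_i)$, and $(\alpha_i\vert\alpha_i)=2$ together with (\ref{eq:Gamma matrices normalization and squares-1}) gives $\Gamma(\alpha_i)^2=-Id$. Consequently the one-parameter subgroup collapses to a trigonometric expression
\[
\widehat{r}_i(\phi)=\exp\!\left(\tfrac{\phi}{2}\Gamma(\alpha_i)\right)=\cos(\phi/2)\,Id+\sin(\phi/2)\,\Gamma(\alpha_i)\:,
\]
with inverse obtained by flipping the sign in front of the sine. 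This reduces the claim to an elementary manipulation inside the algebra generated by the $\Gamma$-matrices.

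Next I would split the computation along the parity of $(\alpha\vert\alpha_i)$ using (\ref{eq:Gamma matrices (anti-) commutator-1}). If $(\alpha\vert\alpha_i)\in 2\mathbb{Z}$, then $\Gamma(\alpha_i)$ and $\Gamma(\alpha)$ commute, so $\Gamma(\alpha)$ commutes with the whole expression for $\widehat{r}_i(\phi)$, leaving $\Gamma(\alpha)$ fixed under conjugation. If $(\alpha\vert\alpha_i)\in 2\mathbb{Z}+1$, they anticommute, and expanding the product
\[
\bigl(\cos(\phi/2)+\sin(\phi/2)\Gamma(\alpha_i)\bigr)\Gamma(\alpha)\bigl(\cos(\phi/2)-\sin(\phi/2)\Gamma(\alpha_i)\bigr)
\]
and pulling $\Gamma(\alpha_i)$ to the left via the anticommutation relation, together with $\Gamma(\alpha_i)^2=-Id$ and the double angle identities $\cos^2(\phi/2)-\sin^2(\phi/2)=\cos\phi$, $2\sin(\phi/2)\cos(\phi/2)=\sin\phi$, yields $\cos(\phi)\,\Gamma(\alpha)+\sin(\phi)\,\Gamma(\alpha_i)\Gamma(\alpha)$ as desired.

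To obtain the second statement, I would specialize to $\phi=\pi/2$, so that only the sine term survives in the odd case: the conjugate equals $\Gamma(\alpha_i)\Gamma(\alpha)=\varepsilon(\alpha_i,\alpha)\Gamma(\alpha_i+\alpha)$ by (\ref{eq:Gamma matrices homomorphism property-1}). The remaining step is to identify $\Gamma(\alpha_i+\alpha)$ with $\Gamma(s_i.\alpha)$. Since $A$ is simply-laced, $s_i.\alpha=\alpha-(\alpha\vert\alpha_i)\alpha_i$, and when $(\alpha\vert\alpha_i)=2k+1$ is odd the two roots $\alpha_i+\alpha$ and $s_i.\alpha$ differ by $-2(k+1)\alpha_i\in 2Q$. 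Invoking (\ref{eq:Gamma matrices of certain sums of roots}) then gives $\Gamma(\alpha_i+\alpha)=\Gamma(s_i.\alpha)$, which completes the identification.

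None of the steps looks genuinely hard: the only non-automatic point is remembering to use the congruence $\alpha_i+\alpha\equiv s_i.\alpha\pmod{2Q}$ and the periodicity identity (\ref{eq:Gamma matrices of certain sums of roots}) to match the two expressions. The heart of the argument is really the one-line collapse $\Gamma(\alpha_i)^2=-Id$ that turns the Lie-theoretic exponential into a rotation inside a two-dimensional subalgebra, after which everything reduces to trigonometry and the defining properties of generalized $\Gamma$-matrices.
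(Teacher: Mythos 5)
Your proposal is correct and follows essentially the same route as the paper's proof: collapse $\widehat{r}_i(\phi)$ to $\cos(\phi/2)Id+\sin(\phi/2)\Gamma(\alpha_i)$ via $\Gamma(\alpha_i)^2=-Id$, split on the parity of $(\alpha\vert\alpha_i)$ using the (anti-)commutation relation, and then identify $\Gamma(\alpha_i)\Gamma(\alpha)=\varepsilon(\alpha_i,\alpha)\Gamma(\alpha_i+\alpha)=\varepsilon(\alpha_i,\alpha)\Gamma(s_i.\alpha)$ since $\alpha_i+\alpha\equiv s_i.\alpha \pmod{2Q}$. Your write-up actually spells out the last congruence more explicitly than the paper does, but the argument is the same.
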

\begin{proof}
	Recall that $\rho\left(X_{i}\right)=\frac{1}{2}\Gamma\left(\alpha_{i}\right)$ so that
	\[
	\widehat{r}_{i}\left(2\phi\right):=\exp\left(2\phi\cdot\rho\left(X_{i}\right)\right)=\cos\phi\cdot Id+\sin\phi\Gamma\left(\alpha_{i}\right).
	\]
	Thus,
	\begin{align*}
		\widehat{r}_{i}\left(2\phi\right) \Gamma\left(\alpha\right) \widehat{r}_{i}\left(2\phi\right)^{-1} &=  \exp\left(2\phi\rho\left(X_{i}\right)\right) \Gamma(\alpha) \exp\left(-2\phi\rho\left(X_{i}\right)\right)\\
		&= \cos^{2}\phi\cdot \Gamma(\alpha) -\sin^{2}\phi \Gamma\left(\alpha_{i}\right) \Gamma(\alpha) \Gamma\left(\alpha_{i}\right)\\
		&+ \sin\phi \cos\phi \left( \Gamma\left(\alpha_{i}\right) \Gamma\left(\alpha\right) -\Gamma\left(\alpha\right) \Gamma\left(\alpha_{i}\right) \right)\\
		&= \cos^{2}\phi \cdot\Gamma(\alpha) -\sin^{2}\phi \Gamma\left(\alpha_{i}\right)^{2} \Gamma\left(\alpha\right)\\
		&- \sin^{2}\phi \Gamma\left(\alpha_{i}\right) \left[ \Gamma\left(\alpha\right), \Gamma\left(\alpha_{i}\right) \right]\\
		&+ \sin\phi \cos\phi \left[ \Gamma\left(\alpha_{i}\right), \Gamma\left(\alpha\right) \right]\\
		&= \Gamma\left(\alpha\right) -\left(\sin^{2}\phi \Gamma\left(\alpha_{i}\right) + \sin\phi\cos\phi \right) \left[ \Gamma\left(\alpha\right), \Gamma\left(\alpha_{i}\right) \right]\:.
	\end{align*}
	By the use of (\ref{eq:Gamma matrices (anti-) commutator-1}) one has 
	\[
	\left[\Gamma\left(\alpha\right),\Gamma\left(\alpha_{i}\right)\right]=\begin{cases}
		0 & \text{if }\left(\alpha\vert\alpha_{i}\right)\in2\mathbb{Z}\\
		-2\Gamma\left(\alpha_{i}\right)\Gamma\left(\alpha\right) & \text{if }\left(\alpha\vert\alpha_{i}\right)\in2\mathbb{Z}+1
	\end{cases}
	\]
	which yields in combination with $2\Gamma\left(\alpha_{i}\right) \Gamma\left(\alpha_{i}\right) \Gamma\left(\alpha\right) =-2\Gamma\left(\alpha\right)$ and a few trigonometric identities that
	\[
	\widehat{r}_{i}\left(2\phi\right)\Gamma\left(\alpha\right)\widehat{r}_{i}\left(2\phi\right)^{-1}=\begin{cases}
		\Gamma\left(\alpha\right) & \text{if }\left(\alpha\vert\alpha_{i}\right)\in2\mathbb{Z}\\
		\cos2\phi\Gamma\left(\alpha\right)+\sin2\phi\Gamma\left(\alpha_{i}\right)\Gamma\left(\alpha\right) & \text{if }\left(\alpha\vert\alpha_{i}\right)\in2\mathbb{Z}+1
	\end{cases}
	\]
	It is possible to use (\ref{eq:Gamma matrices homomorphism property-1}),	(\ref{eq:Gamma matrices of certain sums of roots}) and $2Q\subset \ker \Gamma$ to write $\Gamma\left(\alpha_{i}\right)\Gamma\left(\alpha\right)=\varepsilon\left(\alpha_{i},\alpha\right)\Gamma\left(s_{i}.\alpha\right)$ if $\left(\alpha\vert\alpha_{i}\right)\in2\mathbb{Z}+1$.
	With $r_{i}\coloneqq\widehat{r}_{i}\left(\frac{\pi}{2}\right)$ one then obtains the claimed relations
	\[
	r_{i}\Gamma\left(\alpha\right)r_{i}^{-1}=\begin{cases}
		\Gamma\left(\alpha\right) & \text{if }\left(\alpha\vert\alpha_{i}\right)\in2\mathbb{Z}\\
		\varepsilon\left(\alpha_{i},\alpha\right)\Gamma\left(s_{i}.\alpha\right) & \text{if }\left(\alpha\vert\alpha_{i}\right)\in2\mathbb{Z}+1
	\end{cases}.
	\]

\end{proof}
We had shown in prop. \ref{prop:rep matrix is always prop to Gamma matrix} that $\rho\left(x_{\alpha}\right)=c\left(x_{\alpha}\right)\Gamma\left(\alpha\right)$ for all $x_{\alpha}\in\mf{k}_{\alpha}$.
We are now in the position to make a more precise statement on $c\left(x_{\alpha}\right)\in\mathbb{R}$.
	
\begin{proposition}	\label{prop:Conjugation lemma for S12} 
	Let $A$ be simply-laced and	indecomposable and let $\left(\rho, S\right)$ be a generalized	spin representation of $\mathfrak{k}(A)$ as in def.	\ref{Def:gen spin rep for simply laced case}. 
	If $ x\in\mathfrak{k}_{\alpha}$ is nonzero, then 
	\begin{align*}
	\rho\left(x\right)=c\cdot\Gamma(\alpha)\ s.t.\ c\neq0\ \text{if }\alpha\in\Delta^{re}(A),\quad\rho\left(x\right)=0\ \text{if }\alpha\text{ is an isotropic root.}
	\end{align*}
	Furthermore, to $\alpha,\beta\in\Delta^{re}$ s.t. $\alpha-\beta\in2Q(A)$ and	$0\neq x_{\alpha}\in\mathfrak{k}_{\alpha}$, $0\neq x_{\beta}\in\mathfrak{k}_{\beta}$,	there exists $c\in\mathbb{K}\setminus\left\{ 0\right\} $ s.t. $\rho\left(x_{\alpha}\right)=c\cdot\rho\left(x_{\beta}\right)$.
	If $x_{\alpha}$ and $x_{\beta}$ are both conjugate to $\pm X_{i}$ for some $i=1,\dots,n$,	then $c\in\left\{ -1,+1\right\} $.
\end{proposition}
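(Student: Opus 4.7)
My plan is to split the proposition into three parts: the real-root case, the isotropic case, and the final comparison statement. For the real-root case I will exploit lemma \ref{lem: adjoint W^spin-action} to produce $\hat w\in W^{spin}(A)$ with $Ad_{\hat w}(\mf{k}_\alpha)=\mf{k}_{\alpha_i}=\mathbb{R}X_i$, using that $\mf{g}_\alpha$ is one-dimensional for real $\alpha$ so that $\mf{k}_\alpha$ is one-dimensional. Since $Ad_{\hat w}(x)=\lambda X_i$ with $\lambda\neq 0$ for any nonzero $x\in\mf{k}_\alpha$, lemma \ref{lem:f.d. rep compatible with Ad} yields $\Omega(\hat w)\rho(x)\Omega(\hat w)^{-1}=\tfrac{\lambda}{2}\Gamma(\alpha_i)\neq 0$, so $\rho(x)\neq 0$. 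Proposition \ref{prop:rep matrix is always prop to Gamma matrix} already supplies the proportionality $\rho(x)=c\Gamma(\alpha)$, so this settles $c\neq 0$. Iterating lemma \ref{lem:conjugation lemma for S12} will pin $c$ further to $\pm\tfrac{\lambda}{2}$, which is what the second half of the proposition needs.

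For isotropic $\alpha$ I intend to exploit an adjoint symmetry of $\Gamma(\alpha)$. By theorem \ref{thm:Properties of spin rep's image} the image of $\rho$ is compact, so I will fix an inner product $\langle\cdot,\cdot\rangle_S$ on $S$ with respect to which every $\rho(y)$ acts by a skew-symmetric endomorphism; in particular each $\Gamma(\alpha_i)=2\rho(X_i)$ is skew-symmetric. Starting from the factorisation (\ref{eq: decomposition of Gamma matrices}), $\Gamma(\alpha)=\eta\,\Gamma(\alpha_{i_1})\cdots\Gamma(\alpha_{i_n})$ with $\eta\in\{\pm 1\}$, the adjoint reverses the order and contributes a sign $(-1)^n$ from the individual skew symmetries; undoing this reversal via (\ref{eq:Gamma matrices (anti-) commutator-1}) costs a further $(-1)^S$ where $S=\sum_{k<l}(\alpha_{i_k}\vert\alpha_{i_l})=\tfrac{1}{2}(\alpha\vert\alpha)-n$. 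Combining gives $\Gamma(\alpha)^\ast=(-1)^{(\alpha\vert\alpha)/2}\Gamma(\alpha)$, which reduces to $\Gamma(\alpha)^\ast=\Gamma(\alpha)$ for isotropic $\alpha$. Since proposition \ref{prop:rep matrix is always prop to Gamma matrix} forces $\rho(x)=c(x)\Gamma(\alpha)$, comparing this with $\rho(x)^\ast=-\rho(x)$ and using that $\Gamma(\alpha)$ is invertible by (\ref{eq:Gamma matrices normalization and squares-1}) will yield $c(x)=0$.

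The final claim is then immediate: $\alpha-\beta\in 2Q$ implies $\Gamma(\alpha)=\Gamma(\beta)$ by (\ref{eq:Gamma matrices of certain sums of roots}), so the real-root case gives $\rho(x_\alpha)=c_\alpha\Gamma(\alpha)$ and $\rho(x_\beta)=c_\beta\Gamma(\alpha)$ with $c_\alpha,c_\beta\neq 0$, hence $c\coloneqq c_\alpha/c_\beta\neq 0$. If in addition $x_\alpha,x_\beta$ are each $W^{spin}(A)$-conjugate to $\pm X_i$ for some $i$, the refined form $\rho(x_\bullet)=\pm\tfrac{1}{2}\Gamma(\alpha)$ obtained in the first paragraph pins $c_\alpha,c_\beta$ to $\{\pm\tfrac{1}{2}\}$, whence $c\in\{\pm 1\}$.

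The main obstacle will be the sign identity $\Gamma(\alpha)^\ast=(-1)^{(\alpha\vert\alpha)/2}\Gamma(\alpha)$ in the isotropic case. It rests on the combinatorial identity $n+S=\tfrac{1}{2}(\alpha\vert\alpha)$, the skew symmetry of the simple $\Gamma(\alpha_i)$, and the $\mathbb{Z}/2$-commutation rule (\ref{eq:Gamma matrices (anti-) commutator-1}); once it is in place, the incompatibility of a skew-symmetric $\rho(x)$ with a non-zero self-adjoint $\Gamma(\alpha)$ forces $c(x)=0$ without further work.
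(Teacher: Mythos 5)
Your proof is correct, and the real-root part — nonvanishing via $W^{spin}(A)$-conjugation through lemmas \ref{lem: adjoint W^spin-action} and \ref{lem:f.d. rep compatible with Ad}, proportionality from prop.\ \ref{prop:rep matrix is always prop to Gamma matrix} together with $\Gamma(\alpha)=\Gamma(\beta)$ for $\alpha-\beta\in 2Q$, and the sign $\pm1$ by iterating lemma \ref{lem:conjugation lemma for S12} — is essentially the paper's route. Where you genuinely diverge is the isotropic case. The paper reduces, via \cite[prop.\ 5.7]{Kac:}, to a multiple $m\delta$ of an affine null root, uses that $\mathfrak{k}_{m\delta}$ is spanned by commutators $\left[x_{\alpha_i},x_{m\delta-\alpha_i}\right]$ with $\left(\alpha_i\,\vert\,m\delta-\alpha_i\right)=-2$, so the corresponding $\Gamma$-matrices commute and the bracket is killed, and then conjugates back. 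You instead play the skew-adjointness of $\rho(x)$ (compactness of $\mathrm{im}\,\rho$, as in lemma \ref{lem:skew-adjointness of rep. matrices}) against the identity $\Gamma(\alpha)^{\ast}=(-1)^{\frac{1}{2}(\alpha\vert\alpha)}\Gamma(\alpha)$; your derivation of that identity from the order-reversal sign $(-1)^{n+S}$ with $n+S=\frac{1}{2}(\alpha\vert\alpha)$ is correct, and since $\Gamma(\alpha)$ is invertible this forces $c(x)=0$ when $(\alpha\vert\alpha)=0$. Your argument is arguably cleaner: it needs neither the classification of isotropic roots nor the structural fact that null root spaces of $\mathfrak{k}$ are spanned by those particular commutators, and it dispenses with Weyl-group conjugation entirely for this part. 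Its only limitation — shared with the paper's argument — is that it says nothing about non-isotropic imaginary roots, where $(-1)^{\frac{1}{2}(\alpha\vert\alpha)}$ may equal $-1$ and the adjointness test is vacuous; this is consistent with the proposition's scope.
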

\begin{remark}
	The close connection between generalized spin representations and generalized $\Gamma$-matrices has already been observed in \cite{On higher spin} as well as the fact that elements associated to isotropic root spaces are represented trivially.
	The approach presented here differs in so far as we treat the $\Gamma$-matrices as abstract objects and not elements of a Clifford algebra but more importantly that we derive the parametrization result by the action of the Weyl group instead of successive commutators which provides more control over the factor $c$, in particular if it is nonzero or not.
\end{remark}	
\begin{proof}
	Let $w\in W(A)$ and let $\widetilde{\omega}\in W^{ext}\left(A\right)$ and $\hat{\omega}\in W^{spin}(A)$
	be the corresponding elements from lem. \ref{lem: adjoint W^spin-action}.
	 Together with lem. \ref{lem:f.d. rep compatible with Ad} one has for $x_{\alpha}\in\mf{k}_{\alpha}$:
	 \begin{align*}
	 	\rho\left(Ad_{\widetilde{\omega}}\left(x_{\alpha}\right)\right) &=  \rho\left( Ad_{\widehat{\omega}}\left(x_{\alpha}\right) \right) =\Omega\left(\widehat{\omega}\right) \rho\left(x_{\alpha}\right) \Omega\left(\widehat{\omega}\right)^{-1}\:.
	 \end{align*}
	Now $\Delta^{re}=W(A)\cdot\left\{ \alpha_{1},\dots,\alpha_{n}\right\} $ and $A$  indecomposable
	and simply-laced imply that $\mathfrak{k}_{\alpha}$ for $\alpha\in\Delta^{re}$ is conjugate to $\mathfrak{k}_{\alpha_{i}}=\mathbb{R} X_{i}$ for any $i=1,\dots,n$.
	As $\rho\left(X_{i}\right)\neq0$ for all $i=1,\dots,n$ this shows $\rho\left(x\right)\neq0$ for all $0\neq x\in\mathfrak{k}_{\alpha}$	for all $\alpha\in\Delta^{re}$.
	
	On $\mf{k}(A)$, the invariant bilinear form is negative-definite (a consequence of \cite[thm. 11.7]{Kac:}) and invariant under the action of $\mf{k}(A)$ and hence invariant under the action of $K(A)$.
	Since $W^{ext}(A)<K(A)$ this shows that the norm (defined as the negative of the invariant bilinear form) of $Ad_{\widetilde{\omega}}\left(x_{\alpha}\right))$ is equal to that of $x_{\alpha}$.
	Therefore, $c=\pm 1$ if the norm of $x_{\alpha}$ is that of an $X_i$.
	
	Now by (\ref{eq:Gamma matrices of certain sums of roots}) one has for $\alpha,\beta\in\Delta^{re}$ such that $2\gamma\coloneqq\alpha-\beta\in2Q$ that
	\[
	\Gamma\left(\alpha\right)= \Gamma\left(\beta+2\gamma\right) =\left(-1\right)^{\left(\gamma\vert\gamma\right)}\Gamma\left(\beta\right) = \Gamma\left(\beta\right)
	\]
	and as both $\rho\left(x_{\alpha}\right)$ and  $\rho\left(x_{\beta}\right)$ are nonzero, they must be proportional.
	
	Any isotropic root (roots $\alpha\in\Delta$ that satisfy $(\alpha\vert\alpha)=0$) is $W(A)$-conjugate to the multiple of an affine null root, i.e., a root whose support is an affine subdiagram of $A$ (cp. \cite[prop. 5.7]{Kac:}).
	Given an affine null root $\delta$ one has that $\mathfrak{k}_{m\delta}$	is spanned by all $\left[x_{\alpha_{i}},x_{m\delta-\alpha_{i}}\right]$ with $i\in\text{supp}\,\delta$.
	Since $(\alpha_i\vert \delta)=0$ for all $i\in\text{supp}\,\delta$ one has  $\left(\alpha_{i}\vert m\delta-\alpha_{i}\right)=-2$ so that $\left[\Gamma\left(\alpha_{i}\right), \Gamma\left( m\delta-\alpha_{i}\right)\right]=0$ shows $\mathfrak{k}_{m\delta}\subset\ker\rho$.
	One concludes with lem. \ref{lem: adjoint W^spin-action} that $\rho\left(Ad_{\widetilde{\omega}}\left(x_{m\delta}\right)\right) = \Omega\left(\widehat{\omega}\right) \rho\left(x_{m\delta}\right) \Omega\left(\widehat{\omega}\right)^{-1}=0$.
\end{proof}

\begin{proposition}	\label{prop:Conjjugation lemma for Sn2}  
	Let $A$ be simply-laced and indecomposable and let $\sigma: \mathfrak{k}\left(A\right) \rightarrow \text{End}\left(V\right)\otimes\text{End}\left(S\right)$	denote a higher spin	representation from thms. \ref{thm:S32 and S52} or \ref{thm:7/2 spin rep}.
	Let $\alpha\in\Delta^{re}_+$ and $0\neq x_{\alpha}\in\mathfrak{k}_{\alpha}$,	then there exists $c\left(x_{\alpha}\right)\neq0$ s.t. 
	\[
	\sigma\left(x_{\alpha}\right)=c\left(x_{\alpha}\right)\cdot\tau\left(\alpha\right)\otimes\Gamma\left(\alpha\right).
	\]
	If $x_{\alpha}$ has the same norm than $ X_{j}$ for $j=1,\dots,n$,	then $c\left(x_{\alpha}\right)\in\left\{ -1,+1\right\} $.
\end{proposition}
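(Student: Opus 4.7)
The plan is to adapt the strategy used in Proposition \ref{prop:Conjugation lemma for S12}: I will lift the $W(A)$-action to $W^{spin}(A) < Spin(A)$, use that $\sigma$ lifts to $Spin(A)$ by Propositions \ref{prop:Lift of 3/2 and 5/2 spin rep} and \ref{prop:Lift of 7/2 spin rep}, and apply Lemma \ref{lem:f.d. rep compatible with Ad} to transport $\sigma(X_j)$ to $\sigma(x_\alpha)$ via conjugation. First, since $\alpha \in \Delta_+^{re}$, there exist $w \in W(A)$ and $j\in\{1,\dots,n\}$ with $w\alpha_j = \alpha$, and Lemma \ref{lem: adjoint W^spin-action} provides $\widehat{w} \in W^{spin}(A)$ with $Ad_{\widehat{w}}(\mf{k}_{\alpha_j}) = \mf{k}_\alpha$. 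Since $\dim\mf{k}_\alpha = 1$ for $\alpha$ real, $Ad_{\widehat{w}}(X_j) = c' x_\alpha$ for some $c' \neq 0$, so by Lemma \ref{lem:f.d. rep compatible with Ad},
\[
\sigma(x_\alpha) = (c')^{-1}\,\Omega(\widehat{w})\,\bigl(\tau(\alpha_j) \otimes \Gamma(\alpha_j)\bigr)\,\Omega(\widehat{w})^{-1},
\]
where $\Omega$ denotes the lift of $\sigma$. It thus suffices to show that conjugation by $\Omega(\widehat{w})$ sends $\tau(\alpha_j) \otimes \Gamma(\alpha_j)$ to $\pm\, \tau(\alpha) \otimes \Gamma(\alpha)$.

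The decisive step is to observe that $\Omega(r_i) = \Sigma_i(\pi/2)$ factorises as a genuine tensor product. Evaluating (\ref{eq:formula for spin rep exponential}) at $\phi = \pi/2$ makes the first two summands vanish (as $\cos(\pi/2) = 0$) and yields
\[
\Sigma_i(\pi/2) \;=\; \eta(s_i) \otimes \tfrac{1}{\sqrt{2}}(Id + \Gamma(\alpha_i)) \;=\; \eta(s_i) \otimes \widehat{r}_i(\pi/2).
\]
For $\mathcal{S}_{\frac{7}{2}}$, I would verify that both trigonometric coefficients multiplying $f(\alpha_i)$ in (\ref{eq:7/2 rep exponential}) vanish at $\phi = \pi/2$, thanks to $\cos(5\pi/4) = \cos(3\pi/4)$ and $\sin(5\pi/4) = -\sin(3\pi/4)$, so that the same tensor-factorisation persists. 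As a consequence,
\[
\Omega(r_i)(P \otimes Q)\Omega(r_i)^{-1} \;=\; (\eta(s_i)\,P\,\eta(s_i)^{-1}) \otimes (\widehat{r}_i(\pi/2)\,Q\,\widehat{r}_i(\pi/2)^{-1}).
\]
Conjugation by $\eta(s_i)$ transforms $\tau(\beta) = \eta(s_\beta) - \tfrac{1}{2}Id\,(+\,f(\beta))$ into $\tau(s_i\beta)$: the reflection part follows from $s_i s_\beta s_i = s_{s_i\beta}$, while for $\mathcal{S}_{\frac{7}{2}}$ the identity $\eta(s_i)v(\beta) = v(s_i\beta)$ from Lemma \ref{lem:scalar products of natural elements in Sym^3V}, together with $W(A)$-invariance of $b$, yields $\eta(s_i)f(\beta)\eta(s_i)^{-1} = f(s_i\beta)$. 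On the $S$-factor, Lemma \ref{lem:conjugation lemma for S12} gives $\widehat{r}_i(\pi/2)\,\Gamma(\beta)\,\widehat{r}_i(\pi/2)^{-1} = \pm\Gamma(s_i\beta)$. Iterating along a reduced expression $w = s_{i_k}\cdots s_{i_1}$ with $\widehat{w} = r_{i_k}\cdots r_{i_1}$ then produces $\Omega(\widehat{w})(\tau(\alpha_j) \otimes \Gamma(\alpha_j))\Omega(\widehat{w})^{-1} = \pm\,\tau(\alpha) \otimes \Gamma(\alpha)$, with the sign absorbed into $c(x_\alpha)$, proving $c(x_\alpha) \neq 0$.

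For the norm refinement, I use that the standard invariant bilinear form is negative definite on $\mf{k}(A)$ by \cite[thm. 11.7]{Kac:} and that $W^{ext}(A) \le K(A)$ acts by isometries on $\mf{k}(A)$, so $\|Ad_{\widehat{w}}(X_j)\| = \|X_j\|$; combined with $\|x_\alpha\| = \|X_j\|$, this forces $|c'| = 1$ and hence $c(x_\alpha) \in \{-1,+1\}$. The main technical obstacle is verifying the clean tensor factorisation $\Sigma_i(\pi/2) = \eta(s_i) \otimes \widehat{r}_i(\pi/2)$, and in particular the cancellation of the $f(\alpha_i)$-correction in the $\mathcal{S}_{\frac{7}{2}}$ case; once this is in place, everything else reduces to the inductive conjugation already understood in the $\frac{1}{2}$-spin setting.
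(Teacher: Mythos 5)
Your proposal is correct and follows essentially the same route as the paper's proof: both rest on the tensor factorisation $\Sigma_i(\pi/2)=\eta(s_i)\otimes\tfrac{1}{\sqrt{2}}(Id+\Gamma(\alpha_i))$ (including the vanishing of the $f(\alpha_i)$-corrections at $\phi=\pi/2$ in the $\mathcal{S}_{\frac{7}{2}}$ case), the compatibility of the lift with $Ad$, iterated conjugation by the $\Sigma_i(\pi/2)$ along a reduced word, and the norm argument via $W^{ext}(A)\le K(A)$ acting by isometries. The only difference is organisational — you reuse Lemma \ref{lem:conjugation lemma for S12} for the $\Gamma$-factor where the paper recomputes $(Id+\Gamma(\alpha_i))\Gamma(\alpha_j)(Id-\Gamma(\alpha_i))$ directly — which is immaterial.
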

\begin{remark}
	It was observed in  \cite{On higher spin} that the formula for the Berman generators' representation matrices can easily be used for any real root as well. 
	It was shown further that if $\alpha=\beta+\gamma$ all real such that $(\beta\vert\gamma)=-1$ and the formula holds for $x\in\mf{k}_\beta$ and $y\in\mf{k}_\gamma$, then it holds for $[x,y]\in \mf{k}_{\beta+\gamma}\oplus\mf{k}_{\beta-\gamma}$.
	The formula therefore extends to all real roots that can be written as a successive sum of real roots with product equal to $-1$.
	However, it remains unclear if any real root can be decomposed this way in a simply-laced root system.
	In a discussion, the first author of \cite{On higher spin} expressed the idea of bypassing this problem by Weyl-group conjugation, as any real root is $W(A)$-conjugate to a simple root.
	This proposition is the realization of this idea with a solid link to the mathematical literature and involved objects.
	In particular, we work in the required setting of spin covers and spin-extended Weyl groups developed in \cite{Spin covers}. 
\end{remark}
\begin{proof}
	If $\sigma$ is a representation as in thm. \ref{thm:S32 and S52}, denote the lift to $Spin(A)$ by $\Sigma$ and if it is a representation as in thm. \ref{thm:7/2 spin rep}, denote the lift by $\widetilde{\Sigma}$ so that the formulas of props. \ref{prop:Lift of 3/2 and 5/2 spin rep} and \ref{prop:Lift of 7/2 spin rep} apply verbatim.
	Also, denote the lifts to the fundamental rank-$1$ subgroups by $\Sigma_{i}$ and $\widetilde{\Sigma}_i$, respectively.
	By props.  \ref{prop:Lift of 3/2 and 5/2 spin rep} and \ref{prop:Lift of 7/2 spin rep} one has (note that the formula for general argument $\phi$ differs)
	\begin{align*}
		\Sigma_{i}\left(\pm\frac{\pi}{2}\right) &= \frac{1}{\sqrt{2}} \cdot \eta\left(s_{i}\right) \otimes \left(Id\pm\Gamma\left(\alpha_{i}\right)\right)\:,\\
		\widetilde{\Sigma}_{i} \left(\pm\frac{\pi}{2}\right) &= \frac{1}{\sqrt{2}}\cdot\eta\left(s_{i}\right)\otimes\left(Id\pm\Gamma\left(\alpha_{i}\right)\right)\:.
	\end{align*}
	The $\Sigma_{i}\left(\frac{\pi}{2}\right)$, resp. $\widetilde{\Sigma}_{i}\left(\frac{\pi}{2}\right)$, generate the image of  $W^{spin}(A)$ under $\Sigma$, resp. $\widetilde{\Sigma}$.
	Similar to the previous proof we obtain the representation matrices by conjugation:
	\[
	\sigma\left(Ad_{\widetilde{\omega}}\left(x_{\alpha}\right)\right)=\sigma\left(Ad_{\widehat{\omega}}\left(x_{\alpha}\right)\right)=\Sigma\left(\widehat{\omega}\right)\sigma\left(x_{\alpha}\right)\Sigma\left(\widehat{\omega}\right)^{-1},
	\]
	where $\widetilde{\omega}\in W^{ext}(A)$ is the projection of $\widehat{\omega}\in W^{spin}(A)$.
	For both types of representations one computes 
	\begin{align*}
		\Sigma_{i}\left(\frac{\pi}{2}\right) \sigma\left(X_{j}\right) \Sigma_{i}\left(-\frac{\pi}{2}\right) &=  \frac{1}{2} \eta\left(s_{i}\right) \otimes \left(Id+\Gamma\left(\alpha_{i}\right)\right) \cdot \tau\left(\alpha_{j}\right) \otimes \Gamma\left(\alpha_{j}\right) \cdot \eta\left(s_{i}\right) \otimes \left(Id-\Gamma\left(\alpha_{i}\right)\right)\\
		&= \frac{1}{2} \left( \eta\left(s_{i}\right) \tau\left(\alpha_{j}\right) \eta\left(s_{i}\right) \right) \otimes \left(Id+\Gamma\left(\alpha_{i}\right)\right) \Gamma\left(\alpha_{j}\right) \left(Id-\Gamma\left(\alpha_{i}\right)\right)
	\end{align*}
	and
	\begin{align*}
		\left(Id+\Gamma\left(\alpha_{i}\right)\right) \Gamma\left(\alpha_{j}\right) \left(Id-\Gamma\left(\alpha_{i}\right)\right) &=  \Gamma\left(\alpha_{j}\right) +\Gamma\left(\alpha_{i}\right) \Gamma\left(\alpha_{j}\right) -\Gamma\left(\alpha_{j}\right) \Gamma\left(\alpha_{i}\right) -\Gamma\left(\alpha_{i}\right) \Gamma\left(\alpha_{j}\right) \Gamma\left(\alpha_{i}\right)\\
		&=  \begin{cases}
			2\Gamma\left(\alpha_{j}\right) & \text{if }\left(\alpha_{i}\vert\alpha_{j}\right)=0\\
			2\underset{=\varepsilon\left(\alpha_{i},\alpha_{j}\right)\Gamma\left(\alpha_{i}+\alpha_{j}\right)}{\underbrace{\Gamma\left(\alpha_{i}\right)\Gamma\left(\alpha_{j}\right)}} & \text{if }\left(\alpha_{i}\vert\alpha_{j}\right)=-1.
		\end{cases}
	\end{align*}
	For $\mathcal{S}_{\frac{3}{2}}$ and $\mathcal{S}_{\frac{5}{2}}$ one has $\tau\left(\alpha_{j}\right)=\eta\left(s_{j}\right)-\frac{1}{2}Id$ and computes further:
	\begin{align*}
		\eta\left(s_{i}\right) \tau\left(\alpha_{j}\right) \eta\left(s_{i}\right) &=  \eta\left(s_{i}\right) \left(\eta\left(s_{j}\right) -\frac{1}{2} Id\right) \eta\left(s_{i}\right) =\eta\left(s_{i}s_{j}s_{i}\right)-\frac{1}{2}Id\\
		&=  \eta\left(s_{s_{i}.\alpha_{j}}\right)-\frac{1}{2}Id=\tau\left(s_{i}.\alpha_{j}\right).
	\end{align*}
	For $\mathcal{S}_{\frac{7}{2}}$ one has $\tau\left(\alpha_{j}\right)=\eta\left(s_{j}\right)-\frac{1}{2}Id+f\left(\alpha_{j}\right)$
	and from lem. \ref{lem:scalar products of natural elements in Sym^3V}
	one derives that 
	$\eta\left(s_{i}\right)f\left(\alpha_{j}\right)\eta\left(s_{i}\right) = f\left(s_{i}.\alpha_{j}\right)$
	and therefore 
	\begin{align*}
		\eta\left(s_{i}\right)\tau\left(\alpha_{j}\right) \eta\left(s_{i}\right) &=  \eta\left(s_{i}\right) \left[\eta\left(s_{j}\right)-\frac{1}{2}Id+f\left(\alpha_{j}\right) \right] \eta\left(s_{i}\right)\\
		&=  \eta\left(s_{s_{i}.\alpha_{j}}\right) -\frac{1}{2} Id +f\left(s_{i}.\alpha_{j}\right) =\tau\left(s_{i}.\alpha_{j}\right).
	\end{align*}
	This yields for both cases (from now one we denote the lift simply by $\Sigma$) that 
	\[
	\Sigma_{i}\left(\frac{\pi}{2}\right)\sigma\left(X_{j}\right)\Sigma_{i}\left(-\frac{\pi}{2}\right)=\begin{cases}
		\tau\left(\alpha_{j}\right)\otimes\Gamma\left(\alpha_{j}\right) & \text{if }\left(\alpha_{i}\vert\alpha_{j}\right)=0\:,\\
		\varepsilon\left(\alpha_{i},\alpha_{j}\right)\tau\left(\alpha_i+\alpha_j\right)\otimes\Gamma\left(\alpha_{i}+\alpha_{j}\right) & \text{if }\left(\alpha_{i}\vert\alpha_{j}\right)=-1\:.
	\end{cases}
	\]
	Spelled out differently this yields 
	\[
	\Sigma_{i}\left(\frac{\pi}{2}\right)\sigma\left(X_{j}\right)\Sigma_{i}\left(-\frac{\pi}{2}\right) =\varepsilon\left(\alpha_{i},\alpha_{j}\right) \cdot\tau\left(s_{i}.\alpha_{j}\right) \otimes\Gamma\left(s_{i}.\alpha_{j}\right),
	\]
	since $\varepsilon\left(\alpha_{i},\alpha_{j}\right)=1$ if $\left(\alpha_{i}\vert\alpha_{j}\right)=0$.
	In contrast to the formula before, this formula is also correct if $\alpha_j$ is replaced by an arbitrary real root $\beta\in\Delta^{re}_+$: While the $\Gamma$-matrix only counts modulo $2Q$, one has $\tau(s_i.\beta)\neq \tau(\beta)$ if $(\alpha_i\vert\beta)\in 2\mathbb{Z}$. 
	Therefore it is important to include all actions of $W(A)$.
	By repeated action of $\Sigma_{i}\left(\frac{\pi}{2}\right)$ for $i=1,\dots,n$ this yields for a reduced expression $\widehat{\omega} = r_{i_n}\cdots r_{i_1}$:
	\begin{subequations}
		\begin{align}
		\Sigma\left(\widehat{\omega}\right)\sigma\left(X_{j}\right)\Sigma\left(\widehat{\omega}\right)^{-1} &= 
		c\cdot \tau\left(\omega.\alpha_{j}\right) \otimes \Gamma\left(\omega.\alpha_{j}\right), \label{eq:conjugation in S3/2 and S5/2} \\
		c &= \prod_{k=1}^n \varepsilon(\alpha_{i_k}, s_{i_{k-1}}\cdots s_{i_1}.\alpha_j) \label{eq: sign by conjugation}
		\end{align}	
	\end{subequations}	
	where  $\omega\in W$ is the projection of $\widehat{\omega}$.
	
	As before $\mf{k}_\alpha$ for $\alpha\in\Delta^{re}_+$ is conjugate to $\mf{k}_{\alpha_i}=\mathbb{R}X_i$ via $W^{ext}(A)$ and hence also by $W^{spin}(A)$.
	Thus, all nonzero $ x_{\alpha}\in\mathfrak{k}_{\alpha}$ have a nontrivial image and if the norm of $x_{\alpha}$ is equal to that of $X_i$ the constant in (\ref{eq:conjugation in S3/2 and S5/2}) is $\pm 1$.
\end{proof}
\begin{remark}
	Note that (\ref{eq: sign by conjugation}) suggests that the sign were fully under control. 
	This is only the case, if one fixes the basis of $\mf{k}_\alpha$ to be $\mathrm{Ad}_{\tilde{\omega}}(X_j)$ for the appropriate $\tilde{\omega}\in W^{ext}(A)$. 
	In general, it is a hard problem to decide upon the sign if the basis of a real root space is already fixed (cp. \cite[example 4.25]{Introduction to KM-groups over fields}).
	Thus, the difficulty of the sign is actually not due to the spin representation but due to the subtleties of the adjoint action of $W^{ext}(A)$. 
\end{remark}
\subsection*{Acknowledgment}
We gratefully acknowledge the support of the DFG under the grant KO 4323/13 and the Studienstiftung des Deutschen Volkes for partially funding the research presented in  this paper. 
We would like to thank Axel Kleinschmidt and Hermann Nicolai for discussions and Paul Zellhofer for his comments on an earlier version of this manuscript.
Last, we would like to thank an anonymous reviewer for helpful comments and suggestions that helped improve the quality of our manuscript.

\end{document}